\def\red{\color{red}}
\newtheorem{theorem}{Theorem}[section]
\newtheorem{lemma}[theorem]{Lemma}
\newtheorem{assertion}[theorem]{Assertion}
\newtheorem{remark}[theorem]{Remark}
\newtheorem{definition}[theorem]{Definition}
\newtheorem{corollary}[theorem]{Corollary}
\newtheorem{note}[theorem]{Note}
\newtheorem{proposition}[theorem]{Proposition}
\newtheorem{claim}[theorem]{Claim}
\begin{document}
\title[Hessian]{Hessian of Busemann functions and rank of  Hadamard manifolds
}
\author{Mitsuhiro Itoh}
\address{University of Tsukuba,
1-1-1 Tennodai, Tsukuba, Ibaraki 305-8577 Japan}
\email{itohm@math.tsukuba.ac.jp}
\author{Sinwhi Kim}
\address{Graduate school of Mathematics,\, Sungkyunkwan University, Suwon 16419, Korea}
\email{kimsinhwi@skku.edu}
\author{JeongHyeong\  Park}
\address{Department of Mathematics,\, Sungkyunkwan University, Suwon 16419, Korea}
\email{parkj@skku.edu}
\author{Hiroyasu Satoh}
\address{Nippon Institute of Technology, Japan}
\email{hiroyasu@nit.ac.jp}
\keywords{Damek-Ricci space, Busemann function, rank, Riccati equation,\, visibility axiom}

\subjclass[2010]{53C21, 58C40}
\begin{abstract}In this article we show that every geodesic is rank one and the Hessian of Busemann functions is positive definite for a harmonic Damek-Ricci space, a two step solvable Lie group with a left invariant metric. Moreover, the eigenspace of the Hessian of Busemann functions on a Hadamard manifold $(M,g)$ corresponding to eigenvalue zero is investigated with respect to rank of geodesics.  On a harmonic Hadamard manifold which is of purely exponential volume growth, or of hypergeometric type it is shown that every Busemann function admits positive definite Hessian. A criterion for $(M,g)$ fulfilling  visibility axiom is presented in terms of positive definiteness of the Hessian of Busemann functions.
\end{abstract}
\maketitle

\section{Introduction and main theorems}

Let $(M,g)$ be a Hadamard manifold, namely, a simply
connected, complete Riemannian manifold of non-positive sectional
curvature. A geometric notion important for a Hadamard
manifold is a Busemann function
$b_{\gamma}\hspace{0.5mm};\hspace{0.5mm}M \rightarrow {\Bbb R}$,
associated with a unit speed geodesic $\gamma :
{\Bbb R} \rightarrow M$. Each Busemann function $b_{\gamma}$ which is
$C^2$, convex over $M$, 
induces level hypersurfaces $b_{\gamma}^{-1}(p)$, $p\in M$ called horospheres, in $M$ and admits positive semi-definite Hessian
$\nabla d b_{\gamma}$. 

For a Hadamard manifold an ideal boundary $\partial M$ is defined as the set of 
geodesic rays on $M$ modulo asymptotical equivalence relation.
The Busemann function 
 is well-defined associated with an ideal boundary point $\theta$ of $\partial M$ represented by a geodesic ray $\gamma$, denoted by $b_{\theta}$, up to additive constants{\red{.}}


 If a Hadamard manifold $(M,g)$ is a rank one symmetric space of non-compact type, then,
$(M,g)$ is either a real, complex, quaternionic hyperbolic space, or 16 dimensional Cayley hyperbolic space, for which the
Hessian of the Busemann function $\nabla d b_{\theta}$ is
described as
\begin{eqnarray*}
\nabla d b_{\theta}(u,v) = \langle u, v \rangle - \langle \nabla b_{\theta},u\rangle\, \langle \nabla b_{\theta}, v\rangle + \sum_{k=1}^{d-1} \langle \nabla b_{\theta}, J_k u\rangle\, \langle \nabla b_{\theta}, J_k v \rangle,
\end{eqnarray*}with respect to the corresponding structures $\{J_1,\cdots, J_{d-1}\}$ where $d$ is the dimension of the field ${\Bbb F}$ defining the hyperbolic space. Here, sectional curvature $K$ is  normalized as constant  $ -1$ for ${\Bbb F} = {\Bbb R}$, and $-4 \leq K \leq -1$ for ${\Bbb F} = {\Bbb C}$, ${\Bbb H}$ and ${\Bbb O}$, respectively. So, except for the real hyperbolic space each hyperbolic space admits the eigenspace $\oplus_k\hspace{0.5mm} {\Bbb R}\hspace{0.5mm}  J_k \nabla b_{\gamma}$ with eigenvalue $\lambda_1 = 2$ and the eigenspace $\left(\oplus_k\hspace{0.5mm}
{\Bbb R}\hspace{0.5mm}  J_k \nabla b_{\gamma}\right)^{\perp}$  
with eigenvalue $\lambda_2=1$.
Therefore, for every
hyperbolic space the Hessian of the Busemann function is
positive definite over $\nabla b_{\theta}^{\perp}$, 
 and the constant eigenvalues
$\lambda_1$, $\lambda_2$ with constant multiplicities $\mu_1$,
$\mu_2$. In case of the real hyperbolic space the Hessian of
the Busemann function admits  only single eigenvalue $1$.

These hyperbolic spaces 
are non-compact harmonic Hadamard manifolds.  A Riemannian manifold is called harmonic, if it admits a solution of Laplace equation $\Delta f = 0$ which
depends only on the distance function. However, there are non-symmetric, harmonic Riemannian homogeneous Hadamard manifolds, called Damek-Ricci spaces.
It is interesting to investigate rank one geodesic property and the positive definiteness for  the Hessian of Busemann functions of a
Damek-Ricci space, compared with  those hyperbolic spaces.
The rank of a geodesic $\gamma$ is dimension of the parallel Jacobi fields along $\gamma$.  
The rank
$r(M)$ of $(M,g)$ is defined by $r(M) =
\inf_{\gamma} {\rm rank} \gamma$, a generalization of the rank
of symmetric space.
See \cite{BBE} for the notion
of rank\hspace{0.5mm}. The rank of any geodesic is one for a rank one symmetric space of non-compact type.

This article is comprised of two parts, {\sc{Part}} I and {\sc{Part}} II. In {\sc{Part}} I we present for a Damek-Ricci space $S$ the rank one geodesic property and the positive definiteness of the Hessian of an arbitrary Busemann function by exploiting the algebraic structure of the Lie algebra ${\mathfrak s}$ of $S$. In {\sc Part II} the main results are presented,
namely Theorem 1.4 and Corollary 1.5.


A Damek-Ricci space $S$ is a two step solvable Lie group with a left invariant metric which is a harmonic Hadamard manifold, a particularly important manifold
which plays an significant role in the counter example of the Lichnerowicz conjecture. For basic properties of a Damek-Ricci space refer to \cite{BTV}.

The following are our main results of {\sc Part I}.

\begin{theorem}\label{damekricci-2}{\rm
Any geodesic of a Damek-Ricci space is rank one.
}\end{theorem}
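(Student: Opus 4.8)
The plan is to recast the rank-one property as the non-existence of a nontrivial parallel normal Jacobi field, and then to obstruct such a field through the explicit left-invariant curvature of $S$. Since the velocity $\dot\gamma$ is always a parallel Jacobi field, $\operatorname{rank}\gamma\ge 1$ automatically, and it remains to show that any parallel Jacobi field $Y$ with $Y\perp\dot\gamma$, normalized by $|Y|=1$, is identically zero. A parallel field is a Jacobi field precisely when $R(Y,\dot\gamma)\dot\gamma=0$ along $\gamma$; as $Y$ and $\dot\gamma$ are both parallel, the plane field $\Pi(t)=\operatorname{span}\{\dot\gamma(t),Y(t)\}$ is then parallel, and its sectional curvature $K(\Pi(t))=\langle R(Y,\dot\gamma)\dot\gamma,Y\rangle$ vanishes for every $t$. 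Thus the theorem is equivalent to the statement that no geodesic of $S$ carries a parallel flat tangent $2$-plane field transverse to $\dot\gamma$.

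First I would transport the problem into the Lie algebra $\mathfrak s=\mathfrak v\oplus\mathfrak z\oplus\mathbb R A$. By homogeneity we may take $\gamma(0)=e$; expressing $\dot\gamma$ and $Y$ in the left-invariant orthonormal frame and applying the Koszul formula, the Levi-Civita connection is read off from the maps $J_Z$ of Heisenberg type, satisfying $J_Z^2=-|Z|^2\operatorname{Id}$, together with $[A,\cdot]=\tfrac12\operatorname{Id}$ on $\mathfrak v$ and $[A,\cdot]=\operatorname{Id}$ on $\mathfrak z$. Writing $\dot\gamma=X(t)+Z(t)+a(t)A$, the geodesic equation becomes the explicit system $\dot X=J_{Z}X+\tfrac a2 X$, $\dot Z=aZ$, $\dot a=-\tfrac12|X|^2-|Z|^2$, while parallel transport of $Y$ obeys a companion linear system in its components. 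The decisive structural output is that $a(t)$ is \emph{strictly monotone} for every geodesic other than the $\mathbb R A$-geodesic, since $\dot a$ has a fixed sign, and hence $a$ vanishes for at most one value of $t$.

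The geometric heart of the argument is a curvature lemma which I would establish by a direct computation using the skew-symmetry and the Clifford relation $J_Z^2=-|Z|^2\operatorname{Id}$: a tangent $2$-plane of $S$ has vanishing sectional curvature only if it is contained in the left translate of $\mathfrak v$, every plane meeting $\mathfrak z\oplus\mathbb R A$ nontrivially being strictly negatively curved. Granting this, flatness of $\Pi(t)$ for all $t$ forces $\dot\gamma(t)\in\mathfrak v$, that is $Z(t)=0$ and $a(t)=0$, for every $t$; but then $a\equiv 0$ gives $0=\dot a=-\tfrac12|X|^2$, whence $X\equiv 0$ and $\dot\gamma\equiv 0$, contradicting unit speed. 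Therefore no nonzero parallel normal Jacobi field exists and $\operatorname{rank}\gamma=1$. The excluded $\mathbb R A$-geodesic is disposed of at once, since there $R(\cdot,A)A$ is negative definite on $A^{\perp}$ with eigenvalues $-\tfrac14$ on $\mathfrak v$ and $-1$ on $\mathfrak z$.

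I expect the curvature lemma to be the main obstacle. The sensitive point is not the planes through $A$ itself, where negative definiteness is immediate, but planes $\operatorname{span}\{U,V\}$ pairing a large $\mathfrak v$-part with a small $\mathfrak z$- or $\mathbb R A$-part: there the pure term $K(\mathfrak v,\mathfrak v)$, which is a nonpositive multiple of $|[\cdot,\cdot]|^2$, may vanish, and one must rule out that the mixed curvature terms cancel the strictly negative $\mathbb R A$-contribution. My strategy is to expand $\langle R(U,V)V,U\rangle$ fully in the frame and use the Clifford relations to control the cross terms, confirming that any nonzero $\mathfrak z\oplus\mathbb R A$ component of the plane yields a strictly negative total. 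Should an isolated cancellation resist this, it is absorbed by the monotonicity of $a$: strict negativity of $K(\Pi(t))$ then holds wherever $a(t)\neq 0$, forcing $Y(t)=0$ off at most one instant, and continuity of the parallel field $Y$ completes the proof.
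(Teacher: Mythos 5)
Your reduction of rank one to the non-existence of a flat parallel plane field along $\gamma$, the left-invariant geodesic system $\dot X=J_ZX+\tfrac a2X$, $\dot Z=aZ$, $\dot a=-\tfrac12\vert X\vert^2-\vert Z\vert^2$, and the resulting strict monotonicity of $a$ along any non-$A$-geodesic are all correct. The genuine flaw is your curvature lemma, which is false in both directions. Planes contained in $\mathfrak v$ are \emph{never} flat: for orthonormal $U_1,U_2\in\mathfrak v$ the sectional curvature is
\[
K(P)=-\tfrac34\vert [U_1,U_2]\vert^2-\tfrac14\;\le\;-\tfrac14 .
\]
Conversely, flat planes need not avoid $\mathfrak z$: by the computation in Lemmas \ref{nonpositivecurvature} and \ref{condflatsection}, for a plane spanned by orthonormal $X_1=aA+b(U_1+Y_1)$, $X_2=U_2+Y_2$ one has
\[
K(P)=-\frac{a^2}{4}-\frac34\bigl\vert aY_2+b[U_1,U_2]\bigr\vert^2-\frac34 b^2\langle Y_1,Y_2\rangle^2-\frac34 b^2T(P),\qquad T(P)\ge 0,
\]
and $K(P)=0$ holds exactly when $a=0$, $[U_1,U_2]=0$, $\langle U_1,U_2\rangle=\langle Y_1,Y_2\rangle=0$, $J_{Y_1}U_1=-J_{Y_2}U_2$ and, crucially, $\vert U_i\vert^2=\tfrac23$, $\vert Y_i\vert^2=\tfrac13$. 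So the flat planes are certain \emph{mixed} $\mathfrak v$--$\mathfrak z$ planes with no $A$-component, not subplanes of $\mathfrak v$; the direct expansion you propose would refute, not confirm, your lemma, and the deduction ``flatness of $\Pi(t)$ forces $Z(t)=0$'' is wrong. (The same phenomenon appears in the paper's other proof: by Proposition \ref{notparallel}, the Jacobi operator $R_{V+Y+sA}$ has a zero eigenvector transverse to the velocity precisely when $\vert V\vert^2=\tfrac23$, $\vert Y\vert^2=\tfrac13$, $s=0$.)

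What saves you is that the only consequence of flatness your argument actually uses is $a(t)=0$, and that part \emph{is} true (it is the $-a^2/4$ term above); with it the contradiction still closes, since $0=\dot a=-\tfrac12\vert X\vert^2-\vert Z\vert^2$ forces $X\equiv Z\equiv 0$ regardless of whether $Z$ was already known to vanish, contradicting unit speed. Your hedged fallback --- strict negativity of $K(\Pi(t))$ wherever $a(t)\neq 0$, combined with strict monotonicity of $a$ and constancy of the norm of a parallel field --- is exactly the right repair, and in substance it is the paper's Appendix I proof (Proposition \ref{rank}): there, flatness at $t=0$ gives $s=0$, and the $A$-component of $\gamma'(t)$, computed explicitly as $(\log h(t))'$ from the geodesic formula, is shown to vanish only at $t=0$. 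Your ODE $\dot a<0$ is a cleaner substitute for that explicit formula. To make the proposal into a proof: discard the lemma as stated, prove instead the correct statement ``flat $\Rightarrow$ the plane lies in $\mathfrak v\oplus\mathfrak z$'' by the full curvature expansion, and promote the fallback paragraph to the main argument.
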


\begin{theorem}\label{damekricci-2a}{\rm
For any Busemann function $b_{\gamma}$ of a Damek-Ricci space the Hessian is positive definite over the orthogonal complement $\nabla b_{\gamma}^{\hspace{1mm}\perp}$ to the gradient field $\nabla b_{\theta}$.
}
\end{theorem}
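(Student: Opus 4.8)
The plan is to argue by contradiction, converting a nontrivial kernel direction of the Hessian into a parallel Jacobi field transverse to the geodesic, which is forbidden by Theorem~\ref{damekricci-2}. First I would record the standard facts valid on any Hadamard manifold: the Busemann function $b_\gamma$ is $C^2$ and convex, its gradient $\nabla b_\gamma$ is a unit field whose integral curves are the geodesics asymptotic to $\gamma$, and along such a geodesic $c(t)$ with $\dot c(t)=-\nabla b_\gamma(c(t))$ one has $\nabla db_\gamma(\dot c,\cdot)=0$ together with $\nabla db_\gamma\ge 0$. Thus $\nabla b_\gamma$ always lies in the kernel, and the assertion is exactly that this kernel is one-dimensional, i.e. that $\nabla db_\gamma$ is positive definite on $\nabla b_\gamma^{\perp}$.

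Second, suppose to the contrary that at some point $p$ there is a unit vector $v\perp\nabla b_\gamma(p)$ with $\nabla db_\gamma(v,v)=0$. Since the Hessian is positive semi-definite, $v$ lies in the kernel of the Hessian operator $S:=\nabla db_\gamma$ at $p$. I would then use the identification of $S$ along $c$ with the stable solution of the Riccati equation
\begin{equation*}
S' + S^2 + R = 0,\qquad R=R(\cdot,\dot c)\dot c,
\end{equation*}
together with the attendant fact that the associated stable Jacobi field $Y$, characterised by $Y(0)=v$ and $Y'=-SY$, is bounded on $[0,\infty)$ and satisfies $Y'(0)=-S(0)v=0$.

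Third comes the crucial step. On a nonpositively curved manifold $h(t):=\|Y(t)\|^2$ is convex, because $h''=2\|Y'\|^2-2\langle R Y,Y\rangle\ge 0$. Since $h'(0)=2\langle Y'(0),Y(0)\rangle=0$, the point $t=0$ is a global minimum, so $h'$ is nondecreasing and nonnegative on $[0,\infty)$; but boundedness forces $h'\equiv 0$ there (a positive slope would drive $h$ to infinity by convexity), whence $h$ is constant on $[0,\infty)$. Then $h''\equiv 0$ on $[0,\infty)$ makes both nonnegative summands vanish, so $Y'\equiv 0$ and $RY\equiv 0$ on $[0,\infty)$: the field $Y$ is parallel. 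Exploiting that a Damek--Ricci space is real-analytic, the analytic function $R\,Y$ vanishing on a half-line vanishes identically, so the parallel extension of $v$ is a genuine parallel Jacobi field on all of $\mathbb R$. A short computation shows it is everywhere orthogonal to $\dot c$: the quantity $\langle Y,\dot c\rangle$ is affine in $t$ and bounded, hence constant, and it equals $\langle v,-\nabla b_\gamma\rangle=0$ at $t=0$. Thus $Y$ is a nonzero parallel Jacobi field independent of $\dot\gamma$, so $c$ has rank at least two, contradicting Theorem~\ref{damekricci-2}.

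I expect the main obstacle to be the rigorous justification of the second step, namely pinning down that the Hessian of the Busemann function is precisely the stable Riccati solution and that the corresponding stable Jacobi field is bounded on $[0,\infty)$ with $Y'=-SY$. Once that dictionary is in place the convexity argument of the third step is elementary, and the contradiction with rank one is immediate. A minor point to handle carefully is the passage from parallelism on $[0,\infty)$ to parallelism on all of $\mathbb R$, for which the real-analyticity afforded by homogeneity is the cleanest tool.
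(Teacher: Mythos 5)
Your proposal is correct, but it takes a genuinely different route from the paper's proof. The paper proves Theorem \ref{damekricci-2a} in Section 4 by direct computation: using the explicit formula for $b_{\gamma}$ on $S$, it computes every component of $\nabla d b_{\gamma}$ at the identity $e_S$, decomposes $\gamma'(0)^{\perp}$ along the Lie-algebraic splitting ${\mathfrak s}_4^0\oplus{\mathfrak p}\oplus{\mathfrak q}_0\oplus\cdots\oplus{\mathfrak q}_{\ell}$, and verifies positive definiteness block by block via eigenvalue and Hermitian-determinant computations, with left invariance reducing everything to $e_S$. You instead deduce the theorem from Theorem \ref{damekricci-2} by converting a kernel vector of the Hessian into a perpendicular parallel Jacobi field; this is legitimate and non-circular, since Theorem \ref{damekricci-2} is proved in Section 3 (and Appendix I) by arguments that never use the Hessian. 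Your route is close in spirit to the paper's own Part II (Theorem \ref{positivehessian} and its corollary, which the authors themselves note gives an alternative derivation of Theorem \ref{damekricci-2a}), but your key step --- a bounded stable Jacobi field $Y$ with $Y'(0)=0$ on a nonpositively curved manifold must satisfy $Y'\equiv 0$ and $R(\cdot,\dot c)\dot c\,Y\equiv 0$ on $[0,\infty)$, by convexity and boundedness of $|Y|^2$ --- is substantially more elementary than the paper's proof of the corresponding implication (Proposition \ref{proposition-2}, a double induction on derivatives of $\langle{\mathcal S}u,u\rangle$ and $\langle Ru,u\rangle$ via the Riccati equation and Maclaurin expansions). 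The dictionary you flag as the main obstacle (the Hessian operator equals $-B'B^{-1}$ for the stable, bounded Jacobi tensor $B$) is exactly what the paper records in Section 2 with references, so it is available; note also that you only need analyticity of the metric --- automatic for a left-invariant Einstein metric --- and not analyticity of the Busemann function, to propagate $Y'\equiv 0$ from the half-line to all of $\mathbb{R}$, which is a mild advantage over the Part II route. What your argument gives up relative to the computational proof is the explicit spectral information (eigenvalues $\tfrac12$ and $1$ with their eigenspaces, and the comparison with the Jacobi operator), which the paper exploits later; the uniform lower bound of Theorem \ref{positivedefiniteness}, however, would still follow from your pointwise positivity by the same homogeneity-plus-compactness argument the paper uses.
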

These theorems are apparently independent. However, the rank of a geodesic and the eigenspace of the Hessian of the Busemann function corresponding to eigenvalue zero are essentially related, as illustrated in Part II. They are obtained by using arguments of the Lie algebra structure of a Damek-Ricci space.

We will give two proofs to Theorem \ref{damekricci-2}, one  by an argument based on a subtle relation on eigenvalues of the Jacobi operator together with an explicit formula of a geodesic on a Damek-Ricci space $S$ at \ref{kimproof}, in Section 3, Part I and the other one by using flat plane section field along a geodesic in Appendix 1, Part I.
Theorem \ref{damekricci-2a} is shown by getting formulae of the Hessian $\nabla d b_{\gamma}$ of the Busemann function at the identity $e_S$ and
computing the components of  $\nabla d b_{\gamma}$ by the aid of the Lie algebra structure of ${\mathfrak s}$.
Theorem \ref{damekricci-2a} will be given in Section 4, Part I. Refer to \cite{Satoh} for another proof. Moreover we have the following.
\begin{theorem}\label{positivedefiniteness}
\rm Let $S$ be a Damek-Ricci space. Then  
there exists a positive constant $C_0$ such that for any $\theta\in \partial S$ and $p\in S$
\begin{eqnarray*}\nabla d b_{\theta}(u,u) \geq C_0 \vert u\vert^2,\hspace{2mm}\forall u\in T_p {\mathcal H}_{(\theta,p)}.
\end{eqnarray*}
\end{theorem}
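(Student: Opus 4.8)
The plan is to upgrade the pointwise positive definiteness of Theorem~\ref{damekricci-2a} to a \emph{uniform} lower bound by exploiting the homogeneity of $S$ twice: first to fix the base point, then to fix the point at infinity. The payoff is that the eigenvalues of $\nabla d b_\theta$ restricted to the horosphere turn out to be independent of both $p$ and $\theta$, so $C_0$ may be taken to be their (positive) minimum.

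First I would reduce to the identity. Since the metric on $S$ is left invariant, every left translation $L_g$ is an isometry; isometries carry Busemann functions to Busemann functions and horospheres to horospheres, so for $g\in S$ and $\theta\in\partial S$ one has $b_{g\theta}\circ L_g=b_\theta+\mathrm{const}$, whence
\[
\nabla d b_{g\theta}\big|_{gp}\big((L_g)_*u,(L_g)_*v\big)=\nabla d b_\theta\big|_p(u,v),\qquad |(L_g)_*u|=|u|,
\]
and $(L_g)_*$ maps $T_p\mathcal H_{(\theta,p)}$ onto $T_{gp}\mathcal H_{(g\theta,gp)}$. Taking $g=p^{-1}$ shows the estimate at $(\theta,p)$ is equivalent, with the same constant, to the estimate at $(p^{-1}\theta,e_S)$. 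Hence it suffices to bound $\nabla d b_\theta|_{e_S}$ from below on $T_{e_S}\mathcal H_{(\theta,e_S)}=\nabla b_\theta(e_S)^{\perp}$, uniformly over all $\theta\in\partial S$.

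Next I would remove the dependence on $\theta$ using the action of $N$ on the ideal boundary. Writing $S=N\rtimes A$ with $\mathfrak s=\mathfrak v\oplus\mathfrak z\oplus\mathfrak a$, the boundary is $\partial S=N\cup\{\theta_0\}$, where $\theta_0$ is the fixed point of the $A$-flow; $N$ fixes $\theta_0$ and acts simply transitively on $\partial S\setminus\{\theta_0\}$ (see \cite{BTV}). The Busemann function $b_{\theta_0}$ depends only on the $A$-coordinate, and left translation by $n\in N$ preserves that coordinate, so $b_{\theta_0}\circ L_n=b_{\theta_0}$; thus $\nabla d b_{\theta_0}$ is $N$-invariant and its eigenvalues are the same at every point of $N$. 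For an arbitrary $\theta=n\theta_0$, the identity of the first paragraph applied to $L_{n^{-1}}$ identifies the eigenvalues of $\nabla d b_\theta|_{e_S}$ with those of $\nabla d b_{\theta_0}|_{n^{-1}}$, which by $N$-invariance coincide with those of $\nabla d b_{\theta_0}|_{e_S}$. So the eigenvalues of $\nabla d b_\theta|_{e_S}$ do not depend on $\theta$. A direct computation with the Koszul formula and the $H$-type brackets $[\,H,X\,]=\tfrac12X$ ($X\in\mathfrak v$), $[\,H,Z\,]=Z$ ($Z\in\mathfrak z$), with $H$ spanning $\mathfrak a$, gives $\nabla d b_{\theta_0}|_{e_S}=\tfrac12\,\mathrm{id}$ on $\mathfrak v$ and $=\mathrm{id}$ on $\mathfrak z$, the single zero eigenvalue lying in the radial direction $\mathfrak a$. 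Restricted to the horosphere the eigenvalues are therefore $\{\tfrac12,1\}$, and I would take $C_0=\tfrac12$ (up to the normalization of the metric).

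The argument is essentially soft, so the points demanding care are structural rather than analytic, and this is where I expect the main obstacle: justifying the identification $\partial S=N\cup\{\theta_0\}$ with the simple transitivity of the $N$-action and the $N$-invariance of $b_{\theta_0}$, and carrying out the eigenvalue computation of $\nabla d b_{\theta_0}$ at $e_S$ cleanly. One may in fact bypass the explicit computation: once the two reductions show the eigenvalues of $\nabla d b_\theta|_{e_S}$ to be independent of $\theta$ (and of $p$), their positivity is already supplied by Theorem~\ref{damekricci-2a}, and $C_0$ is their minimum. Finally, a purely analytic route---better suited to the generalizations of Part~II---is to note that $\partial S$ is compact and that $\theta\mapsto\nabla d b_\theta|_{e_S}$ is continuous, being the stable solution of the Riccati equation $U'+U^2+R_{\gamma_\theta}=0$ along the ray $\gamma_\theta$ to $\theta$ and depending continuously on $\theta$ since $S$ has bounded curvature; then $F(\theta,u)=\nabla d b_\theta|_{e_S}(u,u)$ is, by Theorem~\ref{damekricci-2a}, a positive continuous function on the compact set of $(\theta,u)$ with $|u|=1$ and $u\perp\nabla b_\theta(e_S)$, hence attains a positive minimum $C_0$.
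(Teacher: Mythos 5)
Your first reduction (to the point $e_S$ via left translations) is exactly what the paper does (Proposition \ref{equivariance}), but your second reduction contains a genuine error. The point $\theta_0$ fixed by the $A$-flow is fixed by \emph{all} of $S=NA$: $N$ fixes it and so does the $A$-flow, so the orbits of left translations on $\partial S$ are $\{\theta_0\}$ and $\partial S\setminus\{\theta_0\}$. Consequently the expression ``$\theta=n\theta_0$'' never produces a point other than $\theta_0$, and no left translation can relate $b_\theta$, $\theta\neq\theta_0$, to the $N$-invariant function $b_{\theta_0}$. If instead you parametrize $\partial S\setminus\{\theta_0\}$ as the $N$-orbit of the downward point $\theta_-=[\gamma_{-A}]$, the first reduction only identifies the eigenvalues of $\nabla d b_\theta\vert_{e_S}$, $\theta=n\theta_-$, with those of $\nabla d b_{\theta_-}\vert_{n^{-1}}$; since $N$ does not fix $\theta_-$, the function $b_{\theta_-}$ is \emph{not} $N$-invariant and no independence of the base point (equivalently, of $\theta$) follows. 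The two facts you combine live on disjoint orbits. Worse, the conclusion you aim at is false for non-symmetric Damek--Ricci spaces: the eigenvalues of $\nabla d b_\theta\vert_{e_S}$ genuinely depend on $\theta$ (this is precisely the failure of two-point homogeneity). Concretely, in the paper's proof of positive definiteness over ${\mathfrak q}_0$ (case $\mu_0=0$, $L_0\neq\{0\}$), taking $s=0$, $\vert V\vert^2=\tfrac{2}{3}$, $\vert Y\vert^2=\tfrac{1}{3}$, the $3\times3$ matrix $M$ of $\nabla d b_\gamma$ on ${\mathfrak f}_a={\rm Span}\{W_a,W_a',Z_a\}$ has trace $2$ but $\det(M-I)=\det\bigl(M-\tfrac{1}{2}I\bigr)=-\tfrac{1}{72}\neq 0$, so neither $1$ nor $\tfrac{1}{2}$ is an eigenvalue; the spectrum $\{\tfrac{1}{2},1\}$ occurs only on the subspace ${\mathfrak f}$ of Theorem \ref{eigenproblemcommute} and at the exceptional directions $s=\pm1$.

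Your final ``purely analytic route'' is, however, correct, and it is in substance the paper's own proof (Proposition \ref{uniformbound}): after reducing to $e_S$ by left invariance, the paper forms the compact set $\Theta$ of orthonormal pairs $(V+Y+sA,\,V_1+Y_1+s_1A)$ in ${\mathfrak s}\times{\mathfrak s}$, notes that $\Psi(V+Y+sA,V_1+Y_1+s_1A)=(\nabla d b_\gamma)_{e_S}(V_1+Y_1+s_1A,V_1+Y_1+s_1A)$ is continuous and, by Theorem \ref{hessianatidentity} (i.e.\ Theorem \ref{damekricci-2a}), positive, and takes $C_0$ to be its minimum. The only difference is the justification of continuity: the paper can read it off the closed formulas for the Hessian at $e_S$ (Lemma \ref{hessiancomponents-x}), which depend continuously on the boundary data $(v,y)$, whereas you invoke continuous dependence of the stable Riccati solution on $\theta$; either is acceptable. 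So your proposal is salvageable, but only by discarding the eigenvalue-rigidity argument you lead with and promoting the compactness-plus-continuity argument to the main proof.
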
 



 In {\sc Part II}  the following are investigated with respect to  Hadamard manifolds including Damek-Ricci spaces.
\begin{theorem}\label{positivehessian}\hspace{2mm}{\rm
Let $(M,g)$ be an analytic Hadamard manifold.

Let $\theta\in\partial M$ and $\gamma : {\Bbb R} \rightarrow M$ be a geodesic, parametrized by arc-length and with $[\gamma]=\theta$. Let $b_{\theta}$ be the Busemann function associated with $\theta$.  Assume $b_{\theta}$ is an analytic function. Then,
\begin{eqnarray}
{\rm rank}\, \gamma = \dim ({\Bbb E}_{b_{\theta}})_{\gamma(t)} + 1
\end{eqnarray}for any $t$. Here, $({\Bbb E}_{b_{\theta}})_{\gamma(t)} := \{ u\in (\nabla b_{\theta}^{\perp})_{\gamma(t)}\, \vert\, (\nabla d b_{\theta})_{\gamma(t)}(u,u) = 0 \}$ denotes the eigenspace of $\nabla d b_{\theta}$ over $\nabla b_{\theta}^{\hspace{1mm}\perp}$ corresponding to eigenvalue zero.}
\end{theorem}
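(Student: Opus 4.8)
The plan is to exhibit the zero–eigenspace $(\mathbb{E}_{b_\theta})_{\gamma(t)}$ as the fibre of a parallel subbundle whose parallel sections are exactly the perpendicular parallel Jacobi fields; since $\gamma'$ is itself a parallel Jacobi field, this accounts for the term $+1$. Write $U(t)$ for the Hessian $\nabla d b_\theta$ regarded as a symmetric operator along $\gamma$, $U(t)X=\nabla_X\nabla b_\theta$. Because $|\nabla b_\theta|\equiv 1$ we have $U\gamma'=0$ and $U$ preserves $(\gamma')^{\perp}=(\nabla b_\theta^{\perp})_{\gamma(t)}$, and by convexity $U\ge 0$; thus $(\mathbb{E}_{b_\theta})_{\gamma(t)}=\ker\!\big(U(t)|_{(\gamma')^{\perp}}\big)$. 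Along $\gamma$ (oriented towards $\theta$, so $\nabla b_\theta=-\gamma'$) the operator obeys the Riccati equation $\dot U=U^2+\mathcal R$, where the dot denotes covariant differentiation $\nabla_{\gamma'}$ and $\mathcal R(X)=R(X,\gamma')\gamma'$ satisfies $\langle\mathcal R X,X\rangle\le 0$ by non-positivity of the curvature. Since a parallel Jacobi field splits as a multiple of $\gamma'$ plus a perpendicular parallel Jacobi field, it suffices to prove $\dim\{\text{perpendicular parallel Jacobi fields}\}=\dim(\mathbb{E}_{b_\theta})_{\gamma(t)}=:d$, with $d$ independent of $t$.

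As $(M,g)$ and $b_\theta$ are analytic, $\gamma$, the curvature along it, a parallel orthonormal frame, and hence $U(t)|_{(\gamma')^{\perp}}$ are real-analytic in $t$. First I would apply Rellich's theorem to obtain real-analytic eigenvalues $\lambda_i(t)\ge 0$ and real-analytic orthonormal eigenvector fields $E_i(t)\in(\gamma')^{\perp}$. Differentiating $\lambda_i=\langle U E_i,E_i\rangle$, using $|E_i|\equiv 1$ (so $\langle\dot E_i,E_i\rangle=0$) together with the Riccati equation, yields the scalar equation $\dot\lambda_i=\lambda_i^2+\tilde g_i$ with $\tilde g_i=\langle\mathcal R E_i,E_i\rangle\le 0$.

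The decisive step, and the one genuinely requiring analyticity, is to show that an eigenvalue vanishing at one parameter vanishes identically. Suppose $\lambda_i(t_0)=0$ but $\lambda_i\not\equiv 0$. Analyticity gives $\lambda_i(t)=c(t-t_0)^m+\cdots$ with $c>0$, and $\lambda_i\ge 0$ forces $m$ even, hence $m\ge 2$. Then $\dot\lambda_i$ has a zero of odd order $m-1$ at $t_0$ while $\lambda_i^2$ has order $2m>m-1$, so $\tilde g_i=\dot\lambda_i-\lambda_i^2$ has a zero of odd order $m-1$ and therefore changes sign at $t_0$, contradicting $\tilde g_i\le 0$. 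Hence $\lambda_i\equiv 0$, so no non-trivial branch ever vanishes; consequently $d=\#\{i:\lambda_i\equiv 0\}$ is constant in $t$, equals $\dim(\mathbb{E}_{b_\theta})_{\gamma(t)}$, and the corresponding eigenvectors span a real-analytic subbundle $\mathcal K$ of rank $d$ with $\mathcal K(t)=\ker\!\big(U(t)|_{(\gamma')^{\perp}}\big)$.

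It remains to identify $\mathcal K$ with the perpendicular parallel Jacobi fields. For a branch $\lambda_i\equiv 0$ the scalar equation gives $\tilde g_i\equiv 0$, and since $-\mathcal R\ge 0$ this forces $\mathcal R E_i\equiv 0$; differentiating $U E_i\equiv 0$ and invoking the Riccati equation then gives $U\dot E_i=-\mathcal R E_i=0$, so $\dot E_i\in\mathcal K$. Thus $\mathcal K$ is parallel and $\mathcal R$ vanishes on it, whence every parallel section $E$ of $\mathcal K$ satisfies $\ddot E+\mathcal R E=0$ and is a perpendicular parallel Jacobi field; parallel transport of a basis of $\mathcal K(t_0)$ produces $d$ independent such fields. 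Conversely a perpendicular parallel Jacobi field $P$ has constant norm, hence is a stable Jacobi field (bounded as $t\to+\infty$), and stable Jacobi fields satisfy $\dot J=-UJ$; since $\dot P=0$ this gives $UP\equiv 0$, i.e. $P$ is a parallel section of $\mathcal K$. Therefore $\dim\{\text{perpendicular parallel Jacobi fields}\}=d$ and $\mathrm{rank}\,\gamma=d+1=\dim(\mathbb{E}_{b_\theta})_{\gamma(t)}+1$ for every $t$. I expect the most delicate points to be the sign bookkeeping in the Riccati equation — ensuring that $\lambda_i\ge 0$ is paired with $\tilde g_i\le 0$ — and the rigorous justification of the analytic eigendecomposition; the parity-of-the-zero argument is the conceptual core.
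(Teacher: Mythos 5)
Your proposal is correct, but it reaches the hard inequality by a genuinely different route than the paper. The paper proves ${\rm rank}\,\gamma - 1 \geq \dim({\Bbb E}_{b_\theta})_{\gamma(t)}$ via a double induction (its Proposition \ref{proposition-2}, proved in Section 7): starting from a single vector $u$ with ${\mathcal S}(0)u=0$, it repeatedly differentiates the operator Riccati equation ${\mathcal S}'+{\mathcal S}^2+R=0$ at $t=0$, using the negative semi-definiteness of ${\mathcal S}(t)$ and $R(t)$ together with Maclaurin expansions (parity of the first non-vanishing derivative) and discriminant arguments for the quadratic forms $\langle {\mathcal S}(t)(u+xv),u+xv\rangle$ and $\langle R(t)(u+xv),u+xv\rangle$ to kill the cross terms $\langle {\mathcal S}u,v\rangle$, $\langle Ru,v\rangle$ order by order; analyticity then yields ${\mathcal S}(t)u(t)\equiv 0$ and $R(t)u(t)\equiv 0$. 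You instead invoke Rellich's theorem to diagonalize the analytic family $U(t)=-{\mathcal S}(t)$ with analytic eigenvalue branches and eigenvectors, which collapses the operator Riccati equation to the scalar equations $\dot\lambda_i=\lambda_i^2+\tilde g_i$, $\tilde g_i\le 0$, $\lambda_i\ge 0$, and a single parity-of-zero argument then shows that a branch vanishing once vanishes identically. The underlying mechanism (analyticity plus sign constraints plus parity) is the same, but your scalar reduction eliminates the double induction and the discriminant tricks entirely, and it buys extra structure the paper does not state: the zero eigenspace is a parallel subbundle along $\gamma$ on which the Jacobi operator vanishes identically, and the nonzero eigenvalue branches never touch zero. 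The trade-off is the appeal to analytic perturbation theory, which the paper's argument avoids, keeping everything self-contained at the level of Taylor expansions and elementary quadratic-form estimates. Your converse direction (perpendicular parallel Jacobi field $\Rightarrow$ kernel of the Hessian, via boundedness, stability, and $\dot J={\mathcal S}J$) coincides with the paper's proof of its inequality (\ref{inequality-1}); note that both you and the paper use, at the same level of rigor, the standard Hadamard-manifold fact that a Jacobi field bounded on $[0,\infty)$ is the stable Jacobi field determined by its initial value.
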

\vspace{2mm}
Consequently, we get the following.
\begin{corollary}\rm Let $(M,g)$ be an analytic Hadamard manifold. Assume each Busemann function $b_{\theta}$ is analytic in $M$.  Let $\gamma$ be a geodesic on $M$. Then, $\gamma$ has rank one if and only if  $\nabla d b_{\theta}$, $\theta=[\gamma]$ 
is positive definite
over $b_{\theta}^{\hspace{1mm}\perp}$.
\end{corollary}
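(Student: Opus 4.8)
The plan is to obtain the Corollary as a direct logical consequence of Theorem \ref{positivehessian} together with the classical fact, already recalled in the introduction, that the Hessian of a Busemann function on a Hadamard manifold is positive semi-definite. First I would record the two structural observations that make the statement transparent. Along $\gamma$ the gradient $\nabla b_{\theta}$ coincides, up to sign, with the velocity field $\dot\gamma$, which is a parallel Jacobi field; this already forces ${\rm rank}\,\gamma \geq 1$ and accounts for the additive term $+1$ in Theorem \ref{positivehessian}. Moreover, differentiating $\vert \nabla b_{\theta}\vert \equiv 1$ shows that $\nabla b_{\theta}$ lies in the kernel of $\nabla d b_{\theta}$, so all information about (non)degeneracy is carried by the restriction to $\nabla b_{\theta}^{\hspace{1mm}\perp}$, exactly the subspace over which positive definiteness is asserted.

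The central step is purely linear-algebraic. Since $\nabla d b_{\theta}$ is positive semi-definite, its restriction to $(\nabla b_{\theta}^{\hspace{1mm}\perp})_{\gamma(t)}$ is positive definite if and only if its kernel on that subspace is trivial. But that kernel is by definition the zero-eigenspace $({\Bbb E}_{b_{\theta}})_{\gamma(t)}$. Hence positive definiteness of $\nabla d b_{\theta}$ over $\nabla b_{\theta}^{\hspace{1mm}\perp}$ at the point $\gamma(t)$ is equivalent to $\dim ({\Bbb E}_{b_{\theta}})_{\gamma(t)} = 0$.

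Now I would invoke Theorem \ref{positivehessian}, which gives ${\rm rank}\,\gamma = \dim ({\Bbb E}_{b_{\theta}})_{\gamma(t)} + 1$ for any $t$. Combining this identity with the equivalence of the previous paragraph yields the chain ${\rm rank}\,\gamma = 1 \iff \dim ({\Bbb E}_{b_{\theta}})_{\gamma(t)} = 0 \iff \nabla d b_{\theta} \text{ is positive definite over } \nabla b_{\theta}^{\hspace{1mm}\perp}$, which is precisely the asserted biconditional. The analyticity hypotheses are used only insofar as they are needed to apply Theorem \ref{positivehessian} to $\theta = [\gamma]$.

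The one subtlety I would be careful about — and essentially the only point requiring comment, since no hard obstacle arises — is the meaning of ``positive definite over $b_{\theta}^{\hspace{1mm}\perp}$'' in the statement, which does not single out a base point. Because ${\rm rank}\,\gamma$ is, by definition, the dimension of the space of parallel Jacobi fields along the entire geodesic and is therefore independent of $t$, Theorem \ref{positivehessian} forces $\dim ({\Bbb E}_{b_{\theta}})_{\gamma(t)}$ to be constant in $t$. Consequently positive definiteness at one point $\gamma(t_0)$ is equivalent to positive definiteness at every point of $\gamma$, so the $t$-pointwise equivalence transcribes unambiguously into the $t$-uniform statement of the Corollary.
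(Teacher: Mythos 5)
Your proposal is correct and follows exactly the route the paper intends: the Corollary is stated as an immediate consequence of Theorem \ref{positivehessian}, obtained by combining the identity ${\rm rank}\,\gamma = \dim ({\Bbb E}_{b_{\theta}})_{\gamma(t)} + 1$ with the elementary fact that a positive semi-definite form is positive definite precisely when its zero-eigenspace vanishes. Your added remark on the $t$-independence of $\dim ({\Bbb E}_{b_{\theta}})_{\gamma(t)}$ is a sensible clarification but does not alter the argument.
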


Theorem \ref{positivehessian} is obtained by exploiting the Riccati equation together with a double induction argument. 
The Riccati equation,
given at (\ref{riccati-equation}), in \S 2 describes a time-variation of the shape operator ${\mathcal S}(t)$ 
of horospheres 
along a geodesic $\gamma$ of $[\gamma]=\theta\in\partial M$.

\vspace{2mm}
Now let $(M,g)$ be a Hadamard manifold which is harmonic.
Then $(M,g)$ is Einstein and hence analytic.   Every Busemann function on such $M$ is analytic (cf. \cite{RShah} for this). It is well known that a
Damek-Ricci space is harmonic. It is expected that every geodesic is
rank one on such $(M,g)$ as in \cite{Kn} where G.  Knieper conjectured
for a harmonic non-compact manifold  and obtained the following by
assuming that $(M,g)$ is of purely exponential volume growth (see
\cite{Kn, Kn-2}).

\begin{proposition}\label{prop1.4}\rm
 Let $(M,g)$ be a non-compact harmonic manifold of volume entropy $Q > 0$. If $(M,g)$ is of purely exponential volume growth. Then each geodesic of $(M,g)$ is rank one.

\end{proposition}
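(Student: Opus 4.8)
The plan is to argue by contraposition: assuming that some geodesic $\gamma$ has rank $k\geq 2$, I would manufacture a polynomial factor in the radial volume density that is incompatible with purely exponential volume growth. The starting point is the standard fact that, on a harmonic manifold, horospheres have constant mean curvature equal to $Q$; equivalently, the shape operator $\mathcal{S}(t)$ of the horospheres centered at $\theta=[\gamma]$, which solves the Riccati equation \eqref{riccati-equation}, is positive semi-definite with $\operatorname{tr}\mathcal{S}(t)\equiv Q$. Since harmonic manifolds are analytic with analytic Busemann functions, Theorem~\ref{positivehessian} applies and identifies $\operatorname{rank}\gamma$ with $\dim(\mathbb{E}_{b_\theta})+1$. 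Thus $\operatorname{rank}\gamma=k$ means that the flat bundle $\mathcal{F}(t):=\ker\mathcal{S}(t)=(\mathbb{E}_{b_\theta})_{\gamma(t)}\subset\dot\gamma(t)^{\perp}$, spanned by the parallel Jacobi fields orthogonal to $\dot\gamma$, has dimension $k-1\geq 1$.

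The first key step is to show that $\mathcal{F}$ is a parallel subbundle along $\gamma$ and that its orthogonal complement $\mathcal{W}\subset\dot\gamma^{\perp}$ is invariant under the curvature operator $u\mapsto R(\dot\gamma,u)\dot\gamma$. Parallelism of $\mathcal{F}$ is immediate from its description by parallel Jacobi fields, while invariance of the splitting $\dot\gamma^{\perp}=\mathcal{F}\oplus\mathcal{W}$ follows from the symmetry $\langle R(\dot\gamma,w)\dot\gamma,E\rangle=\langle R(\dot\gamma,E)\dot\gamma,w\rangle=0$ for $E\in\mathcal{F}$, $w\in\mathcal{W}$, using $R(\dot\gamma,E)\dot\gamma=0$. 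Consequently the Jacobi tensor $\mathcal{A}(r)$ based at $\gamma(0)$ (with $\mathcal{A}(0)=0$, $\mathcal{A}'(0)=\mathrm{Id}$) block-diagonalizes, and the density $f(r)=\det\mathcal{A}(r)$, which depends only on $r$ by harmonicity, factorizes as $f(r)=f_{\mathcal{F}}(r)\,f_{\mathcal{W}}(r)$.

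I would then evaluate the two factors. On $\mathcal{F}$ the Jacobi equation reduces to $Y''=0$, so the Jacobi fields vanishing at $\gamma(0)$ are linear, $Y(r)=rE(r)$, giving $f_{\mathcal{F}}(r)=r^{\,k-1}$. On $\mathcal{W}$ the sphere shape operator converges, exponentially fast, to $\mathcal{S}|_{\mathcal{W}}$, which is positive definite with $\operatorname{tr}(\mathcal{S}|_{\mathcal{W}})=Q$ (because $\operatorname{tr}\mathcal{S}=Q$ and $\mathcal{S}|_{\mathcal{F}}=0$); hence $(\log f_{\mathcal{W}})'(r)=Q+O(e^{-cr})$ and $f_{\mathcal{W}}(r)\asymp e^{Qr}$. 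Combining the two factors gives $f(r)\asymp r^{\,k-1}e^{Qr}$. Since $k\geq 2$ forces $r^{\,k-1}\to\infty$, this contradicts the upper bound $f(r)\leq C_2\,e^{Qr}$ built into purely exponential volume growth, and therefore $k=1$.

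The step I expect to be the main obstacle is the exponential convergence of the sphere shape operator to $\mathcal{S}|_{\mathcal{W}}$ on the non-flat bundle, together with the attendant decoupling of the Riccati dynamics across the splitting $\mathcal{F}\oplus\mathcal{W}$. This is exactly where harmonicity is indispensable: one needs the constancy $\operatorname{tr}\mathcal{S}\equiv Q$ to pin the exponential rate on $\mathcal{W}$ to the positive eigenvalues of $\mathcal{S}|_{\mathcal{W}}$, and to ensure that the off-diagonal Riccati terms do not spoil the factorization $f=f_{\mathcal{F}}\,f_{\mathcal{W}}$. Carrying this out rigorously amounts to a careful comparison argument for the matrix Riccati equation, controlling $\mathcal{A}(r)$ on $\mathcal{W}$ uniformly in $r$; once this estimate is in place the contradiction with the two-sided exponential bounds is immediate.
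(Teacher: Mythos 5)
The first thing to note is that the paper itself gives no proof of Proposition \ref{prop1.4}: it is quoted as Knieper's theorem from \cite{Kn, Kn-2}, so your proposal has to stand entirely on its own. Your overall mechanism --- assume $\operatorname{rank}\gamma=k\ge 2$, split $\dot\gamma^{\perp}$ into the parallel flat bundle $\mathcal{F}$ and its complement $\mathcal{W}$, factor the density $\Theta(r)=\det\mathcal{A}(r)=f_{\mathcal{F}}(r)\,f_{\mathcal{W}}(r)$ with $f_{\mathcal{F}}(r)=r^{k-1}$, and contradict $\Theta(r)\le b\,e^{Qr}$ --- is indeed the mechanism behind Knieper's proof, and the algebraic steps (invariance of the splitting under the Jacobi operator, block-diagonalization of $\mathcal{A}$, $f_{\mathcal{F}}(r)=r^{k-1}$) are correct. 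But there are two genuine gaps. (i) Scope: Proposition \ref{prop1.4} concerns an arbitrary non-compact harmonic manifold, which has no conjugate points but is \emph{not} assumed to be a Hadamard manifold. You invoke Theorem \ref{positivehessian}, which the paper proves only for analytic Hadamard manifolds (its inductive proof uses the negative semi-definiteness of $R(t)$ and $\mathcal{S}(t)$ at every step), and your comparison of sphere and horosphere shape operators likewise requires non-positive curvature or at least no focal points. So your argument at best covers harmonic Hadamard manifolds, not the stated class. (ii) The central estimate $f_{\mathcal{W}}(r)\asymp e^{Qr}$ rests on an asserted \emph{exponential} convergence of the sphere shape operator to $\mathcal{S}|_{\mathcal{W}}$, which you yourself flag as the main obstacle and do not prove. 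Pointwise positive definiteness of $\mathcal{S}|_{\mathcal{W}}$ at each $\gamma(r)$ gives no rate: exponential Riccati convergence needs a uniform spectral gap along the whole geodesic, i.e. uniform hyperbolicity, which is exactly the hard content of Knieper's equivalences and cannot simply be invoked.

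The good news is that your scheme needs no rate at all, only a one-sided bound, and in the Hadamard (or no-focal-points) case this is immediate from Riccati monotonicity. Write $U(r)=\mathcal{A}'(r)\mathcal{A}(r)^{-1}$ for the sphere shape operator and $U^{u}(r)$ for that of the horosphere through $\gamma(r)$ centered at $[\gamma^{-}]$ (note the spheres $\Sigma(\gamma(0);r)$ converge to horospheres centered at $[\gamma^{-}]$, not at $\theta=[\gamma]$; your identification is off, though harmlessly, since rank is symmetric under reversing $\gamma$). Then $U(r)\ge U^{u}(r)$, both are block-diagonal for $\mathcal{F}\oplus\mathcal{W}$, $U^{u}|_{\mathcal{F}}=0$, and $\operatorname{tr}U^{u}\equiv Q$ by harmonicity, so
\begin{eqnarray*}
\frac{d}{dr}\log\Theta(r) \;=\; \frac{k-1}{r} + \operatorname{tr}\bigl(U(r)|_{\mathcal{W}}\bigr) \;\ge\; \frac{k-1}{r} + Q,
\end{eqnarray*}
and integrating gives $\Theta(r)\ge c\,r^{k-1}e^{Qr}$, the desired contradiction, with no convergence estimate whatsoever. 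To get the proposition in the generality in which it is stated (no curvature sign), one must instead argue as Knieper does in \cite{Kn}: use the factorization $\mathcal{A}(t)=A^{u}(t)\int_{0}^{t}\bigl((A^{u})^{*}A^{u}\bigr)^{-1}(s)\,ds$, note $\det A^{u}(t)=e^{Qt}$ from the constancy of the horosphere mean curvature, and show by Cauchy--Schwarz that a parallel perpendicular Jacobi field forces $\det\int_{0}^{t}\bigl((A^{u})^{*}A^{u}\bigr)^{-1}(s)\,ds$ to grow at least linearly in $t$.
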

Here, a non-compact, harmonic manifold of volume entropy $Q > 0$ is said to be of purely exponential volume growth if there exist constants $0 < a \leq b$ such that
$\displaystyle{ a\hspace{0.5mm}e^{Q\hspace{0.5mm}r} \leq \Theta(r) \leq b\hspace{0.5mm} e^{Q\hspace{0.5mm}r}
}$ 
 for all $r \geq 1$, with respect to the volume density function $\Theta(r)$ of a geodesic sphere of radius $r > 0$ (see \cite{Kn} for the details).

Since a Hadamard manifold is non-compact, we have from Theorem \ref{positivehessian} and
Proposition \ref{prop1.4}.
\begin{proposition}\label{prop1.5}\rm
Let $(M,g)$ be a harmonic Hadamard manifold of purely exponential volume growth. Then the Hessian $\nabla d b_{\theta}$ of  each Busemann function $b_{\theta}$ is positive definite over $\nabla b_{\theta}^{\perp}$.

\end{proposition}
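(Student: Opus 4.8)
The plan is to combine Proposition \ref{prop1.4} and Theorem \ref{positivehessian}, once their hypotheses are verified for $(M,g)$. First I would record the structural prerequisites. Since $(M,g)$ is harmonic it is Einstein and therefore analytic, and by \cite{RShah} each Busemann function $b_{\theta}$ is analytic; moreover, being a Hadamard manifold, $M$ is diffeomorphic to Euclidean space and hence non-compact. Consequently $(M,g)$ meets the analyticity hypotheses of Theorem \ref{positivehessian}, and as a non-compact harmonic manifold of volume entropy $Q>0$ and of purely exponential volume growth it meets the hypotheses of Proposition \ref{prop1.4}.

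Next, Proposition \ref{prop1.4} yields that every geodesic of $(M,g)$ is rank one. Fix an arbitrary point $p\in M$ and an ideal point $\theta\in\partial M$, and let $\gamma$ be the geodesic with $\gamma(0)=p$ whose forward ray represents $\theta$, so that $[\gamma]=\theta$ and ${\rm rank}\,\gamma=1$. Feeding this into the identity of Theorem \ref{positivehessian},
\[
{\rm rank}\,\gamma = \dim ({\Bbb E}_{b_{\theta}})_{\gamma(t)} + 1,
\]
and evaluating at $t=0$ forces $\dim ({\Bbb E}_{b_{\theta}})_{p}=0$; that is, the zero eigenspace of $\nabla d b_{\theta}$ inside $(\nabla b_{\theta}^{\perp})_p$ is trivial.

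It then remains to upgrade the absence of a zero eigenvalue to positive definiteness. On a Hadamard manifold every Busemann function is $C^2$ and convex, so $\nabla d b_{\theta}$ is positive semi-definite; restricted to $(\nabla b_{\theta}^{\perp})_p$ it is positive semi-definite with trivial null space, hence positive definite. Since $p$ and $\theta$ were arbitrary, $\nabla d b_{\theta}$ is positive definite over $\nabla b_{\theta}^{\perp}$ throughout $M$, which is the assertion. The step carrying the actual content is the passage through Theorem \ref{positivehessian}, translating the rank-one property furnished by Proposition \ref{prop1.4} into the vanishing of the zero eigenspace of the Hessian; the remaining points --- analyticity, non-compactness, and semi-definiteness from convexity --- are routine, the only care needed being that the identity of Theorem \ref{positivehessian} holds at every parameter value, so that positivity transfers to every base point $p$.
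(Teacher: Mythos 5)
Your proposal is correct and follows essentially the same route as the paper: the authors likewise note that a harmonic Hadamard manifold is Einstein, hence analytic with analytic Busemann functions (citing \cite{RShah}), that a Hadamard manifold is non-compact, and then combine Proposition \ref{prop1.4} (rank one geodesics from purely exponential volume growth) with the identity of Theorem \ref{positivehessian} to kill the zero eigenspace. Your write-up merely makes explicit the routine steps (evaluation at $t=0$, upgrading semi-definiteness to definiteness) that the paper leaves implicit.
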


 Theorem \ref{damekricci-2a} is then obtained from Proposition {\ref{prop1.5}\, not using the Lie algebra argument, since any Damek-Ricci space $S$ is of purely exponential volume growth. In fact, the volume density of a geodesic sphere of radius $r$ in $S$ has the form \,

  \noindent $\displaystyle{\Theta(r) = c\cdot\sinh^{n-1} r/2\cdot \cosh^{2Q-(n-1)} r/2}$\hspace{0.5mm}, where $c > 0$,\, $n = \dim S$ and the volume entropy $Q$ coincides with the homogeneous dimension of $S$. Refer to  \cite{ADY}.

\vspace{2mm}

Now, let $(M,g)$ be a harmonic Hadamard manifold of the hypergeometric type, namely, the radial part
of the Laplace-Beltrami operator of $M$, $\displaystyle{ -\left(\frac{\partial^2}{\partial r^2} + \sigma(r)\frac{\partial}{\partial r} \right)}$
 is transformed into a second order, differential equation of Gauss hypergeometric type by the transformation $z = - \sinh^2\frac{r}{2}$ of the radial variable $r > 0$. Here $\sigma(r)$ is the mean curvature of the geodesic sphere $\Sigma(x;r)$. See \cite{IKPS, ISGauss} for the precise
definition of the hypergeometric type.  A harmonic Hadamard manifold of a hypergeometric type is of purely exponential volume growth. In fact, the following is verified in \cite{ISGauss}.
\begin{proposition}\label{hypergeometric}\rm
Let $(M,g)$ be a harmonic Hadamard manifold of volume entropy $Q > 0$. Then, $(M,g)$ is of the hypergeometric type if and only if the volume density function
$\Theta(r)$ of the geodesic sphere $\Sigma(x;r)$ has the form\,
$\displaystyle{
\Theta(r) = c_1 \sinh^{2c_2}\frac{r}{2}\cosh^{2c_3}\frac{r}{2}
}$,\, $r>0$  
for some constants $c_i > 0$,\ $i=1,2,3$.
\end{proposition}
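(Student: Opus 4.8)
The plan is to reduce the spectral hypothesis to a single ordinary differential condition on the radial mean curvature $\sigma(r)$ and then integrate it. The bridge is the elementary fact that in geodesic polar coordinates the volume element is $\Theta(r,\xi)\,dr\,d\xi$ and the mean curvature of the geodesic sphere equals $\partial_r\log\Theta$; harmonicity makes $\Theta$, and hence $\sigma$, depend on $r$ alone, so that $\sigma(r) = (\log\Theta)'(r)$ and the radial part of the Laplacian is indeed $-(\partial_r^2 + \sigma\partial_r)$. Thus recovering $\Theta$ from $\sigma$ is a single integration, and the whole proposition becomes the claim that hypergeometric type is equivalent to $\sigma(r) = c_2\coth\tfrac r2 + c_3\tanh\tfrac r2$.

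First I would carry out the change of variables $z=-\sinh^2\tfrac r2$ explicitly, recording $1-z=\cosh^2\tfrac r2$, $(dz/dr)^2=-z(1-z)$ and $d^2z/dr^2 = z-\tfrac12$. Under this substitution the operator $\partial_r^2+\sigma\partial_r$ becomes
\[
-z(1-z)\,\partial_z^2 + \Big[(z-\tfrac12) - \tfrac12\,\sigma(r)\sinh r\Big]\partial_z ,
\]
whose leading coefficient is automatically the hypergeometric one $-z(1-z)$. Hence the equation $-(\partial_r^2+\sigma\partial_r)f=\lambda f$ is of Gauss hypergeometric form $z(1-z)w'' + [c-(a+b+1)z]w' - ab\,w = 0$ precisely when the first-order coefficient is affine in $z$; comparing coefficients this amounts to
\[
\tfrac12\,\sigma(r)\sinh r = \big(c-\tfrac12\big) + (a+b)\sinh^2\tfrac r2 ,
\]
with $ab=\lambda$. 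This is the clean reformulation of hypergeometric type: $\sigma(r)\sinh r$ must be an affine function of $\sinh^2\tfrac r2$.

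Solving this identity for $\sigma$, using $\sinh r = 2\sinh\tfrac r2\cosh\tfrac r2$ and $\cosh^2\tfrac r2 = 1+\sinh^2\tfrac r2$, gives exactly $\sigma(r)=c_2\coth\tfrac r2 + c_3\tanh\tfrac r2$ with $c_2=c-\tfrac12$ and $c_3=a+b-c+\tfrac12$; integrating $(\log\Theta)'=\sigma$ and using $\int\coth\tfrac r2\,dr = 2\log\sinh\tfrac r2$, $\int\tanh\tfrac r2\,dr = 2\log\cosh\tfrac r2$ yields $\Theta(r)=c_1\sinh^{2c_2}\tfrac r2\cosh^{2c_3}\tfrac r2$. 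The converse is the same computation run backwards: substituting this $\Theta$ produces $\sigma=c_2\coth\tfrac r2+c_3\tanh\tfrac r2$, which by the coefficient matching makes the transformed equation hypergeometric with $c=c_2+\tfrac12$, $a+b=c_2+c_3$ and $ab=\lambda$. So both implications rest on one reversible calculation.

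The part needing genuine geometric input, rather than ODE manipulation, is the positivity of the constants, and this is where I expect the main obstacle to lie. The exponent $c_2$ is pinned down by the local behavior $\Theta(r)=\omega_{n-1}r^{n-1}(1+O(r^2))$ as $r\to 0^+$, which forces $2c_2=n-1$ and hence $c_2>0$. The positivity of $c_3$ does not follow from the transformed ODE alone: the equation only determines the functional shape $\sinh^{2c_2}\cosh^{2c_3}$ with real exponents, so $c_3>0$ must be extracted from the geometry of a genuine harmonic Hadamard manifold, namely from nonpositivity of the sectional curvature (equivalently the trace Riccati equation controlling $\sigma$, together with $Q=c_2+c_3>0$). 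I would therefore treat this asymptotic and curvature analysis as the delicate step, keeping the change of variables, which is purely mechanical, as the routine core.
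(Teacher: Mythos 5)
Your computational core is correct, and it is surely the intended argument: note that the paper itself offers no proof of this proposition at all (it is quoted from \cite{ISGauss}, with only a loose version of the definition of hypergeometric type recalled), so your reduction is the natural reconstruction. Concretely, with $z=-\sinh^2\frac{r}{2}$ one has $(dz/dr)^2=-z(1-z)$ and $d^2z/dr^2=z-\frac12$, the transformed equation is of Gauss type precisely when $\frac12\sigma(r)\sinh r=(c-\frac12)+(a+b)\sinh^2\frac{r}{2}$, this is equivalent to $\sigma(r)=c_2\coth\frac r2+c_3\tanh\frac r2$, and integrating $(\log\Theta)'=\sigma$ gives $\Theta=c_1\sinh^{2c_2}\frac r2\cosh^{2c_3}\frac r2$; the $r\to 0^+$ asymptotics force $2c_2=n-1>0$. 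All of this is right, and the converse is indeed the same computation reversed.

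The genuine gap is the one you flagged, but it is worse than a ``delicate step'': the route you propose for $c_3>0$ (nonpositive curvature plus $Q=c_2+c_3>0$, via the Riccati equation) provably cannot succeed, because those hypotheses are consistent with $c_3=0$. Take the real hyperbolic space ${\Bbb R}H^n$ rescaled to sectional curvature $-\frac14$: it is a harmonic Hadamard manifold of volume entropy $Q=\frac{n-1}{2}>0$, its density is $\Theta(r)=\bigl(2\sinh\frac r2\bigr)^{n-1}$, hence $\sigma(r)=\frac{n-1}{2}\coth\frac r2$ and $\frac12\sigma(r)\sinh r=\frac{n-1}{2}\bigl(1+\sinh^2\frac r2\bigr)$ is affine in $\sinh^2\frac r2$, so the radial equation transforms into a genuine Gauss hypergeometric equation (with $c=\frac n2$, $a+b=\frac{n-1}{2}$, $ab=\lambda$) --- yet $c_3=0$. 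Thus, under the definition as recalled in this paper, your argument can only yield $c_2=\frac{n-1}{2}$ and $c_3\in{\Bbb R}$, and no curvature or entropy argument can upgrade this to $c_3>0$; the strict positivity must come from the \emph{precise} definition of hypergeometric type in \cite{IKPS, ISGauss}, which the paper deliberately does not reproduce and which your proof never invokes. Until that extra input is identified and used (or the conclusion weakened to $c_3\geq 0$), the ``only if'' direction of your proof cannot be closed.
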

 A harmonic Hadamard manifold of the hypergeometric type is of importance from harmonic analysis, since the spherical Fourier-Helgason transform on such a manifold is well defined and admits an inversion formula, as shown in \cite{ISGauss}.
From Propositions \ref{prop1.5} and \ref{hypergeometric}, we obtain the following.
\begin{corollary}
\hspace{2mm}{\rm Let $(M,g)$ be a harmonic Hadamard manifold of the hypergeometric type. Then, each Busemann function of $(M,g)$ has positive definite Hessian over $\nabla b_{\theta}^{\perp}$.
}
\end{corollary}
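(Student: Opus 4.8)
The plan is to reduce the Corollary to Proposition~\ref{prop1.5} by verifying that a harmonic Hadamard manifold of the hypergeometric type is necessarily of purely exponential volume growth. Once this is shown, the positive definiteness of $\nabla d b_\theta$ over $\nabla b_\theta^\perp$ for every $\theta\in\partial M$ follows immediately from Proposition~\ref{prop1.5}, since all of its hypotheses (harmonic, Hadamard, purely exponential volume growth) are then in force.

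First I would invoke Proposition~\ref{hypergeometric}: being of the hypergeometric type is equivalent to the volume density function having the closed form $\Theta(r) = c_1 \sinh^{2c_2}(r/2)\cosh^{2c_3}(r/2)$ with $c_1,c_2,c_3 > 0$. The key observation is that this explicit form pins down the exponential growth rate exactly. Using $\sinh(r/2) = \tfrac12 e^{r/2}(1-e^{-r})$ and $\cosh(r/2) = \tfrac12 e^{r/2}(1+e^{-r})$, I would factor out the dominant exponential to obtain
\[
\Theta(r) = c_1\, 2^{-2(c_2+c_3)}\, e^{(c_2+c_3)r}\,(1-e^{-r})^{2c_2}(1+e^{-r})^{2c_3}.
\]
This identifies the volume entropy as $Q = c_2 + c_3 > 0$, the exponential rate of $\Theta$; as a sanity check, in the Damek--Ricci case $2c_2 = n-1$ and $2c_3 = 2Q-(n-1)$, so $c_2+c_3 = Q$ as it must.

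Next I would establish the required two-sided bound for $r\ge 1$. Since $1-e^{-r}$ increases to $1$ and $1+e^{-r}$ decreases to $1$, for $r\ge 1$ the factor $(1-e^{-r})^{2c_2}(1+e^{-r})^{2c_3}$ lies between $(1-e^{-1})^{2c_2}$ and $(1+e^{-1})^{2c_3}$. Setting $a := c_1 2^{-2(c_2+c_3)}(1-e^{-1})^{2c_2}$ and $b := c_1 2^{-2(c_2+c_3)}(1+e^{-1})^{2c_3}$ gives $0 < a \le b$ together with $a\,e^{Qr} \le \Theta(r) \le b\,e^{Qr}$ for all $r\ge 1$, which is exactly the definition of purely exponential volume growth.

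Finally I would apply Proposition~\ref{prop1.5} to $(M,g)$ to conclude that $\nabla d b_\theta$ is positive definite over $\nabla b_\theta^\perp$ for each $\theta\in\partial M$. The only genuinely substantive input is Proposition~\ref{prop1.5} itself (resting in turn on Theorem~\ref{positivehessian} and Proposition~\ref{prop1.4}); the estimate above is elementary, so the main point to be careful about is identifying $Q$ correctly as $c_2+c_3$ and exploiting the monotonicity of the two factors to make the bounding constants uniform over $r\ge 1$.
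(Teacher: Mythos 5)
Your proof is correct and follows essentially the same route as the paper: invoke Proposition~\ref{hypergeometric} to get the explicit form of $\Theta(r)$, deduce purely exponential volume growth, and conclude via Proposition~\ref{prop1.5}. The only difference is that you spell out the elementary two-sided estimate (with $Q=c_2+c_3$) that the paper simply asserts, which is a welcome but minor addition.
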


This  asserts again that every Busemann function on a Damek-Ricci space has positive definite Hessian. 

\vspace{2mm}

A notion of visibility axiom is closely related to the hyperbolicity of
a Hadamard manifold.
\begin{definition}{\rm
A Hadamard manifold $(M,g)$ satisfies {\it the visibility
axiom}, when for any distinct ideal points $\theta$, $\theta'$ of $\partial
M$ there exists a geodesic $\gamma : {\Bbb R}\rightarrow M$ such that $[\gamma] = \theta$ and $[\gamma^-] = \theta'$.
Here $\gamma^-$ is the reversed one of $\gamma$.
}
\end{definition}The visibility axiom is a geometric feature of the ideal boundary $\partial M$. Refer to \cite{BGS, EO}. There are several geometric criterions equivalent with the visibility axiom, one of which is stated as follows. 
 \begin{proposition}\label{critervisibility} {\rm \cite{BGS}
 Let $(M,g)$ be a Hadamard manifold. Then, $(M,g)$ satisfies the visibility axiom if and only if it holds for any $\theta\in \partial M$ \begin{eqnarray}
 \lim_{t\rightarrow\infty} b_{\theta}(\gamma_1) = +\infty,
  \end{eqnarray}with respect to  the  Busemann function $b_{\theta}$ centered at $\theta\in \partial M$. Here $\gamma_1$ is any geodesic parametrized by arc-length with $[\gamma_1] \not=\theta$.
}
 \end{proposition}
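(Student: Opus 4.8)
The plan is to translate the analytic condition $\lim_{t\to\infty}b_\theta(\gamma_1)=+\infty$ into a statement about asymptotic angles, and then to match it with the geometric content of the visibility axiom through the standard equivalence between visibility and the shrinking of angles subtended by far-away geodesics (Eberlein--O'Neill, \cite{EO, BGS}). First I would record the basic differential identity. Writing $\theta_1=[\gamma_1]$ and using that $\nabla b_\theta(x)=-\,\dot\rho_x(0)$, where $\rho_x$ is the unit-speed ray from $x$ to $\theta$ and $|\nabla b_\theta|\equiv1$, one gets
\[
\frac{d}{dt}\,b_\theta(\gamma_1(t))=\langle\nabla b_\theta,\dot\gamma_1\rangle_{\gamma_1(t)}=-\cos\alpha(t),\qquad \alpha(t):=\angle_{\gamma_1(t)}(\theta,\theta_1).
\]
Because $b_\theta$ is convex and $\gamma_1$ is a geodesic, $t\mapsto b_\theta(\gamma_1(t))$ is convex, so $-\cos\alpha(t)$ is nondecreasing and $\alpha(t)$ increases to a limit $\alpha_\infty\in(0,\pi]$. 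Integrating the identity and examining the sign of $\cos\alpha_\infty$ shows that $b_\theta(\gamma_1(t))\to+\infty$ holds if and only if $\alpha_\infty>\pi/2$ (for $\alpha_\infty\le\pi/2$ the function is nonincreasing, hence bounded above). Thus the proposition becomes: $(M,g)$ is a visibility manifold if and only if $\lim_{t}\angle_{\gamma_1(t)}(\theta,[\gamma_1])>\pi/2$ for every $\theta$ and every $\gamma_1$ with $[\gamma_1]\ne\theta$.

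For the forward implication I would assume visibility and invoke the angle characterization of \cite{EO, BGS}: in a visibility manifold the angle subtended at a point by a geodesic staying far from that point tends to $0$. Applied to the triangle with vertices $\gamma_1(0)$, $\gamma_1(t)$ and the ideal vertex $\theta$, this yields $\angle_{\gamma_1(t)}(\gamma_1(0),\theta)\to0$ as $t\to\infty$; since $\gamma_1(0)$ and $\theta_1$ lie in opposite directions from $\gamma_1(t)$, supplementarity gives $\alpha_\infty=\pi>\pi/2$, whence $b_\theta(\gamma_1(t))\to+\infty$.

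For the converse I would reformulate visibility for a fixed pair $\theta\ne\theta'$ as the existence of a minimum of the convex function $F:=b_\theta+b_{\theta'}$: at a minimizer $x_0$ one has $\nabla b_\theta(x_0)=-\nabla b_{\theta'}(x_0)$, and the geodesic $c$ issuing in the direction $-\nabla b_\theta(x_0)$ then runs to $\theta$ forward and to $\theta'$ backward, i.e.\ $[c]=\theta$ and $[c^-]=\theta'$. Hence it suffices to show that the Busemann hypothesis forces $F$ to attain its infimum for every pair. Taking a minimizing sequence $x_k$: if it stays bounded, a subsequential limit is the desired minimizer; otherwise $x_k$ converges to some $\theta''\in\partial M$, and restricting $F$ to the ray $\sigma''$ toward $\theta''$ produces a convex function with asymptotic slope $\le0$. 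When $\theta''\notin\{\theta,\theta'\}$ the hypothesis makes both $b_\theta\circ\sigma''$ and $b_{\theta'}\circ\sigma''$ tend to $+\infty$, so $F\circ\sigma''\to+\infty$, contradicting slope $\le0$.

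The delicate point, and the step I expect to be the main obstacle, is the excluded cases $\theta''\in\{\theta,\theta'\}$: here the naive per-pair threshold ``$\alpha_\infty>\pi/2$'' produced by the Busemann condition must be promoted to ``$\alpha_\infty=\pi$'' in order to forbid a minimizing sequence from escaping toward $\theta$ or $\theta'$ along a direction where $F$ decreases at the borderline linear rate (equivalently, to rule out the flat-strip behaviour that occurs over a totally geodesic flat plane). This promotion cannot come from the slope computation of a single geodesic; it requires the hypothesis \emph{uniformly} over all ideal points and all geodesics, together with the convexity comparison of Busemann functions and the compactness of $\partial M$. I would therefore handle it, and the verification that the limiting geodesic from the bounded minimizing sequence genuinely has endpoints $\theta$ and $\theta'$, by appealing to the angle characterization of \cite{EO, BGS} rather than by slope estimates alone.
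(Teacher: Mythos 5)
Before reviewing your argument, note that the paper itself does not prove this proposition: it is quoted from \cite{BGS}, with only a remark on its meaning (that $b_\theta\circ\sigma$ attains a minimum whenever $[\sigma]\ne\theta\ne[\sigma^-]$). So your attempt must stand on its own, and it has genuine gaps in both directions.

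\textbf{Forward direction.} Your reformulation ``$b_\theta(\gamma_1(t))\to+\infty$ iff $\alpha_\infty>\pi/2$'' is correct and cleanly derived. The gap is the deduction of $\alpha_\infty=\pi$ from visibility: the Eberlein--O'Neill angle axiom supplies, for each \emph{fixed} point $p$ and $\epsilon>0$, a radius $r(p,\epsilon)$ beyond which geodesics subtend angle at most $\epsilon$ at $p$, whereas you apply it at the moving points $\gamma_1(t)$, which escape to infinity. That is \emph{uniform} visibility, which does not follow from the axiom (nor from the paper's definition) without further argument. The gap is avoidable, because the paper's definition of visibility gives this implication directly: let $c$ be a geodesic with $[c]=[\gamma_1]$ and $[c^-]=\theta$; then $b_\theta(c(t))=t+\mathrm{const}$ (any two Busemann functions centered at $\theta$ differ by a constant, and $b_{c^-}(c(t))=t$), and $d(\gamma_1(t),c(t))\le C$ for $t\ge 0$ since $\gamma_1$ and $c$ represent the same ideal point; as $b_\theta$ is $1$-Lipschitz, $b_\theta(\gamma_1(t))\ge t+\mathrm{const}-C\to+\infty$. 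No angle estimates are needed.

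\textbf{Converse.} Here the point you flag as ``delicate'' is not a technicality but the crux, and it genuinely occurs: in the upper half-plane model of ${\Bbb H}^2$ with $\theta=\infty$, $\theta'=0$, one has $F=b_\theta+b_{\theta'}=\log\big((x^2+y^2)/y^2\big)$, and the minimizing sequence $(1,k)$ escapes to $\theta$. Hence no contradiction can be extracted when $\theta''\in\{\theta,\theta'\}$, and ``appealing to the angle characterization of \cite{EO, BGS}'' is circular here, since the statement being proved \emph{is} the \cite{BGS} criterion. What closes the argument is a compactness statement you never extract from the hypothesis: first upgrade it, via convexity of $b_\theta$ along the segments from a basepoint $o$ to $x_k$ and the cone-topology convergence of these segments to the ray $[o,\xi)$, to ``$b_\theta(x_k)\to+\infty$ whenever $x_k\to\xi\in\partial M$ with $\xi\ne\theta$''; this makes every horoball intersection $\{b_\theta\le a\}\cap\{b_{\theta'}\le a'\}$ bounded, hence compact. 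Then, instead of minimizing $F$ over $M$, minimize $b_{\theta'}$ over the horosphere $\{b_\theta=0\}$: its sublevel sets are compact by the above, so a minimizer $x_*$ exists, and since $\vert\nabla b_\theta\vert=\vert\nabla b_{\theta'}\vert=1$ the Lagrange condition forces $\nabla b_{\theta'}(x_*)=\pm\nabla b_\theta(x_*)$; the sign $+$ would make the rays from $x_*$ to $\theta$ and to $\theta'$ coincide, contradicting $\theta\ne\theta'$, so the sign is $-$ and the geodesic through $x_*$ with initial velocity $-\nabla b_\theta(x_*)$ joins $\theta'$ to $\theta$. Your two correct observations---that a critical point of $F$ yields the connecting geodesic, and that escape to $\theta''\notin\{\theta,\theta'\}$ is impossible---do not suffice without this compactness input, and with it the detour through $F$ becomes unnecessary.
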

In \cite{BGS} Proposition \ref{critervisibility} is stated in terms of horofunction.  However, from Lemma 3.4, \cite{BGS} any Busemann function is a synonym for horofunction with respect to continuous functions. The proposition means that for any geodesic $\sigma$
satisfying $[\sigma]\not=\theta$ and  $[\sigma^{-}]\not=\theta$
 the convex function $b_{\theta}(\sigma(t))$ has necessarily a minimum at some $t_0$. Refer to $\S$1.6.5, \cite{Eberlein}.



The positive definiteness of the Hessian of the Busemann functions is related with the visibility axiom.

\begin{proposition} \label{visibilitycriterion}{\rm
Let $(M,g)$ be a Hadamard manifold.
Then $(M,g)$ satisfies the visibility axiom, if and only if 
$(M,g)$ fulfills the following;  let $\theta\in \partial M$ be an arbitrary ideal boundary point and $\gamma$ be an arbitrary geodesic of $M$ parametrized by arc-length and satisfying $[\gamma] \not= \theta$, and let $t\in {\Bbb R}$ be arbitrary. Then, there exists  a $T\in {\Bbb R}$ ($t < T$) satisfying
\begin{eqnarray}\label{integral}
 \int_{t}^{T} (\nabla d b_{\theta})_{\gamma(s)}(\gamma'(s),\gamma'(s)) ds\hspace{0.5mm} \geq 1.
 \end{eqnarray}}
\end{proposition}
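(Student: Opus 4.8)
The plan is to collapse the integral to the variation of a single convex function along $\gamma$ and then to read off visibility through Proposition~\ref{critervisibility}. First I would set $h(s):=b_\theta(\gamma(s))$. Since $\gamma$ is a geodesic, $\nabla_{\gamma'}\gamma'=0$, so two differentiations give
\[
h'(s)=\langle\nabla b_\theta,\gamma'(s)\rangle,\qquad h''(s)=(\nabla d b_\theta)_{\gamma(s)}(\gamma'(s),\gamma'(s)).
\]
Hence the integrand of (\ref{integral}) is exactly $h''$, and
\[
\int_t^T(\nabla d b_\theta)_{\gamma(s)}(\gamma'(s),\gamma'(s))\,ds=h'(T)-h'(t).
\]
Writing $\varphi(s):=h'(s)=\langle\nabla b_\theta(\gamma(s)),\gamma'(s)\rangle$, the inequality (\ref{integral}) says precisely that the slope $\varphi$ rises by at least $1$ from $t$ to $T$.

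Next I would record two structural facts. Since $|\nabla b_\theta|=1=|\gamma'|$, Cauchy--Schwarz gives $|\varphi|\le 1$; since $\nabla d b_\theta$ is positive semi-definite, $h$ is convex, so $\varphi$ is nondecreasing and the limits $L_\pm:=\lim_{s\to\pm\infty}\varphi(s)\in[-1,1]$ exist. Consequently $\sup_{T>t}(\varphi(T)-\varphi(t))=L_+-\varphi(t)$. For the implication ``(\ref{integral}) $\Rightarrow$ visibility'' it suffices, by Proposition~\ref{critervisibility}, to show $b_\theta(\gamma(s))\to+\infty$, that is $L_+>0$, whenever $[\gamma]\ne\theta$. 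Evaluating the hypothesis at one fixed $t$ produces $T$ with $\varphi(T)\ge 1+\varphi(t)\ge 0$, so $L_+\ge 0$; and $L_+=0$ is impossible, for then $\varphi\le 0$ everywhere and, choosing any $t$ with $\varphi(t)>-1$ (such $t$ exists unless $\varphi\equiv-1$, whence $L_+=-1$), one would get $\varphi(T)-\varphi(t)<1$ for every $T$, contradicting the hypothesis. Thus $L_+>0$ and visibility follows.

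For the converse I would apply Proposition~\ref{critervisibility} to both $\gamma$ and its reversal $\gamma^-$: in the substantive case where $\theta$ differs from $[\gamma]$ and from $[\gamma^-]$ (the case highlighted in the discussion following Proposition~\ref{critervisibility}, the remaining case $[\gamma^-]=\theta$ making $h$ affine) one obtains $b_\theta(\gamma(s))\to+\infty$ at both ends, so $L_-<0<L_+$ and $h$ attains a minimum. Then $\varphi$ increases from a negative limit to a positive one, and taking $t$ far in the incoming direction and $T$ far in the outgoing direction makes $\varphi(T)-\varphi(t)$ approach $L_+-L_-$; I would then select such $t<T$ to realize (\ref{integral}).

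The main obstacle is the quantitative gap between the \emph{qualitative} sign change of $\varphi$ and the \emph{definite} threshold $1$ in (\ref{integral}). Producing a minimum of $h$ only gives $L_+-L_->0$; to reach $L_+-L_-\ge 1$ one must control the limiting slopes themselves, and this is exactly where the visibility axiom must enter beyond Proposition~\ref{critervisibility}. The mechanism I expect to invoke is asymptotic orthogonality: in a visibility manifold a geodesic whose endpoints avoid $\theta$ becomes asymptotically perpendicular to the horospheres centered at $\theta$ --- equivalently the angle $\angle_{\gamma(s)}(\theta,[\gamma])$ tends to $\pi$, since $\varphi(s)=-\cos\angle_{\gamma(s)}(\theta,[\gamma])$ --- forcing $L_+=1$ and, symmetrically, $L_-=-1$, so that $L_+-L_-=2\ge 1$. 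Establishing this asymptotic orthogonality from the visibility axiom is the technical heart of the argument; once it is available, the selection of $t$ and $T$ fulfilling (\ref{integral}) is routine bookkeeping.
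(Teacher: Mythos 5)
Your reduction of the integral to $h'(T)-h'(t)$ and your treatment of the direction ``integral condition $\Rightarrow$ visibility'' are correct, and in fact cleaner than the paper's own argument: the paper fixes a common foot point $p$, splits into three cases according to the initial angle between $\nabla b_\theta$ and $\gamma_1'(0)$, and invokes a strict-convexity claim for $b_\theta\circ\gamma_1$ (whose justification --- that a null vector of the Hessian must be orthogonal to the horosphere --- is itself questionable), whereas you obtain $L_+>0$, hence $b_\theta(\gamma_1(s))\to+\infty$, hence visibility via Proposition \ref{critervisibility}, using only monotonicity of $h'$ and two applications of the hypothesis. The essential mechanism --- using the threshold $1$ to push the slope $\langle\nabla b_\theta,\gamma_1'\rangle$ from a value $>-1$ to a nonnegative value --- is the same as in the paper's case (iii).

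The genuine gap is in the converse, and the step you defer as ``routine bookkeeping'' is exactly where it breaks; note that the paper itself never proves this direction (its visibility section establishes only ``integral $\Rightarrow$ visibility'' and then passes to Proposition \ref{leasteigenvalue}). Your plan is to prove $L_+=1$ and $L_-=-1$; this asymptotic orthogonality is indeed true in visibility manifolds (along the geodesic joining $\theta$ to $[\gamma]$ the two ideal points subtend angle $\pi$, so the supremum of angles is $\pi$ and the limiting slope of $b_\theta$ along $\gamma$ is $1$). But even granting it, the proposition quantifies $t$ universally, and for any $t$ past the minimum of $h$ one has $h'(t)>0$, while $h'(T)<1$ for every finite $T$ (equality $h'(T)=1$ would force $\gamma'(T)=\nabla b_\theta$, i.e.\ $[\gamma^-]=\theta$); hence $\int_t^T h''\,ds = h'(T)-h'(t) < 1-h'(t) < 1$ for all $T>t$, and no admissible $T$ exists. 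Already in the hyperbolic plane the asserted condition fails for such $t$. Even more immediately, the statement excludes only $[\gamma]\neq\theta$, so it covers geodesics with $[\gamma^-]=\theta$, for which $h$ is affine, $h''\equiv 0$, and the integral can never reach $1$ --- in any Hadamard manifold whatsoever. So the equivalence as literally stated cannot be proved; the converse holds only after reinterpretation (e.g.\ restricting to geodesics with $[\gamma]\neq\theta\neq[\gamma^-]$ and to parameters $t$ with $\langle\nabla b_\theta,\gamma'(t)\rangle<0$, where your $L_+=1$ argument does produce a finite $T$). Your proposal correctly senses that something beyond Proposition \ref{critervisibility} is needed, but it mislocates the obstruction: the difficulty is not in proving $L_+-L_-\geq 1$, it is in the quantifier ``for all $t$''.
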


The positive definiteness of the Hessian of all Busemann functions is still not sufficient for $(M,g)$ to satisfy the visibility axiom.  However, we exhibit the following in terms of the least eigenvalue of the Hessian.
\begin{proposition}\label{leasteigenvalue}{\rm
Let $(M,g)$ be a Hadamard manifold. Assume that (i) any
Busemann function $b_{\theta}$ admits positive definite Hessian
$\nabla d b_{\theta}$ over  $\nabla
b_{\theta}^{\perp}$ and (ii) let  $\lambda = \lambda(p)$ be the
least eigenvalue of the Hessian $(\nabla d
b_{\theta})_p$ and let $t_1$ be arbitrary. If 
along an arbitrary geodesic $\gamma_1$
with $[\gamma_1] \not= \pm \theta$ 
\begin{eqnarray}\label{tendinfty}
 \int_{t_1}^t \lambda(\gamma_1(s)) ds  \to +\infty\hspace{2mm}{\rm as}\hspace{2mm}t\rightarrow\infty,
  \end{eqnarray} then, $(M,g)$ fulfills the visibility axiom. }
\end{proposition}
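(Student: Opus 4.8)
The plan is to reduce the visibility axiom to the Busemann criterion of Proposition~\ref{critervisibility} and then establish the required divergence $b_{\theta}(\gamma_1(t))\to+\infty$ by a one-dimensional convexity argument fed by the differential inequality that hypotheses (i) and (ii) produce. Fix $\theta\in\partial M$ and a geodesic $\gamma_1$ parametrized by arc length with $[\gamma_1]\neq\theta$; by Proposition~\ref{critervisibility} it suffices to show $\lim_{t\to\infty}b_{\theta}(\gamma_1(t))=+\infty$. If $[\gamma_1^{-}]=\theta$, then $\gamma_1$ is, up to orientation, an integral curve of $\nabla b_{\theta}$ and $b_{\theta}(\gamma_1(t))=t+\mathrm{const}\to+\infty$ automatically; so I may assume $[\gamma_1]\neq\pm\theta$, which is precisely the range in which hypothesis (ii) is available.

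Set $f(t):=b_{\theta}(\gamma_1(t))$. Since $\gamma_1$ is a geodesic and $b_{\theta}$ is $C^{2}$ and convex, $f$ is convex with $f'(t)=\langle\nabla b_{\theta},\gamma_1'(t)\rangle$ and $f''(t)=(\nabla d b_{\theta})_{\gamma_1(t)}(\gamma_1'(t),\gamma_1'(t))\geq 0$. The first key step is the pointwise inequality
\[
f''(t)\;\geq\;\lambda(\gamma_1(t))\,\bigl(1-f'(t)^{2}\bigr).
\]
I would obtain it by decomposing $\gamma_1'(t)=f'(t)\,\nabla b_{\theta}+v$ with $v\in(\nabla b_{\theta}^{\perp})_{\gamma_1(t)}$, and using that $\nabla b_{\theta}$ is a unit field whose integral curves are geodesics, so that the gradient direction lies in the kernel of the Hessian, $(\nabla d b_{\theta})(\nabla b_{\theta},\cdot)=0$. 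Then $(\nabla d b_{\theta})(\gamma_1',\gamma_1')=(\nabla d b_{\theta})(v,v)\geq\lambda\,|v|^{2}=\lambda\,(1-f'^{2})$ by hypothesis (i). Note also $|f'|\leq 1$, with strict inequality at every point, since $\gamma_1'=\mp\nabla b_{\theta}$ would force $[\gamma_1]=\pm\theta$, which is excluded.

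Convexity makes $f'$ nondecreasing and bounded in $[-1,1]$, hence $f'(t)\to L$ for some $L\in[-1,1]$, and the conclusion $f(t)\to+\infty$ follows once $L>0$. To prove $L>0$ I argue by contradiction: if $L\leq 0$, then $f'(t_1)\leq f'(t)\leq L\leq 0$ for all $t\geq t_1$, so $f'(t)^{2}\leq f'(t_1)^{2}<1$ and therefore $1-f'(t)^{2}\geq c:=1-f'(t_1)^{2}>0$. Substituting into the inequality gives $f''(t)\geq c\,\lambda(\gamma_1(t))$, and integrating from $t_1$ to $t$ yields $f'(t)-f'(t_1)\geq c\int_{t_1}^{t}\lambda(\gamma_1(s))\,ds\to+\infty$ by hypothesis (ii), contradicting $f'(t)\leq 0$. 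Hence $L>0$, $f$ is eventually increasing with slope bounded below by a positive constant, and $f(t)\to+\infty$, which verifies the Busemann criterion and thus the visibility axiom.

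The step I expect to be the main obstacle is the derivation and correct use of the differential inequality: one must keep track of the kernel direction $\nabla b_{\theta}$ of the Hessian and justify that $f'$ stays \emph{strictly} inside $[-1,1]$ along $\gamma_1$ (so that the constant $c$ above is genuinely positive), which is exactly where the exclusion $[\gamma_1]\neq\pm\theta$ and the separate treatment of the case $[\gamma_1^{-}]=\theta$ are needed. The remaining ingredients form a routine convexity/ODE comparison, but the sign bookkeeping distinguishing $L\leq 0$ from $L>0$ is the delicate point.
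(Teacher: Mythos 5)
Your proof is correct and follows essentially the same route as the paper: both rest on the differential inequality $f''(t) \geq \lambda(\gamma_1(t))\bigl(1 - f'(t)^2\bigr)$, obtained from the orthogonal decomposition of $\gamma_1'$ and the fact that $\nabla b_{\theta}$ lies in the kernel of $\nabla d b_{\theta}$, together with the strict bound $\vert f'\vert < 1$ forced by $[\gamma_1]\neq\pm\theta$, and both reduce visibility to the Busemann criterion of Proposition \ref{critervisibility}. The only divergence is in the endgame — the paper integrates the inequality explicitly to get $\frac{1+y(t)}{1-y(t)} \geq \frac{1+y(t_1)}{1-y(t_1)}\exp\bigl\{2\int_{t_1}^t\lambda(s)\,ds\bigr\}$ and hence $y(t)>0$ for large $t$, while you reach the same conclusion by a monotone-limit contradiction — a minor technical variation, not a different approach.
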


\vspace{2mm}The content of this article is as follows.
In Section 2 we begin with preliminaries of Hadamard manifolds as a basic reference to the sequel. In Section 3, Part I we give definition of a Damek-Ricci space and then present a proof of
Theorem \ref{damekricci-2}. 
 Section 4 is devoted to proving the positive definiteness of the Hessian of an arbitrary Busemann function on a Damek-Ricci space. Appendix I, Part I another proof of Theorem \ref{damekricci-2}
 is provided 
 and in Appendix II, Part I a proof to the existence of  the admissible decomposition of the Lie algebra of a Damek-Ricci space is given. In Section 5, Part II
 we deal with an inductive proof of Theorem \ref{positivedefiniteness}, using Riccati equation. 
  Section 6, Part II  offers  criterions for is the visibility axiom in terms of the Hessian of the Busemann function.

\section{Preliminaries for Hadamard manifolds}

Let $(M,g)$ be a simply connected, $n$--dimensional complete Riemannian manifold of non-positive sectional curvature\hspace{0.5mm}($n \geq 2$). We call such a $(M,g)$ a Caratn-Hadamard manifold, or briefly a Hadamard manifold. By the Cartan-Hadamard theorem $(M,g)$ is diffeomorphic to an $n-$dimensional Euclidean space.

From the non-positive sectional curvature condition the distance function $d : M\times M \rightarrow {\Bbb R}$ is a convex function \cite{Sakai}.  A function $f: M \rightarrow {\Bbb R}$ is convex (strictly convex), if the restriction $f(\gamma(t))$ is convex (strictly convex) for any geodesic $\gamma$ of $M$. If a continous function is $C^2$, then $f$ is convex if and only if the Hessian $\nabla d f$ is positive semi-definite for any point. A $C^2$ function $h$ is strictly convex if $\nabla d h$ is positive definite at any point. Refer  to 1.6.4, \cite{Eberlein} for the convexity.

In what follows, a geodesic on $M$ is assumed parametrized by arc-length.
  Associated with a geodesic $\gamma : {\Bbb R} \rightarrow M$ we define a function, called Busemann function $b_{\gamma} : M \rightarrow {\Bbb R}$ by
  \begin{eqnarray*}
  b_{\gamma}(p) := \lim_{t\rightarrow\infty} \{d(p,\gamma(t)) - t\}. \end{eqnarray*}
  The Busemann function associated with $\gamma$ thus defined, is $C^2$ and convex, with gradient field $\nabla b_{\gamma}$ of unit norm. Refer to \cite{HI} for Busemann function being of $C^2$. Note $b_{\gamma}(\gamma(t)) = - t$, $t\in{\Bbb R}.$  Since $b_{\gamma}$ is convex, the Hessian $\nabla d b_{\gamma}$ is positive semi-definite at any $p\in M$ and satisfies $(\nabla d b_{\gamma})_p(\nabla b_{\gamma}, u) = 0$ for any $u\in T_pM$ and $p\in M$.
  Here, for a $C^2$--function $f$ on $M$, the Hessian of $f$ is defined by $\displaystyle{(\nabla d f)_p(u,v) := u (df({\tilde v})) - df(\nabla_u{\tilde v})
  }$, $u,v\in T_pM$ where ${\tilde v}$ is a smooth vector field around $p$, an extension of $v$.

  In order to develop geometry of a Hadamard manifold $(M,g)$ we define an ideal boundary $\partial M$ for $M$. We denote by ${\mathcal G}_M$ the set of all geodesic rays; $\gamma : [0,\infty) \rightarrow M$. Then, $\gamma$, $\gamma_1\in {\mathcal G}_M$ are  asymptotically equivalent, denoted by $\gamma \sim_a \gamma_1$, if there exists a constant $C > 0$ such that $d(\gamma(t),\gamma_1(t)) \leq C$ for $t \geq 0$. The relation $\sim_a$ is an equivalence relation so we obtain the quotient space ${\mathcal G}_M/\sim_a$ denoted by $\partial M$, and called an ideal boundary of $(M,g)$. We denote by $[\gamma]$ an equivalence class represented by $\gamma$ and usually use a symbol $\theta$ for $[\gamma]$. Let $p\in M$ be an arbitrary point. Then, for each $\theta$ there exists a unique geodesic $\gamma : {\Bbb R} \rightarrow M$ such that $\gamma(0) = p$ and $[\gamma] = \theta$ so that the ideal boundary $\partial M$ is identified with the unit tangent sphere $U_pM$ at $p$, by  the map $U_pM \ni v \mapsto [\gamma_v]\in\partial M$, where $\gamma_v$ is a geodesic defined by $\gamma_v(t) := \exp_p t v$.  It is noticed that $\gamma \sim_a \gamma_1$ if and only if the difference $b_{\gamma_1}(\cdot) - b_{\gamma}(\cdot)$ is a constant function on $M$.
  Now fix $p\in M$ as a reference point such that $U_pM \sim \partial M$ being identified, and choose for each $\theta\in\partial M$ a geodesic $\gamma = \gamma_v$,\, $v\in U_pM$,\, $[\gamma_v] = \theta$ and set $b_{\theta} := b_{\gamma_v}$. We call the $C^2$--function $b_{\theta}$ the Busemann function associated with $\theta$.  Denote by $\displaystyle{ {\mathcal H}_{(\theta,p)} : = \{ x \in M\, \vert\, b_{\theta}(x) = b_{\theta}(p) \}}$ a horosphere centered at $\theta$ passing $p$, a level hypersurface of $b_{\theta}$ which passes a point $p$.
 The minus signed Hessian of $b_{\theta}$ restricted to a tangent space of a horosphere ${\mathcal H}_{(\theta,p)}$ yields the second fundamental form of the horosphere. Indeed, the gradient field $\nu := \nabla b_{\theta}$ gives a unit normal to ${\mathcal H}_{(\theta,p)}$ so that one defines the shape operator ${\mathcal S} : T_x {\mathcal H}_{(\theta,p)} \rightarrow T_x {\mathcal H}_{(\theta,p)}$ at $x\in {\mathcal H}_{(\theta,p)}$ by  ${\mathcal S}\hspace{0.5mm}v := - \nabla_v \nu$ and the second fundamental form $h\,:\,T_x {\mathcal H}_{(\theta,p)}\times T_x {\mathcal H}_{(\theta,p)} \rightarrow {\Bbb R}$ by $h(v,w) := \langle \nabla_v {\tilde w}, \nu \rangle$, respectively (\hspace{0.5mm}${\tilde w}$ is an extension of $w$). Then, one sees easily
   \begin{eqnarray}\label{secondff}
   h(v,w) = \langle {\mathcal S}v, w \rangle = - (\nabla d b_{\theta})_x(v,w).
   \end{eqnarray}

   An eigenvector of $\nabla d b_{\theta}$ corresponding to  eigenvalue $\lambda$ is geometrically a principal direction of a horosphere with principal curvature $- \lambda$.
 Let $x\in {\mathcal H}_{(\theta,p)}$ and $\gamma : {\Bbb R} \rightarrow M$ a geodesic of $M$ satisfying $\gamma(0) = x$ and $\gamma'(0) = -(\nabla b_{\theta})_x$. Then, one observes  that $[\gamma] = \theta$ in $\partial M$ and  a family of horospheres $\{{\mathcal H}_{(\theta, \gamma(t))}\,\vert\, t\in {\Bbb R} \}$ along $\gamma$ foliates $M$.


 Let 
  $\Sigma(q;r)$ be a geodesic sphere centered at a point $q\in M$ and of radius $r$. Then, one admits a family $\{ \Sigma (q;t)\, \vert\, t > 0\}$ of geodesic spheres centered at $q$
  which foliates $M\setminus \{q\}$.  Let $\sigma : {\Bbb R}\rightarrow M$ be a geodesic such that $\sigma(0) = q$, $\sigma'(0)= v \in U_qM$. Then,  one can define a Jacobi vector field $J(t)$ along $\sigma$, perpendicular to $\sigma'(t)$ by $\displaystyle{J(t) := (d \exp_q)_{tv} t u
 }$, $u\in U_qM$, $u\perp v$. See for this \cite{DoCarmo}.  One sees easily $J(0) = 0$ and $J'(0) = u$. Now let $\{ e_j(t)\, \vert\, j = 1,\dots, n \}$ be a set of  parallel vector fields along $\sigma$, orthonormal at any $t$ such that $e_1(0) = v$ and define the Jacobi fields $J_j(t)$, $j = 2,\dots,n$ by $J_j(t) := (d \exp_q)_{tv} t e_j$ which induce an endomorphism $A(t) : T_{\sigma(t)} \Sigma(q;t) \rightarrow T_{\sigma(t)} \Sigma(q;t)$ along $\sigma$, called a Jacobi tensor field, by $A(t) e_j(t) := J_j(t)$.   $\{A(t)\, \vert\, t>0\}$ is a family of endomorphisms satisfying the equation
 $A''(t) + R(t) \circ A(t) = 0$,
where $R(t)$ is a Jacobi operator associated with $\sigma'(t)$; $R(t)v := R(v,\sigma'(t))\sigma'(t)$.  Here the prime $'$ denotes the covariant differentiation along $\sigma$. The endomorphism $A(t)$ satisfies $A(0) = 0$ and $A(t)$ is invertible at any $t \not= 0$. 
   Therefore, for $t > 0$, $A(t)$ induces an endomorphism $A'(t) \circ A^{-1}(t)$, which yields the shape operator of $\Sigma (q; t)$ at $\sigma(t)$.

   Let $\gamma$ be a geodesic of $M$ and $b_{\theta}$ the Busemann function on $M$ associated with $\theta=[\gamma]$. The shape operator ${\mathcal S} = {\mathcal S}(t)$ of the horosphere ${\mathcal H}_{(\theta,\gamma(t))}$ along the geodesic $\gamma$ of $[\gamma]=\theta$ at $\gamma(t)$ is defined similarly by using a stable Jacobi tensor $B(t)$ as ${\mathcal S}(t) := B'(t)\circ B^{-1}(t) $. A Jacobi tensor field $B(t)$ along $\gamma$ is called stable, if $B(t)$ satisfies $\vert B(t)\vert \leq C$, for a constant $C>0$,\, $t\geq 0$. For a precise definition and the construction
   of a stable Jacobi tensor refer to \cite{ISKyushu, Kn2002, Kn-2}. Notice that $\{{\mathcal S}(t)\, \vert\, -\infty <t<\infty\}$ satisfies the Riccati equation;
   \begin{eqnarray}\label{riccati-equation}
   {\mathcal S}'(t) + {\mathcal S}(t)^2 + R(t) = 0.
   \end{eqnarray}which plays a significant role in Sections 7 and 8 of Part II.


\vspace{3mm}
\begin{center}{\sc Part I}
\end{center}

\section{A Damek-Ricci space and rank one  geodesic property}
\subsection{Damek-Ricci spaces}
Let $({\mathfrak n},[\cdot,\cdot]_{\mathfrak n})$ be a two-step nilpotent algebra with an inner product $\langle\cdot,\cdot\rangle_{\mathfrak n}$. Denote by ${\mathfrak z}$ the center of ${\mathfrak n}$ and by ${\mathfrak v}$ the orthogonal complement of ${\mathfrak z}$. Then, $[{\mathfrak v},{\mathfrak v}] \subset {\mathfrak z}$ and each $Z\in {\mathfrak z}$ defines an endomorphism $J_Z: {\mathfrak v}\rightarrow {\mathfrak v}$;
\begin{eqnarray}
 \langle J_Z V, V_1\rangle := \langle Z,[V,V_1]_{\mathfrak n}\rangle_{\mathfrak n}\hspace{2mm}V, V_1\in {\mathfrak v}.
\end{eqnarray}
When $J_Z$ satisfies
\begin{eqnarray}\label{genheisalg}
   (J_Z)^2 = - \vert Z\vert^2 {\rm id}_{\mathfrak v}
\end{eqnarray}for $Z\in {\mathfrak z}$, we call $({\mathfrak n},[\cdot,\cdot]_{\mathfrak n})$
together with $\langle\cdot,\cdot\rangle_{\mathfrak n}$ a {\it generalized Heisenberg algebra}. Remark that polarization of (\ref{genheisalg}) yields
\begin{eqnarray}\label{innerproduct}
 J_Z J_{Z_1} + J_{Z_1} J_Z = - 2 \langle Z, Z_1\rangle_{\mathfrak n}\, {\rm id}_{\mathfrak v}.
\end{eqnarray}The endomorphism $J_Z$ fulfills the basic formulae;
\begin{eqnarray}\label{algebrastr-a}
& &\langle J_Z U, J_{Z_1}V\rangle_{\mathfrak n} + \langle J_{Z_1} U, J_{Z}V\rangle_{\mathfrak n} = 2 \langle U,V\rangle_{\mathfrak n} \langle Z, Z_1 \rangle_{\mathfrak n}\\ 
& &
\label{bracket}
 [J_ZU, V] - [U, J_ZV] = - 2 \langle U,V\rangle_{\mathfrak n} Z \\ 
& & [J_ZU, J_{Z}V] = - \vert Z\vert^2[U,V] - 2 \langle U, J_ZV\rangle_{\mathfrak n} Z
 \end{eqnarray} and then
 \begin{eqnarray}
  [V, J_Z V] = \vert V\vert^2 Z.
 \end{eqnarray}
 Let $({\mathfrak n},[\cdot,\cdot]_{\mathfrak n})$ with $\langle\cdot,\cdot\rangle_{\mathfrak n}$ be a generalized Heisenberg algebra and $N$ be a simply connected Lie group with  Lie algebra $({\mathfrak n},[\cdot,\cdot]_{\mathfrak n})$. Via the exponential map $\exp_{\mathfrak n} : {\mathfrak n} = {\mathfrak v} \oplus {\mathfrak z} \rightarrow N$; $V + Z \mapsto \exp_{\mathfrak n}(V+Z)$\,  the group structure on $N$ is described by the Campbell-Hausdorff formula;
\begin{eqnarray*}
  \exp_{\mathfrak n}(V+Z)\cdot  \exp_{\mathfrak n}(V_1+Z_1) =  \exp_{\mathfrak n}(V+V_1+Z+Z_1+ \frac{1}{2}[V,V_1]).
 \end{eqnarray*}

 Let ${\mathfrak s} = {\mathfrak n} \oplus {\mathfrak a} = {\mathfrak v}\oplus{\mathfrak z}\oplus {\mathfrak a}$ be a one dimensional extension of a generalized Heisenberg algebra ${\mathfrak n}= {\mathfrak v}\oplus{\mathfrak z}$. Here ${\mathfrak a}$ is a one-dimensional vector space with a non-zero vector $A$. Define a bracket structure $[\cdot,\cdot]_{\mathfrak s}$ on ${\mathfrak s}$ by
 \begin{eqnarray*}
  [V+Z+ \ell A, V_1+Z_1+\ell_1 A]_{\mathfrak s} = \left(\frac{\ell}{2} V_1 - \frac{\ell_1}{2} V\right) + \left(\ell Z_1- \ell_1 Z + [V,V_1]_{\mathfrak n}\right) + 0 A
 \end{eqnarray*}for $V, V_1\in {\mathfrak v}$, $Z, Z_1\in {\mathfrak z}$, $\ell, \ell_1\in {\Bbb R}$ and an inner product $\langle\cdot,\cdot\rangle$ on ${\mathfrak s}$ by\begin{eqnarray*}
  \langle V+Z+\ell A, V_1+Z_1+\ell_1 A\rangle = \langle V,V_1\rangle_{\mathfrak n}+ \langle Z, Z_1\rangle_{\mathfrak n} + \ell\ell_1.
 \end{eqnarray*} Consider the product $S := N \times {\Bbb R}$ of $N$ with ${\Bbb R}$.
  A group structure on $S$ is given by, for $\left(\exp_{\mathfrak n}(V+Z), a\right), \left(\exp_{\mathfrak n}(V_1+Z_1), a_1\right)\in S$
 \begin{eqnarray*}
  & &\left(\exp_{\mathfrak n}(V+Z), a\right)\cdot \left(\exp_{\mathfrak n}(V_1+Z_1), a_1\right)\\ \nonumber
  &=& \left(\exp_{\mathfrak n}(V+ e^{a/2} V_1 + Z+ e^a Z_1 + \frac{1}{2} e^{a/2}[V,V_1]), a+a_1 \right).
 \end{eqnarray*}  See \cite{BTV, Re} for this.
 We call the simply connected Lie group $S$ whose Lie algebra is ${\mathfrak s}$ a {\it Damek-Ricci space}. $S$ is equipped with the left invariant Riemannian metric  induced from $\langle\cdot,\cdot\rangle$, denoted by the same symbol $\langle\cdot,\cdot\rangle$. For later use, we adopt another semi-direct product representation of $S$ as
\begin{eqnarray*}\label{upperhalfmodel}
 S \cong N \ltimes {\Bbb R}_+ = \{ (\exp_{\mathfrak n}(U+X), e^{\lambda})\, \vert\, U+X\in {\mathfrak n}, \lambda \in {\Bbb R} \}.
\end{eqnarray*}This representation corresponds to Poincar\'e upper half space model for a real hyperbolic space. For this refer to \cite{ADY} also. Note that the identity element is $e_S =(\exp_{\mathfrak n}(0_{\mathfrak v}+0_{\mathfrak z}), 1)$.  The exponential map \begin{eqnarray*}\exp_{\mathfrak n}\times \exp_{\mathfrak a} : {\mathfrak n}\oplus {\mathfrak a}  &\rightarrow& S = N \times {\Bbb R}; \\ \nonumber
U+X+ s A &\mapsto& (\exp_{\mathfrak n}(U+X), s)
\end{eqnarray*}is a diffeomorphism.

\begin{proposition}\rm \cite{ADY, BTV, Re}
A Damek-Ricci  space is a harmonic, Einstein Hadamard manifold.
\end{proposition}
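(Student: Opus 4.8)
The plan is to establish the three asserted features separately, drawing everything from the explicit algebraic data of $\mathfrak{s} = \mathfrak{v}\oplus\mathfrak{z}\oplus\mathfrak{a}$ together with the generalized Heisenberg identities (\ref{genheisalg})--(\ref{bracket}). The homogeneity and topology come essentially for free: a left-invariant metric makes $S$ a homogeneous, hence complete, Riemannian manifold, and the exponential map $\exp_{\mathfrak n}\times\exp_{\mathfrak a}:\mathfrak s\to S$ is a diffeomorphism onto $\mathbb{R}^n$, so $S$ is simply connected. Thus only the curvature sign remains to be checked for the Hadamard property.

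First I would compute the Levi-Civita connection $\nabla$ on left-invariant fields by the Koszul formula, substituting the bracket $[\cdot,\cdot]_{\mathfrak s}$ and the inner product $\langle\cdot,\cdot\rangle$ prescribed above; this produces closed expressions for $\nabla_X Y$ in which every term is either an $\mathfrak a$-component or a $J_Z V$-type vector. From $\nabla$ I would form the curvature tensor $R(X,Y)Z$ and the sectional curvature $K(X,Y)$ of an arbitrary plane spanned by an orthonormal pair. Verifying $K\le 0$ is then a purely algebraic matter: expanding $K$ in the $\mathfrak v\oplus\mathfrak z\oplus\mathfrak a$ decomposition, the $\mathfrak a$-direction supplies a hyperbolic term of constant curvature $-1$, while the mixed and $\mathfrak v$-$\mathfrak z$ terms are controlled by repeated use of $(J_Z)^2=-|Z|^2\,\mathrm{id}_{\mathfrak v}$ and its polarizations (\ref{innerproduct})--(\ref{algebrastr-a}), after which collecting squares shows the result is non-positive. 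Tracing the same curvature tensor against an orthonormal basis adapted to $\mathfrak v\oplus\mathfrak z\oplus\mathfrak a$ gives the Ricci tensor, and the identity $(J_Z)^2=-|Z|^2\,\mathrm{id}_{\mathfrak v}$ forces the partial traces over $\mathfrak v$ and over $\mathfrak z$ to collapse to scalars, so that $\mathrm{Ric}=c\,\langle\cdot,\cdot\rangle$ with $c$ expressible through $\dim\mathfrak v$ and $\dim\mathfrak z$; this is the Einstein property, whence $S$ is analytic.

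The main obstacle is harmonicity. Here I would show that the volume density $\Theta(r)$ of geodesic spheres depends only on $r$; by homogeneity it suffices to analyze the spheres about $e_S$ along a single radial geodesic. Using the Jacobi tensor and Riccati framework of Section 2, I would diagonalize the Jacobi operator $R(t)$ along the radial geodesic and verify that, owing to the $J_Z$-structure, its spectrum depends only on $r$ and not on the initial unit direction; integrating the induced shape operator $A'(t)\circ A^{-1}(t)$ then yields the closed form $\Theta(r)=c\,\sinh^{p}(r/2)\cosh^{q}(r/2)$, whose exponents $p,q$ are fixed by $\dim\mathfrak v$, $\dim\mathfrak z$, and which is manifestly radial. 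Since $\Theta$ is independent of direction, $S$ is harmonic; a more conceptual alternative would be to exhibit a radial solution of $\Delta f=0$ directly, but the density computation is the most self-contained. I expect the decisive step to be precisely the direction-independence of the spectrum of $R(t)$, since this is where the generalized Heisenberg relations conspire to make $S$ harmonic rather than merely Einstein.
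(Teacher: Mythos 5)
Your Hadamard and Einstein steps are sound in outline: completeness and simple connectedness are immediate from homogeneity and the diffeomorphism $\exp_{\mathfrak n}\times\exp_{\mathfrak a}$; the sign of the sectional curvature can indeed be extracted from the connection and curvature formulas by collecting non-positive squares (this is exactly what the paper's own Appendix I, Lemma \ref{nonpositivecurvature}, does, following \cite{Boggino}); and the trace computation giving ${\rm Ric}=-(\frac{m}{4}+k)\langle\cdot,\cdot\rangle$, $m=\dim{\mathfrak v}$, $k=\dim{\mathfrak z}$, goes through using the polarized H-type identities. Note also that the paper gives no proof of this proposition at all --- it is quoted from \cite{ADY, BTV, Re} --- so the comparison here is between your sketch and the arguments in those sources.

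The harmonicity step, however, rests on a false claim, and it is the step you yourself single out as decisive. The spectrum of the Jacobi operator in a Damek-Ricci space is \emph{not} independent of the unit direction: by the paper's own Proposition \ref{zeroeigenvector} (BTV, Theorem 1, pp.~96--98), for $u=V+Y+sA$ with $V\neq 0$, $Y\neq 0$ the eigenvalues of $R_u$ on ${\mathfrak q}_j$ are the roots of $(\kappa+1)(\kappa+\tfrac14)^2=\tfrac{27}{64}|V|^4|Y|^2(1+\mu_j)$, which vary with $|V|$, $|Y|$ and $\mu_j$, whereas for $u=A$ the spectrum is $\{-\tfrac14,-1\}$ with multiplicities $m$, $k$. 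This direction-dependence is not a technicality: it reflects the fact that the isotropy group of a non-symmetric Damek-Ricci space is not transitive on directions, which is precisely why these spaces are counterexamples to the Lichnerowicz conjecture --- they are harmonic \emph{despite} having direction-dependent curvature spectrum. So the property you plan to "verify" does not hold, and moreover, even if the spectrum of $R(t)$ were constant, $\det A(t)$ is not determined by that spectrum alone unless the eigenspaces are parallel along the geodesic, which also fails in general. The proofs in the cited literature avoid this trap: one either computes $\Delta r$ directly from the explicit distance function of Lemma \ref{distance-1} and checks that it equals $\frac{m+k}{2}\coth\frac r2+\frac k2\tanh\frac r2$, a function of $r$ alone, or one solves the Jacobi equation along an arbitrary geodesic explicitly using the algebraic structure (Damek's computation, reproduced in \cite{BTV}), obtaining $\Theta(r)=c\,\sinh^{m+k}\frac r2\,\cosh^{k}\frac r2$ with no dependence on the initial direction. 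Your target formula for $\Theta$ is correct, but the mechanism you propose for reaching it would fail.
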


\subsection{Coordinates}
We introduce global coordinates on a Damek-Ricci space $S$ by the aid of the exponential map.

Let $\{ V_1,\dots, V_m, Y_1,\dots, Y_k, A\}$ be a basis of $(\mathfrak{s}, \langle\cdot,\cdot\rangle)$ such that $\{ V_i\}$ and $\{Y_{\alpha}\}$ are 
bases of ${\mathfrak v}$ and ${\mathfrak z}$, respectively. Here $m = \dim \mathfrak{v}$, $k = \dim \mathfrak{z}$. Denote by $\{{\tilde v}_1, \dots, {\tilde v}_m, {\tilde y}_1, \dots, {\tilde y}_k, {\tilde \lambda}\}$  the corresponding coordinate functions on ${\mathfrak{s}}$.

Via the exponential map
 $\exp_{\mathfrak{n}}\times\exp_{\mathfrak{a}}$ we define coordinates $\{ v_1,\dots, v_m$, $y_1, \dots, y_k, \lambda\}$ on $S$ by
 \begin{eqnarray*}
(v_1,\dots, v_m, y_1,\dots, y_k, \lambda)(p) := ({\tilde v}_1, \dots, {\tilde v}_m, {\tilde y}_1, \dots, {\tilde y}_k, {\tilde \lambda})(\exp_{\mathfrak{n}}\times\exp_{\mathfrak{a}})^{-1}(p).
\end{eqnarray*}
For a later convenience we set $a = e^{\lambda}$ with respect to which $\{v_i, y_{\alpha}, a\}$ give coordinates of $S$.
The basis $\{ V_1,\dots, V_m, Y_1,\dots Y_k, A\}$ of ${\mathfrak{s}}$ induces left invariant vector fields on $S$ denoted by the same symbol, as $V_i(p) := (L_p)_{\ast e} V_i$ for instance.

Then we get the following.
\begin{lemma}\label{coordinatesons}\rm \cite{BTV}\hspace{2mm}At a point $p =(U,X,e^r)\in S$ of coordinates $\{v_i, y_{\alpha}, a = e^{\lambda}\}$
\begin{eqnarray}\label{localfields}
V_i &=&  \sqrt{a}\, \frac{\partial}{\partial v_i} - \frac{1}{2}\, \sqrt{a}\, \sum_{j,\alpha}\, \langle [V_i,V_j], Y_{\alpha}\rangle \, v_j\ \frac{\partial}{\partial y_{\alpha}}, \\ \nonumber
Y_{\alpha} &=& a\, \frac{\partial}{\partial y_{\alpha}}, \hspace{2mm}
A = \frac{\partial}{\partial \lambda} = a \frac{\partial}{\partial a}.
\end{eqnarray}
Conversely
\begin{eqnarray}
\frac{\partial}{\partial v_i} &=& \frac{1}{\sqrt{a}}\, V_i + \frac{1}{2a}\, \sum_{j,\alpha}\, \langle [V_i,V_j], Y_{\alpha}\rangle\, v_j\ Y_{\alpha},
\\ \nonumber
\frac{\partial}{\partial y_{\alpha}} &=& \frac{1}{a}\, Y_{\alpha}, \hspace{2mm}
\frac{\partial}{\partial \lambda} = a \frac{\partial}{\partial a} = A.
\end{eqnarray}
\end{lemma}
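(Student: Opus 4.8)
The plan is to read off each left-invariant field directly from its definition $V_i(p)=(L_p)_{\ast e}V_i$, using the standard identification of a left-invariant field with the velocity of a left-translated curve: for $\xi\in\mathfrak{s}$ and any curve $t\mapsto\gamma(t)$ in $S$ with $\gamma(0)=e_S$ and $\gamma'(0)=\xi$, one has $\xi(p)=\frac{d}{dt}\big|_{t=0}\,p\cdot\gamma(t)$. Since the chart $\exp_{\mathfrak n}\times\exp_{\mathfrak a}$ has differential the identity at the origin, I may take the adapted curves $\gamma(t)=(\exp_{\mathfrak n}(tV_i),0)$ for the direction $V_i\in\mathfrak{v}$, $\gamma(t)=(\exp_{\mathfrak n}(tY_\alpha),0)$ for $Y_\alpha\in\mathfrak{z}$, and $\gamma(t)=(\exp_{\mathfrak n}(0),t)$ for $A\in\mathfrak{a}$. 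Throughout I write the $\mathbb{R}$-factor additively as $\lambda$ (the parameter occupying the second slot of the group law), so that $a=e^{\lambda}$. The whole proof is then an application of the explicit group law followed by one differentiation.

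Concretely, fix $p$ with coordinates $(v_1,\dots,v_m,y_1,\dots,y_k,\lambda)$, that is $p=(\exp_{\mathfrak n}(U+X),\lambda)$ with $U=\sum_j v_j V_j$ and $X=\sum_\beta y_\beta Y_\beta$. Applying the group law with $(V_1,Z_1,\lambda_1)=(tV_i,0,0)$, the $\mathfrak{v}$-part of $p\cdot\gamma(t)$ is $U+e^{\lambda/2}tV_i$, its $\mathfrak{z}$-part is $X+\tfrac12 e^{\lambda/2}t\,[U,V_i]$, and its $\mathbb{R}$-part stays $\lambda$. Hence the $v_j$-coordinates vary by $t\sqrt a\,\delta_{ij}$, while the $y_\alpha$-coordinates vary by $\tfrac{t}{2}\sqrt a\,\langle[U,V_i],Y_\alpha\rangle$. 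Using antisymmetry, $[U,V_i]=\sum_j v_j[V_j,V_i]=-\sum_j v_j[V_i,V_j]$, so the latter equals $-\tfrac{t}{2}\sqrt a\sum_j v_j\langle[V_i,V_j],Y_\alpha\rangle$. Differentiating at $t=0$ gives exactly
\[
V_i=\sqrt a\,\frac{\partial}{\partial v_i}-\frac12\sqrt a\sum_{j,\alpha}\langle[V_i,V_j],Y_\alpha\rangle\,v_j\,\frac{\partial}{\partial y_\alpha}.
\]

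The remaining two fields are immediate: with $(0,tY_\alpha,0)$ only the $\mathfrak{z}$-part changes, by $e^{\lambda}tY_\alpha$, giving $Y_\alpha=a\,\partial/\partial y_\alpha$; with $(0,0,t)$ only $\lambda\mapsto\lambda+t$, giving $A=\partial/\partial\lambda$, and the chain rule with $a=e^{\lambda}$ yields $\partial/\partial\lambda=a\,\partial/\partial a$. The converse formulae follow by inverting this triangular linear system: substituting $\partial/\partial y_\alpha=\tfrac1a Y_\alpha$ into the expression for $V_i$ and solving for $\partial/\partial v_i$ produces $\partial/\partial v_i=\tfrac{1}{\sqrt a}V_i+\tfrac{1}{2a}\sum_{j,\alpha}\langle[V_i,V_j],Y_\alpha\rangle\,v_j\,Y_\alpha$, together with $\partial/\partial y_\alpha=\tfrac1a Y_\alpha$ and $\partial/\partial\lambda=A$.

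There is no genuine difficulty here beyond careful bookkeeping; the one step that requires attention is the $\mathfrak{z}$-slot of the Campbell--Hausdorff product, where the factor $\tfrac12 e^{\lambda/2}[V,V_1]$ and the antisymmetry of the bracket together produce the $-\tfrac12$ and the ordering $\langle[V_i,V_j],Y_\alpha\rangle$ in the final formula. Apart from that, the computation is a direct differentiation of the stated group law, so I expect the main obstacle to be purely organizational—keeping the two meanings of the $\mathbb{R}$-parameter (the additive $\lambda$ versus the multiplicative coordinate $a=e^{\lambda}$) consistent throughout.
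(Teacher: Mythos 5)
Your proof is correct. The paper gives no proof of this lemma at all—it is quoted directly from \cite{BTV}—and your derivation, differentiating the left translation $p\cdot\gamma(t)$ along the adapted curves $(\exp_{\mathfrak n}(tV_i),0)$, $(\exp_{\mathfrak n}(tY_\alpha),0)$, $(\exp_{\mathfrak n}(0),t)$ using the explicit group law, is the standard computation; the two delicate points, namely the antisymmetry $[U,V_i]=-\sum_j v_j[V_i,V_j]$ producing the $-\tfrac12$ coefficient, and the consistent treatment of the additive parameter $\lambda$ versus the multiplicative coordinate $a=e^{\lambda}$ (the paper's own statement of the group law and of $e_S$ is notationally inconsistent on this point), are both handled correctly, as is the inversion of the triangular system for the converse formulae.
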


\subsection{Levi-Civita connection and Riemannian curvature tensor}

Let $\nabla$ be the Levi-Civita connection of a Damek-Ricci space $S$.
The connection $\nabla$ is determined by its values on the left invariant vector fields in $\mathfrak{s}$.

\begin{lemma}\label{connection}\rm \cite{BTV} For left invariant vector fields $V+Y+sA$ and $U+X+ r A$
\begin{eqnarray*}
\nabla_{(V+Y+sA)}\, (U+X+ r A) &=& - \frac{1}{2}\, J_X V - \frac{1}{2}\, J_Y U - \frac{1}{2}\, r V \\ \nonumber
&-& \frac{1}{2} [U,V] - r Y + \frac{1}{2} \langle U,V\rangle A +  \langle X,Y\rangle A.
\end{eqnarray*} Namely
\begin{eqnarray}\label{levicivita}
\nabla_{V}U &=& -\frac{1}{2} [U,V] + \frac{1}{2} \langle U,V\rangle A,\\ \nonumber
\nabla_{V} X &=&  \nabla_{X} V = - \frac{1}{2} J_{X}V,
\hspace{2mm} \nabla_{V} A =  -\frac{1}{2}V, \\ \nonumber
 \nabla_{Y}X &=& \nabla_XY =\langle X,Y\rangle A, \hspace{2mm}  \nabla_{Y} A = - Y, \\ \nonumber
 \nabla_A V &=& 0,\hspace{2mm}\nabla_A Y = 0,\hspace{2mm}\nabla_A A = 0.
  \end{eqnarray}
\end{lemma}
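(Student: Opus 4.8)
The statement is a purely algebraic identity for the left-invariant metric Lie group $(S,\langle\cdot,\cdot\rangle)$, so the natural tool is the Koszul formula. Since the metric is left invariant, the inner product of any two left-invariant fields is constant on $S$, so the three derivative terms in the general Koszul formula drop out and one is left with the purely bracket-theoretic identity
\begin{equation*}
2\langle \nabla_{W_1} W_2, W_3\rangle = \langle [W_1,W_2], W_3\rangle - \langle [W_2,W_3], W_1\rangle + \langle [W_3,W_1], W_2\rangle,
\end{equation*}
valid for all left-invariant $W_1,W_2,W_3\in{\mathfrak s}$. The plan is to evaluate the right-hand side with $W_1,W_2$ ranging over the three distinguished summands ${\mathfrak v}$, ${\mathfrak z}$ and ${\mathfrak a}$, and to read off $\nabla_{W_1}W_2$ from the resulting linear functional of $W_3$. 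By bilinearity this yields the six elementary rules in (\ref{levicivita}), whose sum recovers the displayed formula for general $V+Y+sA$ and $U+X+rA$.

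First I would record all the structure brackets directly from the defining bracket $[\cdot,\cdot]_{\mathfrak s}$: one finds $[{\mathfrak v},{\mathfrak v}]\subset{\mathfrak z}$ with $[V,U]=[V,U]_{\mathfrak n}$, the vanishing relations $[{\mathfrak v},{\mathfrak z}]=0$ and $[{\mathfrak z},{\mathfrak z}]=0$, and the ${\mathfrak a}$-actions $[V,A]=-\tfrac12 V$, $[Y,A]=-Y$, $[A,A]=0$. Next, the crucial algebraic input is the defining identity $\langle J_Z V, V_1\rangle=\langle Z,[V,V_1]_{\mathfrak n}\rangle$ together with the skew-symmetry of $J_Z$ (immediate from $\langle J_Z V, V\rangle=\langle Z,[V,V]\rangle=0$). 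These let me convert a Koszul term of the form $\langle [W,V], X\rangle$ with $X\in{\mathfrak z}$ into $\langle J_X W, V\rangle=-\langle J_X V, W\rangle$, which is exactly what produces the contributions $-\tfrac12 J_X V$ and $-\tfrac12 J_Y U$.

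Carrying out the six cases is then routine substitution. For $\nabla_V U$ with $V,U\in{\mathfrak v}$ only the brackets $[V,U]\in{\mathfrak z}$ and the ${\mathfrak a}$-actions survive, giving the ${\mathfrak z}$-component $-\tfrac12[U,V]$ and the term $\tfrac12\langle U,V\rangle A$; for $\nabla_V X$ the single nonvanishing Koszul term is handled by the $J_Z$ identity above and yields $-\tfrac12 J_X V$; the remaining cases $\nabla_Y X$, $\nabla_V A$, $\nabla_Y A$ and $\nabla_A(\cdot)$ follow in the same way from the ${\mathfrak a}$-action brackets. I expect the only point requiring genuine care to be the bookkeeping of signs when passing between $\langle [W,V], X\rangle$ and $\langle J_X V, W\rangle$ through the skew-symmetry of $J_X$, since a misplaced sign there propagates into every $J$-term; everything else is direct. (The formula is classical and is recorded in \cite{BTV}; the argument above reproduces it.)
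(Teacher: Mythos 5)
Your proposal is correct: the paper itself gives no proof of this lemma (it is quoted from \cite{BTV}), and the Koszul-formula computation you outline is the standard derivation, which checks out — the bracket relations you read off the definition of $[\cdot,\cdot]_{\mathfrak s}$ (namely $[V,U]=[V,U]_{\mathfrak n}$, $[{\mathfrak v},{\mathfrak z}]=[{\mathfrak z},{\mathfrak z}]=0$, $[V,A]=-\tfrac12 V$, $[Y,A]=-Y$) are exactly right, and the conversion $\langle [W,V],X\rangle=\langle J_XW,V\rangle=-\langle J_XV,W\rangle$ correctly produces the $J$-terms with the stated signs. Summing the six elementary rules indeed recovers the displayed general formula, so the argument is complete as proposed.
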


  The Riemannian curvature tensor $R$ is defined by
  \begin{eqnarray*}R(u,v)w := \left(\nabla_{\tilde u}\nabla_{\tilde v}{\tilde w}-\nabla_{\tilde v}\nabla_{\tilde u}{\tilde w}- \nabla_{[{\tilde u},{\tilde v}]}{\tilde w}\right)(p),\hspace{2mm}u,v,w\in T_pM,
  \end{eqnarray*} where ${\tilde u}$, ${\tilde v}$ and ${\tilde w}$ are extensions of  $u,v,w$.
   For the formula of $R$ in terms of the algebra ${\mathfrak s}$ refer to pp. \hspace{-1mm}84,\, 85, \cite{BTV}.  The Jacobi operator $R_u$, $u\in T_xS$, $\vert u\vert=1$ is an endomorphism,  defined by $R_u : u^{\perp} \rightarrow u^{\perp}$, $R_u(v) := R(v,u)u$.  Since $S$ is of non-positive sectional curvature, $R_u$ is negative semi-definite for any $u$.

    \subsection{The endomorphism $K_{V,Y}$}\label{endoj2}
 Let $V\in {\mathfrak v}$ and $Y\in{\mathfrak z}$ be non-zero vectors with the normalized unit vectors  $\displaystyle{{\hat V}=\frac{V}{\vert V\vert}}$, $\displaystyle{{\hat Y}=\frac{Y}{\vert Y\vert}}$. Denote by $Y^{\perp}$ a linear subspace in ${\mathfrak z}$;
 \begin{eqnarray}Y^{\perp}:=\{Z\in {\mathfrak z}\,\vert\, \langle Z,Y\rangle = 0\}.
 \end{eqnarray}
  \begin{definition}\cite{BTV}\label{defk}\,  \rm An endomorphism $K = K_{V,Y} : Y^{\perp} \rightarrow Y^{\perp}$ is defined by
\begin{eqnarray}
   Z \mapsto\, K(Z) := [{\hat V}, J_ZJ_{\hat Y}{\hat V}].
\end{eqnarray}
\end{definition}
\begin{proposition}\cite{BTV}\label{k1}\, \rm
 The endomorphism $K$ is skew-adjoint with respect to the inner product $\langle\cdot,\cdot\rangle$.
Thus $K^2$ is self-adjoint. 
\end{proposition}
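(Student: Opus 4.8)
The plan is to verify skew-adjointness directly from the definition $K(Z) = [\hat V, J_Z J_{\hat Y}\hat V]$ together with the defining relation $\langle J_Z V, V_1\rangle = \langle Z,[V,V_1]\rangle$ and the polarized identity (\ref{algebrastr-a}). First I would unwind the pairing $\langle K(Z), Z_1\rangle$ for $Z, Z_1 \in Y^{\perp}$. Writing $W := J_{\hat Y}\hat V \in {\mathfrak v}$ and reading the defining relation in the form $\langle [\hat V, J_Z W], Z_1\rangle = \langle J_{Z_1}\hat V, J_Z W\rangle$, I obtain
\[
\langle K(Z), Z_1\rangle = \langle J_{Z_1}\hat V,\, J_Z J_{\hat Y}\hat V\rangle .
\]
Exchanging the roles of $Z$ and $Z_1$ in the same manipulation gives $\langle K(Z_1), Z\rangle = \langle J_Z \hat V,\, J_{Z_1} J_{\hat Y}\hat V\rangle$.

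Next I would apply the Clifford-type identity (\ref{algebrastr-a}) with $U = \hat V$ and $V = J_{\hat Y}\hat V$, which yields
\[
\langle J_Z \hat V,\, J_{Z_1} J_{\hat Y}\hat V\rangle + \langle J_{Z_1}\hat V,\, J_Z J_{\hat Y}\hat V\rangle = 2\,\langle \hat V,\, J_{\hat Y}\hat V\rangle\,\langle Z, Z_1\rangle .
\]
The key observation is that the right-hand side vanishes: each $J_Z$ is skew-adjoint on ${\mathfrak v}$ (immediate from $\langle J_Z V, V_1\rangle = \langle Z,[V,V_1]\rangle$ and the antisymmetry of $[\cdot,\cdot]$), so $\langle \hat V, J_{\hat Y}\hat V\rangle = 0$. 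Comparing with the two expressions above, this says exactly $\langle K(Z), Z_1\rangle + \langle K(Z_1), Z\rangle = 0$, i.e. $\langle K(Z), Z_1\rangle = -\langle Z, K(Z_1)\rangle$, which is the asserted skew-adjointness. The self-adjointness of $K^2$ then follows formally from $(K^2)^{\ast} = (K^{\ast})^2 = (-K)^2 = K^2$.

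There is essentially no serious obstacle: the whole argument collapses to a two-line computation once one recognizes that (\ref{algebrastr-a}) is the right tool and that the cross-term $\langle \hat V, J_{\hat Y}\hat V\rangle$ drops out by skew-symmetry of $J_{\hat Y}$. The only point needing a moment's care is checking that $K$ genuinely maps $Y^{\perp}$ into $Y^{\perp}$, so that skew-adjointness on $Y^{\perp}$ is even meaningful. This follows from the analogous unwinding $\langle K(Z), Y\rangle = \langle J_Y \hat V,\, J_Z J_{\hat Y}\hat V\rangle = \vert Y\vert\,\langle W, J_Z W\rangle$, which is $0$ again by skew-adjointness of $J_Z$; hence $K(Z) \in Y^{\perp}$ for every $Z$, and in particular $K$ preserves $Y^{\perp}$.
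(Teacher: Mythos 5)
Your proof is correct. Note that the paper itself gives no argument for this proposition --- it is simply quoted from the reference [BTV] --- so there is no in-paper proof to compare against; your direct verification fills this in completely. The chain of identities is sound: the defining relation $\langle J_{Z_1}U,W\rangle=\langle Z_1,[U,W]\rangle$ converts $\langle K(Z),Z_1\rangle$ into $\langle J_{Z_1}\hat V, J_ZJ_{\hat Y}\hat V\rangle$, the polarized Clifford identity (\ref{algebrastr-a}) with $U=\hat V$, $V=J_{\hat Y}\hat V$ produces exactly the sum $\langle K(Z),Z_1\rangle+\langle K(Z_1),Z\rangle$, and the right-hand side vanishes because $\langle \hat V, J_{\hat Y}\hat V\rangle=\langle \hat Y,[\hat V,\hat V]\rangle=0$. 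Your closing check that $K$ actually lands in $Y^{\perp}$ is a worthwhile addition (one should also remark, though it is immediate, that $K(Z)=[\hat V,J_ZJ_{\hat Y}\hat V]\in[{\mathfrak v},{\mathfrak v}]\subset{\mathfrak z}$, so that orthogonality to $Y$ is all that remains to verify), and the formal step $(K^2)^{\ast}=(K^{\ast})^2=(-K)^2=K^2$ settles the self-adjointness claim.
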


 We decompose $Y^{\perp}$ orthogonally into a direct sum
 \begin{eqnarray}\label{eigenspacesofell}
 Y^{\perp} = L_0 \oplus \cdots\oplus L_{\ell}, \hspace{2mm} 
  L_j := {\rm Ker}\, (K^2 - \mu_j {\rm id}_{Y^{\perp}}),\, j = 0,\cdots,\ell
 \end{eqnarray}with the distinct eigenvalues
 \begin{eqnarray*}
 0\geq\mu_0 \geq \mu_1 > \cdots > \mu_{\ell} \geq -1
 \end{eqnarray*}of $K^2$. It is observed since $K^2\circ K = K\circ K^2$ that
 \begin{eqnarray*}
     K(L_j) \subset L_j,\hspace{2mm}j=0,\dots,\ell
 \end{eqnarray*}and from \cite{BTV} that when $\mu_0=0$
 \begin{eqnarray*}\label{charactofl}
 X\in L_0\, \Leftrightarrow \, J_XJ_Y V \in {\rm Ker\, ad}\ (V)
 \end{eqnarray*}
 and when $\mu_{\ell}= -1$
 \begin{eqnarray*}
 X\in L_{\ell}\, \Leftrightarrow \, J_XJ_Y V \in {\rm Ker\, ad}\ (V)^{\perp}.
 \end{eqnarray*}
 Define a subspace of ${\mathfrak v}\oplus{\mathfrak z}$
 \begin{eqnarray}
   {\mathfrak q}_j := {\rm Span}\, \{L_j, J_{L_j}V, J_{L_j}J_Y V\}, \, j= 0,\cdots, \ell,\end{eqnarray}
   when $\mu_{\ell} \not=-1$ and
   \begin{eqnarray}
   {\mathfrak q}_{\ell} := {\rm Span}\, \{L_{\ell}, J_{L_{\ell}}V\},\, {\rm when}\, \mu_{\ell} = -1.
 \end{eqnarray}Then, we define as an orthogonal direct sum
 \begin{eqnarray}\label{decompoq}
 {\mathfrak q} :=  {\mathfrak q}_0\oplus \dots \oplus  {\mathfrak q}_{\ell}
 \end{eqnarray}
 which has the form \begin{eqnarray}\label{spaceq}{\mathfrak q} = {\rm Span}\, \{L_{Y^{\perp}}V, L_{Y^{\perp}}L_YV, Y^{\perp}\}.
 \end{eqnarray} Refer for this to p.97,\ \cite{BTV}.

 We define a linear subspace  $(Y^{\perp})_J$ in $Y^{\perp}$ as 
 \begin{eqnarray}\label{J2condition-1}
 (Y^{\perp})_J := \{ Z\in Y^{\perp}\, \vert\, \exists Z'\in {\mathfrak z}\hspace{2mm}{\rm such \, that}\, J_ZJ_YV = J_{Z'}V \}
 \end{eqnarray}and its orthogonal complement $(Y^{\perp})_J^{\perp}$  so that
 \begin{eqnarray}\label{decompy}
 Y^{\perp} = (Y^{\perp})_J \oplus (Y^{\perp})_J^{\perp}.
 \end{eqnarray}  We remark that in (\ref{J2condition-1}) $Z'$ belongs to $(Y^{\perp})_J$. Actually from (\ref{J2condition-1}) the $Z'$ must satisfy $J_{Z'}J_YV = J_{(-\vert Y\vert^2 Z)}J_Y V$, since from $J_Y(J_ZJ_YV) = J_Y( J_{Z'}V)$,\,  one has from (\ref{algebrastr-a}) $- J_Z (J_Y J_Y V) = - J_{Z'}J_YV$ and hence $- J_Z(- \vert Y\vert^2)V) = - J_{Z'}J_YV$.
   \begin{note}\rm If ${\mathfrak s}$ satisfies the $J^2$-condition, then, one has exactly $Y^{\perp}= (Y^{\perp})_J$.
   \end{note}
       \begin{lemma}\label{k}\rm
        $K((Y^{\perp})_J) \subset (Y^{\perp})_J$ and $K((Y^{\perp})_J^{\perp}) \subset (Y^{\perp})_J^{\perp}$.
        \end{lemma}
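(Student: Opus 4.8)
The plan is to reduce the whole statement to the single inclusion $K((Y^{\perp})_J) \subset (Y^{\perp})_J$, and then obtain the inclusion for the orthogonal complement for free from the skew-adjointness of $K$ (Proposition \ref{k1}). Indeed, if $W := (Y^{\perp})_J$ is $K$-invariant and $K$ is skew-adjoint on $Y^{\perp}$, then for $u \in W^{\perp}$ and $w \in W$ one has $\langle Ku, w\rangle = -\langle u, Kw\rangle = 0$, since $Kw \in W$; hence $Ku \in W^{\perp}$. So it suffices to treat $(Y^{\perp})_J$. Throughout I write $J_{\mathfrak z}V := \{ J_W V\,\vert\, W\in{\mathfrak z}\}$, noting $J_{\mathfrak z}V \subset V^{\perp}$ because $\langle J_W V, V\rangle = 0$.

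The heart of the argument is that $K$ acts extremely simply on $(Y^{\perp})_J$. Let $Z \in (Y^{\perp})_J$, so $J_Z J_Y V = J_{Z'}V$ for some $Z'\in{\mathfrak z}$. Since $J_{\hat Y} = |Y|^{-1}J_Y$ and $\hat V = |V|^{-1}V$, the definition of $K$ gives $K(Z) = |V|^{-2}|Y|^{-1}[V, J_Z J_Y V] = |V|^{-2}|Y|^{-1}[V, J_{Z'}V]$, and the identity $[V, J_{Z'}V] = |V|^2 Z'$ collapses this to $K(Z) = |Y|^{-1}Z'$. Thus $K$ sends $Z$ to a scalar multiple of its ``$J^2$-partner'' $Z'$, and since $(Y^{\perp})_J$ is a linear subspace it remains only to check $Z' \in (Y^{\perp})_J$.

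For this I would first record that $Z' \in Y^{\perp}$: pairing $J_{Z'}V = J_Z J_Y V$ with $J_Y V$ and using that each $J_W$ is skew-symmetric together with (\ref{algebrastr-a}) gives $|V|^2\langle Z', Y\rangle = \langle J_{Z'}V, J_Y V\rangle = \langle J_Z(J_Y V), J_Y V\rangle = 0$, whence $\langle Z', Y\rangle = 0$. With $Z, Z' \in Y^{\perp}$ both orthogonal to $Y$, the anticommutation relation (\ref{innerproduct}) yields $J_{Z'}J_Y = -J_Y J_{Z'}$ and $J_Y J_Z = -J_Z J_Y$; combining these with $J_Y^2 = -|Y|^2\,\mathrm{id}$ from (\ref{genheisalg}) I compute $J_{Z'}J_Y V = -J_Y J_{Z'}V = -J_Y J_Z J_Y V = J_Z J_Y^2 V = -|Y|^2 J_Z V$. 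Hence $J_{Z'}J_Y V = J_{(-|Y|^2 Z)}V \in J_{\mathfrak z}V$, so $Z' \in (Y^{\perp})_J$, and therefore $K(Z) = |Y|^{-1}Z' \in (Y^{\perp})_J$. (This computation also reproves the remark preceding the lemma that the $Z'$ of (\ref{J2condition-1}) lies in $(Y^{\perp})_J$.)

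I do not expect a genuine obstacle: once one notices the collapse $K(Z) = |Y|^{-1}Z'$, the statement is almost immediate. The only point requiring care is the sign bookkeeping in the anticommutation relations, which is precisely where the orthogonality $Z, Z' \perp Y$ enters — without $\langle Z', Y\rangle = 0$ an extra term $-2\langle Z', Y\rangle V$ would survive, and since $V \notin J_{\mathfrak z}V$ (as $J_{\mathfrak z}V \subset V^{\perp}$) the required membership $J_{K(Z)}J_Y V \in J_{\mathfrak z}V$ would fail. Establishing $Z' \perp Y$ first is thus the step to get right, after which the lemma follows.
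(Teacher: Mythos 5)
Your proof is correct, and it is essentially the intended argument: the paper states Lemma \ref{k} without proof, but the remark immediately preceding it (that the partner $Z'$ in (\ref{J2condition-1}) again lies in $(Y^{\perp})_J$) is exactly your key step, which you combine with the collapse $K(Z)=\vert Y\vert^{-1}Z'$ via $[V,J_{Z'}V]=\vert V\vert^2 Z'$ and the skew-adjointness of $K$ (Proposition \ref{k1}) to handle $(Y^{\perp})_J^{\perp}$. Your explicit verification that $\langle Z',Y\rangle=0$ is a welcome addition, since the paper's remark uses the anticommutation $J_YJ_{Z'}=-J_{Z'}J_Y$ without noting that it requires precisely this orthogonality.
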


                \begin{lemma}\label{ksquare}\rm
    (i)\, The endomorphism $K^2 =K^2_{V,Y}$ is equal to $- id_{(Y^{\perp})_J}$, when restricted to $(Y^{\perp})_J$.

    (ii)\, For $Z\in Y^{\perp}$\, it holds $K^2(Z) = - Z$ if and only if $Z\in (Y^{\perp})_J$.

       \vspace{2mm}
     (iii)\, Any eigenvalue $\mu$ of $K^2\vert_{(Y^{\perp})_J^{\perp}}$ restricted to $(Y^{\perp})_J^{\perp}$ satisfies\, $-1 < \mu \leq 0$.
     \end{lemma}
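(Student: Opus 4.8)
The plan is to make the endomorphism $K=K_{V,Y}$ completely explicit on $Y^{\perp}$ and to reduce all three assertions to a single sharp norm identity. Throughout I work with the unit vectors $\hat V$, $\hat Y$ (so I may take $|V|=|Y|=1$) and write $\Phi(Z):=J_ZJ_YV\in{\mathfrak v}$, so that $K(Z)=[V,\Phi(Z)]$. Two elementary consequences of the structure equations are isolated first: from (\ref{algebrastr-a}) one gets $|J_ZW|^2=|Z|^2|W|^2$ for $W\in{\mathfrak v}$, so the map $\iota:{\mathfrak z}\to{\mathfrak v}$, $\iota(Z_1):=J_{Z_1}V$, is a linear isometry onto its image ${\mathcal W}:=J_{\mathfrak z}V$; and from the defining relation $\langle[V,V_1],Z_1\rangle=\langle J_{Z_1}V,V_1\rangle$ one gets, for any $W\in{\mathfrak v}$, that $[V,W]=\iota^{-1}(P\,W)$, where $P$ is orthogonal projection onto ${\mathcal W}$ (indeed $\langle[V,W],Z_1\rangle=\langle J_{Z_1}V,W\rangle=\langle J_{Z_1}V,PW\rangle$). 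In particular $[V,J_{Z_1}V]=Z_1$, the normalized form of $[V,J_ZV]=|V|^2Z$. Consequently $K=\iota^{-1}\circ P\circ\Phi$, and since $\iota$ is an isometry,
\begin{equation*}
|K(Z)|^2=|P\,\Phi(Z)|^2\le|\Phi(Z)|^2=|J_ZJ_YV|^2=|Z|^2,
\end{equation*}
with equality if and only if $\Phi(Z)\in{\mathcal W}$, that is, if and only if $J_ZJ_YV=J_{Z'}V$ for some $Z'$, i.e. $Z\in(Y^{\perp})_J$.

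For (i) I would take $Z\in(Y^{\perp})_J$ and write $J_ZJ_YV=J_{Z'}V$; as in the remark following (\ref{decompy}) one has $Z'\in Y^{\perp}$ (from $\langle J_{Z'}V,J_YV\rangle=\langle\Phi(Z),J_YV\rangle=0$ and $\langle J_{Z'}V,J_YV\rangle=|V|^2\langle Z',Y\rangle$ by (\ref{algebrastr-a})). Then $K(Z)=[V,\Phi(Z)]=[V,J_{Z'}V]=Z'$. To compute $K^2(Z)=K(Z')$, I use the anticommutation $J_YJ_W=-J_WJ_Y$ for $W\in Y^{\perp}$, coming from (\ref{innerproduct}): applying $J_Y$ on the left of $J_ZJ_YV=J_{Z'}V$ gives $J_ZV=J_YJ_ZJ_YV=-J_{Z'}J_YV$, i.e. $J_{Z'}J_YV=-J_ZV$. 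Hence $K^2(Z)=[V,J_{Z'}J_YV]=[V,-J_ZV]=-Z$, which is exactly $K^2=-\,\mathrm{id}_{(Y^{\perp})_J}$.

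For (ii) and (iii) I would invoke Proposition \ref{k1}: since $K$ is skew-adjoint, $K^2$ is self-adjoint with $\langle K^2Z,Z\rangle=-|K(Z)|^2\le0$, so every eigenvalue satisfies $\mu\le0$. For (ii), if $K^2Z=-Z$ then $-|Z|^2=\langle K^2Z,Z\rangle=-|K(Z)|^2$, so equality holds in the displayed estimate, forcing $Z\in(Y^{\perp})_J$; the converse is (i). For (iii), by Lemma \ref{k} the operator $K^2$ preserves $(Y^{\perp})_J^{\perp}$, so its restriction there is self-adjoint with $\mu\le0$; and if $Z\in(Y^{\perp})_J^{\perp}$, $Z\ne0$, satisfies $K^2Z=\mu Z$, then $Z\notin(Y^{\perp})_J$ forces $\Phi(Z)\notin{\mathcal W}$, hence $|P\Phi(Z)|^2<|\Phi(Z)|^2=|Z|^2$ and $\mu|Z|^2=-|P\Phi(Z)|^2>-|Z|^2$, giving $-1<\mu\le0$.

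The main obstacle is the bookkeeping in the first paragraph: recognizing $K$ as the composite $\iota^{-1}\circ P\circ\Phi$, and verifying both that $\iota$ is an isometry and that $[V,\cdot]$ is its adjoint-type inverse $\iota^{-1}\circ P$. Once this structural description is in place, the norm identity $|K(Z)|^2=|P\,\Phi(Z)|^2$ together with its sharp equality case does essentially all of the work for the three parts. The only genuinely algebraic step, the identity $J_{Z'}J_YV=-J_ZV$ used in (i), is a short manipulation with the anticommutation relation from (\ref{innerproduct}); care is needed only to confirm $Z'\in Y^{\perp}$ so that this anticommutation is legitimate.
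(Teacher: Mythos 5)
Your proof is correct, and it is worth noting that the paper itself states Lemma \ref{ksquare} (like Lemma \ref{k} before it) without any proof, implicitly deferring to the treatment of $K_{V,Y}$ in \cite{BTV}; so there is no in-paper argument to compare against, and your write-up actually fills a gap rather than duplicating one. Your structural reduction is sound: the identity $\langle J_{Z_1}V,J_{Z_2}V\rangle=\langle Z_1,Z_2\rangle\vert V\vert^2$ from (\ref{algebrastr-a}) does make $\iota(Z_1)=J_{Z_1}\hat V$ an isometry onto ${\mathcal W}=J_{\mathfrak z}\hat V$, the computation $\langle[\hat V,W],Z_1\rangle=\langle J_{Z_1}\hat V,W\rangle$ correctly identifies $[\hat V,\cdot]=\iota^{-1}\circ P$, and the resulting inequality $\vert K(Z)\vert\le\vert Z\vert$ with equality exactly on $(Y^{\perp})_J$ cleanly delivers (ii) and the strict bound $\mu>-1$ in (iii) once skew-adjointness of $K$ (Proposition \ref{k1}) is invoked. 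The anticommutation step for (i) is also right: you verify $Z'\in Y^{\perp}$ before using $J_YJ_{Z'}=-J_{Z'}J_Y$, which is the one place where carelessness would invalidate the computation $K^2(Z)=[\hat V,-J_Z\hat V]=-Z$. One small remark: your only appeal to the (also unproved) Lemma \ref{k} is the $K^2$-invariance of $(Y^{\perp})_J^{\perp}$ in part (iii), but this is dispensable --- since $K^2$ is self-adjoint and restricts to $-\mathrm{id}$ on $(Y^{\perp})_J$ by your part (i), it automatically preserves the orthogonal complement: for $Z\in(Y^{\perp})_J^{\perp}$ and $W\in(Y^{\perp})_J$ one has $\langle K^2Z,W\rangle=\langle Z,K^2W\rangle=-\langle Z,W\rangle=0$. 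Making that substitution would render your proof of Lemma \ref{ksquare} entirely independent of the unproved Lemma \ref{k}.
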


 Therefore, we have from (\ref{charactofl})
 \begin{eqnarray*}
  (Y^{\perp})_J = L_{\ell},  \hspace{3mm}
  (Y^{\perp})_J^{\perp} = L_0 \oplus \dots \oplus L_{\ell-1},
 \end{eqnarray*}provided $\mu_{\ell} = -1$, since $J_{\mathfrak z}V = {\rm Ker\, ad}(V)^{\perp}$.

 \begin{lemma}\label{orthogonal-2}\rm Assume $\mu_{\ell} = -1$. Let $j, j' = 0,\dots, \ell-1$. Then
 \begin{eqnarray*}
{\rm (i)}\hspace{2mm}J_{L_j}V &\perp& J_{L_{j'}}V,\hspace{2mm}j\not= j', \\ \nonumber
{\rm (ii)}\hspace{2mm}J_{L_j}J_YV &\perp& J_{L_{j'}}J_YV,\hspace{2mm}j\not= j',\\ \nonumber
{\rm (iii)}\hspace{2mm}J_{L_j}V &\perp& J_{L_{j'}} J_Y V,\hspace{2mm}j\not= j',\\ \nonumber
{\rm (iv)}\hspace{2mm}J_{L_j}V &\cap& J_{L_{j}} J_Y V = \{0\}.
 \end{eqnarray*}
 \end{lemma}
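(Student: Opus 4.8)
The plan is to reduce all four statements to the spectral decomposition of the self-adjoint operator $K^2$ (Proposition \ref{k1}), the essential device being a single bilinear identity relating the ``mixed'' pairing $\langle J_Z V, J_W J_Y V\rangle$ to the operator $K$ itself. Concretely, I would first establish that for $Z, W\in Y^{\perp}$
\begin{equation*}
\langle J_Z V, J_W J_Y V\rangle = \vert V\vert^2\, \vert Y\vert\, \langle Z, K W\rangle .
\end{equation*}
This follows at once from the defining relation $\langle Z, [V_1,V_2]\rangle = \langle J_Z V_1, V_2\rangle$ together with Definition \ref{defk}: unnormalizing $\hat V$ and $\hat Y$ shows $[V, J_W J_Y V] = \vert V\vert^2 \vert Y\vert\, K(W)$, and pairing this element of ${\mathfrak z}$ against $Z$ reproduces the left-hand side.

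For (i) and (ii) I would polarize the structure relation (\ref{algebrastr-a}): taking $U=V$, $Z_1=W$ yields $\langle J_Z V, J_W V\rangle = \vert V\vert^2\langle Z,W\rangle$, while taking $U = J_Y V$ together with $\vert J_Y V\vert^2 = \vert Y\vert^2\vert V\vert^2$ yields $\langle J_Z J_Y V, J_W J_Y V\rangle = \vert V\vert^2\vert Y\vert^2\langle Z,W\rangle$. Since $L_j$ and $L_{j'}$ are eigenspaces of the self-adjoint $K^2$ for distinct eigenvalues $\mu_j\neq\mu_{j'}$, they are orthogonal, so $\langle Z,W\rangle = 0$ for $Z\in L_j$, $W\in L_{j'}$ with $j\neq j'$; this gives (i) and (ii). For (iii) I would use the key identity instead: $K$ preserves each $L_{j'}$ (since $K^2\circ K = K\circ K^2$), hence $KW\in L_{j'}$ is orthogonal to $Z\in L_j$, and the identity forces $\langle J_Z V, J_W J_Y V\rangle = 0$.

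The main obstacle is (iv), where both vectors come from the \emph{same} $L_j$, so no orthogonality of eigenspaces is available. Here I would argue by contradiction. Suppose $\xi := J_Z V = J_W J_Y V \neq 0$ with $Z,W\in L_j$, $j\leq\ell-1$. Evaluating $\vert\xi\vert^2$ in three ways gives $\vert Z\vert^2\vert V\vert^2 = \vert W\vert^2\vert Y\vert^2\vert V\vert^2 = \vert V\vert^2\vert Y\vert\,\langle Z, KW\rangle$, the last equality being the key identity. The first equality gives $\vert Z\vert = \vert Y\vert\,\vert W\vert$, while equating the first and last expressions gives $\vert Z\vert^2 = \vert Y\vert\,\langle Z, KW\rangle$. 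Combining these with Cauchy--Schwarz and $\vert KW\vert^2 = -\mu_j\vert W\vert^2$ (which holds because $K$ is skew-adjoint and $K^2 = \mu_j\,{\rm id}$ on $L_j$) forces $\sqrt{-\mu_j}\geq 1$, i.e. $\mu_j\leq -1$. But for $j\leq\ell-1$ one has $L_j\subset (Y^{\perp})_J^{\perp}$, on which $-1<\mu\leq 0$ by Lemma \ref{ksquare}(iii); this contradiction forces $\xi = 0$.

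The two delicate points are therefore the precise bookkeeping of the normalization constants $\vert V\vert, \vert Y\vert$ in the key identity, and the strict inequality $\mu_j > -1$ used in part (iv). The latter is exactly where the standing hypothesis $\mu_\ell = -1$ enters, since it is what isolates $(Y^{\perp})_J = L_\ell$ and confines $L_0,\dots,L_{\ell-1}$ to the subspace $(Y^{\perp})_J^{\perp}$ on which the eigenvalues of $K^2$ are strictly greater than $-1$.
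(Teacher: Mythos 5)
Your proposal is correct, and in fact the paper states this lemma with no proof at all, so your argument supplies exactly the missing content. Moreover, it does so with the paper's own machinery: your key identity $\langle J_Z V, J_W J_Y V\rangle = \vert V\vert^2\vert Y\vert\,\langle Z, KW\rangle$ is precisely the (unnormalized form of the) computation the paper uses right after the lemma, where $\langle W_1,W_2\rangle = \langle K(KZ_1),Z_1\rangle/\sqrt{-\mu_j} = -\sqrt{-\mu_j}$ for unit vectors, and again in Lemma \ref{ljyeperp}, where $[V,J_ZJ_{\hat Y}\hat V]=\vert V\vert K(Z)$. Parts (i)--(iii) are routine once one has polarization of (\ref{algebrastr-a}), orthogonality of the eigenspaces of the self-adjoint operator $K^2$ (Proposition \ref{k1}), and $K(L_{j'})\subset L_{j'}$; part (iv) is the only place needing an idea, and your three-way evaluation of $\vert\xi\vert^2$, combined with Cauchy--Schwarz and $\vert KW\vert^2=-\mu_j\vert W\vert^2$, correctly forces $\mu_j\le -1$, contradicting $\mu_j>-1$. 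One simplification worth noting: for $j\le\ell-1$ the strict inequality $\mu_j>-1$ follows already from the strict ordering of the distinct eigenvalues, $\mu_j>\mu_\ell=-1$, so the detour through $(Y^{\perp})_J^{\perp}$ and Lemma \ref{ksquare}(iii) is not actually needed, though it is of course valid under the standing hypothesis $\mu_\ell=-1$.
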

 \begin{remark}\label{notorthogonal}\rm  It does not hold
 \begin{eqnarray*}J_{L_j} V \perp J_{L_{j}}J_Y V, \hspace{2mm}j= 1,\dots, \ell-1,
 \end{eqnarray*}except for  $j = 0$ with $\mu_0 = 0$ as
 \begin{eqnarray}\label{perpendicular}
 J_{L_0}V &\perp& J_{L_{0}} J_Y V.
 \end{eqnarray}
In fact, for  vectors $W_1 := J_{Z_1}{\hat V}\in J_{L_j} V$, $W_2:= J_{Z_2}J_{\hat Y}{\hat V} \in  J_{L_{j}}J_Y V$, $0< j\leq \ell-1$, where $Z_1$ and $\displaystyle{Z_2 =
\frac{1}{\sqrt{-\mu_j}} K(Z_1)}$  are unit vectors in $L_j$. We have then
\begin{eqnarray}\label{notorthogonal} 
\langle W_1,W_2\rangle = - \sqrt{-\mu_j}.
\end{eqnarray}Actually the inner product is
\begin{eqnarray*}
\langle W_1,W_2\rangle = 
 \frac{1}{\sqrt{-\mu_j}}\langle [{\hat V}, J_{KZ_1}J_{\hat Y}{\hat V}], Z_1\rangle
= \frac{1}{\sqrt{-\mu_j}}\langle K(KZ_1), Z_1\rangle. 
\end{eqnarray*}
 \end{remark}

\subsection{Geodesics of a Damek-Ricci space}

In what follows, we use customarily symbols $U, V$ and $W$ for elements of ${\mathfrak v}$ and $X,Y$ and $Z$ for elements of ${\mathfrak z}$.


\begin{lemma}\label{geod} \rm \cite{BTV, IStokyo, Re}\hspace{2mm}Let $V+Y+ s A \in {\mathfrak s}$ be a unit vector and $\gamma$ be a geodesic in $S$ of $\gamma(0) = e_S$, $\gamma'(0)=V+Y+sA$. Then $\gamma(t)$ can be represented by
\begin{eqnarray}
\gamma(t) = \Big(\exp_{\mathfrak n}\big(\frac{2\theta(t)(1-s\theta(t))}{\chi(t)}V + \frac{2\theta^2(t)}{\chi(t)} J_YV + \frac{2\theta(t)}{\chi(t)} Y\big), \big(\frac{1-\theta^2(t)}{\chi(t)}\big) A\Big).
 \end{eqnarray}
 Here
 \begin{eqnarray}
  \theta(t) := \tanh \frac{t}{2},\, \chi(t) := (1-s\theta(t))^2 + \vert Y\vert^2 \theta^2(t).
 \end{eqnarray}
 \end{lemma}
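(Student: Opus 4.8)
The plan is to read the geodesic equation $\nabla_{\gamma'}\gamma'=0$ in the left-invariant frame furnished by Lemma \ref{connection}, and then to check that the curve displayed in the statement solves it with the prescribed initial data. Writing the velocity of an arbitrary curve as $\gamma'(t)=V(t)+Y(t)+s(t)A$ with $V(t)\in{\mathfrak v}$, $Y(t)\in{\mathfrak z}$, $s(t)\in{\Bbb R}$, and expanding $\nabla_{\gamma'}\gamma'$ by bilinearity using the connection values of Lemma \ref{connection} (together with $[V,V]=0$, $\langle J_{Y}V,V\rangle=0$ since $J_Y$ is skew-adjoint, and $\nabla_A(\cdot)=0$), the geodesic equation splits into its ${\mathfrak v}$-, ${\mathfrak z}$- and ${\mathfrak a}$-parts:
\begin{eqnarray*}
\dot V=\tfrac{s}{2}\,V+J_{Y}V,\qquad \dot Y=sY,\qquad \dot s=-\tfrac12|V|^2-|Y|^2 .
\end{eqnarray*}
A one-line computation gives $\tfrac{d}{dt}\big(|V|^2+|Y|^2+s^2\big)=0$, so the unit-speed normalization $|V|^2+|Y|^2+s^2=1$ is preserved; this conservation law is what ties the three equations together.

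I would then integrate this system. The key structural point is that $\dot Y=sY$ forces $Y(t)$ to remain a positive scalar multiple of its initial value, $Y(t)=c(t)Y(0)$, whence $J_{Y(t)}=c(t)J_{Y(0)}$ and the ${\mathfrak v}$-equation becomes linear, driven by the single fixed skew operator $J_{Y(0)}$; the remaining coupling to $s(t)$ is resolved through the conserved speed. The substitution $\theta(t)=\tanh(t/2)$ is the natural one, since the purely vertical geodesic ($V=Y=0$, $s=1$) integrates to $(\exp_{\mathfrak n}0,e^{t})$ and $\tfrac{1+\theta}{1-\theta}=e^{t}$ already produces the factor $\tfrac{1-\theta^2}{\chi}$ appearing in the ${\mathfrak a}$-slot. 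In practice it is cleaner to verify rather than solve: read the coordinate functions off the displayed curve, convert the coordinate velocity $\dot\gamma$ into frame components by the inverse relations of Lemma \ref{coordinatesons} — recalling $\partial/\partial v_i=a^{-1/2}V_i+\tfrac{1}{2a}\sum_{j,\alpha}\langle[V_i,V_j],Y_\alpha\rangle v_j Y_\alpha$, $\partial/\partial y_\alpha=a^{-1}Y_\alpha$ and $\partial/\partial\lambda=A$ — and substitute into the three ODEs above.

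The initial conditions are then immediate: at $t=0$ one has $\theta=0$, $\chi=1$ and all $v_j=0$, so the correction terms in Lemma \ref{coordinatesons} drop out, and a short Taylor expansion gives $\gamma(0)=e_S$ together with $\gamma'(0)=V+Y+sA$.

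The main obstacle is the ${\mathfrak v}$-component identity. Passing from $\partial/\partial v_i$ to the frame introduces both the factor $a^{-1/2}$ and the $\langle[V_i,V_j],Y_\alpha\rangle v_j$ correction, and the latter mixes the $V$- and $J_{Y}V$-directions precisely through $J_{Y}$; one must see the $\theta$-derivatives of the rational coefficients $\tfrac{2\theta(1-s\theta)}{\chi}$ and $\tfrac{2\theta^2}{\chi}$ recombine, after clearing the common denominator $\chi$, into exactly $\tfrac{s}{2}V+J_{Y}V$. An alternative that avoids this coordinate bookkeeping is to observe, using Lemma \ref{connection}, that $\mathrm{span}\{V,J_{Y}V,Y,A\}$ is closed under $\nabla$ and hence tangent to a totally geodesic submanifold isometric to a complex hyperbolic plane (a real hyperbolic plane when $Y=0$); since $\gamma'(0)$ lies in this span, the claim reduces to the classical geodesic formula there. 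The price of this route is establishing the closure and the metric identification, though both are short checks with Lemma \ref{connection}.
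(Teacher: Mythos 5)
The paper offers no proof of Lemma \ref{geod}: the formula is simply quoted from \cite{BTV, IStokyo, Re}, so the comparison here is with the cited literature rather than with an internal argument. Your proposal is correct in substance. The left-invariant (Euler--Arnold) reduction is right: reading $\gamma'(t)=V(t)+Y(t)+s(t)A$ in the left-invariant frame, Lemma \ref{connection} gives $\nabla_{\gamma'}\gamma'=\dot V+\dot Y+\dot s\,A-J_YV-\tfrac{s}{2}V-sY+\big(\tfrac12\vert V\vert^2+\vert Y\vert^2\big)A$, so the geodesic equation is exactly your system, and the conservation of $\vert V\vert^2+\vert Y\vert^2+s^2$ does follow from skew-adjointness of $J_Y$; the verification scheme via the inverse relations of Lemma \ref{coordinatesons} and your initial-condition check are also correct. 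One organizational remark: the correction term $\tfrac{1}{2a}\sum_{j,\alpha}\langle[V_i,V_j],Y_\alpha\rangle v_jY_\alpha$ lands entirely in the ${\mathfrak z}$-slot, so the ${\mathfrak v}$-component of $\dot\gamma$ is just $a^{-1/2}\dot v$; the mixing of the $V$- and $J_YV$-directions enters the ${\mathfrak v}$-equation only through the operator $J_{Y(t)}$ on its right-hand side, where $Y(t)=a^{-1}\big(\dot y+\tfrac12[\dot v,v]\big)$ and $[\dot v,v]$ is automatically a multiple of $[V,J_YV]=\vert V\vert^2 Y$ --- slightly different bookkeeping than your description, but it only simplifies the check, which then reduces to routine differentiation of the rational functions of $\theta$. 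Your alternative route is essentially the one taken in \cite{BTV}: ${\rm Span}\,\{V,J_YV,Y,A\}$ is a subalgebra closed under $\nabla$, hence tangent to a totally geodesic subgroup isometric to the complex hyperbolic plane (real hyperbolic plane in the degenerate cases), and the displayed curve is the classical geodesic there. That route buys conceptual clarity --- it explains why only the four directions $V,J_YV,Y,A$ appear in the answer --- while the direct ODE verification buys self-containedness at the cost of computation. In both routes you stop short of executing the decisive computation, but since the setup is correct and what remains is elementary calculus, this is bookkeeping rather than a gap.
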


 \vspace{2mm}
\subsection{Rank one  geodesic property}\label{kimproof}

Let $\gamma$ be a geodesic in a Damek-Ricci space $S$ of $\gamma(0) = e_S$ and of $\gamma'(0) = V + Y + s A$, a unit tangent vector in $T_{e_S}S$. Let $u=u(t)$ be a parallel Jacobi field along $\gamma$, that is,
\begin{eqnarray}
u''(t) + R(u(t),\gamma'(t))\gamma'(t) = 0\hspace{2mm} {\rm and}\hspace{2mm} u'(t) = 0,
\end{eqnarray}where $R(\cdot,\cdot)$ is the Riemannian curvature tensor of $S$. Therefore it holds
with respect to the Jacobi operator $R_{\gamma'(t)}$ defined by
$R_{\gamma'(t)} : \gamma'(t)^{\perp} \rightarrow \gamma'(t)^{\perp}\,;\, v \mapsto R(v,\gamma'(t))\gamma'(t)$ it holds $R_{\gamma'(t)} u(t) = 0$ for all $t$.   In particular, $u(t)$ is an element of ${\rm Ker}(R_{\gamma'(t)})$ in $\gamma'(t)^{\perp}\subset T_{\gamma(t)}S$.

Now, using several propositions, we will prove that if there exists a vector
field perpendicular to $\gamma'$ which is an element of ${\rm
Ker}(R_{\gamma'(t)})$ at any $t$, then there is a contradiction.

\vspace{2mm}
Eigenvalue zero of  $R_{V+Y+sA}$ and the corresponding eigenspaces are presented as

\begin{proposition}\label{zeroeigenvector} \rm (Theorem 1, p. 96-98, \cite{BTV}) we have

\vspace{2mm}
Case (i):\, $V = 0$ or $Y = 0$.

\vspace{1mm}
The eigenspace of $R_{V+Y+sA}$ corresponding to zero is ${\Bbb R}(V+Y+sA)$.

\vspace{2mm}
Case (ii):\, $V \not= 0$ and $Y \not= 0$.

\vspace{1mm}In this case we can decompose from (\ref{eigenspacesofell})
the subspace $Y^{\perp}$ orthogonally into $Y^{\perp}
= L_0\oplus \cdots \oplus L_{\ell}$ in terms of the eigenspaces $L_j$ of $K^2$ with eigenvalues

$\displaystyle{ 
0 \geq \mu_0 > \mu_1 > \cdots > \mu_{\ell} \geq -1}$. Then, each space  ${\mathfrak q}_j$ of (\ref{decompoq})
 is invariant under $R_{V+Y+sA}$. Refer  for this to \cite{BTV}.

We have further a decomposition of ${\mathfrak s}$ as ${\mathfrak s} = {\mathfrak s}_4 \oplus {\mathfrak p} \oplus {\mathfrak q}$ orthogonally for some subspaces ${\mathfrak s}_4$, ${\mathfrak p}$ which are invariant under  $R_{V+Y+sA}$. 
The eigenspace of $(R_{V+Y+sA})_{\vert({\mathfrak s}_4\oplus{\mathfrak p})}$ corresponding to eigenvalue zero is  ${\Bbb R}(V+Y+s A)$.

If $j = \ell$ and $\mu_{\ell} = -1$, then the eigenvalues of $(R_{V+Y+sA})_{\vert{\mathfrak q}_{\ell}}$ are $- \frac{1}{4}$ and $-1$.

Otherwise, if not, $(R_{V+Y+sA})_{\vert{\mathfrak q}_{j}}$ has two
or three distinct eigenvalues $\kappa_1$, $\kappa_2$, $\kappa_3$ satisfying
\begin{eqnarray}\label{k1k2k3}
 - 1 < \kappa_1 \leq - \frac{3}{4} \leq \kappa_2 < - \frac{1}{4} < \kappa_3 \leq 0.
\end{eqnarray}They are the solutions of the equation
\begin{eqnarray}\label{equation}
 (\kappa +1)(\kappa + \frac{1}{4})^2 = \frac{27}{64}\ \vert V \vert^4\ \vert Y \vert^2 (1 + \mu_j).
\end{eqnarray}
\end{proposition}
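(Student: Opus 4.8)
The plan is to obtain the whole spectrum of the Jacobi operator $R_u$, $u = V+Y+sA$, by computing it directly from the curvature tensor of $S$ (pp.~84--85, \cite{BTV}) and then diagonalizing it block-by-block after splitting $\mathfrak{s}$ into $R_u$-invariant pieces. Throughout I would write $\hat V = V/\vert V\vert$, $\hat Y = Y/\vert Y\vert$ and lean on the identities (\ref{innerproduct})--(\ref{bracket}) for the maps $J_Z$. First I would dispose of Case (i). If $V=0$ then $u=Y+sA$ lies in $\mathfrak{z}\oplus\mathfrak{a}$, and if $Y=0$ then $u=V+sA$ lies in $\mathfrak{v}\oplus\mathfrak{a}$; in either case the curvature terms that couple $\mathfrak{v}$ to $\mathfrak{z}$ through $J$ drop out, and a direct computation from Lemma \ref{connection} shows that $R_u$ acts as $-\,\mathrm{id}$ on $u^{\perp}$ (along such a direction the ambient geometry is that of a real hyperbolic space of curvature $-1$). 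Hence $\ker R_u = \mathbb{R}\,u$, as claimed.

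The substance is Case (ii). The idea is to exhibit the orthogonal splitting $\mathfrak{s} = \mathfrak{s}_4 \oplus \mathfrak{p} \oplus \mathfrak{q}$, where $\mathfrak{s}_4 = \mathrm{Span}\{\hat V, J_{\hat Y}\hat V, \hat Y, A\}$ is the four-dimensional subspace carrying the geodesic direction $u$, $\mathfrak{p}$ is the orthogonal complement of $\mathrm{Span}\{V, J_Y V\}$ inside the relevant $J_Y$-invariant part of $\mathfrak{v}$, and $\mathfrak{q} = \bigoplus_j \mathfrak{q}_j$ is the decomposition (\ref{decompoq}) built from the eigenspaces $L_j$ of $K^2$. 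I would verify, term by term from the curvature formula, that each of these three subspaces is $R_u$-invariant; for $\mathfrak{q}_j$ the invariance rests on $K(L_j)\subset L_j$ together with the bracket rules, which keep $\{L_j, J_{L_j}V, J_{L_j}J_Y V\}$ closed under $R_u$. On $\mathfrak{s}_4 \oplus \mathfrak{p}$ the operator $R_u$ reproduces the curvature of a rank-one model, so its eigenvalues on $(\mathfrak{s}_4\oplus\mathfrak{p})\cap u^{\perp}$ are strictly negative; therefore $\ker\bigl(R_u\vert_{\mathfrak{s}_4\oplus\mathfrak{p}}\bigr) = \mathbb{R}\,u$.

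The heart of the matter is the restriction $R_u\vert_{\mathfrak{q}_j}$. Fixing a unit $Z \in L_j$ with $K^2 Z = \mu_j Z$, the three vectors $Z$, $J_Z V$, $J_Z J_Y V$ span a block on which I would compute the $3\times 3$ matrix of $R_u$ explicitly, repeatedly invoking $(J_Z)^2 = -\vert Z\vert^2\,\mathrm{id}$, the anticommutation (\ref{innerproduct}), the bracket relation (\ref{bracket}), and $K(Z) = [\hat V, J_Z J_{\hat Y}\hat V]$. After simplification the characteristic polynomial collapses to the cubic
\begin{eqnarray*}
(\kappa+1)\Bigl(\kappa+\tfrac14\Bigr)^2 = \tfrac{27}{64}\,\vert V\vert^4\,\vert Y\vert^2\,(1+\mu_j),
\end{eqnarray*}
which is precisely (\ref{equation}). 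When $\mu_{\ell} = -1$ the right-hand side vanishes and $\mathfrak{q}_{\ell}$ degenerates to the two-dimensional $\mathrm{Span}\{L_{\ell}, J_{L_{\ell}}V\}$, on which the two roots $-1$ and $-\tfrac14$ of the degenerate cubic are exactly the eigenvalues.

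Finally, to read off the interlacing (\ref{k1k2k3}) I would analyze $f(\kappa) := (\kappa+1)(\kappa+\tfrac14)^2$ as a real function: from $f'(\kappa) = 3(\kappa+\tfrac14)(\kappa+\tfrac34)$ it has a local maximum $f(-\tfrac34) = \tfrac1{16}$ and a local minimum $f(-\tfrac14) = 0$. Since $u$ is a unit vector, $\vert V\vert^2 + \vert Y\vert^2 + s^2 = 1$ and $\mu_j \leq 0$, so maximizing $\vert V\vert^4\vert Y\vert^2(1+\mu_j)$ under these constraints yields right-hand side $\leq \tfrac1{16}$, attained only at $\vert V\vert^2 = \tfrac23$, $\vert Y\vert^2 = \tfrac13$, $s = 0$, $\mu_j = 0$. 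Comparing this constant with the graph of $f$ then forces exactly $-1 < \kappa_1 \leq -\tfrac34 \leq \kappa_2 < -\tfrac14 < \kappa_3 \leq 0$, with the coincidence $\kappa_1=\kappa_2$ and the value $\kappa_3=0$ occurring only in that extremal configuration; this also explains the dichotomy of ``two or three distinct eigenvalues.'' The hard part will be the block computation of $R_u\vert_{\mathfrak{q}_j}$: organizing the $J$- and bracket-identities so that the $3\times3$ determinant reduces to this clean cubic is where all of the Damek-Ricci algebra is concentrated.
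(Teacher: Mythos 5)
First, a remark on scope: the paper gives no proof of Proposition \ref{zeroeigenvector} at all --- it quotes Theorem 1 of \cite{BTV} verbatim, and the only ingredient it proves itself is the existence of the decomposition ${\mathfrak s}={\mathfrak s}_4\oplus{\mathfrak p}\oplus{\mathfrak q}$ (Proposition \ref{decompositionofs}, Appendix II). So you are attempting strictly more than the paper does, and your overall architecture (the same decomposition, blockwise diagonalization, then reading off (\ref{k1k2k3}) from the graph of $f(\kappa)=(\kappa+1)(\kappa+\frac14)^2$ with critical values $f(-\frac34)=\frac1{16}$, $f(-\frac14)=0$) is indeed the route of \cite{BTV}; your analysis of the cubic and of the extremal case $\vert V\vert^2=\frac23$, $\vert Y\vert^2=\frac13$, $s=0$, $\mu_j=0$ is correct. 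However, your disposal of Case (i) contains a false claim: when $V=0$ or $Y=0$ the Jacobi operator $R_u$ is \emph{not} $-{\rm id}$ on $u^{\perp}$, and the geometry along such geodesics is not that of curvature $-1$. For $u=A$, for instance, $R_A$ has eigenvalue $-\frac14$ on ${\mathfrak v}$ and $-1$ on ${\mathfrak z}$; this is visible in the paper's own computation that the Hessian of $b_\gamma$ with $\gamma'(0)=A$ has eigenvalues $\frac12$ on ${\mathfrak v}$ and $1$ on ${\mathfrak z}$, and in Proposition \ref{planesection}, which gives $K(P)=-\frac14$ for a plane spanned by unit vectors of ${\mathfrak v}$ and ${\mathfrak z}$. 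The conclusion $\ker R_u={\Bbb R}u$ survives, but only because all eigenvalues on $u^{\perp}$ lie in $[-1,-\frac14]$; your justification must be replaced by this computation.

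The more serious gap is in Case (ii). The full space ${\mathfrak q}_j$ is $R_u$-invariant, as you say, but your proposed further reduction to the $3\times3$ block ${\rm Span}\{Z,J_ZV,J_ZJ_YV\}$ for a \emph{single} unit $Z\in L_j$ fails whenever $\mu_j\neq 0$: the curvature of $S$ is built from brackets, and these produce $KZ$-components. Indeed $[V,J_ZJ_YV]=\vert V\vert^2\vert Y\vert\,K(Z)$, and Lemma \ref{ljyeperp} shows $[v,J_Z{\hat V}]\in{\rm Span}\{Z,KZ\}$ with nonzero $KZ$-coefficient; moreover $\langle J_{Z}{\hat V},J_{KZ}J_{\hat Y}{\hat V}\rangle=-\sqrt{-\mu_j}\,\vert KZ\vert\neq 0$. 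Since $K$ is skew-adjoint, $KZ\perp Z$ and $KZ\neq 0$ for $\mu_j\neq 0$, so your $3\times3$ matrix does not close up; the minimal invariant block is the six-dimensional ${\rm Span}\{Z,KZ,J_ZV,J_{KZ}V,J_ZJ_YV,J_{KZ}J_YV\}$ (four-dimensional when $\mu_\ell=-1$). This is exactly why the paper, in its parallel Hessian analysis, works with the $K$-paired blocks ${\mathfrak h}_a$ and ${\mathfrak b}_a$ of \S 4.6--4.7 and a $3\times3$ \emph{complex} Hermitian matrix built from ${\hat K}=K/\sqrt{\vert\mu_j\vert}$. The repair is to complexify: ${\hat K}$ is a complex structure on $L_j$, $R_u$ commutes with the induced complex structure on the six-dimensional block, and the resulting $3\times3$ complex block has characteristic polynomial equal to (\ref{equation}), each root occurring with even real multiplicity; your real $3\times3$ computation is legitimate only for $\mu_j=0$, where $KZ=0$. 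With these two repairs your outline becomes a correct reconstruction of the cited theorem of \cite{BTV}.
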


Notice for detail of the decomposition ${\mathfrak s} = {\mathfrak s}_4 \oplus {\mathfrak p} \oplus {\mathfrak q}$ of ${\mathfrak s}$ refer to Proposition \ref{decompositionofs}. A proof is given in {\sc Appendix II}.

\begin{proposition}\label{notparallel}\rm
Let $V + Y+ sA$ be a unit vector in ${\mathfrak s}$ of the
Damek-Ricci space $S$. If there exists an eigenvector of  $R_{V+Y+sA}$ corresponding to eigenvalue zero which is not proportional to the vector $V + Y+ sA$, then $V$ and $Y$ must satisfy
\begin{eqnarray}
  \vert V \vert^2 = \frac{2}{3}\hspace{2mm}{\rm and}\hspace{2mm} \vert Y\vert^2 = \frac{1}{3}.
\end{eqnarray}

\end{proposition}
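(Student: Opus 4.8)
The plan is to analyze the eigenvalue-zero condition using Proposition \ref{zeroeigenvector}, which classifies the spectrum of the Jacobi operator $R_{V+Y+sA}$ on each invariant subspace. The hypothesis gives an eigenvector of $R_{V+Y+sA}$ with eigenvalue zero that is \emph{not} proportional to $V+Y+sA$. First I would rule out Case (i) of Proposition \ref{zeroeigenvector}: if $V=0$ or $Y=0$, the only eigenspace for eigenvalue zero is $\mathbb{R}(V+Y+sA)$, which contradicts the existence of an independent zero-eigenvector. Hence we must be in Case (ii), with $V\neq 0$ and $Y\neq 0$, and the whole point of the proposition will be to extract the numerical constraints $|V|^2=2/3$ and $|Y|^2=1/3$ from the curvature eigenvalue equation.

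Next I would localize \emph{where} the extra zero-eigenvector can live. By the decomposition ${\mathfrak s} = {\mathfrak s}_4 \oplus {\mathfrak p} \oplus {\mathfrak q}$ and Proposition \ref{zeroeigenvector}, the eigenspace of $(R_{V+Y+sA})_{\vert({\mathfrak s}_4\oplus{\mathfrak p})}$ for eigenvalue zero is exactly $\mathbb{R}(V+Y+sA)$; so no new zero-eigenvector can come from ${\mathfrak s}_4 \oplus {\mathfrak p}$. Therefore the extra zero-eigenvector must have a nontrivial component in some ${\mathfrak q}_j$, and on ${\mathfrak q}_j$ the operator $R_{V+Y+sA}$ must admit eigenvalue zero. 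I would then split into the two subcases recorded in the proposition. If $j=\ell$ and $\mu_\ell=-1$, the eigenvalues on ${\mathfrak q}_\ell$ are $-1/4$ and $-1$, neither of which is zero, so this subcase is excluded. In every remaining subcase the eigenvalues $\kappa_1,\kappa_2,\kappa_3$ on ${\mathfrak q}_j$ satisfy the cubic (\ref{equation}), and by (\ref{k1k2k3}) only the largest root $\kappa_3$ can possibly vanish, with $-1/4<\kappa_3\le 0$.

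The decisive step is to impose $\kappa=0$ as a root of (\ref{equation}). Substituting $\kappa=0$ into $(\kappa+1)(\kappa+\tfrac14)^2 = \tfrac{27}{64}|V|^4|Y|^2(1+\mu_j)$ gives $1\cdot(\tfrac14)^2 = \tfrac{1}{16}$ on the left, so
\begin{eqnarray*}
\frac{1}{16} = \frac{27}{64}\,|V|^4\,|Y|^2\,(1+\mu_j),
\end{eqnarray*}
i.e. $|V|^4|Y|^2(1+\mu_j) = \tfrac{4}{27}$. To pin down $|V|$ and $|Y|$ individually I would combine this with the normalization of the unit tangent vector. Since $V+Y+sA$ has unit norm, $|V|^2+|Y|^2+s^2=1$; and I would look for the further relation forced on $s$ and $\mu_j$ so that the system closes. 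The natural guess, matching the target values, is that the zero-eigenvalue condition also forces the relevant $\mu_j$ and $s$ to their extreme admissible values (so that $1+\mu_j$ and the $s$-dependence are maximized/pinned), after which the single scalar equation $|V|^4|Y|^2(1+\mu_j)=\tfrac{4}{27}$ together with $|V|^2+|Y|^2+s^2=1$ yields $|V|^2=2/3$, $|Y|^2=1/3$, $s=0$.

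The main obstacle I anticipate is precisely this last closure: equation (\ref{equation}) as stated contains $\mu_j$ and, implicitly through the derivation in \cite{BTV}, a dependence on $s$, so a single scalar relation cannot by itself separate $|V|$ from $|Y|$. The real work is to show that demanding $\kappa=0$ be an \emph{admissible} eigenvalue (lying in the stated range, and compatible with the constraints among the three roots coming from the structure of $R_{V+Y+sA}$ on ${\mathfrak q}_j$) forces $s=0$ and forces $\mu_j$ to a specific value, thereby reducing (\ref{equation}) to the clean Diophantine-looking constraint whose unique solution under $|V|^2+|Y|^2=1$ is $(|V|^2,|Y|^2)=(\tfrac23,\tfrac13)$. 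I expect this to hinge on a careful reading of how the coefficients of the cubic for $R_{V+Y+sA}\vert_{{\mathfrak q}_j}$ depend on all of $|V|,|Y|,s,\mu_j$ in \cite{BTV}, and on using the boundary constraint $-1/4<\kappa_3\le 0$ together with the ordering (\ref{k1k2k3}) to eliminate the spurious parameters.
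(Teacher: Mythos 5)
Your setup is correct and follows the paper's route up to the decisive point: you rightly reduce to Case (ii), localize the extra zero-eigenvector to some ${\mathfrak q}_j$, exclude the subcase $\mu_\ell=-1$, identify $\kappa_3=0$ as the only root that can vanish, and substitute $\kappa=0$ into (\ref{equation}) to get $\frac{1}{16} = \frac{27}{64}\,|V|^4|Y|^2(1+\mu_j)$, i.e. $|V|^4|Y|^2(1+\mu_j)=\frac{4}{27}$. But at exactly the step you flag as "the main obstacle" there is a genuine gap: you assert that a single scalar relation cannot separate $|V|$ from $|Y|$, and you propose to close the argument by re-deriving the coefficients of the cubic from \cite{BTV} and tracking an implicit $s$-dependence. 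That route is both unnecessary and based on a misreading --- equation (\ref{equation}) contains no $s$ at all; $s$ enters only through the unit-norm constraint.

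The missing idea is an elementary extremality argument, and it closes everything at once. From $|V|^2+|Y|^2+s^2=1$ one has $|V|^2\le 1-|Y|^2$, hence
\begin{eqnarray*}
|V|^4|Y|^2(1+\mu_j) \;\le\; (1-|Y|^2)^2|Y|^2\cdot(1+\mu_j) \;\le\; \max_{0\le t\le 1}(1-t)^2 t \;=\; \frac{4}{27},
\end{eqnarray*}
where the last inequality also uses $1+\mu_j\le 1$ (since $\mu_j\le 0$). Your equation demands that this chain be an equality throughout, and equality is rigid: it forces $s=0$ (so that $|V|^2=1-|Y|^2$), forces $|Y|^2=\tfrac13$ (the unique maximizer of $(1-t)^2t$ on $[0,1]$), hence $|V|^2=\tfrac23$, and forces $\mu_j=0$. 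So the single scalar relation does pin down all the parameters simultaneously, precisely because it asserts that a product of separately bounded quantities attains its maximum possible value. This is the paper's proof; your proposal reaches the right equation but does not supply this closure, and the alternative you sketch would send you into an avoidable and much heavier computation.
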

\noindent
\begin{proof}\rm Suppose that there exists an eigenvector of $R_{V+Y+sA}$ corresponding to zero, not proportional to $V+Y+sA$. By Proposition \ref{zeroeigenvector} there exists some $j \in \{1,\cdots,\ell\}$ such that $(R_{V+Y+sA})_{\vert{\mathfrak q}_j}$ has two or three distinct eigenvalues $\kappa_1$, $\kappa_2$, $\kappa_3$ satisfying the conditions in (\ref{k1k2k3}) and (\ref{equation}). We have here from (\ref{k1k2k3}) the eigenvalue $\kappa_3 = 0$, since others $\kappa_1$, $\kappa_2$ are negative. Thus, we have, substituting $\kappa = 0$ into (\ref{equation})
\begin{eqnarray*}
  \frac{1}{16} = \frac{27}{64}\, \vert V\vert^4\ \vert Y\vert^2 (1 + \mu_j).
\end{eqnarray*}
Since $\vert V \vert^2 + \vert Y\vert^2 + s^2 = 1$,
\begin{eqnarray*}
  \vert V\vert^4\ \vert Y\vert^2 \leq (1 - \vert Y\vert^2)^2 \vert Y \vert^2 \leq \frac{4}{27}.
\end{eqnarray*}The last inequality comes from\, ${\rm max}_{0\leq t\leq 1}\, (1-t)^2 t = 4/27$\, and the equality $\vert V\vert^4\vert Y\vert^2 = 4/27$ attains  if and only if
\begin{eqnarray*}
\vert V \vert^2 = \frac{2}{3}, \,  \vert Y \vert^2 = \frac{1}{3}\hspace{2mm}
{\rm and}\hspace{2mm}s = 0.
\end{eqnarray*}
Since $0 \leq 1 + \mu_j \leq 1$,
\begin{eqnarray*}
\frac{27}{64}\, \vert V \vert^4 \vert Y \vert^2 (1+ \mu_j) \leq \frac{27}{64}\, \vert V \vert^4 \vert Y \vert^2 \leq \frac{1}{16}
\end{eqnarray*}and hence
\begin{eqnarray*}\frac{1}{16} =
\vert V\vert^4\ \vert Y\vert^2(1+\mu_j)\, \Leftrightarrow\, \vert V \vert^2 = \frac{2}{3}, \,  \vert Y \vert^2 = \frac{1}{3},\, s = 0\hspace{2mm} {\rm and}\hspace{2mm} \mu_0 = 0,\, j=0.
\end{eqnarray*}
In particular,
\begin{eqnarray*}
  \vert V \vert^2 = \frac{2}{3},\,  \vert Y \vert^2 = \frac{1}{3}.
\end{eqnarray*}
\end{proof}
\begin{proposition}\label{geodesic}{\rm(Theorem 2, p. 94, \cite{BTV})
Let $V + Y + s A$ be a unit vector of ${\mathfrak s}$ and $\gamma$ the geodesic in $S$ of $\gamma(0) = e_S$, $\gamma'(0) = V + Y+ s A$. Decompose $\gamma'(t)$ into
\begin{eqnarray}\label{geodesicdecompo}
  \gamma'(t) = V(t) + Y(t) + A(t).
\end{eqnarray}Then $\vert V(t) \vert^2 = \vert V \vert^2 h(t)$ and $\vert Y(t) \vert^2 = \vert Y \vert^2 h(t)$ where
\begin{eqnarray}\label{hfunction}
h(t) = \frac{1-\theta^2(t)}{\chi(t)}.  
\end{eqnarray}
 }
 \end{proposition}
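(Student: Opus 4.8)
The plan is to read off the three components of the velocity $\gamma'(t)$ directly from the explicit geodesic of Lemma \ref{geod}, to transcribe them into the orthonormal left invariant frame by means of Lemma \ref{coordinatesons}, and then to evaluate the two squared norms, comparing the outcomes with $|V|^2 h(t)$ and $|Y|^2 h(t)$. Since the proposition asserts a precise closed form, the explicit formula is the most natural instrument: it lets one compute $V(t)$ and $Y(t)$ term by term and match the $\theta$--$\chi$ dependence against $h(t) = (1-\theta^2)/\chi$.

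First I would record the coordinate description of $\gamma(t)$. Writing the $\exp_{\mathfrak n}$--exponent as $U(t) = P(t)V + Q(t)J_YV \in {\mathfrak v}$ and $X(t) = R(t)Y \in {\mathfrak z}$, with
\begin{eqnarray*}
P = \frac{2\theta(1-s\theta)}{\chi}, \quad Q = \frac{2\theta^2}{\chi}, \quad R = \frac{2\theta}{\chi}, \quad a = \frac{1-\theta^2}{\chi} = h,
\end{eqnarray*}
the velocity reads $\gamma'(t) = \sum_i \dot v_i\, \partial_{v_i} + \sum_\alpha \dot y_\alpha\, \partial_{y_\alpha} + \dot\lambda\, \partial_\lambda$, where $\lambda = \log a$. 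Substituting the expressions of Lemma \ref{coordinatesons} for $\partial_{v_i}, \partial_{y_\alpha}, \partial_\lambda$ in terms of $V_i, Y_\alpha, A$ yields the decomposition $\gamma'(t) = V(t) + Y(t) + A(t)$ with
\begin{eqnarray*}
V(t) = \frac{1}{\sqrt{a}}\, \dot U, \qquad Y(t) = \frac{1}{a}\Big( \tfrac{1}{2}[\dot U, U] + \dot X \Big), \qquad A(t) = \dot\lambda\, A,
\end{eqnarray*}
the bracket term being the ${\mathfrak z}$--valued correction carried by the off--diagonal part of $\partial_{v_i}$.

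For the ${\mathfrak v}$--norm I would use $V \perp J_YV$ together with $|J_YV|^2 = |Y|^2|V|^2$ (a consequence of $J_Y^2 = -|Y|^2\,\mathrm{id}$) to obtain $|\dot U|^2 = |V|^2(\dot P^2 + |Y|^2\dot Q^2)$, and then simplify by the identities $\dot\theta = \tfrac{1}{2}(1-\theta^2)$ and $\dot\chi = 2\dot\theta\big(-s(1-s\theta) + |Y|^2\theta\big)$ together with the unit--speed relation $|V|^2 + |Y|^2 + s^2 = 1$; this collapses $|V(t)|^2 = |\dot U|^2/a$ to the asserted $|V|^2 h(t)$. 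For the ${\mathfrak z}$--norm I would first evaluate the bracket through $[V, J_YV] = |V|^2 Y$, giving $[\dot U, U] = (\dot P Q - \dot Q P)|V|^2 Y$, so that $Y(t)$ becomes a scalar multiple of $Y$; its coefficient, a combination of the Wronskian--type quantity $\dot P Q - \dot Q P$ with $\dot R$ divided by $a$, would then be reduced by the same $\theta$--$\chi$ identities to establish the stated ${\mathfrak z}$--scaling $|Y(t)|^2 = |Y|^2 h(t)$.

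The main obstacle is the ${\mathfrak z}$--component. Unlike the ${\mathfrak v}$--part, the orthonormal ${\mathfrak z}$--component absorbs the quadratic correction $\tfrac{1}{2}[\dot U, U]$ coming from the off--diagonal term of $\partial_{v_i}$, and this must be amalgamated with $\dot X$ \emph{before} the $\theta$--$\chi$ cancellations can be performed; keeping the powers of $a = h$ straight through this amalgamation is exactly where an error in the scaling would creep in. The efficient route is to exploit $Q = \theta R$ and $P = (1-s\theta)R$, whereby $\dot P Q - \dot Q P$ reduces to a single monomial in $\theta, \dot\theta, \chi$; once that collapse is in hand the remaining reduction is mechanical. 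Organizing this cancellation cleanly, rather than any conceptual difficulty, is where the effort concentrates.
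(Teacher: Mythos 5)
Your frame-conversion setup is correct, and so is the resulting decomposition: with $U(t)=P V+Q J_YV$, $X(t)=RY$, $P=2\theta(1-s\theta)/\chi$, $Q=2\theta^2/\chi$, $R=2\theta/\chi$, $a=h$, one indeed gets $V(t)=a^{-1/2}\dot U$, $Y(t)=a^{-1}\bigl(\tfrac12[\dot U,U]+\dot X\bigr)$, $A(t)=(\log a)'A$ from Lemma \ref{coordinatesons}, and your ${\mathfrak v}$-part goes through: $\dot P^2+\vert Y\vert^2\dot Q^2=h^2$, hence $\vert V(t)\vert^2=h\vert V\vert^2$. (The paper itself offers no proof of this proposition; it only cites \cite{BTV}, so a direct verification of this kind is exactly what is called for.)

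The genuine gap is the ${\mathfrak z}$-step, which you deferred as ``mechanical'' and whose outcome you asserted rather than computed: carried out, it does \emph{not} land on the printed formula. One finds $\dot P=(1-\theta^2)\bigl[(1-s\theta)^2-\vert Y\vert^2\theta^2\bigr]/\chi^2$, $\dot Q=2\theta(1-\theta^2)(1-s\theta)/\chi^2$, $\dot R=(1-\theta^2)\bigl[1-(s^2+\vert Y\vert^2)\theta^2\bigr]/\chi^2$, so $\dot PQ-\dot QP=-2\theta^2(1-\theta^2)/\chi^2$, and using $[V,J_YV]=\vert V\vert^2Y$ together with $\vert V\vert^2+\vert Y\vert^2+s^2=1$, the coefficient of $Y$ in $\tfrac12[\dot U,U]+\dot X$ equals $\frac{1-\theta^2}{\chi^2}\bigl(1-\theta^2\bigr)=h^2$. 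Hence $Y(t)=a^{-1}h^2\,Y=h\,Y$ and $\vert Y(t)\vert^2=\vert Y\vert^2h(t)^2$, not $\vert Y\vert^2h(t)$. The squared scaling is the correct one: it is forced by unit speed, since $h\vert V\vert^2+h^2\vert Y\vert^2+\bigl((\log h)'\bigr)^2=1$ identically, whereas with $h\vert Y\vert^2$ the sum fails to be $1$ (test $V=0$, $s=0$, $\vert Y\vert=1$, where $h+4\theta^2/(1+\theta^2)^2\neq 1$); it also agrees with the paper's own velocity formula (\ref{velocity}) in Appendix I, whose ${\mathfrak z}$-component is $h(t)\vert Y\vert\hat Y$. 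So the proposition as printed carries a misprint in the exponent of $h$, and your proposal, by claiming the computation ``establishes the stated ${\mathfrak z}$-scaling,'' asserts something your own method refutes. (The misprint is harmless downstream: the rank-one argument only needs $\vert V(t)\vert^2=\vert V\vert^2h(t)$ to force $h\equiv1$ and reach the contradiction.)
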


\begin{theorem} {\rm
There exists no parallel Jacobi vector field which is orthogonal to $\gamma'$ along a geodesic $\gamma$ in a Damek-Ricci space $S$.
}
\end{theorem}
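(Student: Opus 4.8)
The plan is to argue by contradiction, extracting from a hypothetical orthogonal parallel Jacobi field a rigid pointwise constraint holding along the \emph{entire} geodesic, and then colliding that constraint with the explicit time-evolution of the components of $\gamma'$. Suppose $u$ is a parallel Jacobi field with $u \perp \gamma'$; being parallel, $u'(t) = 0$, so $u''(t) = 0$ and $|u(t)|$ is a nonzero constant. As noted in the paragraph preceding Proposition \ref{zeroeigenvector}, the Jacobi equation then forces $R(u(t),\gamma'(t))\gamma'(t) = 0$, i.e. $R_{\gamma'(t)}u(t) = 0$ for every $t$. Hence at each $t$ the vector $u(t)$ is a nonzero element of $\ker R_{\gamma'(t)}$ lying in $\gamma'(t)^{\perp}$; in particular $u(t)$ is an eigenvector of $R_{\gamma'(t)}$ with eigenvalue zero that is not proportional to the unit vector $\gamma'(t)$.

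The crucial step is to invoke Proposition \ref{notparallel} \emph{at every} $t$, applied to the unit vector $\gamma'(t) = V(t) + Y(t) + A(t)$ of Proposition \ref{geodesic}. Since $R_{\gamma'(t)}$ admits a zero-eigenvector not proportional to $\gamma'(t)$, the $\mathfrak{v}$- and $\mathfrak{z}$-components of $\gamma'(t)$ are pinned down for all $t$:
\begin{eqnarray*}
|V(t)|^2 = \frac{2}{3}, \qquad |Y(t)|^2 = \frac{1}{3}.
\end{eqnarray*}
(This in particular places us in Case (ii) of Proposition \ref{zeroeigenvector}, both components being nonzero, so no degeneracy arises.)

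Finally, I would compare this rigid pointwise information with the dynamics of $\gamma'$. By Proposition \ref{geodesic} one has $|V(t)|^2 = |V|^2 h(t)$ and $|Y(t)|^2 = |Y|^2 h(t)$, with $h$ the function in (\ref{hfunction}). Combining with the constraint above gives $|V|^2 h(t) \equiv 2/3$; evaluating at $t=0$, where $\theta(0)=0$ yields $h(0)=1$, forces $|V|^2 = 2/3$ and hence $h(t) \equiv 1$. But since $Y \neq 0$ we have $\chi(t) \to (1-s)^2 + |Y|^2 \geq |Y|^2 > 0$ while $1-\theta^2(t)\to 0$ as $t\to\infty$, so $h(t)\to 0$; thus $h$ is non-constant, the desired contradiction, and no orthogonal parallel Jacobi field can exist.

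The genuinely heavy lifting lives upstream: in Proposition \ref{notparallel}, whose proof rests on the cubic (\ref{equation}), the eigenvalue ordering (\ref{k1k2k3}), and the sharp inequality $\max_{0\le\tau\le1}(1-\tau)^2\tau = 4/27$, and in the explicit geodesic formula of Lemma \ref{geod} underlying Proposition \ref{geodesic}. The final assembly is short; its only subtlety, and the point I expect to require care in stating, is that the equalities $|V(t)|^2 = 2/3$, $|Y(t)|^2 = 1/3$ must be enforced uniformly in $t$ rather than merely at the basepoint — it is precisely the incompatibility of this time-independence with the non-constant scaling factor $h(t)$ that closes the argument.
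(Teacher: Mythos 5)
Your proposal is correct and follows essentially the same route as the paper: invoke Proposition \ref{notparallel} pointwise along $\gamma$ to pin $\vert V(t)\vert^2 = \tfrac{2}{3}$, $\vert Y(t)\vert^2 = \tfrac{1}{3}$, then use Proposition \ref{geodesic} to force $h(t) \equiv 1$ and derive a contradiction from the explicit form (\ref{hfunction}). The only (cosmetic) difference is the last line — you contradict $h \equiv 1$ via $h(t) \to 0$ as $t \to \infty$, while the paper notes that $h(t)=1$ forces $\theta(t)=0$, which holds only at $t=0$.
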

\noindent
\begin{proof}\rm Suppose that there exists a vector field $u(t)$ which is parallel, Jacobi and orthogonal to $\gamma'$ along a geodesic $\gamma$.

Decompose $\gamma'(t)$ into (\ref{geodesicdecompo})
 for each $t$.  Fix $t_0$. Then $\gamma'(t_0)$ is a unit vector and $R_{(V(t)+Y(t)+A(t))} u(t) = 0$ at $t_0$. Thus by Proposition \ref{notparallel}, $\vert V(t_0)\vert^2 = \frac{2}{3}$ and $\vert Y(t_0)\vert^2 = \frac{1}{3}$ and  moreover $A(t_0) = 0$.  Therefore,  $\vert V(t)\vert^2 = \frac{2}{3}$ and $\vert Y(t)\vert^2 = \frac{1}{3}$ for any $t$.

By Proposition \ref{geodesic} $h(t) \equiv 1$ for any $t$ and then, from
(\ref{hfunction}) $\theta(t) = 0$ for any $t$. However, $\theta(t) =
0$ if and only if $t = 0$. So, this is a contradiction.
\end{proof}

\section{Hessian of Busemann function}

\subsection{The distance function of a Damek-Ricci space}

\vspace{2mm}Let $p\in S$ be a point and represent $p$ in a view of (\ref{upperhalfmodel}) as $p = (\exp_{\mathfrak n}(U+X), e^r)$.
From Lemma \ref{geod} we obtain
\begin{lemma}\label{distance-1}\rm  \cite{IStokyo}\, The distance of $p = (\exp_{\mathfrak n}(U+X), e^r)$ from $e_S$ is given by
\begin{eqnarray}
d(e_S,p) = \log \frac{\lambda(p) -2 + \sqrt{\lambda^2(p) - 4 \lambda(p)}}{2},
\end{eqnarray}where, 
$a = e^r$ and $\lambda : S \rightarrow {\Bbb R}$ is a function of $S$ given by
\begin{eqnarray}
\lambda(p) := \frac{1}{a}\left\{(1+a+\frac{1}{4}\vert U\vert^2)^2 + \vert X \vert^2 \right\}.
\end{eqnarray}
\end{lemma}

Now, let $\gamma$ be a geodesic of $\gamma(0) = e_S$, $\gamma'(0) = V+Y+ sA$.

\begin{lemma}\rm \label{distance-2} \cite{IStokyo}
 \hspace{2mm}The distance $d(p,\gamma(t))$ 
  is described by
 \begin{eqnarray}\label{distance}
 d(p,\gamma(t)) &=& d(e_S,p^{-1}\, \gamma(t))\\ \nonumber
 &=& \log \frac{\lambda - 2 + \sqrt{\lambda^2 - 4 \lambda}}{2},
 \end{eqnarray}
 where \begin{eqnarray}
 \lambda = \lambda(p^{-1}\ \gamma(t))= \frac{1}{a(t)}\big\{ \big(1+ a(t)+ \frac{1}{4}\vert {\hat U}(t)\vert^2\big)^2 + \vert{\hat X}(t)\vert^2\big\},
 \end{eqnarray}
 \begin{eqnarray}
   {\hat U}(t) &=& - e^{-r/2} U + e^{-r/2}{\tilde V}(t), \\ \nonumber
   {\hat X}(t) &=& - e^{-r} X+ e^{-r} {\tilde Y}(t),  \hspace{2mm}
   a(t) =  e^{-r} h(t).
 \end{eqnarray}
 \end{lemma}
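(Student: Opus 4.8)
The plan is to reduce $d(p,\gamma(t))$ to a distance measured from the identity $e_S$ by exploiting the left-invariance of the metric, and then to feed the result into Lemma~\ref{distance-1}. Since the metric on $S$ is left-invariant, the left translation $L_{p^{-1}}$ is an isometry sending $p$ to $e_S$ and $\gamma(t)$ to $p^{-1}\gamma(t)$; hence $d(p,\gamma(t)) = d(e_S, p^{-1}\gamma(t))$, which is the first displayed equality. It then remains to write the point $p^{-1}\gamma(t)$ in the upper half space coordinates $(\exp_{\mathfrak n}(\,\cdot + \cdot\,), e^{\,\cdot\,})$ and to substitute those coordinates into Lemma~\ref{distance-1}.

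First I would compute $p^{-1}$. Writing $p = (\exp_{\mathfrak n}(U+X), e^r)$ and solving $p\cdot p^{-1} = e_S$ with the group multiplication law of $S$, together with $[U,U]=0$, gives $p^{-1} = (\exp_{\mathfrak n}(-e^{-r/2}U - e^{-r}X), e^{-r})$. Next I would form the product $p^{-1}\gamma(t)$, inserting the explicit expression for $\gamma(t)$ from Lemma~\ref{geod} and applying the group law once more. The $\mathbb R_+$-coordinate multiplies to $e^{-r}h(t)$, which is exactly $a(t)$; the $\mathfrak v$-component is $-e^{-r/2}U + e^{-r/2}\tilde V(t) = \hat U(t)$, where $\tilde V(t)$ denotes the $\mathfrak v$-part of the $\mathfrak n$-component of $\gamma(t)$; and the $\mathfrak z$-component assembles into $\hat X(t) = -e^{-r}X + e^{-r}\tilde Y(t)$.

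The step requiring the most care, and the main obstacle, is precisely the $\mathfrak z$-component, because the extension is non-abelian in $\mathfrak n$: beyond the translated sum of the $\mathfrak z$-parts, the product carries the Campbell--Hausdorff correction $\tfrac12 e^{-r/2}[-e^{-r/2}U,\tilde V(t)] = -\tfrac12 e^{-r}[U,\tilde V(t)]$. Collecting this bracket term together with the scaled $\mathfrak z$-part $\tfrac{2\theta(t)}{\chi(t)}Y$ of $\gamma(t)$ is what produces $\tilde Y(t)$, and one must keep track of it carefully to arrive at the stated $\hat X(t)$; everywhere else the computation is a routine bookkeeping of scalings by powers of $e^{r/2}$.

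Finally, with the triple $(\hat U(t), \hat X(t), a(t))$ identified as the coordinates of $p^{-1}\gamma(t)$, I would apply Lemma~\ref{distance-1} verbatim: substituting $a(t)$ for $a$, $\hat U(t)$ for $U$, and $\hat X(t)$ for $X$ in the expression $\lambda(p) = \tfrac1a\{(1+a+\tfrac14|U|^2)^2 + |X|^2\}$ yields the stated $\lambda = \lambda(p^{-1}\gamma(t))$, and the distance formula $d(e_S,p^{-1}\gamma(t)) = \log\frac{\lambda - 2 + \sqrt{\lambda^2 - 4\lambda}}{2}$ follows at once. This completes the argument.
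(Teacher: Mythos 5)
Your proposal is correct and follows essentially the same route as the paper: compute $p^{-1} = \bigl(\exp_{\mathfrak n}(-e^{-r/2}U - e^{-r}X),\, e^{-r}\bigr)$, multiply by $\gamma(t)$ via the group law of $S$, and substitute the resulting coordinates $({\hat U}(t),{\hat X}(t),a(t))$ into Lemma~\ref{distance-1}, the first equality being left-invariance of the metric. If anything you are more careful than the paper, whose displayed computation of $p^{-1}\gamma(t)$ silently drops the Campbell--Hausdorff correction $-\tfrac{1}{2}e^{-r}[U,{\tilde V}(t)]$ in the ${\mathfrak z}$-component that you flag; this term must indeed be retained (it is precisely what produces the $\tfrac{1}{2}[U,v]$ appearing in Proposition~\ref{busemann}), so it should be absorbed into the definition of the ${\mathfrak z}$-part of ${\hat X}(t)$ rather than discarded.
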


If we write $\gamma(t) = ({\tilde V}(t), {\tilde Y}(t), h(t))$,
\begin{eqnarray*}
{\tilde V}(t) =
\frac{2\theta(t)(1-s\theta(t))}{\chi(t)}V + \frac{2\theta^2(t)}{\chi(t)} J_YV, \hspace{2mm}
\label{eqn-3}
{\tilde Y}(t) = \frac{2\theta(t)}{\chi(t)} Y. 
\end{eqnarray*}
Then
 \begin{eqnarray*}
 p^{-1}\ \gamma(t) &=& (- e^{-r/2} U, - e^{-r} X, e^{-r})({\tilde V}(t),{\tilde Y}(t), h(t)) \\ \nonumber
 &=& \big( - e^{-r/2} U + e^{-r/2}{\tilde V}(t), - e^{-r} X+ e^{-r} {\tilde Y}(t),
 e^{-r} h(t)
 \big)\\ \nonumber
 &=:& ({\hat U}(t),{\hat X}(t), a(t)).
 \end{eqnarray*}
Therefore, Lemma \ref{distance-2} is obtained from Lemma \ref{distance-1}.

 \subsection{Busemann function on a Damek-Ricci space}
 Let $b_{\gamma}$ be the Buseman function on $S$ associated with a geodesic $\gamma$;
 \begin{eqnarray*}
 b_{\gamma}(p) := \lim_{t\rightarrow \infty} \{d(p,\gamma(t)) - t\} = \lim \{d(e_S, p^{-1} \gamma(t))-t\}, \hspace{2mm}p\in S.
 \end{eqnarray*}
 Then
  \begin{proposition}\label{busemann}\rm \cite{IStokyo}\hspace{2mm}Let $p = (\exp_{\mathfrak n}(U+X), e^r)\in S$ and $\gamma$ be a geodesic of $\gamma(0) = e_S$, $\gamma'(0) = V+Y+ s A$, a unit vector. Then the Busemann function $b_{\gamma}$ on $S$ associated with $\gamma$ is described by
 \begin{eqnarray}\label{busefuncexpression}
 b_{\gamma}(p) &=& -\log \Big(\frac{e^r\cdot \big((1+\frac{1}{4}\vert v\vert^2)^2 + \vert y\vert^2\big)}{\big(e^r+\frac{1}{4}\vert U - v\vert^2 \big)^2 + \vert X-y + \frac{1}{2}[U,v]\vert^2}  \Big),
   \hspace{2mm}s\not= 1, \\ \nonumber
  &=& - \log e^r,\hspace{2mm}s = 1.\end{eqnarray}
 Here $(v,y)\in {\mathfrak v}\times {\mathfrak z}$
 is defined by 
 \begin{eqnarray}\label{vy}
 v
   := \frac{2}{\chi_{\infty}}\{(1-s)V + J_YV\}, \hspace{2mm}
 y:= \frac{2}{\chi_{\infty}} Y
 \end{eqnarray}
 ($\chi_{\infty} := \lim_{t\rightarrow\infty} \chi(t) = (1-s)^2 + \vert Y\vert^2$). $(v,y)$ defines an ideal boundary point representing $[\gamma]\in \partial S$, relative to a cone topology of $S\cup \partial S$.

 \end{proposition}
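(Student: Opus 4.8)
The plan is to evaluate the defining limit $b_{\gamma}(p) = \lim_{t\to\infty}\{d(p,\gamma(t)) - t\}$ directly from Lemma \ref{distance-2}, letting the divergence of the argument $\lambda = \lambda(p^{-1}\gamma(t))$ cancel against the subtracted $t$. First I would dispose of the degenerate case $s = 1$: the unit-vector condition $\vert V\vert^2 + \vert Y\vert^2 + s^2 = 1$ then forces $V = 0$ and $Y = 0$, so $\gamma$ is the vertical geodesic $t\mapsto(\exp_{\mathfrak n}(0), e^t)$ and a direct evaluation of the limit yields $b_{\gamma}(p) = -\log e^r$.

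For $s\neq 1$ I would first record the asymptotics as $t\to\infty$. Since $\theta(t)=\tanh(t/2)\to 1$ and $\chi(t)\to\chi_\infty=(1-s)^2+\vert Y\vert^2$, the vectors $\tilde V(t),\tilde Y(t)$ of Lemma \ref{distance-2} converge to $v=\frac{2}{\chi_\infty}\{(1-s)V+J_YV\}$ and $y=\frac{2}{\chi_\infty}Y$; this is precisely what motivates the definition (\ref{vy}) of the boundary data $(v,y)$. Consequently $\hat U(t)\to e^{-r/2}(v-U)$ and, retaining the bracket term coming from the group product $p^{-1}\gamma(t)$, $\hat X(t)\to -e^{-r}(X-y+\tfrac12[U,v])$, while $a(t)=e^{-r}h(t)\to 0$. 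The decisive quantitative input is $1-\theta^2(t)=1/\cosh^2(t/2)=4e^{-t}(1+e^{-t})^{-2}$, so that $h(t)\sim 4e^{-t}/\chi_\infty$ and hence $\log h(t)+t\to\log 4-\log\chi_\infty$; this is the term that will absorb the subtracted $t$.

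Feeding these into Lemma \ref{distance-2} and writing $D:=(e^r+\tfrac14\vert U-v\vert^2)^2+\vert X-y+\tfrac12[U,v]\vert^2$ for the target denominator, the bracketed quantity in $\lambda(t)$ converges: $(1+a(t)+\tfrac14\vert\hat U(t)\vert^2)^2+\vert\hat X(t)\vert^2\to e^{-2r}D$, using $(1+\tfrac14 e^{-r}\vert U-v\vert^2)^2=e^{-2r}(e^r+\tfrac14\vert U-v\vert^2)^2$. Thus $\lambda(t)\sim e^{-2r}D/a(t)\to+\infty$. Since $\log\tfrac{\lambda-2+\sqrt{\lambda^2-4\lambda}}{2}=\log\lambda+o(1)$ as $\lambda\to\infty$, one gets $d(p,\gamma(t))-t=\log\lambda(t)-t+o(1)$; expanding $\log\lambda(t)=\log(e^{-2r}D)+r-\log h(t)+o(1)$ and subtracting $t$ gives $b_{\gamma}(p)=-r+\log D-(\log 4-\log\chi_\infty)$.

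The final and most delicate step is the algebraic identity $(1+\tfrac14\vert v\vert^2)^2+\vert y\vert^2=4/\chi_\infty$, which converts $-(\log 4-\log\chi_\infty)$ into $-\log((1+\tfrac14\vert v\vert^2)^2+\vert y\vert^2)$ and thereby produces the factor $e^r\big((1+\tfrac14\vert v\vert^2)^2+\vert y\vert^2\big)$ in the numerator of (\ref{busefuncexpression}). I would prove it from the generalized-Heisenberg relations: skew-adjointness of $J_Y$ gives $\langle V,J_YV\rangle=0$ and (\ref{genheisalg}) gives $\vert J_YV\vert^2=\vert Y\vert^2\vert V\vert^2$, whence $\vert v\vert^2=4\vert V\vert^2/\chi_\infty$ and $\vert y\vert^2=4\vert Y\vert^2/\chi_\infty^2$; the unit condition then collapses $\chi_\infty+\vert V\vert^2$ to $2(1-s)$, so $1+\tfrac14\vert v\vert^2=2(1-s)/\chi_\infty$ and $(1+\tfrac14\vert v\vert^2)^2+\vert y\vert^2=4((1-s)^2+\vert Y\vert^2)/\chi_\infty^2=4/\chi_\infty$. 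Substituting this identity recasts the limit exactly as (\ref{busefuncexpression}). I expect the main obstacle to be matching the two simultaneous divergences $\lambda(t)\to\infty$ and $t\to\infty$ at the correct rate: the $o(1)$ in $\log\tfrac{\lambda-2+\sqrt{\lambda^2-4\lambda}}{2}=\log\lambda+o(1)$ and the precise constant $4/\chi_\infty$ in $h(t)\sim 4e^{-t}/\chi_\infty$ must be pinned down rather than merely estimated, after which the $(v,y)$-identity is routine.
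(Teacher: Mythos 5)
Your proof is correct and follows essentially the route the paper itself sets up: evaluate $b_{\gamma}(p)=\lim_{t\to\infty}\{d(p,\gamma(t))-t\}$ via the distance formula of Lemma \ref{distance-2}, use $h(t)\sim 4e^{-t}/\chi_{\infty}$ to cancel the subtracted $t$, and invoke the identity $(1+\tfrac14\vert v\vert^2)^2+\vert y\vert^2 = 4/\chi_{\infty}$ (which the paper also records as $F(e_S)=4/\chi_{\infty}$) to arrive at (\ref{busefuncexpression}); this is precisely the computation the paper defers to \cite{IStokyo}. A point in your favor: you correctly restore the Campbell-Hausdorff bracket term $-\tfrac12 e^{-r}[U,\tilde V(t)]$ in $\hat X(t)$, which the displayed formula in Lemma \ref{distance-2} omits but which is indispensable for producing the $\tfrac12[U,v]$ term in the final expression.
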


  \subsection{Hessian of Buseman function}

 The aim of this section and the subsequent sections is to investigate the positive definiteness of the Hessian of the Busemann function over $\nabla b_{\gamma}^{\perp}$
 at any point of $S$.


 Let $b_{\gamma}$ be the Busemann function associated with a geodesic $\gamma$ of $\gamma(0) = e_S$, $\gamma'(0) \in T_eS$, a unit vector. 
 Notice that the gradient $\nabla b_{\gamma}$ gives a null vector of the Hessian. 
  Let $p\in S$ be an arbitrary point and restrict  the Hessian  to $\nabla b_{\gamma,\,p}^{\perp}$, the subspace of $T_pS$ orthogonal to $\nabla b_{\gamma,\,p}$;
 \begin{eqnarray*}
 \nabla d b_{\gamma,\, p} : \nabla b_{\gamma,\,p}^{\perp} \times  \nabla b_{\gamma,\,p}^{\perp} \rightarrow {\Bbb R}.
 \end{eqnarray*}
  Since the metric of $S$ is left invariant, any left translation is an isometry. Therefore, as Proposition \ref{equivariance} shows, in order to assert the positive definiteness of the Hessian $\nabla d b_{\gamma}$ at an arbitrary point it suffices to investigate the positive definiteness of the Hessian 
 at $e_S$.  Just like the equivariance of the Jacobi operator $R_u$ with respect to an isometry, the Hessian $\nabla d b_{\gamma}$ satisfies the following for the action of left translation.

 \begin{proposition}\label{equivariance}\rm  Let $\gamma$ be a geodesic of $S$ and $y$ an arbitrary point of $S$. Then, for any $x\in S$
 \begin{eqnarray*}
 (\nabla d b_{\gamma})_y(u,v) =  (\nabla d b_{L_x(\gamma)})_{L_x(y)}(L_{x\ast}u,L_{x\ast}v),\hspace{2mm}u,v\in T_yS.
 \end{eqnarray*}Here $L_x(\gamma)$ is a geodesic defined by $L_x(\gamma)(t) := L_x(\gamma(t))$.
 \end{proposition}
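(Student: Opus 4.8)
The plan is to reduce the proposition to three elementary equivariance facts: that left translations are isometries of $S$, that the Busemann function is natural under isometries, and that the Hessian operation commutes with pullback by isometries. First I would record that, since the metric $\langle\cdot,\cdot\rangle$ on $S$ is left invariant, each left translation $L_x : S \to S$ is an isometry. Consequently $L_x$ preserves the distance function and carries unit-speed geodesics to unit-speed geodesics, so $L_x(\gamma)$ is again a geodesic parametrized by arc length, and its Busemann function is defined by the same limiting procedure as $b_\gamma$.

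Next I would establish the transformation law $b_{\gamma} = b_{L_x(\gamma)}\circ L_x$ for the Busemann functions. Starting from the definition and using $d(L_x(p), L_x(q)) = d(p,q)$, one computes
\[
b_{L_x(\gamma)}(L_x(p)) = \lim_{t\to\infty}\{d(L_x(p), L_x(\gamma(t))) - t\} = \lim_{t\to\infty}\{d(p,\gamma(t)) - t\} = b_{\gamma}(p),
\]
which gives the claimed identity as functions on $S$.

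Finally I would invoke the naturality of the Hessian under isometries. If $\phi$ is an isometry and $f = g\circ\phi$, then, because $\phi$ preserves the Levi-Civita connection, the Hessian of $f$ is the pullback of the Hessian of $g$, that is $\nabla d f = \phi^{\ast}(\nabla d g)$; written out on tangent vectors via the definition $(\nabla d f)_p(u,v) = u(df(\tilde v)) - df(\nabla_u \tilde v)$, this reads $(\nabla d f)_p(u,v) = (\nabla d g)_{\phi(p)}(\phi_{\ast}u,\phi_{\ast}v)$. Applying it with $\phi = L_x$, $f = b_{\gamma}$, $g = b_{L_x(\gamma)}$ and $p = y$ yields
\[
(\nabla d b_{\gamma})_y(u,v) = (\nabla d b_{L_x(\gamma)})_{L_x(y)}(L_{x\ast}u, L_{x\ast}v),
\]
as desired. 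I do not expect a genuine obstacle here, since the whole argument is the equivariance of distance, Busemann functions, and $\nabla$ under isometries — the same mechanism the text alludes to for the Jacobi operator $R_u$. The only point deserving a line of care is verifying that $L_x(\gamma)$ is unit-speed, so that the defining limit for $b_{L_x(\gamma)}$ coincides with that for $b_{\gamma}$ once the distance identity has been applied.
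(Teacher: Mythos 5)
Your proposal is correct and follows essentially the same route as the paper, which merely sketches the argument via the identity $(\nabla d b_{\gamma})_y(u,v) = \langle \nabla_u \nabla b_{\gamma}, v\rangle_y$ together with the same equivariance mechanism (left translations are isometries, hence preserve distances, Busemann functions, and the Levi-Civita connection). Your write-up simply makes explicit the transformation law $b_{\gamma} = b_{L_x(\gamma)}\circ L_x$ and the naturality of the Hessian that the paper leaves to the reader.
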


 A proof is derived by $\displaystyle{(\nabla d b_{\gamma})_y(u,v) = \langle \nabla_u \nabla b_{\gamma}, v\rangle_y}$.

 We may assume in Proposition \ref{equivariance} that $\gamma$ satisfies $\gamma(0) = y$, since $\nabla d b_{\gamma} = \nabla d b_{\gamma_1}$ for geodesics $\gamma$ and $\gamma_1$ such that $\gamma_1 \sim \gamma$, so that we choose $x= y^{-1}$. Therefore,  $L_x(\gamma)(0) = L_xy =e_S$ for the geodesic $L_x(\gamma)$ appeared on the right hand side.


 \subsection{The formula of the Hessian.}

We now investigate some of the properties of the Hessian.

 \begin{theorem}\label{hessianatidentity}\rm Let $V+Y+sA\in {\mathfrak s}$ be an arbitrary unit vector and $\gamma$ be the geodesic of $\gamma(0) = e_S$, $\gamma'(0) = V+Y+sA$. Then the Hessian $\nabla d b_{\theta}$ is positive definite over $\gamma'(0)^{\perp}$, where $\theta =[\gamma]\in \partial S$.

 \end{theorem}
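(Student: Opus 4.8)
The plan is to compute the Hessian $(\nabla d b_\gamma)_{e_S}$ directly from the closed form of the Busemann function in Proposition \ref{busemann} and then verify strict positivity of the resulting symmetric bilinear form on $\gamma'(0)^\perp$. The starting observation is that, since $b_\gamma(\gamma(t)) = -t$ and $|\nabla b_\gamma| = 1$, the gradient at $e_S$ equals $-\gamma'(0) = -(V+Y+sA)$; hence the kernel of the (positive semi-definite) Hessian is exactly $\mathbb{R}\,\gamma'(0)$, so $\nabla b_\gamma^\perp = \gamma'(0)^\perp$ and it suffices to rule out a further null direction inside $\gamma'(0)^\perp$. Combined afterwards with the equivariance of Proposition \ref{equivariance}, this at-$e_S$ statement yields the global Theorem \ref{damekricci-2a}.

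First I would rewrite the expression of Proposition \ref{busemann} (for $s\neq1$) as $b_\gamma = \mathrm{const} - r + \log D$, where $D(p) = (e^r + \tfrac14|U-v|^2)^2 + |X - y + \tfrac12[U,v]|^2$ and $(v,y)$ is the ideal data of $\theta$ from (\ref{vy}). Since $A = \partial/\partial\lambda$ is the unit field dual to $dr$ (immediate from Lemma \ref{coordinatesons}), we have $\nabla r = A$, so $\nabla b_\gamma = -A + D^{-1}\nabla D$. Evaluating at $e_S$ (where $U=X=0$, $a=1$) both fixes $D(e_S) = (1+\tfrac14|v|^2)^2 + |y|^2$ and recovers $\nabla b_\gamma(e_S) = -(V+Y+sA)$, which serves as a consistency check and pins down $\nabla D(e_S)$.

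Next I would differentiate once more, using $\nabla_u\nabla b_\gamma = -\nabla_u A - D^{-2}\,u(D)\,\nabla D + D^{-1}\nabla_u\nabla D$ together with $(\nabla d b_\gamma)_{e_S}(u,w) = \langle\nabla_u\nabla b_\gamma,\,w\rangle$. The terms $\nabla_u A$ are read off Lemma \ref{connection} ($\nabla_V A = -\tfrac12 V$, $\nabla_Y A = -Y$, $\nabla_A A = 0$), while $u(D)$ and $\nabla_u\nabla D$ are obtained from the coordinate form of $D$ through the frame formulas of Lemma \ref{coordinatesons}; the bracket $[U,v]$ feeds the structure constants $\langle[V_i,V_j],Y_\alpha\rangle$ into these derivatives, and the substitution (\ref{vy}) re-expresses everything in terms of $V$, $Y$, $s$ and the operators $J_Z$. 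I expect the outcome to have the shape $\langle u,w\rangle - \langle\gamma'(0),u\rangle\langle\gamma'(0),w\rangle$ plus a correction quadratic in the $J$-operators tested against $Y$, specializing to the rank-one symmetric-space formula of the Introduction when $S$ is a hyperbolic space.

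The hard part will be the positivity step. After restricting the explicit form to $\gamma'(0)^\perp$, I would diagonalize it along an orthogonal decomposition adapted to $V$ and $Y$ -- isolating $\mathbb{R}V$, $\mathfrak{v}\ominus\mathbb{R}V$, $\mathbb{R}Y$, $\mathfrak{a}$ and refining $Y^\perp$ by the eigenspaces $L_j$ of $K^2$ in (\ref{eigenspacesofell}), which is the same $R_{V+Y+sA}$-invariant splitting $\mathfrak{s} = \mathfrak{s}_4\oplus\mathfrak{p}\oplus\mathfrak{q}$, $\mathfrak{q}=\mathfrak{q}_0\oplus\cdots\oplus\mathfrak{q}_\ell$, used in Proposition \ref{zeroeigenvector}. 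On each block the identities (\ref{genheisalg})--(\ref{bracket}) collapse the form to scalars in $|V|^2$, $|Y|^2$, $s$ and $\mu_j$; the delicate contributions are the cross terms between $\mathfrak{v}$ and $\mathfrak{z}$ produced by $J_Y$ and by the bracket term in $D$, which couple the $\mathfrak{q}_j$. These I would control by Cauchy--Schwarz against $\mu_j$ (using $-1\le\mu_j\le0$) and then confirm that each resulting small block matrix is positive definite once the single null direction $\gamma'(0)$ is removed. The tightest case is the distinguished direction $|V|^2=\tfrac23$, $|Y|^2=\tfrac13$, $s=0$ singled out in Proposition \ref{notparallel} and linked to the vanishing Jacobi eigenvalue $\kappa_3$ of (\ref{equation}); there the positivity on $\gamma'(0)^\perp$ is sharp and should be verified by a direct evaluation of the block determinant rather than by a soft estimate.
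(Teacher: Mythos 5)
Your plan follows essentially the same route as the paper: the paper computes the Hessian at $e_S$ from the closed form of Proposition \ref{busemann} (Lemmas \ref{componenthessian} and \ref{hessiancomponents-x}), reduces to $e_S$ by left-invariance (Proposition \ref{equivariance}), block-diagonalizes along the same admissible decomposition ${\mathfrak s}={\mathfrak s}_4\oplus{\mathfrak p}\oplus{\mathfrak q}_0\oplus\cdots\oplus{\mathfrak q}_{\ell}$ (Proposition \ref{decompositionofs}, Lemma \ref{orthogonality}), and verifies positivity block by block. Two points in your outline, however, need correction. First, your decomposition presupposes $V\neq 0$ and $Y\neq 0$: the operator $K_{V,Y}$, the eigenspaces $L_j$, and the subspaces ${\mathfrak q}_j$ are undefined otherwise, so the non-generic cases $V=0$, or $Y=0$, or $s=\pm 1$ require the separate (short) arguments the paper gives at the end of Section 4; as written your proof would not cover an arbitrary unit vector $\gamma'(0)$. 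Second, your expectation that positivity is ``sharp'' at the distinguished direction $|V|^2=\tfrac23$, $|Y|^2=\tfrac13$, $s=0$ conflates the kernel of the Jacobi operator with the kernel of the Hessian: at that direction $R_{\gamma'(0)}$ does annihilate a vector of ${\mathfrak q}_0$ (Proposition \ref{notparallel}), but the Hessian restricted to ${\mathfrak q}_0$ remains uniformly positive definite --- the paper's completed-squares expression there has all coefficients bounded away from zero (cf. also Proposition \ref{uniformbound}). The genuinely delicate blocks are instead the ${\mathfrak q}_j$ with $-1<\mu_j<0$, where the natural basis fails to be orthonormal ($\langle W_1,W_2'\rangle=-\sqrt{|\mu_j|}$) and the paper must compute the Hermitian minors $\det H^{(2)}$, $\det H^{(3)}$ of Lemma \ref{positivehermitematrix}. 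This distinction is not cosmetic: the fact that a pointwise zero eigenvalue of $R_{\gamma'}$ does \emph{not} force a null direction of $\nabla d b_{\gamma}$ is exactly what makes Theorem \ref{damekricci-2a} consistent with Proposition \ref{zeroeigenvector}, and is the phenomenon the rank/Hessian relation of Part II is built around. Since you do propose direct determinant evaluations on each block, your plan would succeed once these two corrections are incorporated.
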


 Note $\nabla d b_{\theta} \equiv \nabla d b_{\gamma}$ on $S$.  Theorem \ref{hessianatidentity} therefore implies
\begin{proposition}\label{uniformbound}\rm The Hessian $\nabla d b_{\theta}$ is uniformly bounded from below by a positive constant. Namely, there exists $C_0>0$ such that  $\nabla d b_{\theta}(X,X) \geq C_0$,\hspace{2mm}for any $X\in T_p {\mathcal H}_{(\theta, p)}$ of $\vert X\vert = 1$ and any $p\in S$, $\theta\in \partial S${\red{.}}
\end{proposition}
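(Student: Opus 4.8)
The plan is to upgrade the pointwise positive definiteness at the identity furnished by Theorem \ref{hessianatidentity} into a uniform lower bound, using left invariance to transport every estimate to $e_S$ and then a compactness argument over the unit sphere at $e_S$.

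First I would reduce the estimate at an arbitrary point to the identity. Given $\theta\in\partial S$ and a unit vector $X\in T_p{\mathcal H}_{(\theta,p)} = (\nabla b_{\theta})_p^{\perp}$, choose a geodesic $\gamma$ with $\gamma(0)=p$ and $[\gamma]=\theta$; this is legitimate because $\nabla d b_{\theta}$ depends only on the asymptotic class. Applying Proposition \ref{equivariance} with $x=p^{-1}$ gives
\begin{eqnarray*}
(\nabla d b_{\theta})_p(X,X) = (\nabla d b_{L_{p^{-1}}\gamma})_{e_S}(L_{p^{-1}\ast}X, L_{p^{-1}\ast}X).
\end{eqnarray*}
Since $L_{p^{-1}}$ is an isometry, $w := L_{p^{-1}\ast}\gamma'(0)$ is a unit vector in $T_{e_S}S$, the vector $L_{p^{-1}\ast}X$ is a unit vector orthogonal to $w$, and $L_{p^{-1}}\gamma$ is precisely the geodesic issuing from $e_S$ in the direction $w$. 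Hence the quantity to be bounded equals the Hessian at $e_S$ evaluated on the orthogonal pair of unit vectors $(w, L_{p^{-1}\ast}X)$, and it suffices to bound from below, uniformly, the function
\begin{eqnarray*}
F(w, X) := (\nabla d b_{[\gamma_w]})_{e_S}(X,X)
\end{eqnarray*}
over the set ${\mathcal B} := \{(w,X)\in U_{e_S}S\times U_{e_S}S\,\vert\, X\perp w\}$, where $\gamma_w$ is the geodesic with $\gamma_w(0)=e_S$ and $\gamma_w'(0)=w$.

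Next I would invoke compactness. The set ${\mathcal B}$ is a closed, hence compact, subset of $U_{e_S}S\times U_{e_S}S$, and under the identification $U_{e_S}S\cong\partial S$, letting $\theta$ range over all of $\partial S$ is the same as letting $w$ range over $U_{e_S}S$. By Theorem \ref{hessianatidentity} the function $F$ is strictly positive at every point of ${\mathcal B}$ (including the vertical direction $w=A$, i.e. the case $s=1$). Granting that $F$ is continuous on ${\mathcal B}$, it attains a minimum $C_0>0$ there, and then $(\nabla d b_{\theta})_p(X,X)=F(w, L_{p^{-1}\ast}X)\geq C_0$ for all $p$, $\theta$ and all unit $X\in T_p{\mathcal H}_{(\theta,p)}$, which is exactly the assertion.

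The main obstacle is the continuity of $w\mapsto (\nabla d b_{[\gamma_w]})_{e_S}$, specifically across the vertical direction $w=A$, where the unit condition forces $V=Y=0$, the factor $\chi_{\infty}=(1-s)^2+\vert Y\vert^2$ of (\ref{vy}) degenerates to $0$, and the closed form (\ref{busefuncexpression}) of the Busemann function switches to the case $s=1$. I would treat this by differentiating (\ref{busefuncexpression}) twice and verifying that the components of $\nabla d b_{\gamma_w}$ at $e_S$ admit expressions that extend continuously as $s\to 1$ (equivalently $V,Y\to 0$), the apparent singularity in $(v,y)$ cancelling against the vanishing numerators. Alternatively, continuity can be argued abstractly from the smooth dependence of Busemann functions and their Hessians on the asymptotic direction on the analytic manifold $S$, which dispenses with the explicit limit at $s=1$. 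Either route closes the argument and yields the uniform constant $C_0$.
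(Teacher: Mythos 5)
Your proposal is correct and is essentially the paper's own argument: the paper likewise reduces to $e_S$ via left invariance (Proposition \ref{equivariance}) and then applies a compactness argument to the continuous function $\Psi$ on the set $\Theta$ of orthogonal unit pairs in ${\mathfrak s}$, which is exactly your $F$ on ${\mathcal B}$, invoking Theorem \ref{hessianatidentity} for pointwise positivity. The only difference is that the paper simply asserts continuity of $\Psi$, whereas you flag and address the potential degeneracy of the formula (\ref{busefuncexpression}) as $s\to 1$ — a point the paper glosses over.
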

In fact, let $\Theta : = \{ (V+Y+sA, V_1+Y_1+s_1A) \in {\mathfrak s}\times{\mathfrak s}\, \vert\, \vert V+Y+sA\vert = \vert V_1+Y_1+s_1A\vert= 1,\, \langle V+Y+sA, V_1+Y_1+s_1A\rangle = 0\}$ and define a map $\Psi : \Theta \rightarrow {\Bbb R}$ by $\Psi(V+Y+sA,V_1+Y_1+s_1A ) = (\nabla d b_{\gamma})_{e_S}(V_1+Y_1+s_1A, V_1+Y_1+s_1A)$,
 where $\gamma$ is the geodesic of $\gamma(0)=e_S$, $\gamma'(0)= V+Y+sA$. The map $\Psi$ is a continuous function on a compact set $\Theta$ so that from Theorem
 \ref{hessianatidentity} there exists a constant $C_0 > 0$ such that $\Psi(V+Y+sA,V_1+Y_1+s_1A )\geq C_0$, namely
 $\nabla d b_{\gamma}(V_1+Y_1+s_1A, V_1+Y_1+s_1A) \geq C_0 \vert V_1+Y_1+s_1A\vert^2$, $V_1+Y_1+s_1A\in T_{e_S}{\mathcal H}_{(\theta,\gamma(0))}$.

 \vspace{2mm}In what follows we will prove Theorem \ref{hessianatidentity}.

 We mainly deal with the case $s \not=1$. In case $s=1$ it is directly shown as
 \begin{proposition}\rm Let $\gamma$ be the geodesic of $\gamma(0) = e_S$, $\gamma'(0) = A$. Then, $\nabla d b_{\gamma}$ is positive definite over $\gamma'(0)^{\perp}={\mathfrak v}\oplus {\mathfrak z}$. More precisely, ${\mathfrak v}$ is the eigenspace of $\nabla d b_{\gamma}$ corresponding to eigenvalue $\frac{1}{2}$ and ${\mathfrak z}$ is the eigenspace correspnding to eigenvalue $1$.

 \end{proposition}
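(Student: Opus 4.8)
The plan is to exploit the fact that for $s=1$ the Busemann function is completely explicit, so that both the gradient field and the Hessian can be read off directly from the structure constants. First I would invoke Proposition \ref{busemann} in the case $s=1$, which gives $b_{\gamma}(p) = -\log e^{r} = -\lambda$, where $\lambda$ is the coordinate function with $a = e^{\lambda}$ introduced in Lemma \ref{coordinatesons}. Thus, up to sign, $b_{\gamma}$ is nothing but the coordinate $\lambda$, and the whole argument reduces to differentiating this single coordinate function.

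Next I would identify the gradient field. Taking the basis $\{V_1,\dots,V_m,Y_1,\dots,Y_k,A\}$ of $(\mathfrak{s},\langle\cdot,\cdot\rangle)$ to be orthonormal, the associated left-invariant frame is orthonormal for the left-invariant metric, and $A$ is a unit field orthogonal to $\mathfrak{v}\oplus\mathfrak{z}$. Using the coordinate expressions of Lemma \ref{coordinatesons}, namely that $V_i$ and $Y_{\alpha}$ involve only $\partial/\partial v_j$ and $\partial/\partial y_{\beta}$ while $A = \partial/\partial\lambda$, one checks $V_i(\lambda)=Y_{\alpha}(\lambda)=0$ and $A(\lambda)=1$. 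Comparing $db_{\gamma}(W) = -d\lambda(W)$ with $\langle -A, W\rangle$ on each frame field then shows $\nabla b_{\gamma} = -A$ at \emph{every} point of $S$. The essential point is precisely this: the gradient is the left-invariant field $-A$ globally, not merely at $e_S$, which is what licenses applying the connection formulas pointwise.

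Finally, since $\nabla b_{\gamma}=-A$ is left-invariant, the Hessian follows immediately from Lemma \ref{connection}. For $u,v\in T_pS$ one has $(\nabla d b_{\gamma})(u,v) = \langle \nabla_u \nabla b_{\gamma}, v\rangle = -\langle \nabla_u A, v\rangle$. Taking $u=V\in\mathfrak{v}$ and using $\nabla_V A = -\tfrac12 V$ gives $(\nabla d b_{\gamma})(V,v) = \tfrac12\langle V,v\rangle$, while taking $u=Y\in\mathfrak{z}$ and using $\nabla_Y A = -Y$ gives $(\nabla d b_{\gamma})(Y,v) = \langle Y,v\rangle$. Because $\mathfrak{v}\perp\mathfrak{z}$ the cross terms vanish, so $\mathfrak{v}$ is the eigenspace with eigenvalue $\tfrac12$ and $\mathfrak{z}$ the eigenspace with eigenvalue $1$; both eigenvalues being positive, $\nabla d b_{\gamma}$ is positive definite on $\gamma'(0)^{\perp} = \mathfrak{v}\oplus\mathfrak{z}$. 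I expect no genuine obstacle here beyond the bookkeeping verification that $\nabla b_{\gamma}=-A$ holds globally; once that is in hand the rest is a one-line application of the Levi-Civita formulas.
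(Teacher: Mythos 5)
Your proof is correct and follows essentially the same route as the paper: the paper's own (very terse) argument likewise combines the explicit formula $b_{\gamma}(p) = -\log e^{r}$ for $s=1$ from Proposition \ref{busemann} with the connection formulas $\nabla_{V}A = -\tfrac12 V$, $\nabla_{Y}A = -Y$ of Lemma \ref{connection}. Your explicit verification that $\nabla b_{\gamma} = -A$ holds globally is precisely the step the paper leaves implicit, so this is a fleshed-out version of the same computation.
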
In fact, by the aid of the formula in Lemma \ref{connection} it is easily seen that $\nabla d b_{\gamma}(V,V') = \frac{1}{2}\langle V,V'\rangle$, $\nabla d b_{\gamma}(Z,Z') = \langle Z,Z'\rangle$,  and $\nabla d b_{\gamma}(V,Z) = 0$, $V,V'\in {\mathfrak v}$, $Z,Z'\in {\mathfrak z}$. So, the proposition is proved.

 \vspace{2mm}
 Let $s\not=1$. We use the formula (\ref{busefuncexpression}) in Proposition \ref{busemann}.
We identify a point $p = (\exp_{\mathfrak n}(U+X), a=e^r)$ with its
  coordinates $(v^i(p), y^{\alpha}(p), a(p))$ in terms of  $U = \sum_i v^i(p) V_i$, $X = \sum_{\alpha} y^{\alpha}(p) Y_{\alpha}$, $a(p) = e^{\lambda(p)},\, \lambda(p) = r$. We introduce the following maps and the functions
  \begin{eqnarray}
 {\mathcal V} : S\times \partial S \rightarrow {\mathfrak v}&:&\hspace{2mm}{\mathcal V}(p,(v,y)) := v - U,\\ \nonumber
 {\mathcal Y} : S\times \partial S \rightarrow {\mathfrak z}&:&\hspace{2mm}{\mathcal Y}(p,(v,y)) := y - X -\frac{1}{2}[U,v],\\ \nonumber
 f(p,(v,y)) &:=& a + \frac{1}{4}\vert {\mathcal V}(p,(v,y))\vert^2,\\ \nonumber
   F(p,(v,y)) &:=& f^2(p,(v,y)) + \vert {\mathcal Y}(p,(v,y))\vert^2, \\ \nonumber
  \end{eqnarray}respectively so that
$b_{\gamma}$ can be written by
  \begin{eqnarray}\label{formbusemann}
   b_{\gamma}(p) = \log F(p,(v,y)) - \log a + C((v,y)). 
   \end{eqnarray}Here $C((v,y)) := - \log\big((1+\frac{1}{4}\vert v\vert^2)^2 +\vert y\vert^2  \big)$ is a constant function on $S$.
   Notice that for a fixed $(v,y)$ thus defined $f$, $F$, ${\mathcal V}$ and ${\mathcal Y}$ are functions of $p$ and
  expressed by $f = f(p)$, $F = F(p)$, ${\mathcal V}= {\mathcal V}(p)$ and ${\mathcal Y}={\mathcal Y}(p)$, respectively.

 Thus, the Hessian can be computed by using the formula
   \begin{eqnarray}\label{loghessian}
   \nabla d \log F(u,v) 
   = \frac{1}{F}\big(u({\tilde v}F) - (\nabla_u{\tilde v})F  \big)- \frac{1}{F^2} uF\, vF,\hspace{2mm}u,v\in T_pS
   \end{eqnarray}(${\tilde v}$ is an extension of $v$).

    Let $p = (\exp_{\mathfrak n}(U+X), e^r)$ be a point of $S$ with coordinates $\{v_i, y_{\alpha}, a = e^{\lambda}\}$. Then, straightforward computations give us via (\ref{coordinatesons})
\begin{eqnarray}\label{firstderivative}
V_i\hspace{0.5mm} F&=&  \sqrt{a}\, \frac{\partial}{\partial v_i}F - \frac{1}{2}\, \sqrt{a}\, \sum_{j,\alpha}\, \langle [V_i, V_j], Y_{\alpha}\rangle \, v_j\ \frac{\partial}{\partial y_{\alpha}}F \\ \nonumber
&=& - \sqrt{a}\hspace{0.5mm}\langle f {\mathcal V}- J_{\mathcal Y}{\mathcal V}, V_i\rangle , \\ 
\label{firstderivative-2}
Y_{\alpha}\hspace{0.5mm} F &=& e^{\lambda}\, \frac{\partial}{\partial y_{\alpha}}F = - 2a\hspace{0.5mm} \langle {\mathcal Y}, Y_{\alpha}\rangle, 
A\hspace{0.5mm} F = \frac{\partial}{\partial \lambda}F = a \frac{\partial}{\partial a}F = 2 a f
\end{eqnarray}
   and for the ${\mathfrak v}$-, ${\mathfrak z}$-valued functions ${\mathcal V}$, ${\mathcal Y}$
\begin{eqnarray}\label{VY}
 V_i({\mathcal V}) &=& - \sqrt{a} V_i, \hspace{2mm}
 V_i({\mathcal Y}) = - \frac{\sqrt{a}}{2} [V_i,{\mathcal V}],\\ 
 Y_{\alpha}({\mathcal V}) &=&0,\hspace{2mm}
 Y_{\alpha}({\mathcal Y}) = - a Y_{\alpha}
 \end{eqnarray}
which are derived from $\displaystyle{
   \frac{\partial}{\partial v^i} {\mathcal V} = - V_i,\hspace{2mm} \frac{\partial}{\partial y^{\alpha}} {\mathcal V} = 0, 
     \frac{\partial}{\partial v^i} {\mathcal Y} = - \frac{1}{2}[V_i, v], \hspace{2mm} \frac{\partial}{\partial y^{\alpha}} {\mathcal Y} = - Y_{\alpha}}$.


   \begin{lemma}\label{componenthessian}
   \rm \cite{Satoh}\hspace{2mm}The components of the Hessian $\nabla d b_{\gamma}$ with respect to a basis $\{ V_i, Y_{\alpha}, A\}$ of ${\mathfrak s}={\mathfrak v}\oplus{\mathfrak z}\oplus{\mathfrak a}$ at a point $p = (\exp_{\mathfrak n}(U+X), a)$ are given by
    \begin{eqnarray}
   \nabla d b_{\gamma}(A,A) &=& \frac{2a}{F^2}\left(fF + aF - 2a f^2\right),\\ \nonumber
  \nabla d b_{\gamma}(A,V_i)&=& \nabla d b_{\gamma}(V_i,A) = - \frac{\sqrt{a}}{2F^2}\big\{ \big(fF + 2aF - 4a f^2\big) \langle {\mathcal V},V_i\rangle \\ \nonumber
   &+& \left( 4 a f - F\right) \langle J_{\mathcal Y}{\mathcal V}, V_i\rangle \big\},\\ \nonumber
  \nabla d b_{\gamma}(A,Y_{\alpha})&=& \nabla d b_{\gamma}(Y_{\alpha},A) = \frac{2a}{F^2} (2a f - F) \langle {\mathcal Y}, Y_{\alpha}\rangle,
  \end{eqnarray}
   \begin{eqnarray}
  \label{ijcompo}
 \hspace{10mm}\nabla d b_{\gamma}(V_i,V_j) &=& \nabla d b_{\gamma}(V_j,V_i) \\ \nonumber
 &=& \frac{1}{2} \langle V_i,V_j\rangle + \frac{a}{2F}\big(\langle{\mathcal V},V_i\rangle \langle {\mathcal V}, V_j\rangle +  \langle [{\mathcal V}, V_i], [{\mathcal V}, V_j]\rangle\big) \\ \nonumber
  &-& \frac{a}{F^2} \langle f{\mathcal V} - J_{\mathcal Y}{\mathcal V}, V_i\rangle \langle f {\mathcal V} - J_{\mathcal Y}{\mathcal V}, V_j\rangle ,   \\ \nonumber
 \label{ialphacompo}
 \nabla d b_{\gamma}(V_i,Y_{\alpha}) &=& \nabla d b_{\gamma}(Y_{\alpha},V_i) = - \frac{2a\sqrt{a}}{F^2} \langle f {\mathcal V} - J_{\mathcal Y}{\mathcal V},V_i\rangle \langle {\mathcal Y}, Y_{\alpha}\rangle \\ \nonumber
  &-& \frac{\sqrt{a}}{2F} \langle [V_i, (f-2a){\mathcal V} - J_{\mathcal Y}{\mathcal V}], Y_{\alpha}\rangle , \\ \nonumber
 \label{alphabetacompo}
  \nabla d b_{\gamma}(Y_{\alpha},Y_{\beta}) &=&\nabla d b_{\gamma}(Y_{\beta},Y_{\alpha}) \\ \nonumber &=& \frac{1}{F} (F - 2a f + 2a^2) \langle Y_{\alpha}, Y_{\beta}\rangle  - \frac{4 a^2}{F^2} \langle {\mathcal Y}, Y_{\alpha}\rangle\langle {\mathcal Y}, Y_{\beta}\rangle .
   \end{eqnarray}

     \end{lemma}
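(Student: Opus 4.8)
The plan is to exploit the additive decomposition of the Busemann function recorded in (\ref{formbusemann}), namely $b_{\gamma} = \log F - \log a + C((v,y))$ with $C((v,y))$ constant on $S$, so that
\begin{eqnarray*}
\nabla d b_{\gamma} = \nabla d \log F - \nabla d \lambda,
\end{eqnarray*}
where $\lambda = \log a$. The second term is immediate: since $V_i, Y_\alpha, A$ are left-invariant and $d\lambda$ takes the constant values $d\lambda(A) = 1$, $d\lambda(V_i) = d\lambda(Y_\alpha) = 0$ on them, the defining formula for the Hessian collapses to $\nabla d\lambda(u,v) = -d\lambda(\nabla_u \tilde v)$, i.e. minus the $A$-component of $\nabla_u\tilde v$. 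Reading these off from Lemma \ref{connection}, only $\nabla_{V_i}V_j$ and $\nabla_{Y_\alpha}Y_\beta$ carry an $A$-part, namely $\tfrac12\langle V_i,V_j\rangle A$ and $\langle Y_\alpha,Y_\beta\rangle A$, while all mixed entries and the $(A,A)$-entry have none. Subtracting $\nabla d\lambda$ therefore contributes exactly the summands $\tfrac12\langle V_i,V_j\rangle$ and $\langle Y_\alpha, Y_\beta\rangle$ that head the stated $(V_i,V_j)$- and $(Y_\alpha,Y_\beta)$-components, and nothing else.

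The remaining and substantial task is to evaluate $\nabla d\log F$ through the formula (\ref{loghessian}),
\begin{eqnarray*}
\nabla d\log F(u,v) = \frac{1}{F}\bigl(u(\tilde v F) - (\nabla_u\tilde v)F\bigr) - \frac{1}{F^2}\,uF\,vF,
\end{eqnarray*}
taking $u,v$ in the basis $\{V_i, Y_\alpha, A\}$ and their left-invariant extensions for $\tilde v$. The first-order factors $uF,vF$ are already supplied by (\ref{firstderivative})-(\ref{firstderivative-2}). For the second-order terms $u(\tilde vF)$ I would differentiate these once more, using the derivative rules (\ref{VY}) for $\mathcal V$ and $\mathcal Y$ together with the two facts that organise all $a$-dependence: the coordinate description shows $\mathcal V$ and $\mathcal Y$ are independent of $a$, so $A(\mathcal V) = A(\mathcal Y) = 0$ and only $A$ acts nontrivially on $a$ with $A(a) = a$, whereas $V_i, Y_\alpha$ annihilate $a$. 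From these one obtains, for instance, $V_i(f) = -\tfrac{\sqrt a}{2}\langle\mathcal V,V_i\rangle$, $Y_\alpha(f)=0$, $A(f) = a$, and hence $A(F) = 2af$, consistent with (\ref{firstderivative-2}). The connection terms $(\nabla_u\tilde v)F$ are handled by Lemma \ref{connection}: after expanding $\nabla_u\tilde v$ into its $\mathfrak v$-, $\mathfrak z$-, and $\mathfrak a$-parts one applies the already-known first derivatives, noting in particular that $[V_i,V_j]\in\mathfrak z$ feeds into the $Y_\alpha$-derivative rule.

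The \emph{main obstacle} is the genuinely second-order differentiation of the $\mathfrak v$-valued combination $f\mathcal V - J_{\mathcal Y}\mathcal V$ and the subsequent collapse to the compact closed forms. Differentiating $J_{\mathcal Y}\mathcal V$ by $V_i$ yields both $J_{V_i(\mathcal Y)}\mathcal V = -\tfrac{\sqrt a}{2}J_{[V_i,\mathcal V]}\mathcal V$ and $J_{\mathcal Y}(V_i\mathcal V) = -\sqrt a\, J_{\mathcal Y}V_i$, and assembling the resulting bracket-and-$J$ expressions into the stated scalar coefficients $\langle[\mathcal V,V_i],[\mathcal V,V_j]\rangle$, $\langle J_{\mathcal Y}\mathcal V, V_i\rangle$, and so on is where the generalized Heisenberg identities (\ref{innerproduct})-(\ref{bracket}) and (\ref{algebrastr-a}) must be invoked repeatedly to convert triple-bracket and $J_ZJ_{Z'}$ terms into inner products.

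I would first verify the scalar $(A,A)$-entry fully as a template: since $\nabla_AA = 0$ and $A(AF) = A(2af) = 2af + 2a^2$, one gets $\nabla d\log F(A,A) = \frac{2af + 2a^2}{F} - \frac{4a^2f^2}{F^2} = \frac{2a}{F^2}(fF + aF - 2af^2)$, matching the stated formula. The vector-valued $(A,V_i)$, $(V_i,V_j)$, $(V_i,Y_\alpha)$ and $(Y_\alpha,Y_\beta)$ computations are then carried out in the same bookkeeping, with the symmetry $\nabla d b_\gamma(u,v) = \nabla d b_\gamma(v,u)$ serving as a useful internal consistency check throughout.
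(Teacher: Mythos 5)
Your proposal is correct and takes essentially the same route as the paper: both split $b_{\gamma}=\log F-\log a+C$ via (\ref{formbusemann}), evaluate $\nabla d\log F$ through (\ref{loghessian}) using the first derivatives (\ref{firstderivative})--(\ref{firstderivative-2}) and the rules (\ref{VY}), supply the connection terms $(\nabla_u\tilde v)F$ from Lemma \ref{connection}, and recover the extra summands $\tfrac12\langle V_i,V_j\rangle$ and $\langle Y_\alpha,Y_\beta\rangle$ from the $-\log a$ part exactly as you describe. The only cosmetic difference is which entries are worked out as templates (you do $(A,A)$; the paper does $(V_i,V_j)$ and $(Y_\alpha,Y_\beta)$, where the key conversion $\langle J_{[V_i,\mathcal V]}\mathcal V,V_j\rangle=\langle[V_i,\mathcal V],[\mathcal V,V_j]\rangle$ needs only the defining relation of $J_Z$ rather than the full identities (\ref{innerproduct})--(\ref{algebrastr-a})).
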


   \begin{remark}\rm   Using the above formulae  we can see that the velocity vector $V + Y + s A$ of a geodesic $\gamma$ is a null vector of $\nabla d b_{\gamma}$, one of the basic properties of the Busemann functions. However we omit the detail.
   \end{remark}

  To get the formula of $\nabla d b_{\gamma}(V_i,V_j)$ for example, we use (\ref{formbusemann}) and (\ref{loghessian})  as
  \begin{eqnarray}\label{vivj}
  \nabla d \log F(V_i,V_j)=
  \frac{1}{F}\{V_i(V_j F) - (\nabla_{V_i}V_j)F\} - \left(\frac{V_i F}{F}  \right)\left(\frac{V_j F}{F}  \right) ,
  \end{eqnarray}where $V_j F$ is computed from
  (\ref{firstderivative})  and then
   \begin{eqnarray*}
  V_i(V_j F) &=& V_i\big(- \sqrt{a}\hspace{0.5mm}\{ f\langle  {\mathcal V}, V_j\rangle - \langle J_{\mathcal Y}{\mathcal V}, V_j\rangle\}
  \big) \\ \nonumber
  &=& - \sqrt{a}\{ (V_i f) \langle  {\mathcal V}, V_j\rangle + f\, V_i \langle  {\mathcal V}, V_j\rangle - V_i\, \langle J_{\mathcal Y}{\mathcal V}, V_j\rangle\} .
  \end{eqnarray*}Here, from (\ref{VY})
   \begin{eqnarray*}V_i f &=& - 1/2\, \sqrt{a} \langle {\mathcal V}, V_i\rangle, \\
V_i \langle{\mathcal V}, V_j\rangle &=& \langle V_i({\mathcal V}), V_j\rangle = - \sqrt{a} \langle V_i, V_j\rangle, \\
V_i\, \langle J_{\mathcal Y}{\mathcal V}, V_j\rangle &=& \langle J_{V_i({\mathcal Y})}{\mathcal V}, V_j \rangle + \langle J_{\mathcal Y}V_i({\mathcal V}), V_j\rangle \\ \nonumber
&=& - \sqrt{a} \langle [V_i,V_j], {\mathcal Y}\rangle + \frac{\sqrt{a}}{2}\langle [ {\mathcal V}, V_j], [{\mathcal V},V_i]\rangle .
  \end{eqnarray*}
  Thus
\begin{eqnarray*}
V_i(V_j F) = \frac{a}{2}\langle {\mathcal V}, V_i\rangle \langle {\mathcal V}, V_j\rangle + a f \langle V_i, V_j\rangle - a \langle[V_i,V_j], {\mathcal Y}\rangle + \frac{a}{2}\langle [{\mathcal V},V_j],[{\mathcal V}, V_i]\rangle.
\end{eqnarray*}

The term $(\nabla_{V_i}V_j)F$ is given by $-a \langle[V_i,V_j], {\mathcal Y}\rangle + a f \langle V_i, V_j\rangle$ from (\ref{levicivita}). Thus,
$\frac{1}{F}\{V_i(V_j F) - (\nabla_{V_i}V_j)F\}= \frac{a}{2F} \big\{\langle {\mathcal V},V_i\rangle \langle {\mathcal V},V_j\rangle+ \langle [{\mathcal V},V_j],[V_i,{\mathcal V}]\rangle\big\}$  and then (\ref{vivj})
turns out to be
\begin{eqnarray*}
\frac{a}{2F} \big\{\langle {\mathcal V},V_i\rangle \langle {\mathcal V},V_j\rangle+ \langle [{\mathcal V},V_i],[{\mathcal V},V_j]\rangle\big\}  - \frac{a}{F^2}\langle f {\mathcal V}- J_{\mathcal Y}{\mathcal V}, V_i\rangle \langle f {\mathcal V}- J_{\mathcal Y}{\mathcal V}, V_j\rangle .
\end{eqnarray*}This formula together with $\displaystyle{- \nabla d \log a(V_i,V_j) =  \frac{1}{a} (\nabla_{V_i}V_j)a = \frac{1}{2} \langle V_i, V_j\rangle}$ yields the formula for $\nabla d b_{\gamma}(V_i,V_j)$.

To get the last formula  of Lemma \ref{componenthessian} we have
\begin{eqnarray*}
   Y_{\beta} F 
  = - 2a \langle {\mathcal Y}, Y_{\beta}\rangle, \hspace{2mm}
 Y_{\alpha}(Y_{\beta} F) 
  = 2 a^2 \langle Y_{\alpha},Y_{\beta}\rangle.
\end{eqnarray*}

The term $(\nabla_{Y_{\alpha}}Y_{\beta})F$ is given by $\langle Y_{\alpha},Y_{\beta}\rangle A F = 2a \langle Y_{\alpha},Y_{\beta}\rangle f$ 
from (\ref{levicivita}) so that
\begin{eqnarray*}\frac{1}{F}\{Y_{\alpha}(Y_{\beta} F) - (\nabla_{Y_{\alpha}}Y_{\beta})F\}= \frac{1}{F}\big\{
2 a^2\langle Y_{\alpha},Y_{\beta}\rangle - 2a f\langle Y_{\alpha},Y_{\beta}\rangle \big\}
\end{eqnarray*}
  and

  $\displaystyle{\left(\frac{Y_{\alpha}F}{F}\right)\left(\frac{Y_{\beta}F}{F}\right) = \frac{4a^2}{F^2}\langle {\mathcal Y},Y_{\alpha}\rangle\langle {\mathcal Y},Y_{\beta}\rangle}.$
 Then (\ref{vivj})
turns out to be
\begin{eqnarray*}
\frac{1}{F}(2a^2 -2 a f)\,\langle Y_{\alpha},Y_{\beta}\rangle - \frac{4a^2}{F^2}\langle {\mathcal Y},Y_{\alpha}\rangle\langle {\mathcal Y},Y_{\beta}\rangle.
\end{eqnarray*}
This formula together with $\displaystyle{\nabla d \log a(Y_{\alpha},Y_{\beta}) =  \frac{1}{a} (\nabla_{Y_{\alpha}}Y_{\beta})a = \langle Y_{\alpha}, Y_{\beta}\rangle}$ yields the formula for $\nabla d b_{\gamma}(Y_{\alpha},Y_{\beta})$. Other components of the Hessian are similarly obtained.

\begin{lemma}\label{hessiancomponents-x}\rm  Let $W, W'\in {\mathfrak v}$ and $Z,Z'\in {\mathfrak z}$.
The Hessian $\nabla d b_{\gamma}$ satisfies at $e_S$  the following;
\begin{eqnarray}\label{aacompo-1}
   \nabla d b_{\gamma}(A,A) &=& \frac{2}{F^2}\left(fF + F - 2 f^2\right),
  \\ 
  \label{aw}
  \nabla d b_{\gamma}(A, W)&=&  - \frac{1}{2F^2}\big\{ \big(fF + 2F - 4 f^2\big) \langle v,W\rangle \\ \nonumber
 &+& \left( 4 f - F\right) \langle J_yv, W\rangle \big\},\\
  \label{az}
 \nabla d b_{\gamma}(A,Z)&=&  \frac{2}{F^2} (2f - F) \langle y, Z\rangle,
 \end{eqnarray}
 \begin{eqnarray}
  \label{ww}
 \hspace{10mm}\nabla d b_{\gamma}(W,W') &=&  \frac{1}{2} \langle W, W'\rangle + \frac{1}{2F}\big(\langle v,W\rangle \langle v, W'\rangle +  \langle [v, W], [v, W']\rangle\big) \\ \nonumber
  &-& \frac{1}{F^2} \langle f v - J_yv, W\rangle \langle f v - J_yv, W'\rangle ,   \\ 
 \label{wz} \nabla d b_{\gamma}(W,Z) &=&  - \frac{2}{F^2} \langle f v - J_yv,W\rangle \langle y, Z\rangle \\ \nonumber
 &-& \frac{1}{2F} \langle [W, f v- J_yv - 2v], Z\rangle , \\ 
\label{zz-1} 
 \nabla d b_{\gamma}(Z,Z') &=& \frac{1}{F} (F - 2 f + 2) \langle Z, Z'\rangle - \frac{4}{F^2} \langle y, Z\rangle\langle y, Z'\rangle .
  \end{eqnarray}
 \end{lemma}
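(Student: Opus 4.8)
The plan is to derive Lemma~\ref{hessiancomponents-x} directly from the general component formulae of Lemma~\ref{componenthessian} by specializing them to the identity $e_S$. First I would record the values of the auxiliary quantities at $e_S = (\exp_{\mathfrak n}(0_{\mathfrak v}+0_{\mathfrak z}), 1)$, where $U = 0$, $X = 0$ and $a = 1$. From the definitions of ${\mathcal V}$ and ${\mathcal Y}$ one then reads off ${\mathcal V}(e_S,(v,y)) = v - 0 = v$ and ${\mathcal Y}(e_S,(v,y)) = y - 0 - \tfrac{1}{2}[0,v] = y$, whence $f = 1 + \tfrac14 |v|^2$ and $F = f^2 + |y|^2$. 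Consequently every occurrence of ${\mathcal V}$, ${\mathcal Y}$, $J_{\mathcal Y}{\mathcal V}$ and $[{\mathcal V},V_i]$ in Lemma~\ref{componenthessian} is replaced by $v$, $y$, $J_y v$ and $[v,V_i]$ respectively, while each scalar factor $a$ (as well as $\sqrt a$ and $a^2$) becomes $1$.

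The second step is the substitution itself, carried out componentwise. Setting $a = 1$ in $\nabla d b_{\gamma}(A,A) = \tfrac{2a}{F^2}(fF + aF - 2af^2)$ yields (\ref{aacompo-1}); the same replacement turns the formula for $\nabla d b_{\gamma}(A,V_i)$ into (\ref{aw}) with $W = V_i$, and that for $\nabla d b_{\gamma}(A,Y_\alpha)$ into (\ref{az}), and likewise the purely nilpotent components give (\ref{ww}) and (\ref{zz-1}). The one line demanding a moment of care is the mixed component (\ref{wz}): at $a = 1$ the bracket argument $(f - 2a){\mathcal V} - J_{\mathcal Y}{\mathcal V}$ becomes $(f-2)v - J_y v = f v - J_y v - 2v$, which is precisely the vector appearing inside $[\,W,\,\cdot\,]$ in (\ref{wz}), so that the specialized formula for $\nabla d b_{\gamma}(V_i,Y_\alpha)$ matches (\ref{wz}).

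Finally I would pass from the basis vectors $V_i, V_j, Y_\alpha, Y_\beta$ of Lemma~\ref{componenthessian} to arbitrary $W, W' \in {\mathfrak v}$ and $Z, Z' \in {\mathfrak z}$. This is legitimate because $\nabla d b_{\gamma}$ is a symmetric bilinear form and each term on the right-hand side of the specialized formulae is separately linear in the slot occupied by $V_i$ (or $V_j$, $Y_\alpha$, $Y_\beta$): the pairings $\langle v, V_i\rangle$, $\langle J_y v, V_i\rangle$ and $\langle y, Y_\alpha\rangle$ are linear, and the terms $\langle [v,V_i],[v,V_j]\rangle$ and $\langle [V_i,\,\cdot\,], Y_\alpha\rangle$ are bilinear in the corresponding pair of slots. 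Hence the replacements $V_i \mapsto W$, $V_j \mapsto W'$, $Y_\alpha \mapsto Z$, $Y_\beta \mapsto Z'$ are valid and produce the stated identities (\ref{aacompo-1})--(\ref{zz-1}). There is no genuine analytic obstacle here: the lemma is a specialization-plus-bilinearity bookkeeping step, and the only point that requires attention is the correct simplification of the bracket term in (\ref{wz}).
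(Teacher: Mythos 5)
Your proposal is correct and is exactly the paper's (implicit) route: the paper derives Lemma \ref{hessiancomponents-x} by specializing Lemma \ref{componenthessian} at $e_S$, where $U=0$, $X=0$, $a=1$ force ${\mathcal V}=v$, ${\mathcal Y}=y$ (as recorded in the paper's auxiliary formulae), and then reading the results for arbitrary $W,W'\in{\mathfrak v}$, $Z,Z'\in{\mathfrak z}$ by bilinearity. Your handling of the bracket term $(f-2)v - J_yv = fv - J_yv - 2v$ in (\ref{wz}) is the only nontrivial simplification, and you did it correctly.
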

  \subsection{Block decomposition of $\nabla d b_{\gamma}$}

  Let $\gamma$ be a geodesic of $\gamma(0) = e_S$, $\gamma'(0) = V+ Y+ sA$, a unit vector such that $V\not=0$ and $Y\not=0$.
   \begin{proposition}\label{decompositionofs}\rm
  The Lie algebra
  ${\mathfrak s}$ admits the orthogonal decomposition;
   \begin{eqnarray}\label{orthog-1}
  {\mathfrak s} &=& {\mathfrak s}_4 \oplus {\mathfrak p} \oplus {\mathfrak q},\hspace{2mm}{\mathfrak q}={\mathfrak t}\oplus Y^{\perp}, \\ 
    {\mathfrak s}_4 &:=& {\rm Span}\hspace{0.5mm}\{ V, J_Y V, Y, A\},\\ 
       {\mathfrak p} &:=& {\rm Ker}\, (ad\hspace{0.5mm}V) \cap {\rm Ker}\, (ad\hspace{0.5mm} J_YV), \\ 
    {\mathfrak t} &:=& \{ J_{Y^{\perp}}V, J_{Y^{\perp}}J_YV\}, \\ \nonumber
    Y^{\perp}\hspace{0.5mm}&:=& \{ Z\in {\mathfrak z}\, \vert Z \perp Y \},
    \end{eqnarray}($\displaystyle{J_{Y^{\perp}}V :=\{ J_ZV,\, \vert\, Z\in Y^{\perp}\} }$).
    \end{proposition}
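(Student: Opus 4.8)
The plan is to reduce everything to an orthogonal splitting of ${\mathfrak v}$, since the ambient decomposition ${\mathfrak s}={\mathfrak v}\oplus{\mathfrak z}\oplus{\mathfrak a}$ is already orthogonal, ${\mathfrak a}=\mathbb{R}A\subset{\mathfrak s}_4$, and ${\mathfrak z}=\mathbb{R}Y\oplus Y^{\perp}$ places $Y$ into ${\mathfrak s}_4$ and $Y^{\perp}$ into ${\mathfrak q}$. Thus the real content is to prove that inside ${\mathfrak v}$ one has the orthogonal sum ${\mathfrak v}={\rm Span}\{V,J_YV\}\oplus{\mathfrak t}\oplus{\mathfrak p}$. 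To organize this I would set ${\mathfrak w}:=J_{\mathfrak z}V+J_{\mathfrak z}J_YV$ (where $J_{\mathfrak z}V:=\{J_ZV\,\vert\,Z\in{\mathfrak z}\}$) and establish two facts: first, ${\mathfrak w}={\rm Span}\{V,J_YV\}+{\mathfrak t}$; second, ${\mathfrak p}={\mathfrak w}^{\perp}\cap{\mathfrak v}$. Together these give the splitting of ${\mathfrak v}$ at once.

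For the first identity I would use that, by the polarized relation $\langle J_ZW,J_{Z'}W\rangle=|W|^2\langle Z,Z'\rangle$ coming from (\ref{algebrastr-a}), the maps $Z\mapsto J_ZV$ and $Z\mapsto J_ZJ_YV$ are injective on ${\mathfrak z}$, so their images are faithful copies of ${\mathfrak z}$. Splitting ${\mathfrak z}=\mathbb{R}Y\oplus Y^{\perp}$ gives $J_{\mathfrak z}V=\mathbb{R}J_YV\oplus J_{Y^{\perp}}V$ and, using $J_YJ_YV=-|Y|^2V$ from (\ref{genheisalg}), $J_{\mathfrak z}J_YV=\mathbb{R}V\oplus J_{Y^{\perp}}J_YV$. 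Adding these yields ${\mathfrak w}={\rm Span}\{V,J_YV\}+{\mathfrak t}$ with ${\mathfrak t}=J_{Y^{\perp}}V+J_{Y^{\perp}}J_YV$. The hypothesis $Y\neq0$ is essential here: it is precisely what places $V$ inside $J_{\mathfrak z}J_YV$.

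For the second identity I would use $\langle J_ZV,W\rangle=\langle Z,[V,W]\rangle$, so that $[V,W]=0$ if and only if $W\perp J_{\mathfrak z}V$; hence ${\rm Ker}({\rm ad}\,V)=(J_{\mathfrak z}V)^{\perp}\cap{\mathfrak v}$ and likewise ${\rm Ker}({\rm ad}\,J_YV)=(J_{\mathfrak z}J_YV)^{\perp}\cap{\mathfrak v}$ (this is consistent with the identity $J_{\mathfrak z}V={\rm Ker}\,{\rm ad}(V)^{\perp}$ already used in the excerpt). Intersecting the two kernels gives ${\mathfrak p}=(J_{\mathfrak z}V+J_{\mathfrak z}J_YV)^{\perp}\cap{\mathfrak v}={\mathfrak w}^{\perp}\cap{\mathfrak v}$, so ${\mathfrak v}={\mathfrak w}\oplus{\mathfrak p}$ orthogonally, completing the splitting of ${\mathfrak v}$.

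It then remains to verify the inter-block orthogonalities, all of which are routine consequences of the skew-adjointness of each $J_Z$ and of $[W,W]=0$: for $Z\in Y^{\perp}$ one computes $\langle J_ZV,V\rangle=\langle J_ZV,J_YV\rangle=\langle J_ZJ_YV,V\rangle=\langle J_ZJ_YV,J_YV\rangle=0$, which gives ${\mathfrak t}\perp{\rm Span}\{V,J_YV\}$, while ${\mathfrak p}\perp{\mathfrak w}$ holds by construction and ${\mathfrak v}\perp Y^{\perp}$, ${\mathfrak n}\perp A$ are automatic. The main point to watch is that ${\mathfrak t}$ itself need not be an orthogonal, or even direct, internal sum of $J_{Y^{\perp}}V$ and $J_{Y^{\perp}}J_YV$ (cf. Remark \ref{notorthogonal}); this is the only delicate feature, but it does not obstruct the stated result, since ${\mathfrak t}$ enters merely as the single span ${\mathfrak q}\cap{\mathfrak v}$ and the orthogonality needed is only between the blocks ${\mathfrak s}_4$, ${\mathfrak p}$, ${\mathfrak q}$, not within ${\mathfrak t}$. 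Reassembling the ${\mathfrak v}$-, ${\mathfrak z}$- and ${\mathfrak a}$-parts then yields ${\mathfrak s}={\mathfrak s}_4\oplus{\mathfrak p}\oplus{\mathfrak q}$ with ${\mathfrak q}={\mathfrak t}\oplus Y^{\perp}$, as claimed.
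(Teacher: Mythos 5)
Your proof is correct, but it takes a genuinely different and noticeably more economical route than the paper's own argument in Appendix II. The paper proceeds by dimension count: it observes the containment ${\rm Span}\{V,J_YV\}\oplus{\mathfrak p}\oplus{\mathfrak t}\subset{\mathfrak v}$ and then closes the gap by proving $\dim {\mathfrak v}\le 2+\dim{\mathfrak p}+\dim{\mathfrak t}$, which requires decomposing ${\mathfrak v}$ twice (once relative to ${\rm Ker}\,ad(V)$, once relative to ${\rm Ker}\,ad(J_YV)$, with auxiliary complements ${\mathfrak p}_1,{\mathfrak p}_2$), invoking the splitting $Y^{\perp}=(Y^{\perp})_J\oplus(Y^{\perp})_J^{\perp}$, and establishing the Claim that $U\mapsto \frac{1}{\vert V\vert^2\vert Y\vert^2}[J_YV,U]$ is injective from ${\mathfrak p}_1$ into $(Y^{\perp})_J^{\perp}$, whence $\dim{\mathfrak p}_1\le k_2$ and the count closes. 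You replace all of this with a single linear-algebra observation: since $\langle J_ZW,W'\rangle=\langle Z,[W,W']\rangle$, the space ${\mathfrak p}$ is \emph{by definition} the orthogonal complement in ${\mathfrak v}$ of ${\mathfrak w}:=J_{\mathfrak z}V+J_{\mathfrak z}J_YV$ (the paper itself uses the one-sided version $J_{\mathfrak z}V={\rm Ker}\,ad(V)^{\perp}$, but never sums the two images), and ${\mathfrak w}={\rm Span}\{V,J_YV\}+{\mathfrak t}$ because $J_YJ_YV=-\vert Y\vert^2 V$; hence ${\mathfrak v}={\mathfrak w}\oplus{\mathfrak p}$ with no dimension comparison, no injectivity claim, and no appeal to $(Y^{\perp})_J$ whatsoever. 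What the paper's longer route buys is by-product information that your argument does not produce: the bookkeeping $\dim{\mathfrak t}=k_1+2k_2$, the bound $\dim{\mathfrak p}_1\le k_2$, and the interaction of ${\mathfrak t}$ with the $J^2$-type subspaces, all of which the authors reuse in the block decomposition of the Hessian in Section 4; with your proof those facts would have to be imported separately where needed. Finally, your closing caveat is exactly the right one: ${\mathfrak t}$ is in general neither an orthogonal nor even a direct internal sum of $J_{Y^{\perp}}V$ and $J_{Y^{\perp}}J_YV$ (indeed $J_{(Y^{\perp})_J}V=J_{(Y^{\perp})_J}J_YV$, so the two images overlap, consistent with Remark \ref{notorthogonal}), but only the inter-block orthogonality of ${\mathfrak s}_4$, ${\mathfrak p}$, ${\mathfrak q}$ is asserted in the proposition, so this causes no gap.
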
    Refer to \cite{BTV}, p.97 for Proposition \ref{decompositionofs}  whose proof will be given in {\sc Appendix II},\, {\sc Part I}. The subspace $\gamma'(0)^{\perp}$ is therefore decomposed into
  \begin{eqnarray*}
 (V+Y+sA)^{\perp} = {\mathfrak s}_4^0\oplus{\mathfrak p}\oplus {\mathfrak t}_1\oplus {\mathfrak t}_2\oplus {\mathfrak t}_3 \oplus (Y^{\perp})_J \oplus (Y^{\perp})_J^{\perp}.
  \end{eqnarray*}Here ${\mathfrak s}_4^0 := {\mathfrak s}_4\cap (V+Y+sA)^{\perp}$.  Due to the notation of \cite{BTV} and (\ref{decompoq}), (\ref{spaceq})
  \begin{eqnarray}{\mathfrak q} &=& {\mathfrak q}_0\oplus \cdots\oplus {\mathfrak q}_{\ell}, \\
  \label{t-1}
   {\mathfrak q}_{\ell} &=& {\mathfrak t}_1\oplus (Y^{\perp})_J,\hspace{2mm}{\mathfrak t}_1 := J_{(Y^{\perp})_J}V
  \end{eqnarray}and
  \begin{eqnarray}
   {\mathfrak q}_0  \oplus \cdots \oplus  {\mathfrak q}_{\ell-1} &=& {\mathfrak t}_2\oplus {\mathfrak t}_3 \oplus (Y^{\perp})_J^{\perp},\hspace{2mm} \\
   \label{t2t3}
   {\mathfrak t}_2 := J_{(Y^{\perp})_J^{\perp}}V,& &\hspace{-2mm}{\mathfrak t}_3 := J_{(Y^{\perp})_J^{\perp}}J_YV.
  \end{eqnarray}

  Then we have an orthogonal decomposition;
  \begin{eqnarray}\label{adapteddecompo}
 (V+Y+sA)^{\perp} = {\mathfrak s}_4^0\oplus
 {\mathfrak p}\oplus \left(\oplus_{j=0}^{\ell-1} {\mathfrak q}_j\right)\oplus {\mathfrak q}_{\ell}.
  \end{eqnarray}
 \begin{lemma}\label{orthogonality}\rm With respect to the decomposition (\ref{adapteddecompo})
  \begin{eqnarray*}
 ({\rm i})\hspace{2mm} \nabla d b_{\gamma}({\mathfrak s}_4^0,{\mathfrak p}) &=& 0, \hspace{2mm}({\rm ii})\hspace{2mm}\nabla d b_{\gamma}({\mathfrak s}_4^0,{\mathfrak q}_j) = 0,\ 0\leq j \leq \ell, 
  \nonumber \hspace{2mm}\\
  ({\rm iii})\hspace{2mm}\nabla d b_{\gamma}({\mathfrak p}, {\mathfrak q}_j) &=& 0,\, 0\leq j \leq \ell, 
 \, ({\rm iv})\hspace{1mm}\nabla d b_{\gamma}({\mathfrak q}_i,{\mathfrak q}_j) = 0, \hspace{1mm}i\not=j,  0\leq i<j \leq \ell.  
  \end{eqnarray*}so that
  \begin{eqnarray*}
  \nabla d b_{\gamma} = \nabla d b_{\gamma}\vert_{{\mathfrak s}_4^0} \oplus
  \nabla d b_{\gamma}\vert_{\mathfrak p} \oplus \left(\oplus_{j=0}^{\ell-1}\nabla d b_{\gamma}\vert_{{\mathfrak q}_j}\right)\oplus \nabla d b_{\gamma}\vert_{{\mathfrak q}_{\ell}}.
  \end{eqnarray*}
  \end{lemma}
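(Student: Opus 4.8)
The plan is to compute $\nabla d b_{\gamma}$ at the identity $e_S$, where Proposition \ref{equivariance} reduces the general point, and to read each block off the explicit component formulae of Lemma \ref{hessiancomponents-x}. At $e_S$ one has $a=1$, ${\mathcal V}=v$ and ${\mathcal Y}=y$, so the only vectors entering those formulae are $v$, $J_yv$, $y$ and brackets of the form $[v,\cdot]$. The first fact I would record is that all three distinguished vectors live inside ${\mathfrak s}_4$: from (\ref{vy}) we have $v\in{\rm Span}\{V,J_YV\}$ and $y\in{\Bbb R}Y$, and since $J_Y(\alpha V+\beta J_YV)=\alpha J_YV-\beta|Y|^2V$ also $J_yv\in{\rm Span}\{V,J_YV\}$. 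Consequently every inner product $\langle v,\cdot\rangle$, $\langle J_yv,\cdot\rangle$, $\langle fv-J_yv,\cdot\rangle$, $\langle y,\cdot\rangle$ appearing in (\ref{aw})--(\ref{zz-1}) vanishes against any vector orthogonal to ${\rm Span}\{V,J_YV,Y\}$.

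Before the case analysis I would establish two algebraic facts. First, for $Z\in Y^{\perp}$ the vectors $J_ZV$ and $J_ZJ_YV$ are orthogonal to ${\rm Span}\{V,J_YV\}$: skew-adjointness of $J_Z$ kills $\langle J_ZV,V\rangle$ and $\langle J_ZJ_YV,J_YV\rangle$, while (\ref{algebrastr-a}) with $\langle Z,Y\rangle=0$ kills $\langle J_ZV,J_YV\rangle$ and hence $\langle J_ZJ_YV,V\rangle$. Second, bracketing with any element of ${\rm Span}\{V,J_YV\}$ respects the decomposition: such a bracket maps ${\rm Span}\{V,J_YV\}$ into ${\Bbb R}Y$ (using $[V,J_YV]=|V|^2Y$) and maps ${\mathfrak q}_j\cap{\mathfrak v}$ into $L_j$. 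For the latter, $[V,J_ZV]=|V|^2Z$ already lies in $L_j$, while (\ref{bracket}) gives $[J_YV,J_ZV]=-[V,J_ZJ_YV]$, a multiple of $K(Z)$ by the definition of $K=K_{V,Y}$, so the $K$-invariance $K(L_j)\subset L_j$ places it in $L_j$. Moreover $\langle[\xi,W],Y\rangle=\langle J_Y\xi,W\rangle=0$ for $\xi\in{\rm Span}\{V,J_YV\}$ and $W\in J_{Y^{\perp}}V\cup J_{Y^{\perp}}J_YV$, so these brackets carry no ${\Bbb R}Y$-component and in fact land in $L_j$.

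With these in hand the vanishing is mechanical. For (i)--(iii) the defining orthogonality of ${\mathfrak s}={\mathfrak s}_4\oplus{\mathfrak p}\oplus{\mathfrak q}$ (Proposition \ref{decompositionofs}) annihilates the $\langle\cdot,\cdot\rangle$ terms, the $\langle v,\cdot\rangle$/$\langle J_yv,\cdot\rangle$/$\langle y,\cdot\rangle$ terms vanish by the first fact, and the bracket terms in (\ref{ww}) and (\ref{wz}) vanish because $[v,W]=0$ for $W\in{\mathfrak p}$ (by ${\mathfrak p}={\rm Ker}\,(ad\,V)\cap{\rm Ker}\,(ad\,J_YV)$) and because bracketing ${\rm Span}\{V,J_YV\}$ into ${\Bbb R}Y\perp Y^{\perp}$ disposes of the ${\mathfrak s}_4^0$--${\mathfrak q}_j$ coupling. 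The substantive case is (iv): in (\ref{ww}) the terms $\langle W,W'\rangle$, $\langle v,W\rangle$, $\langle fv-J_yv,W\rangle$ all vanish for $W\in{\mathfrak q}_i\cap{\mathfrak v}$, $W'\in{\mathfrak q}_j\cap{\mathfrak v}$ with $i\neq j$ (the first by Lemma \ref{orthogonal-2}, the rest by the first fact), leaving only $\tfrac{1}{2F}\langle[v,W],[v,W']\rangle$; but by the second fact $[v,W]\in L_i$ and $[v,W']\in L_j$, orthogonal eigenspaces of $K^2$, so this too vanishes. The ${\mathfrak z}$--${\mathfrak z}$ part of (iv) is immediate from (\ref{zz-1}) since $L_i\perp L_j$ and $\langle y,Z\rangle=0$, and the ${\mathfrak v}$--${\mathfrak z}$ part from (\ref{wz}) uses $[W,fv-J_yv-2v]\in L_i$ paired with $Z\in L_j$.

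The main obstacle I anticipate is exactly the second algebraic fact, the control of brackets on the ${\mathfrak v}$-parts of the blocks. Showing $[\xi,J_ZV]\in L_j$ is straightforward once $[J_YV,J_ZV]$ is rewritten through $K$, but the companion statement $[\xi,J_ZJ_YV]\in L_j$ needs the generalized-Heisenberg identities (\ref{algebrastr-a})--(\ref{bracket}) together with the $K^2$-invariance of each $L_j$, and one must track the refinement $Y^{\perp}=(Y^{\perp})_J\oplus(Y^{\perp})_J^{\perp}$ and the degenerate block ${\mathfrak q}_{\ell}$ with $\mu_{\ell}=-1$ (Lemma \ref{k}), where ${\mathfrak q}_{\ell}={\rm Span}\{L_{\ell},J_{L_{\ell}}V\}$ drops the $J_{L_{\ell}}J_YV$ summand. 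Checking that the bracket bookkeeping stays consistent across this $\ell$-versus-$(j<\ell)$ dichotomy is the one place where genuine care, rather than routine expansion, is required.
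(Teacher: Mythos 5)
Your proposal is correct and follows essentially the same route as the paper: reduce to $e_S$, feed the component formulae of Lemma \ref{hessiancomponents-x} into each block, and control the bracket terms by showing $[v,\cdot]$ and $[J_yv,\cdot]$ map the ${\mathfrak v}$-part of ${\mathfrak q}_j$ into ${\rm Span}\{Z,KZ\}\subset L_j$. Your ``second algebraic fact'' is exactly the paper's Lemma \ref{ljyeperp} (proved there, as by you, via the identity $[J_YV,J_ZV]=-|V|^2|Y|\,K(Z)$ and the invariance $K(L_j)\subset L_j$), so the two arguments coincide in substance.
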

  \begin{proof}\rm
  From the orthogonality of ${\mathfrak s}_4^0$ and ${\mathfrak p}$, and definition of ${\mathfrak p}$ it is obvious to see (i), since $[V,W]=[J_YV,W]=0$ for $W\in{\mathfrak p}$. (iii) is similarly shown.  For verifying (ii) it suffices to prove that $\nabla d b_{\gamma}(V, {\mathfrak q}_j) = \nabla d b_{\gamma}(J_YV, {\mathfrak q}_j) = \nabla d b_{\gamma}(Y, {\mathfrak q}_j) = 0$ and $\nabla d b_{\gamma}(A, {\mathfrak q}_j) = 0$, since $\gamma'(0)=V+Y+sA\in {\mathfrak s}_4$ is a null vector of $\nabla d b_{\gamma}$.

  Let $W + Z\in {\mathfrak q}_j$.  $W$ is written for $j$ of $\mu_j > -1$ by $W = J_{Z_1}{\hat V} + J_{Z_2}J_{\hat Y}{\hat V}$ with $Z, Z_a\in L_j$, $a=1,2$ and for $j=\ell$ of $\mu_{\ell}=-1$\, $W = J_{Z_1}{\hat V}$ with $Z,Z_1\in (Y^{\perp})_J=L_{\ell}$, respectively. Then from the formulae of Lemma \ref{hessiancomponents-x} 
  together with Lemma \ref{ljyeperp} below we see (ii). (iv) is also shown by the aid of Lemma \ref{ljyeperp}.

  \end{proof}
  \begin{lemma}\label{ljyeperp}\rm
  Let $Z\in L_j \left(\subset Y^{\perp}\right)$. Then
  \begin{eqnarray}\label{bracketlj}
  [v, J_Z{\hat V}],\hspace{1mm}[v,J_ZJ_{\hat Y}{\hat V}],\hspace{1mm}
    [J_yv, J_Z{\hat V}], \hspace{1mm}[J_yv,J_ZJ_{\hat Y}{\hat V}]\in {\rm Span}\, \{Z, KZ\} \subset  L_j,
  \end{eqnarray}more precisely
  \begin{eqnarray*}
  [v, J_Z{\hat V}] &=& \frac{2\vert V\vert}{\chi_{\infty}}\big\{(1-s) Z - \vert Y\vert KZ
    \big\},\\ \nonumber
  [v, J_ZJ_{\hat Y}{\hat V}] &=& \frac{2\vert V\vert}{\chi_{\infty}}\big\{\vert Y\vert Z + (1-s)KZ\big\}{\red{.}}
  \end{eqnarray*}
  \end{lemma}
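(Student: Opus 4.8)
The plan is to reduce each of the four brackets in (\ref{bracketlj}) to a short list of ``atomic'' brackets and to evaluate those by the generalized Heisenberg identities. First I would insert the explicit expressions from (\ref{vy}), namely $v=\frac{2}{\chi_{\infty}}\{(1-s)V+J_YV\}$ and $y=\frac{2}{\chi_{\infty}}Y$. Since $J_y=\frac{2}{\chi_{\infty}}J_Y$ and $J_Y^2V=-\vert Y\vert^2V$ by (\ref{genheisalg}), one also has $J_yv=\frac{4}{\chi_{\infty}^2}\{-\vert Y\vert^2V+(1-s)J_YV\}$. By bilinearity of the bracket, all four vectors in (\ref{bracketlj}) become ${\Bbb R}$--linear combinations of the four atomic brackets $[V,J_Z\hat V]$, $[J_YV,J_Z\hat V]$, $[V,J_ZJ_{\hat Y}\hat V]$ and $[J_YV,J_ZJ_{\hat Y}\hat V]$, with coefficients built from $\vert V\vert$, $\vert Y\vert$, $1-s$ and $\chi_{\infty}$.

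Next I would evaluate the atomic brackets. Two are immediate: the identity $[V,J_ZV]=\vert V\vert^2Z$ gives $[V,J_Z\hat V]=\vert V\vert\,Z$, while Definition \ref{defk} reads off $[V,J_ZJ_{\hat Y}\hat V]=\vert V\vert\,KZ$ at once. A third reduces to the same identity $[U,J_ZU]=\vert U\vert^2Z$ applied to the unit vector $U:=J_{\hat Y}\hat V$ (it is a unit vector because $\vert J_WU\vert^2=\vert W\vert^2\vert U\vert^2$, a consequence of (\ref{algebrastr-a})); this yields $[J_{\hat Y}\hat V,J_ZJ_{\hat Y}\hat V]=Z$, hence $[J_YV,J_ZJ_{\hat Y}\hat V]=\vert Y\vert\vert V\vert\,Z$.

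The crux, and the step I expect to be the main obstacle, is the remaining bracket $[J_YV,J_Z\hat V]=\vert Y\vert\vert V\vert\,[J_{\hat Y}\hat V,J_Z\hat V]$. Here I would apply (\ref{bracket}) with central element $\hat Y$ and with $\hat V$, $J_Z\hat V$ in the roles of the two ${\mathfrak v}$--vectors, giving $[J_{\hat Y}\hat V,J_Z\hat V]-[\hat V,J_{\hat Y}J_Z\hat V]=-2\langle\hat V,J_Z\hat V\rangle\hat Y$. The right-hand side vanishes since $J_Z$ is skew-adjoint. Because $Z\perp Y$, identity (\ref{innerproduct}) gives $J_{\hat Y}J_Z=-J_ZJ_{\hat Y}$, so $[\hat V,J_{\hat Y}J_Z\hat V]=-[\hat V,J_ZJ_{\hat Y}\hat V]=-KZ$. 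Thus $[J_{\hat Y}\hat V,J_Z\hat V]=-KZ$ and $[J_YV,J_Z\hat V]=-\vert Y\vert\vert V\vert\,KZ$.

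Finally I would assemble the pieces. Substituting these four atomic values into the linear combinations of the first paragraph produces the two displayed closed forms for $[v,J_Z\hat V]$ and $[v,J_ZJ_{\hat Y}\hat V]$, and the corresponding combinations for the two $J_yv$--brackets land in ${\rm Span}\{Z,KZ\}$ as well. The containment ${\rm Span}\{Z,KZ\}\subset L_j$ is then immediate from $Z\in L_j$ together with $K(L_j)\subset L_j$, which completes the argument.
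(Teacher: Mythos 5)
Your proposal is correct and takes essentially the same route as the paper's proof: both reduce everything, via linearity of $v$ and $J_yv$ in $V$ and $J_YV$, to the four atomic brackets and evaluate them as $[V,J_Z\hat V]=\vert V\vert Z$, $[J_YV,J_Z\hat V]=-\vert Y\vert\vert V\vert KZ$, $[V,J_ZJ_{\hat Y}\hat V]=\vert V\vert KZ$, $[J_YV,J_ZJ_{\hat Y}\hat V]=\vert Y\vert\vert V\vert Z$, before assembling. The only cosmetic difference is in the crux bracket $[J_{\hat Y}\hat V,J_Z\hat V]$: you apply the identity $[J_{\hat Y}U,W]-[U,J_{\hat Y}W]=-2\langle U,W\rangle\hat Y$ directly, whereas the paper first rewrites $J_Z\hat V=-J_{\hat Y}(J_{\hat Y}J_Z\hat V)$ and uses $[J_{\hat Y}U,J_{\hat Y}W]=-[U,W]-2\langle U,J_{\hat Y}W\rangle\hat Y$; both reduce to $[\hat V,J_{\hat Y}J_Z\hat V]=-KZ$ by the anticommutation $J_{\hat Y}J_Z=-J_ZJ_{\hat Y}$ for $Z\perp Y$.
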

  \begin{proof}\rm One has
  \begin{eqnarray*}
  [V, J_Z{\hat V}] &=& \vert V\vert [{\hat V}, J_Z{\hat V}] = \vert V\vert Z
  \end{eqnarray*}and by using (\ref{bracket})\, namely the formula $ [J_ZU, J_{Z}V] = - \vert Z\vert^2[U,V] - 2 \langle U, J_ZV\rangle_{\mathfrak n} Z$
    \begin{eqnarray*}
  [J_YV, J_Z{\hat V}] &=& \vert Y\vert\vert V\vert [J_{\hat Y}{\hat V}, J_Z{\hat V}]
   =\vert Y\vert\vert V\vert (-[J_{\hat Y}{\hat V}, J_{\hat Y}(J_{\hat Y}J_Z{\hat V})]) \\ \nonumber
 &=& \vert Y\vert\vert V\vert \big\{-\left(-\vert Y\vert^2 [{\hat V}, J_{\hat Y}J_Z{\hat V}]- 2\langle {\hat V},J_{\hat Y}J_{\hat Y}J_Z {\hat V}\rangle {\hat Y}\right)\big\}  \\ \nonumber
 &=&\vert Y\vert\vert V\vert \left( [{\hat V}, J_{\hat Y}J_Z{\hat V}] - 2\langle {\hat V}, J_Z{\hat V}\rangle {\hat Y}\right) \\ \nonumber
  &=&  \vert Y\vert\vert V\vert\, [{\hat V}, J_{\hat Y}J_Z{\hat V}] = - \vert Y\vert\vert V\vert\, [{\hat V}, J_ZJ_{\hat Y}{\hat V}] \\ \nonumber
  &=& - \vert Y\vert\vert V\vert\, K(Z)
  \end{eqnarray*}so that (\ref{bracketlj}) is shown, since $v$ and $J_yv$ are a linear combination of $V$ and $J_YV$, respectively. Moreover
  \begin{eqnarray*}
   [V, J_ZJ_{\hat Y}{\hat V}] = \vert V\vert [{\hat V}, J_ZJ_{\hat Y}{\hat V}] = \vert V\vert K(Z)
  \end{eqnarray*}and
  \begin{eqnarray*}
   [J_YV, J_ZJ_{\hat Y}{\hat V}] = \vert Y\vert\vert V\vert [J_{\hat Y}{\hat V}, J_ZJ_{\hat Y}{\hat V}] 
   = \vert Y\vert\vert V\vert Z.
  \end{eqnarray*}Therefore the later parts of  (\ref{bracketlj}) are similarly obtained.
  \end{proof}

  Thus, for asserting the positive definiteness of $\nabla d b_{\gamma}$ it suffices from Lemma \ref{orthogonality} to show the positive definiteness to each subspace of (\ref{adapteddecompo}).

         \begin{lemma}\rm For $W, W'\in {\mathfrak p}$
     \begin{eqnarray*}
     \nabla d b_{\gamma}(W,W') = \frac{1}{2} \langle W, W'\rangle.  
   \end{eqnarray*}
   Henceforth, each $W$ in ${\mathfrak p}$ is an eigenvector of $\nabla d b_{\gamma}$ corresponding to eigenvalue $\frac{1}{2}$. \end{lemma}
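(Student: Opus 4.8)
The plan is to evaluate $\nabla d b_{\gamma}(W,W')$ at $e_S$ directly from the formula (\ref{ww}) in Lemma \ref{hessiancomponents-x}, which writes this quantity as $\tfrac{1}{2}\langle W,W'\rangle$ plus two correction terms, and to show that both correction terms vanish when $W,W'\in{\mathfrak p}$. Recall from Proposition \ref{decompositionofs} that ${\mathfrak p}={\rm Ker}\,(ad\,V)\cap{\rm Ker}\,(ad\,J_YV)$ sits in ${\mathfrak v}$ and lies in the orthogonal complement of ${\mathfrak s}_4={\rm Span}\{V,J_YV,Y,A\}$. Since the two correction terms in (\ref{ww}) are built only from the quantities $\langle v,W\rangle$, $[v,W]$ and $\langle J_yv,W\rangle$ (together with their primed analogues), it suffices to verify that each of these three vanishes on ${\mathfrak p}$.

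First I would insert the explicit ideal-point data (\ref{vy}), namely $v=\tfrac{2}{\chi_{\infty}}\{(1-s)V+J_YV\}$ and $y=\tfrac{2}{\chi_{\infty}}Y$. Because $v\in{\rm Span}\{V,J_YV\}\subset{\mathfrak s}_4$ while $W\perp{\mathfrak s}_4$, we obtain $\langle v,W\rangle=0$ immediately, and likewise $\langle v,W'\rangle=0$. Moreover $[v,W]=\tfrac{2}{\chi_{\infty}}\{(1-s)[V,W]+[J_YV,W]\}=0$, precisely because $W\in{\rm Ker}\,(ad\,V)\cap{\rm Ker}\,(ad\,J_YV)$, and similarly $[v,W']=0$. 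These two facts already annihilate the entire middle term $\tfrac{1}{2F}\big(\langle v,W\rangle\langle v,W'\rangle+\langle[v,W],[v,W']\rangle\big)$ of (\ref{ww}).

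It then remains to handle the last term, for which I need $\langle J_yv,W\rangle=0$. Using $y=\tfrac{2}{\chi_{\infty}}Y$, linearity of $Z\mapsto J_Z$, and $(J_Y)^2=-|Y|^2{\rm id}_{\mathfrak v}$ from (\ref{genheisalg}), one computes $J_yv=\tfrac{4}{\chi_{\infty}^{2}}\big\{(1-s)J_YV-|Y|^2V\big\}$, which again lies in ${\rm Span}\{V,J_YV\}\subset{\mathfrak s}_4$ and is therefore orthogonal to $W$. Hence $\langle fv-J_yv,W\rangle=f\langle v,W\rangle-\langle J_yv,W\rangle=0$, so the last term of (\ref{ww}) vanishes as well, leaving exactly $\nabla d b_{\gamma}(W,W')=\tfrac{1}{2}\langle W,W'\rangle$. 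Finally, since Lemma \ref{orthogonality} shows that ${\mathfrak p}$ is $\nabla d b_{\gamma}$-orthogonal to the remaining blocks of the decomposition (\ref{adapteddecompo}), the restriction of the Hessian to ${\mathfrak p}$ is $\tfrac{1}{2}$ times the metric, so every $W\in{\mathfrak p}$ is an eigenvector with eigenvalue $\tfrac{1}{2}$. The computation is routine; the only point needing a little care is the observation that both $v$ and $J_yv$ remain inside the plane ${\rm Span}\{V,J_YV\}\subset{\mathfrak s}_4$, which is exactly what forces their orthogonality to the complementary subspace ${\mathfrak p}$. I do not expect any genuine obstacle.
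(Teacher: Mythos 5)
Your proof is correct and is essentially the paper's own argument: the paper likewise deduces the lemma directly from formula (\ref{ww}), using that $W,W'\in({\rm Ker}\,ad\,V)\cap({\rm Ker}\,ad\,J_YV)$ kills the bracket terms and that $v$ and $J_yv$ lie in ${\rm Span}\{V,J_YV\}\subset{\mathfrak s}_4$, hence are orthogonal to ${\mathfrak p}$. You merely spell out the computation of $J_yv$ and the appeal to Lemma \ref{orthogonality}, which the paper leaves implicit.
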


   This is a direct consequence of (\ref{ww}), since $W, W' \in ({\rm Ker}\, ad V)\cap ({\rm Ker}\, ad J_YV)$ and $v$, $J_yv$ are vectors linearly spanned by $V$, $J_YV$.

\subsection{Auxiliary formulae}
 The value of ${\mathcal V}$ and ${\mathcal Y}$ at
$e_S$ are
\begin{eqnarray*}
 {\mathcal V}(e_S) = v\, =\, \frac{2}{\chi_{\infty}}\{(1-s)V+J_YV\}, \hspace{2mm}  {\mathcal Y}(e_S) =  y = \frac{2}{\chi_{\infty}} Y,
\end{eqnarray*}since   $U = 0$, $X=0$ and $a= 1$ at $e_S$. Here $v$ and $y$ are the coordinates of $[\gamma]\in \partial S$ defined by (\ref{vy}).  Note ${\mathcal V}(e_S) = v\in {\rm Span}\, \{V, J_YV\}$. 
Moreover,
\begin{eqnarray}\label{ff}
f(e_S) = 
\frac{2(1-s)}{\chi_{\infty}},\hspace{2mm}
F(e_S)= f^2(e_S) + \vert {\mathcal Y}\vert^2(e_S) 
= \frac{4}{\chi_{\infty}}.
\end{eqnarray}
As before, we let $\chi_{\infty} = (1-s)^2 +\vert Y\vert^2 = \lim_{t\rightarrow\infty}\chi(t)$, the limit of the function $\chi(t)$. From (\ref{ff}) we have
\begin{eqnarray*}\label{fv}
f(e_S) v = \frac{4(1-s)}{\chi^2_{\infty}}\, \left( (1-s)V + J_YV\right),\hspace{2mm}
J_y v  
= \frac{4}{\chi^2_{\infty}}\{(1-s) J_YV - \vert Y\vert^2 V\}
\end{eqnarray*}so that at $e_S$
\begin{eqnarray*}\label{auxiliary} \hspace{5mm} f(e_S) v - J_yv= \frac{4}{\chi_{\infty}}V\hspace{2mm}{\rm and}\hspace{2mm}\displaystyle{4f - F = 4 \frac{(1-2s)}{\chi_{\infty}} .}\end{eqnarray*}


   \subsection{An adapted basis}

    \vspace{2mm}
   First we take an orthonormal basis of the Lie subalgebra ${\mathfrak s}_4$ of ${\mathfrak s}$;
        \begin{eqnarray}\label{linearcombi}
   W^{{\mathfrak s}_4}_0 &:= & V+Y+sA,\\ \nonumber
   W^{{\mathfrak s}_4}_1 &:= & \vert V\vert J_{\hat Y}{\hat V} + s {\hat Y} - \vert Y\vert A,
   \\ \nonumber 
   W^{{\mathfrak s}_4}_2 &:= &  s {\hat V} - \vert Y\vert J_{\hat Y}{\hat V} - \vert V\vert A,
   \\ \nonumber
  W^{{\mathfrak s}_4}_3  &:= & \vert Y\vert {\hat V} + s J_{\hat Y}{\hat V} - \vert V\vert {\hat Y}.
     \end{eqnarray}
              \begin{proposition}\label{s4}\rm
      $W^{{\mathfrak s}_4}_1$ is an eigenvector of $\nabla d b_{\gamma}$ of eigenvalue $1$ at $e_S$ 
       and $W^{{\mathfrak s}_4}_2, W^{{\mathfrak s}_4}_3$ are the eigenvectors of $\nabla d b_{\gamma}$ of eigenvalue $\frac{1}{2}$ at $e_S$.
     \end{proposition}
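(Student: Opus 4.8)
The plan is to compute the matrix of the Hessian $\nabla d b_{\gamma}$ at $e_S$ in the orthonormal basis $\{W^{{\mathfrak s}_4}_0, W^{{\mathfrak s}_4}_1, W^{{\mathfrak s}_4}_2, W^{{\mathfrak s}_4}_3\}$ of ${\mathfrak s}_4$ and to show that it is the diagonal matrix ${\rm diag}(0,1,\tfrac12,\tfrac12)$. Since $W^{{\mathfrak s}_4}_0 = \gamma'(0) = V+Y+sA$ is the velocity vector of $\gamma$, it is a null vector of $\nabla d b_{\gamma}$; hence the zeroth row and column vanish, and it suffices to prove that the symmetric $3\times 3$ block $M_{ij} := (\nabla d b_{\gamma})_{e_S}(W^{{\mathfrak s}_4}_i, W^{{\mathfrak s}_4}_j)$, $i,j\in\{1,2,3\}$, equals ${\rm diag}(1,\tfrac12,\tfrac12)$. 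As the $W^{{\mathfrak s}_4}_i$ are orthonormal, this is precisely the assertion that $W^{{\mathfrak s}_4}_1$ is an eigenvector for eigenvalue $1$ and that $W^{{\mathfrak s}_4}_2, W^{{\mathfrak s}_4}_3$ are eigenvectors for eigenvalue $\tfrac12$.

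First I would express each $W^{{\mathfrak s}_4}_i$ through the orthonormal frame $\{{\hat V}, J_{\hat Y}{\hat V}, {\hat Y}, A\}$, using $V=\vert V\vert{\hat V}$, $J_YV=\vert V\vert\,\vert Y\vert\,J_{\hat Y}{\hat V}$ and $\vert J_{\hat Y}{\hat V}\vert=1$, thereby reading off the ${\mathfrak v}$-, ${\mathfrak z}$- and ${\mathfrak a}$-parts of each vector. Each entry $M_{ij}$ then expands by bilinearity into the elementary components of Lemma \ref{hessiancomponents-x}, i.e. the six formulae (\ref{aacompo-1})--(\ref{zz-1}). Into these I would substitute the values at $e_S$ from the auxiliary formulae, namely $f=\frac{2(1-s)}{\chi_{\infty}}$, $F=\frac{4}{\chi_{\infty}}$, $v=\frac{2}{\chi_{\infty}}((1-s)V+J_YV)$, $y=\frac{2}{\chi_{\infty}}Y$, $fv-J_yv=\frac{4}{\chi_{\infty}}V$ and $4f-F=\frac{4(1-2s)}{\chi_{\infty}}$, together with the generalized Heisenberg relations, in particular $[{\hat V},J_{\hat Y}{\hat V}]={\hat Y}$ and $(J_{\hat Y})^2=-\,{\rm id}_{\mathfrak v}$, which are needed to evaluate the bracket terms $\langle [v,W],[v,W']\rangle$ in (\ref{ww}) and $\langle [W,fv-J_yv-2v],Z\rangle$ in (\ref{wz}). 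It is efficient to tabulate the scalars $\langle v,W\rangle$, $\langle fv-J_yv,W\rangle$, $\langle J_yv,W\rangle$ and the brackets $[v,W]$ for the ${\mathfrak v}$-parts $W$ of the $W^{{\mathfrak s}_4}_i$ once, and then to assemble the nine entries $M_{ij}$.

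The main obstacle will be the bookkeeping of the many vector-valued terms and, above all, the cancellations that force the off-diagonal entries $M_{12},M_{13},M_{23}$ to vanish and the diagonal entries to collapse to the clean constants $1,\tfrac12,\tfrac12$. These simplifications do not occur term by term: they use both the Heisenberg identities and, decisively, the unit-norm constraint $\vert V\vert^2+\vert Y\vert^2+s^2=1$. For example, in $M_{22}$ one meets a $\chi_{\infty}^{-1}$-contribution whose numerator is a polynomial in $s$ and $\vert Y\vert^2$; the identity $3s^2(1-s)^2-2s(1-s)\vert Y\vert^2-\vert Y\vert^4+\vert Y\vert^2(4s-1)=(3s^2-\vert Y\vert^2)\chi_{\infty}$ cancels the $\chi_{\infty}$ in the denominator, after which the surviving terms add up to $\tfrac12(\vert V\vert^2+\vert Y\vert^2+s^2)=\tfrac12$ precisely by the unit-norm relation. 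I expect the off-diagonal entries to vanish through analogous cancellations, which likewise combine the Heisenberg bracket relations with the constraint.

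As a conceptual cross-check I would note that ${\mathfrak s}_4={\rm Span}\{{\hat V},J_{\hat Y}{\hat V},{\hat Y},A\}$ is a totally geodesic subalgebra of ${\mathfrak s}$: a direct application of Lemma \ref{connection} gives $\nabla_XW\in{\mathfrak s}_4$ for all $X,W\in{\mathfrak s}_4$, so the corresponding subgroup $S_4$ is a totally geodesic copy of the complex hyperbolic plane ${\Bbb C}H^2$ in the Damek-Ricci normalization, with a compatible complex structure $J$ given by $J{\hat V}=J_{\hat Y}{\hat V}$, $J{\hat Y}=-A$ and $JA={\hat Y}$. Since $\gamma'(0)\in{\mathfrak s}_4$, the geodesic $\gamma$ lies in $S_4$, the restriction $b_{\gamma}\vert_{S_4}$ is the intrinsic Busemann function, and the restriction of $\nabla d b_{\gamma}$ to $T_{e_S}S_4$ agrees with the ${\Bbb C}H^2$-Hessian. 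The eigenvalues $2,1$ of the complex hyperbolic model recalled in the Introduction rescale to $1,\tfrac12$ under this normalization, the eigenvalue-$1$ direction being $J\gamma'(0)=\vert V\vert J_{\hat Y}{\hat V}+s{\hat Y}-\vert Y\vert A=W^{{\mathfrak s}_4}_1$. This matches the direct computation and confirms the eigenvalue assignment.
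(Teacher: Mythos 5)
Your computational core coincides with the paper's own proof: the paper evaluates exactly these Hessian components at $e_S$ in the frame $\{{\hat V}, J_{\hat Y}{\hat V}, {\hat Y}, A\}$ (Lemma \ref{component3}, obtained from Lemma \ref{hessiancomponents-x} and the auxiliary formulae $f=\frac{2(1-s)}{\chi_{\infty}}$, $F=\frac{4}{\chi_{\infty}}$, $fv-J_yv=\frac{4}{\chi_{\infty}}V$), then expands $\nabla d b_{\gamma}(W^{{\mathfrak s}_4}_i,W^{{\mathfrak s}_4}_j)$ bilinearly and finds the diagonal values $1,\tfrac12,\tfrac12$ (Lemma \ref{eigen-1}); the final collapse to constants indeed uses $\vert V\vert^2+\vert Y\vert^2+s^2=1$, just as you predict (e.g.\ the paper's own check of $\nabla d b_{\gamma}(W^{\mathfrak s}_1,W^{\mathfrak s}_1)=1$ reduces to $\tfrac12+\tfrac12(\vert V\vert^2+\vert Y\vert^2+s^2)^2=1$). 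Your ${\Bbb C}H^2$ cross-check is a nice addition not in the paper, which only records the analogous comparison with the eigenvectors of $R_{V+Y+sA}$ in a Note.

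There is, however, one genuine logical gap. Your claim that a diagonal $3\times 3$ block together with orthonormality of the $W^{{\mathfrak s}_4}_i$ ``is precisely the assertion'' that they are eigenvectors of $\nabla d b_{\gamma}$ is false as stated: the Hessian is a bilinear form on all of $T_{e_S}S\cong{\mathfrak s}$, and to conclude that $W^{{\mathfrak s}_4}_1$ is an eigenvector of the associated symmetric endomorphism you must also show that $\nabla d b_{\gamma}(W^{{\mathfrak s}_4}_i,Z)=0$ for every $Z$ in the complement ${\mathfrak p}\oplus{\mathfrak q}$ of ${\mathfrak s}_4$ in the decomposition of Proposition \ref{decompositionofs}. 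This invariance is precisely the content of the paper's Lemma \ref{orthogonality} (items (i) and (ii)), proved via $[v,W]=0$ for $W\in{\mathfrak p}$ and via Lemma \ref{ljyeperp} for the ${\mathfrak q}_j$; Proposition \ref{s4} is stated after that lemma and tacitly relies on it. Your plan never checks these mixed pairings, so as written it establishes only that the $W^{{\mathfrak s}_4}_i$ diagonalize the restriction $\nabla d b_{\gamma}\vert_{{\mathfrak s}_4}$. The same caveat applies to your cross-check: total geodesy of $S_4$ identifies the restriction of $\nabla d b_{\gamma}$ to $T_{e_S}S_4$ with the intrinsic ${\Bbb C}H^2$ Hessian, but says nothing about the pairing of ${\mathfrak s}_4$ with ${\mathfrak p}\oplus{\mathfrak q}$, so it cannot close the gap either. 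The fix is short — verify the vanishing of the mixed terms using the formulae of Lemma \ref{hessiancomponents-x} and the bracket computations of Lemma \ref{ljyeperp} — but it is an ingredient your argument needs and does not supply.
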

    \begin{note}\rm The above basis corresponds to the eigenvectors of $R_{V+Y+sA}$. Namely, $W^{{\mathfrak s}_4}_1$ is the eigenvector with eigenvalue $-1$ and $W^{{\mathfrak s}_4}_2, W^{{\mathfrak s}_4}_3$ are eigenvectors with eigenvalue $-\frac{1}{4}$.  Refer to \cite{BTV}.
    \end{note}
       Now decompose, as before, $Y^{\perp} \subset {\mathfrak z}$ into
    \begin{eqnarray*}\label{decompy}
      Y^{\perp} = (Y^{\perp})_J \oplus  (Y^{\perp})_J^{\perp}.
    \end{eqnarray*}

     \vspace{2mm}
       Put $\dim (Y^{\perp})_J = k_1$ and $\dim (Y^{\perp})_J^{\perp} = k_2$. Then $\dim Y^{\perp} = k_1 + k_2$ and $k = \dim {\mathfrak z} = k_1 + k_2 + 1$.

   The orthogonal decomposition of $Y^{\perp}$ induces, therefore, a decomposition of the subspace ${\mathfrak t}= \{J_{Y^{\perp}}V, J_{Y^{\perp}}J_YV\}$ as $\displaystyle{    {\mathfrak t} = {\mathfrak t}_1 \oplus {\mathfrak t}_2 \oplus {\mathfrak t}_3.}$  

    Note that ${\mathfrak t}_1 = J_{(Y^{\perp})_J} V = J_{(Y^{\perp})_J} J_YV$ is  $J_Y$-invariant.  By the aid of the bases $\{Z^J_{\alpha}\}$ and  $\{Z^{J^{\perp}}_{\beta}\}$ of $(Y^{\perp})_J$ and $(Y^{\perp})_J^{\perp}$, we provide an orthonormal basis for each of the subspaces ${\mathfrak t}_a$, $a =1,2,3$, respectively as follows;
      \begin{eqnarray*}\label{t1}
 & & \big\{W^{{\mathfrak t}_1}_1=J_{Z^J_{1}}{\hat V}, \dots, W^{{\mathfrak t}_1}_{k_1}=J_{Z^J_{k_1}}{\hat V}\big\}, \\
 & &\label{t2} \big\{W^{{\mathfrak t}_2}_1 =J_{Z^{J^{\perp}}_1}{\hat V}, \dots, W^{{\mathfrak t}_2}_{k_2}= J_{Z^{J^{\perp}}_{k_2}}{\hat V}\big\},\\
& &\label{t3}   \big\{W^{{\mathfrak t}_3}_1 =J_{Z^{J^{\perp}}_1}J_{\hat Y}{\hat V}, \dots, W^{{\mathfrak t}_3}_{k_2}= J_{Z^{J^{\perp}}_{k_2}}J_{\hat Y}{\hat V}\big\}.
   \end{eqnarray*} 
   Thus, $\dim {\mathfrak t}_1 = k_1,\,  \dim {\mathfrak t}_2 = \dim {\mathfrak t}_3 = k_2$ and $\dim {\mathfrak t} = k_1 + 2 k_2$.

\vspace{2mm}
   The components of the Hessian $\nabla d b_{\gamma}$ 
   with respect to $V_1 = {\hat V}, V_2 = J_{\hat Y}{\hat V}, Y_1 = {\hat Y}, A$ are given by the following. At $e_S$   we have $a = 1$, ${\mathcal V} = v$, ${\mathcal Y} = y$ and from this, we have the following.

    \begin{lemma}\label{component3}\rm
   \begin{eqnarray*}
  \nabla d b_{\gamma}(A, V_2) &=& - \frac{1}{2F^2}\big\{ \big(fF + 2F - 4f^2\big) \langle v,V_2\rangle \\ \nonumber
   &+& \left( 4 f - F\right) \langle J_yv, V_2\rangle \big\}=-\frac{1}{2}\vert Y\vert\vert V\vert, \\ \nonumber
  \nabla d b_{\gamma}(V_1, V_2)  &=& \frac{1}{2F}\big(\langle v,V_1\rangle \langle v, V_2\rangle +  \langle [v, V_1], [v, V_2]\rangle\big)=0, \\ \nonumber
  \nabla d b_{\gamma}(V_2, V_2)  &=&\frac{1}{2F}\big(\langle v,V_2\rangle \langle v, V_2\rangle +  \langle [v, V_2], [v, V_2]\rangle\big) + \frac{1}{2}=\frac{1}{2}(1+\vert V\vert^2),\\ \nonumber
   \nabla d b_{\gamma}(V_2,Y_1) &=& - \frac{2}{F^2} \langle f v - J_yv,V_2\rangle \langle y, Y_{1}\rangle \\ \nonumber
  &-& \frac{1}{2F} \langle [V_2, (f-2) v - J_yv], Y_{1}\rangle=\frac{1}{2} s \vert V\vert.
   \end{eqnarray*} 
   \end{lemma}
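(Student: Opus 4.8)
The plan is to treat this as a direct substitution: the four entries are the special cases $W, W' \in \{V_1, V_2\}$, $Z = Y_1$ of the general component formulae (\ref{aw}), (\ref{ww}), (\ref{wz}) of Lemma \ref{hessiancomponents-x}, evaluated at $e_S$ using the Auxiliary formulae for $f$, $F$, $v$, $y$, $J_y v$ and $fv - J_y v$. So the real work is to assemble a short list of inner products and brackets, after which the identities are arithmetic.

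First I would record the inner products, exploiting that $V$, $J_Y V$, $Y$ are mutually orthogonal and that each $J_Z$ is skew-adjoint. The single most useful fact is that $fv - J_y v = \frac{4}{\chi_\infty}V$ is a multiple of $V$, so that $\langle fv - J_y v, V_2\rangle = 0$ because $V_2 = J_{\hat Y}\hat V \perp V$. This one vanishing kills the $-\frac{1}{F^2}(\cdots)$ term in (\ref{ww}) and the leading term of (\ref{wz}), and together with $\langle V_1, V_2\rangle = 0$ it already reduces $\nabla d b_\gamma(V_1, V_2)$ and $\nabla d b_\gamma(V_2, V_2)$ to the truncated shapes displayed in the statement. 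I would then compute the handful of needed brackets from $[V, J_Z V] = |V|^2 Z$ and the orthogonality relations, the key ones being $[v, V_1]$, $[v, V_2]$ and $[V_2, (f-2)v - J_y v]$, each of which turns out to be a scalar multiple of $Y$ or $\hat Y$.

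With these in hand the four evaluations become routine. The genuinely nontrivial point, and the step I expect to be the main obstacle, is the exact cancellation in the $(V_1, V_2)$-entry: the metric contribution $\langle v, V_1\rangle\langle v, V_2\rangle$ must cancel the bracket contribution $\langle [v, V_1], [v, V_2]\rangle$ precisely, giving zero. A parallel subtlety arises in $\nabla d b_\gamma(A, V_2)$, where the numerator of (\ref{aw}) must be regrouped, using $s + (1-2s) = 1-s$ and $\chi_\infty = (1-s)^2 + |Y|^2$, so that it collapses to a multiple of $\chi_\infty$ and the powers of $\chi_\infty$ cancel against $F^2 = 16/\chi_\infty^2$. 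The remaining two entries then follow by the same substitution together with the normalization $|V|^2 + |Y|^2 + s^2 = 1$.

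In short, there is no conceptual difficulty: every identity hinges on the proportionality $fv - J_y v \parallel V$ and the mutual orthogonality of $V$, $J_Y V$, $Y$, so the obstacle is purely the algebraic bookkeeping of making the $\chi_\infty$-powers and $(1-s)$-terms cancel correctly.
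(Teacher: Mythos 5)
Your proposal is correct and is essentially the paper's own argument: the paper gives no separate proof of Lemma \ref{component3}, but obtains it exactly as you describe, by evaluating the general component formulae of Lemma \ref{hessiancomponents-x} at $e_S$ (where $a=1$, ${\mathcal V}=v$, ${\mathcal Y}=y$) with the auxiliary identities $f v - J_y v = \frac{4}{\chi_\infty}V$, $F = \frac{4}{\chi_\infty}$, $4f-F = \frac{4(1-2s)}{\chi_\infty}$, and the bracket relation $[V,J_ZV]=\vert V\vert^2 Z$. Your identification of the two key mechanisms — the proportionality $f v - J_y v \parallel V$ killing the $F^{-2}$ terms against $V_2 = J_{\hat Y}\hat V \perp V$, and the exact cancellation $\langle v,V_1\rangle\langle v,V_2\rangle = -\langle [v,V_1],[v,V_2]\rangle = \frac{4(1-s)\vert V\vert^2\vert Y\vert}{\chi_\infty^2}$ together with the collapse $(1-s)s + \vert Y\vert^2 + (1-2s)(1-s) = \chi_\infty$ — is precisely what makes the substitution close up, so the plan goes through as stated (the normalization $\vert V\vert^2+\vert Y\vert^2+s^2=1$ is in fact not even needed for these four entries).
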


      \begin{lemma}\label{eigen-1}\rm
      \begin{eqnarray*}
      \nabla d b_{\gamma}(W^{\mathfrak s}_1,W^{\mathfrak s}_1) &=& 1,\hspace{2mm}\hspace{2mm} \nabla d b_{\gamma}(W^{\mathfrak s}_1,W^{\mathfrak s}_i) = 0, \, i = 2,3, \\ \nonumber
        \nabla d b_{\gamma}(W^{\mathfrak s}_i, W^{\mathfrak s}_j) &=&  \frac{1}{2} \delta_{ij}, \hspace{2mm}i, j = 2,3. 
     \end{eqnarray*}
     \end{lemma}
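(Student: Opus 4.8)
The plan is to turn the statement into a single finite computation at the identity $e_S$. Writing $W^{\mathfrak s}_i = W^{\mathfrak s_4}_i$, I would first re-express the orthonormal vectors of (\ref{linearcombi}) in the adapted basis $\{V_1 = {\hat V},\, V_2 = J_{\hat Y}{\hat V},\, Y_1 = {\hat Y},\, A\}$ of ${\mathfrak s}_4$, namely
\begin{gather*}
W^{\mathfrak s}_1 = \vert V\vert\, V_2 + s\, Y_1 - \vert Y\vert\, A, \qquad
W^{\mathfrak s}_2 = s\, V_1 - \vert Y\vert\, V_2 - \vert V\vert\, A, \\
W^{\mathfrak s}_3 = \vert Y\vert\, V_1 + s\, V_2 - \vert V\vert\, Y_1.
\end{gather*}
Since $\nabla d b_{\gamma}$ is a symmetric bilinear form, it then suffices to know its ten entries on the four basis vectors $\{V_1, V_2, Y_1, A\}$ at $e_S$, after which each $\nabla d b_{\gamma}(W^{\mathfrak s}_i, W^{\mathfrak s}_j)$ is obtained by pure expansion.

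To assemble that matrix I would specialize the component formulae (\ref{aacompo-1})--(\ref{zz-1}) of Lemma \ref{hessiancomponents-x} to $e_S$, using the auxiliary values $f = \frac{2(1-s)}{\chi_{\infty}}$, $F = \frac{4}{\chi_{\infty}}$, $fv - J_y v = \frac{4}{\chi_{\infty}}V$, $4f - F = \frac{4(1-2s)}{\chi_{\infty}}$, together with
\begin{gather*}
\langle v, V_1\rangle = \frac{2(1-s)\vert V\vert}{\chi_{\infty}}, \qquad
\langle v, V_2\rangle = \frac{2\vert Y\vert\,\vert V\vert}{\chi_{\infty}}, \qquad
\langle y, Y_1\rangle = \frac{2\vert Y\vert}{\chi_{\infty}}, \\
[v, V_1] = -\frac{2\vert V\vert}{\chi_{\infty}}\, Y, \qquad
[v, V_2] = \frac{2(1-s)\vert V\vert}{\chi_{\infty}}\, {\hat Y},
\end{gather*}
all of which follow from the skew-adjointness of $J_Z$ and the identity $[V, J_Z V] = \vert V\vert^2 Z$. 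Four of the needed entries are already given in Lemma \ref{component3}; the rest come out to be
\begin{gather*}
\nabla d b_{\gamma}(V_1, V_1) = \tfrac12(1-\vert V\vert^2), \qquad
\nabla d b_{\gamma}(Y_1, Y_1) = s + \tfrac{(1-s)^2}{2} - \tfrac{\vert Y\vert^2}{2}, \\
\nabla d b_{\gamma}(A, A) = \tfrac{\vert V\vert^2}{2} + \vert Y\vert^2, \qquad
\nabla d b_{\gamma}(V_1, Y_1) = -\tfrac12\vert V\vert\,\vert Y\vert, \\
\nabla d b_{\gamma}(A, V_1) = -\tfrac12 s\vert V\vert, \qquad
\nabla d b_{\gamma}(A, Y_1) = -s\vert Y\vert.
\end{gather*}

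With the full matrix in hand I would expand $\nabla d b_{\gamma}(W^{\mathfrak s}_i, W^{\mathfrak s}_j)$ bilinearly and simplify using the unit-norm relation $\vert V\vert^2 + \vert Y\vert^2 + s^2 = 1$ and the definition $\chi_{\infty} = (1-s)^2 + \vert Y\vert^2$. After these substitutions the diagonal values collapse to $1$ on $W^{\mathfrak s}_1$ and to $\tfrac12$ on $W^{\mathfrak s}_2$ and $W^{\mathfrak s}_3$, while the three off-diagonal values $\nabla d b_{\gamma}(W^{\mathfrak s}_1, W^{\mathfrak s}_2)$, $\nabla d b_{\gamma}(W^{\mathfrak s}_1, W^{\mathfrak s}_3)$, $\nabla d b_{\gamma}(W^{\mathfrak s}_2, W^{\mathfrak s}_3)$ vanish. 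I expect this final bilinear expansion to be the only real obstacle: the individual summands carry mixed powers of $\vert V\vert$, $\vert Y\vert$ and $s$ that do not cancel term by term, and the target values $1$ and $\tfrac12$ emerge only once the norm constraint is invoked to recombine the $\vert V\vert^2$, $\vert Y\vert^2$ and $s^2$ contributions and to eliminate $\chi_{\infty}$ consistently.

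Finally, I would record the lemma as the component form of Proposition \ref{s4}, so that the computation doubles as a verification of that proposition. As an independent sanity check, the same basis diagonalizes the Jacobi operator $R_{V+Y+sA}$ with eigenvalues $-1$, $-\tfrac14$, $-\tfrac14$ (the Note following Proposition \ref{s4}); the values $1 = \sqrt{-(-1)}$ and $\tfrac12 = \sqrt{-(-\tfrac14)}$ obtained above are exactly the square roots $\sqrt{-\kappa}$ of these Jacobi eigenvalues, which is the expected relation between the horospherical shape operator ${\mathcal S} = -\nabla d b_{\gamma}$ and $R$ through the Riccati equation (\ref{riccati-equation}) and confirms the matrix computation.
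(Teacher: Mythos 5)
Your proposal is correct and takes essentially the same route as the paper: the paper likewise writes $W^{{\mathfrak s}_4}_1 = \vert V\vert J_{\hat Y}{\hat V} + s{\hat Y} - \vert Y\vert A$ (and similarly for $W^{{\mathfrak s}_4}_2$, $W^{{\mathfrak s}_4}_3$), expands $\nabla d b_{\gamma}$ bilinearly over the adapted basis $\{{\hat V}, J_{\hat Y}{\hat V}, {\hat Y}, A\}$ using the entries from Lemma \ref{component3} (i.e.\ Lemma \ref{hessiancomponents-x} specialized at $e_S$ with $f = 2(1-s)/\chi_{\infty}$, $F = 4/\chi_{\infty}$), and simplifies via $\vert V\vert^2 + \vert Y\vert^2 + s^2 = 1$. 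The extra entries you compute, such as $\nabla d b_{\gamma}({\hat Y},{\hat Y}) = \frac{1}{2}(1+s^2-\vert Y\vert^2)$, $\nabla d b_{\gamma}(A,A) = \frac{1}{2}(1-s^2+\vert Y\vert^2)$ and $\nabla d b_{\gamma}({\hat Y},A) = -s\vert Y\vert$, are exactly the values the paper uses implicitly in its displayed expansion of $\nabla d b_{\gamma}(W^{\mathfrak s}_1, W^{\mathfrak s}_1) = 1$.
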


    \begin{proof}\rm   Since $W^{{\mathfrak s}_4}_1 = \vert V\vert J_{\hat Y}{\hat V} + s {\hat Y} - \vert Y\vert A$, using the formulae of Lemma \ref{component3} we verify that
         \begin{eqnarray*}
      \nabla d b_{\gamma}(W^{\mathfrak s}_1,W^{\mathfrak s}_1) 
   &=& \vert V\vert^2\left(\frac{1}{2}\vert V\vert^2+ \frac{1}{2}\right) + 2s\vert V\vert\left(\frac{s}{2}\vert V\vert  \right)- 2\vert V\vert\vert Y\vert\left(-\frac{\vert V\vert\vert Y\vert}{2}\right) \\ \nonumber
     &+& s^2\big\{\frac{1}{2}(1+s^2-\vert Y\vert^2)\big\} - 2 s\vert Y\vert(-s \vert Y\vert) + \vert Y\vert^2\cdot\frac{1}{2}(1-s^2 + \vert Y\vert^2) \\ \nonumber
     &=& 1.
      \end{eqnarray*} Others are similarly obtained.
\end{proof}


 \vspace{2mm}
   It is concluded from Lemma \ref{eigen-1} 
  that $W_1^{\mathfrak s}$ is an eigenvector of $\nabla d b_{\gamma}$ corresponding to eigenvalue $1$, and $W_2^{\mathfrak s}$ and $W_3^{\mathfrak s}$ are eigenvectors corresponding to eigenvalue $\frac{1}{2}$.

\subsection{Positive definiteness over ${\mathfrak q}_{\ell} = {\mathfrak t}_1\oplus (Y^{\perp})_J$}\label{positive}

The aim of this subsection is to show the following. 

\begin{lemma}\label{positive-1}\rm $\nabla d b_{\gamma}$ over ${\mathfrak t}_1\oplus (Y^{\perp})_J$ is positive definite.
\end{lemma}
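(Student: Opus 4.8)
The plan is to restrict the component formulae of Lemma \ref{hessiancomponents-x}, evaluated at $e_S$ by means of the auxiliary formulae of the preceding subsection, to the subspace ${\mathfrak t}_1\oplus (Y^{\perp})_J$, and then to diagonalize the resulting quadratic form by a Schur-complement argument. First I would fix the parametrization: every element of ${\mathfrak t}_1\oplus (Y^{\perp})_J$ is written uniquely as $\xi = J_{Z_1}{\hat V} + Z$ with $Z_1, Z\in (Y^{\perp})_J$, and $Z_1\mapsto J_{Z_1}{\hat V}$ is an isometry of $(Y^{\perp})_J$ onto ${\mathfrak t}_1$ since $\langle J_{Z_1}{\hat V}, J_{Z_2}{\hat V}\rangle = \langle Z_1, Z_2\rangle$ by (\ref{algebrastr-a}). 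Thus $\nabla d b_{\gamma}$ restricted to ${\mathfrak t}_1\oplus (Y^{\perp})_J$ becomes a quadratic form in the pair $(Z_1,Z)\in (Y^{\perp})_J\oplus (Y^{\perp})_J$.

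Next I would compute the three blocks. The key vanishing relations that kill most terms in Lemma \ref{hessiancomponents-x} are: for $W = J_{Z_1}{\hat V}\in {\mathfrak t}_1$ one has $\langle v,W\rangle = 0$ and $\langle fv - J_yv, W\rangle = \tfrac{4}{\chi_{\infty}}\langle V, J_{Z_1}{\hat V}\rangle = 0$ (skew-symmetry of $J_{Z_1}$ together with $Z_1\perp Y$), while $\langle y, Z\rangle = 0$ for $Z\in Y^{\perp}$. Using these, Lemma \ref{ljyeperp}, the fact that $K$ restricts to a complex structure on $(Y^{\perp})_J$, i.e. $K^2 = -\,\mathrm{id}$ there (Lemma \ref{ksquare}(i)), and the skew-adjointness of $K$, I expect
\begin{align*}
\nabla d b_{\gamma}(J_{Z_1}{\hat V}, J_{Z_2}{\hat V}) &= \tfrac{1}{2}(1+|V|^2)\,\langle Z_1, Z_2\rangle, \\
\nabla d b_{\gamma}(J_{Z_1}{\hat V}, Z) &= \tfrac{|V|}{2}\big(s\,\langle Z_1, Z\rangle + |Y|\,\langle K Z_1, Z\rangle\big), \\
\nabla d b_{\gamma}(Z, Z') &= \tfrac{1}{2}(1+s^2+|Y|^2)\,\langle Z, Z'\rangle.
\end{align*}
The first follows from $\langle [v,J_{Z_1}{\hat V}],[v,J_{Z_2}{\hat V}]\rangle = \tfrac{4|V|^2}{\chi_{\infty}}\langle Z_1,Z_2\rangle$, where the cross terms cancel by skew-adjointness and $\langle K Z_1, K Z_2\rangle = \langle Z_1, Z_2\rangle$; the third reduces to $\tfrac{1}{F}(F-2f+2) = s + \tfrac{\chi_{\infty}}{2} = \tfrac{1}{2}(1+s^2+|Y|^2)$; the middle one uses $fv - J_yv - 2v = \tfrac{4}{\chi_{\infty}}(sV - J_YV)$ together with the brackets $[J_{Z_1}{\hat V}, V] = -|V|Z_1$ and $[J_{Z_1}{\hat V}, J_YV] = |Y||V|\,KZ_1$ extracted from the proof of Lemma \ref{ljyeperp}.

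Finally, for positive definiteness I would write the form as $\langle A Z_1, Z_1\rangle + 2\langle B Z_1, Z\rangle + \langle C Z, Z\rangle$ with $A = \tfrac{1+|V|^2}{2}\,\mathrm{id}$, $C = \tfrac{1+s^2+|Y|^2}{2}\,\mathrm{id}$ and $B = \tfrac{|V|}{2}(s\,\mathrm{id} + |Y|\,K)$, and invoke the Schur-complement criterion: since $A>0$, positive definiteness is equivalent to $C - B^{*}A^{-1}B > 0$. Using $K^{*} = -K$ and $K^2 = -\,\mathrm{id}$ one computes $B^{*}A^{-1}B = \tfrac{|V|^2(s^2+|Y|^2)}{2(1+|V|^2)}\,\mathrm{id}$, so the Schur complement has scalar coefficient $\tfrac{(1+s^2+|Y|^2)(1+|V|^2) - |V|^2(s^2+|Y|^2)}{2(1+|V|^2)}$. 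By the unit-norm relation $|V|^2 + |Y|^2 + s^2 = 1$ the numerator collapses to the constant $2$, so the coefficient equals $\tfrac{1}{1+|V|^2} > 0$, establishing positive definiteness.

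I expect the main obstacle to be the coupling block $\nabla d b_{\gamma}(J_{Z_1}{\hat V}, Z)$: obtaining the $K$-term and its correct sign requires the bracket identities of Lemma \ref{ljyeperp} and careful bookkeeping with skew-adjointness. The decisive point, however, is the algebraic cancellation in the Schur complement—the numerator simplifying to the constant $2$—which is precisely where the unit-norm constraint $|V|^2+|Y|^2+s^2 = 1$ and the property $K^2 = -\,\mathrm{id}$ on $(Y^{\perp})_J$ enter; without either, positivity would fail.
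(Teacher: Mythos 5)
Your proposal is correct, and the computational core coincides with the paper's: the three block formulas you derive match exactly the entries $a=\tfrac12(1+\vert V\vert^2)$, $b=\tfrac12(1+s^2+\vert Y\vert^2)$, $c=\tfrac12 s\vert V\vert$, $d=\tfrac12\vert Y\vert\vert V\vert$ of the matrix the paper writes down (your coupling operator $B=\tfrac{\vert V\vert}{2}(s\,{\rm id}+\vert Y\vert K)$ reproduces $c$, $d$, $-d$, $c$ once one evaluates on a pair $Z_1$, $KZ_1$), and both arguments rest on the same inputs: Lemma \ref{hessiancomponents-x}, the brackets of Lemma \ref{ljyeperp}, skew-adjointness of $K$, and $K^2=-{\rm id}$ on $(Y^{\perp})_J$ (Lemma \ref{ksquare}). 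Where you genuinely diverge is the final linear-algebra step. The paper chooses a basis of $(Y^{\perp})_J$ adapted to the complex structure $K$ (pairs $Z_{2a-1}$, $Z_{2a}=KZ_{2a-1}$), checks that the Hessian block-diagonalizes into the $4\times 4$ blocks ${\mathfrak h}_a$, and solves $(a-\lambda)(b-\lambda)=c^2+d^2$ explicitly, obtaining the eigenvalues $1$ and $\tfrac12$. You keep the two operator blocks intact over $(Y^{\perp})_J$ and invoke the Schur-complement criterion, where the identity $B^{*}B=\tfrac{\vert V\vert^2}{4}(s^2+\vert Y\vert^2)\,{\rm id}$ does the work that the paper's basis adaptation does. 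Your route is basis-free and slightly shorter; the paper's route buys strictly more information, namely the explicit spectrum and eigenvectors (Lemma \ref{eigenvalue}), which the paper then uses to compare $\nabla d b_{\gamma}$ with the Jacobi operator $R_{V+Y+sA}$ on this subspace. One small quibble with your closing remark: positivity would not in fact fail without the unit-norm constraint, since the Schur numerator is $(1+s^2+\vert Y\vert^2)(1+\vert V\vert^2)-\vert V\vert^2(s^2+\vert Y\vert^2)=1+\vert V\vert^2+s^2+\vert Y\vert^2$, which is positive regardless; the constraint only evaluates it to the constant $2$. The property $K^2=-{\rm id}$, by contrast, is genuinely needed to make $B^{*}B$ a scalar.
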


Decompose ${\mathfrak t}_1\oplus(Y^{\perp})_J$ orthogonally into
\begin{eqnarray*}
 {\mathfrak t}_1\oplus(Y^{\perp})_J = \oplus_{a=1}^{r_{\ell}}\, {\mathfrak h}_a,\hspace{2mm} {\mathfrak h}_a := {\rm Span}\, \{W_{2a-1}, W_{2a}, Z_{2a-1}, Z_{2a}\} 
\end{eqnarray*} $(\hspace{1mm}\dim (Y^{\perp})_J = 2 r_{\ell}\hspace{1mm})$. Then, it follows immediately
\begin{eqnarray*}
 \nabla d b_{\gamma}\vert_{{\mathfrak t}_0^1\oplus(Y^{\perp})_J } = \oplus_{a=1}^{r_{\ell}} \nabla d b_{\gamma} \vert_{ {\mathfrak h}_a}
\end{eqnarray*}as a direct sum of the quadratic forms $\nabla d b_{\gamma} \vert_{ {\mathfrak h}_a}$.  We will  show $\displaystyle{\nabla d b\vert_{ {\mathfrak h}_a} > 0}$,\, $a = 1,\cdots,r_{\ell}$ and conclude thus $\nabla d b_{\gamma} > 0$ over ${\mathfrak t}_1\oplus (Y^{\perp})_J$.

Instead of showing $\nabla d b_{\gamma} > 0$ we rather consider the eigenproblem for $ {\mathcal T}=-{\mathcal S} $, the endomorphism induced from $\nabla d b_{\gamma}$. Here ${\mathcal S}$ is the shape operator of a horosphere centered at $[\gamma]\in \partial S$. Then we restrict the eigenproblem over ${\mathfrak h}_1$ and write the endomorphism ${\mathcal T}_{\hspace{1mm}\vert{\mathfrak h}_1}$ as a $4\times 4$-matrix given by
\begin{eqnarray*}
B =\left(
\begin{array}{cc|cc}
 a & 0 & c & d \\
 0 & a & - d & c \\ \hline
 c & - d & b &0 \\
 d & c & 0 & b
 \end{array}\right),
\end{eqnarray*}where
\begin{eqnarray*}
\label{abcd}\displaystyle{a = \frac{1}{2}(1+\vert V\vert^2)},\, \displaystyle{b = \frac{1}{2}(1+s^2+\vert Y\vert^2)},\, \displaystyle{c = \frac{1}{2}s\vert V\vert}, \,\displaystyle{d = \frac{1}{2}\vert Y\vert\vert V\vert}.
\end{eqnarray*} 
 Other matrices for ${\mathfrak h}_a$, $a >1$ are the same.
Let $\lambda$ be an eigenvalue of $B$. Then, $\lambda$ satisfies
\begin{eqnarray*}
 (a-\lambda)(b-\lambda) = c^2 + d^2
\end{eqnarray*}
 whose solutions are
\begin{eqnarray*}
\lambda = \frac{1}{2}\big\{a+b \pm \sqrt{(a-b)^2 + 4\{c^2+d^2\}} \big\} = 1,\, \frac{1}{2} .
\end{eqnarray*}
\begin{lemma}\label{eigenvalue}\rm
The eigenvalues of $B$ and hence of ${\mathcal T}$ are $1$ and $\displaystyle{\frac{1}{2}}$.



Let $\lambda = 1/2$ be the eigenvalue of $\nabla d b_{\gamma}$ over ${\mathfrak q}_{\ell}$. The 
eigenvectors corresponding to $\lambda =1/2$ are  vectors of the form
\begin{eqnarray}
U_1= \vert V\vert^2 Z + J_{Z}\left(J_YV- sV  \right),\hspace{2mm}Z\in (Y^{\perp})_J.
\end{eqnarray}
Let $\lambda = 1$ be the eigenvalue. Then, the 
eigenvectors corresponding to $\lambda =1$ are  vectors of the form
\begin{eqnarray}
U_2= \left(\vert V\vert^2-1\right) Z + J_{Z}\left(J_YV- sV  \right),\hspace{2mm}Z\in (Y^{\perp})_J.
\end{eqnarray}
\end{lemma}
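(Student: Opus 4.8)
The plan is to treat the eigenproblem for the explicit $4\times 4$ matrix $B$ and then translate the resulting coordinate eigenvectors back into the intrinsic vectors $U_1$ and $U_2$. I first record the block form
\begin{eqnarray*}
B = \begin{pmatrix} a\,I_2 & C \\ C^{T} & b\,I_2 \end{pmatrix}, \qquad C = \begin{pmatrix} c & d \\ -d & c \end{pmatrix},
\end{eqnarray*}
and observe that $C\,C^{T} = C^{T}C = (c^2+d^2)\,I_2$. Consequently, for an eigenvalue $\lambda\not=a$ with eigenvector $(x,y)^{T}$, $x,y\in{\Bbb R}^2$, the relation $Cy = (\lambda-a)x$ gives $x = (\lambda-a)^{-1}Cy$, and substitution into $C^{T}x + b\,y = \lambda y$ yields the scalar equation $(\lambda-a)(\lambda-b) = c^2+d^2$ already displayed above. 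Here the unit-vector constraint $\vert V\vert^2+\vert Y\vert^2+s^2 = 1$ is essential: it gives $a+b = \tfrac32$ and $ab-(c^2+d^2) = \tfrac12$, so that $\lambda$ solves $2\lambda^2-3\lambda+1 = 0$, whence $\lambda = 1$ and $\lambda = \tfrac12$.

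For the eigenvectors I would solve $(B-\lambda I)w = 0$ in the ordered orthonormal basis $\{W_1,W_2,Z_1,Z_2\}$ of ${\mathfrak h}_1$, seeking a solution with vanishing $Z_2$-coordinate. With $a-\tfrac12 = \tfrac12\vert V\vert^2$ and $b-\tfrac12 = \tfrac12(1-\vert V\vert^2)$, the first two rows force the $W_1$- and $W_2$-coordinates to equal $-s\vert V\vert$ and $\vert Y\vert\vert V\vert$ once the $Z_1$-coordinate is normalized to $\vert V\vert^2$; the remaining two rows then vanish identically after the substitution $s^2+\vert Y\vert^2 = 1-\vert V\vert^2$. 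This produces the $\lambda=\tfrac12$ eigenvector $-s\vert V\vert\,W_1 + \vert Y\vert\vert V\vert\,W_2 + \vert V\vert^2 Z_1$. Running the same computation at $\lambda=1$, where $a-1 = \tfrac12(\vert V\vert^2-1)$ and $b-1 = -\tfrac12\vert V\vert^2$, gives identical $W$-coordinates but $Z_1$-coordinate $\vert V\vert^2-1$, i.e. $-s\vert V\vert\,W_1 + \vert Y\vert\vert V\vert\,W_2 + (\vert V\vert^2-1)Z_1$.

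It then remains to recognize these coordinate vectors as the stated intrinsic expressions. Writing $Z := Z_1\in(Y^{\perp})_J$ and taking $W_1 = J_Z{\hat V}$, $W_2 = J_{KZ}{\hat V}$ (legitimate since, by Lemma \ref{ksquare}(i) and Proposition \ref{k1}, $K$ restricts to a complex structure on $(Y^{\perp})_J$, so $\{Z,KZ\}$ is orthonormal and $\{J_Z{\hat V},J_{KZ}{\hat V}\}$ is orthonormal by (\ref{algebrastr-a})), I would use $J_ZV = \vert V\vert\,W_1$ together with the key identity $J_ZJ_YV = \vert Y\vert\vert V\vert\,J_{KZ}{\hat V} = \vert Y\vert\vert V\vert\,W_2$. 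Granting these, $J_Z(J_YV-sV) = \vert Y\vert\vert V\vert\,W_2 - s\vert V\vert\,W_1$, so the $\lambda=\tfrac12$ eigenvector equals $\vert V\vert^2 Z + J_Z(J_YV-sV) = U_1$ and the $\lambda=1$ eigenvector equals $(\vert V\vert^2-1)Z + J_Z(J_YV-sV) = U_2$. As $Z$ ranges over a basis of $(Y^{\perp})_J$ and each summand ${\mathfrak h}_a$ contributes two such vectors for each eigenvalue, this exhausts ${\mathfrak q}_{\ell} = {\mathfrak t}_1\oplus(Y^{\perp})_J$.

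The main obstacle is the identity $J_ZJ_{\hat Y}{\hat V} = J_{KZ}{\hat V}$ for $Z\in(Y^{\perp})_J$, which is precisely the bridge between the algebraic eigenvectors and the matrix computation. I would derive it from the defining condition (\ref{J2condition-1}): for $Z\in(Y^{\perp})_J$ there is $Z'\in{\mathfrak z}$ with $J_ZJ_YV = J_{Z'}V$; pulling out the norms gives $J_ZJ_{\hat Y}{\hat V} = J_{Z'/\vert Y\vert}{\hat V}$, an element of $J_{\mathfrak z}{\hat V}$, and applying $\mathrm{ad}\,{\hat V}$ together with $[{\hat V},J_W{\hat V}] = W$ identifies $Z'/\vert Y\vert$ with $[{\hat V},J_ZJ_{\hat Y}{\hat V}] = K(Z)$ by Definition \ref{defk}. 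The delicate point is keeping the normalizations (${\hat V},{\hat Y}$ versus $V,Y$) consistent throughout, as the identity genuinely uses membership in $(Y^{\perp})_J$ rather than mere orthogonality to $Y$. Finally, since both eigenvalues $1$ and $\tfrac12$ are positive, the positive definiteness of $\nabla d b_{\gamma}$ on this block asserted in Lemma \ref{positive-1} follows at once.
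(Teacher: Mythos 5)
Your proposal is correct and follows essentially the same route as the paper: both reduce the eigenproblem on each block ${\mathfrak h}_a$ to the same $4\times 4$ matrix $B$ and the same scalar characteristic equation $(a-\lambda)(b-\lambda)=c^2+d^2$, which the unit-norm condition $\vert V\vert^2+\vert Y\vert^2+s^2=1$ collapses to the roots $1$ and $\tfrac12$. Your explicit solution of $(B-\lambda I)w=0$ and the bridge identity $J_ZJ_{\hat Y}{\hat V}=J_{KZ}{\hat V}$ for $Z\in (Y^{\perp})_J$ (derived correctly from (\ref{J2condition-1}), Definition \ref{defk} and $[{\hat V},J_W{\hat V}]=W$) simply supply the eigenvector verification that the paper states without detail, so no gap remains.
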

\begin{remark}\rm
As shown in \cite{BTV}, the eigenvalues of $R_{V+Y+sA}\vert_{{\mathfrak t}_1\oplus(Y^{\perp})_J}$ and the corresponding eigenspaces are
\begin{eqnarray*}
- \frac{1}{4}&:&\hspace{2mm}\vert V\vert^2 Z + J_{Z}\left(J_YV- sV  \right),\hspace{2mm}Z\in (Y^{\perp})_J,\\ \nonumber
-1&:&\hspace{2mm}\left(\vert V\vert^2-1\right) Z + J_{Z}\left(J_YV- sV  \right),\hspace{2mm}Z\in (Y^{\perp})_J.
\end{eqnarray*}Thus, the eigenspaces of $\nabla d b_{\gamma}$ over ${\mathfrak t}_1\oplus(Y^{\perp})_J$ are exactly same as those of the Jacobi operator.
\end{remark}

As was verified above,  it is concluded that $\nabla d b_{\gamma}$ is positive definite over ${\mathfrak t}_1\oplus(Y^{\perp})_J$.

\subsection{Positive definiteness over ${\mathfrak q}_j$, $j=0,\dots,\ell-1$}
\vspace{2mm}
We will deal in this subsection with the positive definiteness of the Hessian over the subspace ${\mathfrak t}_2\oplus {\mathfrak t}_3 \oplus (Y^{\perp})_J^{\perp} = \oplus_{j=0}^{\ell-1}\ {\mathfrak q}_j$. It suffices from Lemma \ref{orthogonality} to show $\nabla d b_{\gamma}\vert_{{\mathfrak q}_j} > 0$ for each $j = 0,1,\cdots, \ell-1$. Here ${\mathfrak q}_j = {\rm Span}\, \{J_{L_j}{\hat V}, J_{L_j}J_{\hat Y}{\hat V}, L_j\}$.

\vspace{2mm}
Let $j=0$ with $\mu_0 = 0$. Then, $L_0 =\{Z\in (Y^{\perp})_J^{\perp}\,\vert\, KZ=0\}$, since $K^2Z= 0\, \Leftrightarrow\, KZ= 0$ and also $J_{L_0}{\hat V}\perp J_{L_0}J_{\hat Y}{\hat V}$ from (\ref{perpendicular}). Let $\{Z_a,\, a= 1,\cdots, r_0\}$ be an orthonormal basis of $L_0$ so that ${\mathfrak q}_0$ is decomposed orthogonally into
\begin{eqnarray*}
{\mathfrak q}_0 = \oplus_{a=1}^{r_0}\, {\mathfrak f}_a,\hspace{2mm}{\mathfrak f}_a := {\rm Span}\, \{W_a=J_{Z_a}{\hat V},\, W_a'=J_{Z_a}J_{\hat Y}{\hat V}, Z_a\}.
\end{eqnarray*}
It is easily derived similarly as in the proof of Lemma \ref{orthogonality} that $\nabla d b_{\gamma}\vert_{{\mathfrak q}_0} = \oplus_{a=1}^{r_0} \nabla d b_{\gamma}\vert_{{\mathfrak f}_a}$, from Lemma \ref{ljyeperp}.


In what follows we will prove $\nabla d b_{\gamma}\vert_{{\mathfrak f}_a} > 0$. 
With respect to the basis $\{ W_a, W_a', Z_a\}$ of 
${\mathfrak f}_a$, we have the following. 
\begin{lemma}\rm
\begin{eqnarray*}\nabla d b(W_a, W_a) &=& \frac{1}{2} +\frac{1}{2\chi_{\infty}}\vert V\vert^2 (1- s)^2,
\\ \nonumber
\nabla d b(W'_a, W'_a) &=& \frac{1}{2} +\frac{1}{2\chi_{\infty}}\vert V\vert^2\vert Y\vert^2, \\ \nonumber
\nabla d b(Z_a, Z_a) &=& \frac{1}{2} (1+s^2+\vert Y\vert^2), \\ \nonumber
\nabla d b(W_a, Z_a) &=& \frac{1}{2}\vert V\vert s, \,
\nabla d b(W'_a, Z_a) = -\frac{1}{2}\vert V\vert\vert Y\vert, \\ \nonumber
\nabla d b(W_a, W'_a) &=& \frac{1}{2}\vert V\vert^2 \frac{(1-s)\vert Y\vert}{\chi_{\infty}}. \\ \nonumber
\end{eqnarray*}\end{lemma}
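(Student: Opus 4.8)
The plan is to feed the three basis vectors $W_a = J_{Z_a}\hat V$, $W_a' = J_{Z_a}J_{\hat Y}\hat V$ and $Z_a\in L_0$ into the six component formulae of Lemma~\ref{hessiancomponents-x}, evaluated at $e_S$, and to reduce each entry by means of the auxiliary identities of the preceding subsection, namely $v=\tfrac{2}{\chi_{\infty}}\{(1-s)V+J_YV\}$, $y=\tfrac{2}{\chi_{\infty}}Y$, $f(e_S)=\tfrac{2(1-s)}{\chi_{\infty}}$, $F(e_S)=\tfrac{4}{\chi_{\infty}}$ and $fv-J_yv=\tfrac{4}{\chi_{\infty}}V$. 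The decisive observation is that $Z_a$ lies in $L_0={\rm Ker}(K^2-\mu_0\,{\rm id})$ with $\mu_0=0$; since $K$ is skew-adjoint by Proposition~\ref{k1}, $\langle K^2Z_a,Z_a\rangle=-\vert KZ_a\vert^2=0$ forces $KZ_a=0$, so the bracket formulae of Lemma~\ref{ljyeperp} collapse to $[v,W_a]=\tfrac{2\vert V\vert}{\chi_{\infty}}(1-s)Z_a$ and $[v,W_a']=\tfrac{2\vert V\vert}{\chi_{\infty}}\vert Y\vert Z_a$.

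Next I would record the inner products that vanish. Each $J_Z$ is skew-adjoint from its defining relation and, by (\ref{genheisalg}), orthogonal when $\vert Z\vert=1$; together with $Z_a\perp Y$ the polarization identity (\ref{algebrastr-a}) then gives $\langle J_Y\hat V,J_{Z_a}\hat V\rangle=0$ and $\langle J_{Z_a}\hat V,J_{\hat Y}\hat V\rangle=0$. Combined with $\langle u,J_Zu\rangle=0$ these yield $\langle v,W_a\rangle=\langle v,W_a'\rangle=0$ and $\langle fv-J_yv,W_a\rangle=\langle fv-J_yv,W_a'\rangle=0$, while $\langle y,Z_a\rangle=\tfrac{2}{\chi_{\infty}}\langle Y,Z_a\rangle=0$. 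Consequently the quadratic correction term in (\ref{ww}), the first term in (\ref{wz}), and the last term in (\ref{zz-1}) all drop out, so the three $\mathfrak v$--$\mathfrak v$ entries reduce to $\tfrac12\langle\cdot,\cdot\rangle+\tfrac{1}{2F}\langle[v,\cdot],[v,\cdot]\rangle$, where orthogonality of $J_{Z_a}$ gives $\langle W_a,W_a\rangle=\langle W_a',W_a'\rangle=1$ and $\langle W_a,W_a'\rangle=\langle\hat V,J_{\hat Y}\hat V\rangle=0$. Substituting the two collapsed brackets and $F(e_S)=\tfrac{4}{\chi_{\infty}}$ produces the stated values $\tfrac12+\tfrac{1}{2\chi_{\infty}}\vert V\vert^2(1-s)^2$, $\tfrac12+\tfrac{1}{2\chi_{\infty}}\vert V\vert^2\vert Y\vert^2$ and $\tfrac{1}{2}\vert V\vert^2\tfrac{(1-s)\vert Y\vert}{\chi_{\infty}}$ at once.

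For the $\mathfrak v$--$\mathfrak z$ entries I would use the pairing $\langle Z,[W,X]\rangle=\langle J_ZW,X\rangle$ coming directly from the defining relation of $J_Z$. Writing $X:=fv-J_yv-2v=\tfrac{4}{\chi_{\infty}}(sV-J_YV)$ and using $J_{Z_a}W_a=J_{Z_a}^2\hat V=-\hat V$ and $J_{Z_a}W_a'=-J_{\hat Y}\hat V$, the relevant pairings become $\langle J_{Z_a}W_a,sV-J_YV\rangle=-s\vert V\vert$ and $\langle J_{Z_a}W_a',sV-J_YV\rangle=\vert Y\vert\vert V\vert$, the latter from $\langle J_{\hat Y}\hat V,J_YV\rangle=\vert Y\vert\vert V\vert$. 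Since the first term of (\ref{wz}) has already vanished and $-\tfrac{1}{2F}\cdot\tfrac{4}{\chi_{\infty}}=-\tfrac12$, these give $\nabla db(W_a,Z_a)=\tfrac12 s\vert V\vert$ and $\nabla db(W_a',Z_a)=-\tfrac12\vert V\vert\vert Y\vert$.

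Finally, $\langle y,Z_a\rangle=0$ reduces (\ref{zz-1}) to $\nabla db(Z_a,Z_a)=\tfrac1F(F-2f+2)=1-\tfrac{2f}{F}+\tfrac2F=1-(1-s)+\tfrac{\chi_{\infty}}{2}$, and substituting $\chi_{\infty}=(1-s)^2+\vert Y\vert^2$ yields $\tfrac12(1+s^2+\vert Y\vert^2)$. The computation is mechanical throughout; the only genuine care lies in the $\mathfrak v$--$\mathfrak z$ block, where $fv-J_yv-2v$ must be assembled correctly and $J_{Z_a}^2=-{\rm id}$ applied, and in the final $Z_a$--$Z_a$ reduction $s+\tfrac{\chi_{\infty}}{2}=\tfrac12(1+s^2+\vert Y\vert^2)$, which relies on substituting the expression for $\chi_{\infty}$ only at the very end. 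I expect no essential obstacle beyond this bookkeeping.
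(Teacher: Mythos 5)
Your proposal is correct and follows exactly the route the paper intends (but leaves implicit): specialize the component formulae of Lemma~\ref{hessiancomponents-x} at $e_S$ to the basis $\{W_a, W_a', Z_a\}$, using the auxiliary identities $f(e_S)=\tfrac{2(1-s)}{\chi_\infty}$, $F(e_S)=\tfrac{4}{\chi_\infty}$, $fv-J_yv=\tfrac{4}{\chi_\infty}V$, and the brackets of Lemma~\ref{ljyeperp}, which collapse since $KZ_a=0$ on $L_0$ (a fact the paper also records via $K^2Z=0\Leftrightarrow KZ=0$). All six entries check out, including the two subtler ones: the $\mathfrak{v}$--$\mathfrak{z}$ pairings via $J_{Z_a}^2=-\mathrm{id}$ and $fv-J_yv-2v=\tfrac{4}{\chi_\infty}(sV-J_YV)$, and the reduction $1-(1-s)+\tfrac{\chi_\infty}{2}=\tfrac12(1+s^2+|Y|^2)$.
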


Let $w W_a + w' W_a' + z Z_a\in {\mathfrak f}_a$($w,w',z\in {\Bbb R}$) be a non-zero vector. Then
 \begin{eqnarray*}
 & &
\nabla d b_{\gamma}(w W_a + w' W_a' + z Z_a, w W_a + w' W_a' + z Z_a)
\\ \nonumber
&=& \big\{\frac{1}{2} +\frac{1}{2\chi_{\infty}}\vert V\vert^2 (1- s)^2\big\} w^2 + \big\{\frac{1}{2} + \frac{1}{2\chi_{\infty}}\vert V\vert^2\vert Y\vert^2\big\} (w')^2 \\ \nonumber
&+& \frac{1}{2}(1+s^2+\vert Y\vert^2) z^2 + 2\hspace{0.5mm}\frac{1}{2}\vert V\vert^2 \frac{(1-s)\vert Y\vert}{\chi_{\infty}} w\, w' \\ \nonumber
&+& 2\hspace{0.5mm} \frac{1}{2}\ s\ \vert V\vert w\,z + 2\hspace{0.5mm} \frac{1}{2}\left(-\vert Y\vert\ \vert V\vert \right)w'\,z \\ \nonumber
&=& \frac{1}{2}(w^2+(w')^2) +   \frac{1}{2\chi_{\infty}} \vert V\vert^2 \big\{(1-s) w + \vert Y\vert w'\big\}^2
\\ \nonumber
&+& \frac{1}{2}(1+s^2+\vert Y\vert^2) z^2 + 2\, \frac{1}{2}\vert V\vert z\left(s w - \vert Y\vert w'\right)
 \end{eqnarray*}
 which is reduced to
 \begin{eqnarray*}
 & &\frac{1}{2}(w^2+(w')^2) +   \frac{1}{2\chi_{\infty}} \vert V\vert^2 \big\{(1-s) w + \vert Y\vert w'\big\}^2 \\ \nonumber
 &+& \frac{1}{2} z^2 + \frac{1}{2} s^2 z^2 + \frac{1}{2} \vert Y\vert^2 z^2 + 2\, \frac{1}{2}\vert V\vert w\hspace{0.5mm}(s z)  + 2\, \frac{1}{2}\vert V\vert w'\left(- \vert Y\vert z\right) \\ \nonumber
 &=& \frac{1}{2\chi_{\infty}} \vert V\vert^2 \big\{(1-s) w + \vert Y\vert w'\big\}^2 
 + \frac{1}{2}\left(w+ s\vert V\vert z\right)^2\\
 &+& \frac{1}{2}\left(w' - \vert Y\vert\vert V\vert z\right)^2 + \frac{1}{2}\big[1+(s^2 + \vert Y\vert^2)(1-\vert V\vert^2)\big]\ z^2.
 \end{eqnarray*}This is obviously positive, since $V$, $Y$ are non-zero vectors, so
we have the following.
\begin{proposition}\rm $\nabla d b_{\gamma}$ is positive definite over ${\mathfrak q}_0$.
\end{proposition}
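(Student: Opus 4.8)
The plan is to reduce positive-definiteness over ${\mathfrak q}_0$ to a computation on each three-dimensional summand ${\mathfrak f}_a={\rm Span}\,\{W_a=J_{Z_a}{\hat V},\,W_a'=J_{Z_a}J_{\hat Y}{\hat V},\,Z_a\}$, and then to display the restricted quadratic form as a sum of squares. First I would establish the block splitting $\nabla d b_{\gamma}\vert_{{\mathfrak q}_0}=\oplus_{a=1}^{r_0}\nabla d b_{\gamma}\vert_{{\mathfrak f}_a}$. Because $\mu_0=0$ forces $KZ=0$ on $L_0$, the perpendicularity $J_{L_0}{\hat V}\perp J_{L_0}J_{\hat Y}{\hat V}$ of (\ref{perpendicular}) holds, and Lemma \ref{ljyeperp} shows that the brackets $[v,J_Z{\hat V}]$, $[J_yv,J_Z{\hat V}]$, etc., lie in ${\rm Span}\,\{Z,KZ\}={\Bbb R}Z$ for $Z\in L_0$. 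Arguing exactly as in the proof of Lemma \ref{orthogonality}, the cross terms $\nabla d b_{\gamma}({\mathfrak f}_a,{\mathfrak f}_{a'})$ vanish for $a\not= a'$, so it suffices to prove $\nabla d b_{\gamma}\vert_{{\mathfrak f}_a}>0$ for a single $a$.

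Next I would compute the six components of $\nabla d b_{\gamma}$ on the basis $\{W_a,W_a',Z_a\}$ at $e_S$, feeding the auxiliary identities $F(e_S)=4/\chi_{\infty}$ of (\ref{ff}) together with $f(e_S)v-J_yv=\tfrac{4}{\chi_{\infty}}V$ and $4f-F=4(1-2s)/\chi_{\infty}$ of (\ref{auxiliary}) into the formulae (\ref{ww}), (\ref{wz}), (\ref{zz-1}) of Lemma \ref{hessiancomponents-x}, and evaluating the brackets $[v,W_a]$, $[v,W_a']$ through Lemma \ref{ljyeperp}. The outcome is precisely the component table recorded above, whose entries depend only on $\vert V\vert$, $\vert Y\vert$, $s$ and $\chi_{\infty}$.

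Finally, for a general vector $wW_a+w'W_a'+zZ_a$ I would substitute these components and complete the square, recasting the quadratic form as
\begin{eqnarray*}
\frac{\vert V\vert^2}{2\chi_{\infty}}\big((1-s)w+\vert Y\vert w'\big)^2 &+& \frac{1}{2}\big(w+s\vert V\vert z\big)^2 \\
&+& \frac{1}{2}\big(w'-\vert Y\vert\vert V\vert z\big)^2 + \frac{1}{2}\big[1+(s^2+\vert Y\vert^2)(1-\vert V\vert^2)\big]z^2.
\end{eqnarray*}
Every term is nonnegative, and the coefficient of $z^2$ is strictly positive because the unit-norm condition $\vert V\vert^2+\vert Y\vert^2+s^2=1$ gives $1-\vert V\vert^2=s^2+\vert Y\vert^2\geq 0$, whence $1+(s^2+\vert Y\vert^2)(1-\vert V\vert^2)\geq 1$. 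Reading off the vanishing locus, positivity of the last coefficient forces $z=0$, after which the second and third squares force $w=0$ and $w'=0$; thus the form is positive definite, proving the proposition.

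I expect the only genuine obstacle to be the second step, namely the Lie-algebra bookkeeping that reduces the abstract expressions $\langle f v-J_yv,\cdot\rangle$ and the various brackets to the clean scalar entries of the component table. Once that table is secured, the positivity is purely a completing-the-square exercise, and the hypotheses $V\not=0$, $Y\not=0$ enter only implicitly through $\chi_{\infty}>0$ and the unit-norm relation.
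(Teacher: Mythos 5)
Your proposal is correct and follows essentially the same route as the paper: the same orthogonal splitting of ${\mathfrak q}_0$ into the three-dimensional blocks ${\mathfrak f}_a$ justified via Lemma \ref{ljyeperp} (with $KZ=0$ on $L_0$), the same component table obtained from Lemma \ref{hessiancomponents-x} and the auxiliary identities at $e_S$, and the same completion of squares yielding the four-term sum whose last coefficient is bounded below by $\tfrac12$. Your explicit vanishing-locus argument ($z=0$, then $w=w'=0$) is merely a slightly more careful spelling-out of the paper's concluding remark that the sum of squares is "obviously positive."
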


\vspace{2mm}For the case\, $0< j < \ell$ with $-1 < \mu_j < 0$\,  define on the subspace ${\mathfrak q}_j$ an almost complex structure $\displaystyle{{\hat K} :=   \frac{1}{\sqrt{\vert \mu_j\vert}} K
}$, Hermitian with respect to the inner product $\langle \cdot,\cdot\rangle$ so that we take an orthonormal basis $\{Z_j\},\, j=1,\cdots, 2r_{j}$ such that
$Z_{2a}:={\hat K}Z_{2a-1}$\, $a=1,\cdots, r_j
$.

Decompose ${\mathfrak q}_j$ into
\begin{eqnarray}
{\mathfrak q}_j &=& \nonumber \oplus_{a=1}^{r_j}\ {\mathfrak b}_a,\hspace{2mm}\dim L_j = 2 r_j,\\ 
\label{nonorthog}
{\mathfrak b}_a &:=& {\rm Span}\, \{W_{2a-1}, W_{2a}, W'_{2a-1}, \, W'_{2a}, Z_{2a-1}, Z_{2a}\},
\end{eqnarray}where
$W_{2a-1}:=J_{Z_{2a-1}}{\hat V}, W_{2a}:=J_{Z_{2a}}{\hat V}, W'_{2a-1}:=J_{Z_{2a-1}}J_{\hat Y}{\hat V}, \, W'_{2a}:=J_{Z_{2a}}J_{\hat Y}{\hat V}$.
Similarly as in case $j=0$ with $\mu_0 = 0$, it is shown that $\nabla d b_{\gamma}\vert_{{\mathfrak q}_j} = \oplus_{a=1}^{r_j} \nabla d b_{\gamma}\vert_{{\mathfrak b}_a}$. Notice that  the basis of (\ref{nonorthog}) is not orthonormal, since  $\displaystyle{\langle W_{2a-1},W'_{2a}\rangle = - \sqrt{\vert \mu_j\vert}}$ from (\ref{notorthogonal}).


\begin{lemma}\rm On ${\mathfrak b}_1= {\rm Span}\,\{W_1, W_2, W_1', W_2', Z_1, Z_2\}$ the components of the Hessian $\nabla d b_{\gamma}$ are
shown below.
\begin{eqnarray*}\nabla d b(W_1, W_1)&=& \nabla d b(W_2, W_2) \\ \nonumber
&=&\frac{1}{2} + \frac{1}{2\chi_{\infty}} \vert V\vert^2 \big\{ (1-s)^2  - \vert Y\vert^2 \mu_i
\big\}, \\ \nonumber
&=& \frac{1}{2}(1+\vert V\vert^2)  - \frac{1}{2\chi_{\infty}} \vert V\vert^2\vert Y\vert^2(1+\mu_i),
\\ \nonumber
\nabla d b(W_1, W_2) &=& 0,
\\ \nonumber
\nabla d b(W_1', W_1') &=& \nabla d b(W_2', W_2')=\frac{1}{2} +\frac{1}{2\chi_{\infty}} \vert V\vert^2 \big\{ \vert Y\vert^2  - (1-s)^2 \mu_i
\big\} \\ \nonumber
&=&\frac{1}{2}(1+\vert V\vert^2)  -\frac{1}{2\chi_{\infty}} \vert V\vert^2(1-s)^2 (1+ \mu_i), \\ \nonumber
\nabla d b(W_1', W_2') &=& 0,
\\ \nonumber
\nabla d b(W_1,W_1') &=& \nabla d b(W_2,W_2') = \frac{1}{2\chi_{\infty}} \vert V\vert^2 (1-s)\vert Y\vert (1 + \mu_i), \\ \nonumber
\nabla d b(W_1,W_2') &=&
-\nabla d b(W_2,W_1') = \frac{1}{2}\langle W_1, W_2'\rangle  + \frac{1}{2} \vert V\vert^2 (-\sqrt{\vert\mu_i\vert})\\ \nonumber
&=& - \frac{1}{2}(1 + \vert V\vert^2)\sqrt{\vert\mu_i\vert}.
\end{eqnarray*}
\begin{eqnarray*}\nabla d b(W_1, Z_1) &=& \nabla d b(W_2, Z_2) =\frac{1}{2} s\  \vert V\vert,
\\ \nonumber
\nabla d b(W_1, Z_2) &=& - \nabla d b(W_2, Z_1)= \frac{1}{2} \vert Y\vert\vert V\vert \sqrt{\vert \mu_i\vert },
\\ \nonumber
\nabla d b(W_1', Z_1) &=&\nabla d b(W_2', Z_2) = - \frac{1}{2} \vert Y\vert\vert V\vert, \\ \nonumber
\nabla d b(W_1', Z_2) &=&-\nabla d b(W_2', Z_1)= \frac{1}{2} s\, \vert V\vert \sqrt{\vert \mu_i\vert }
, 
\\ \nonumber
\nabla d b(Z_i, Z_j) &=& \frac{1}{2} (1+s^2+ \vert Y\vert^2) \delta_{ij},\,  i,j=1,2.
\end{eqnarray*}
\end{lemma}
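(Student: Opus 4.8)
The plan is to read off every entry directly from the component formulae of Lemma \ref{hessiancomponents-x}, evaluated at $e_S$, where $a=1$, ${\mathcal V}=v$, ${\mathcal Y}=y$. I would use the auxiliary values $f = 2(1-s)/\chi_{\infty}$ and $F = 4/\chi_{\infty}$ (so that $1/(2F) = \chi_{\infty}/8$) together with the key identity $fv - J_yv = (4/\chi_{\infty})V$. The six basis vectors of ${\mathfrak b}_1$ split as $\{W_1, W_2, W_1', W_2'\}\subset{\mathfrak v}$ and $\{Z_1, Z_2\}\subset Y^{\perp}\subset{\mathfrak z}$, so the $W$--$W$ entries come from (\ref{ww}), the $W$--$Z$ entries from (\ref{wz}), and the $Z$--$Z$ entries from (\ref{zz-1}); I would treat these three blocks in turn.

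First I would assemble the inner products on ${\mathfrak b}_1$. From (\ref{algebrastr-a}), the skew-adjointness of each $J_Z$, and the identity $\langle J_{Z_1}{\hat V}, J_{Z'}J_{\hat Y}{\hat V}\rangle = \langle Z_1, KZ'\rangle$ (which follows from $\langle J_ZV, V_1\rangle = \langle Z, [V,V_1]\rangle$ and the definition of $K$), all six vectors are unit, $W_1\perp W_2$, $W_1'\perp W_2'$, $W_1\perp W_1'$, $W_2\perp W_2'$, and the only surviving off-diagonal products are $\langle W_1, W_2'\rangle = -\langle W_2, W_1'\rangle = -\sqrt{|\mu_j|}$, i.e.\ (\ref{notorthogonal}) of Remark \ref{notorthogonal}. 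Crucially, since $v$ and $fv - J_yv$ lie in ${\rm Span}\{V, J_YV\}$, all of which is orthogonal to $J_Z{\hat V}$ and $J_ZJ_{\hat Y}{\hat V}$ for $Z\in Y^{\perp}$, one gets $\langle v, \cdot\rangle = \langle fv - J_yv, \cdot\rangle = 0$ on the four $W$-vectors, and $\langle y, Z_i\rangle = 0$. This reduces (\ref{ww}) to $\frac{1}{2}\langle\cdot,\cdot\rangle + \frac{1}{2F}\langle[v,\cdot],[v,\cdot]\rangle$, kills the first term of (\ref{wz}), and collapses (\ref{zz-1}) to $\nabla d b(Z_i, Z_j) = \frac{F-2f+2}{F}\delta_{ij} = \frac{1}{2}(1+s^2+|Y|^2)\delta_{ij}$.

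The computational core is then the bracket inner products. For the $W$-block I would substitute the explicit brackets of Lemma \ref{ljyeperp}, namely $[v, J_Z{\hat V}] = \frac{2|V|}{\chi_{\infty}}\{(1-s)Z - |Y|KZ\}$ and $[v, J_ZJ_{\hat Y}{\hat V}] = \frac{2|V|}{\chi_{\infty}}\{|Y|Z + (1-s)KZ\}$, at $Z=Z_1, Z_2$, using $KZ_1 = \sqrt{|\mu_j|}Z_2$ and $KZ_2 = -\sqrt{|\mu_j|}Z_1$ (from $Z_2 = {\hat K}Z_1$ and $K^2 = \mu_j\,{\rm id}$ on $L_j$) together with $|KZ_i|^2 = -\mu_j$. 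Expanding in the orthonormal pair $\{Z_1, Z_2\}$ turns each $\langle[v,\cdot],[v,\cdot]\rangle$ into a combination of $(1-s)^2$, $|Y|^2$ and $\mu_j$, which, after multiplying by $\chi_{\infty}/8$ and adding the $\frac{1}{2}\langle\cdot,\cdot\rangle$ term, gives the stated $W$-entries; the two presentations of the diagonal entries are reconciled by $(1-s)^2 = \chi_{\infty} - |Y|^2$ and $1 - |\mu_j| = 1 + \mu_j$. For the $W$--$Z$ block I would rewrite $fv - J_yv - 2v = \frac{4}{\chi_{\infty}}(sV - J_YV)$ and pair it against $Z_i$ using the brackets $[J_Z{\hat V}, V] = -|V|Z$, $[J_Z{\hat V}, J_YV] = |V||Y|KZ$, $[J_ZJ_{\hat Y}{\hat V}, V] = -|V|KZ$, $[J_ZJ_{\hat Y}{\hat V}, J_YV] = -|V||Y|Z$, all read from the proof of Lemma \ref{ljyeperp}.

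The only real difficulty is bookkeeping: keeping signs consistent while tracking how $K$ rotates the pair $(Z_1, Z_2)$ and propagating $\mu_j = -|\mu_j|$ through every product. Once the vanishing relations of the first step are in place no individual entry is hard, but the number of entries and the parallel diagonal presentations make disciplined sign-tracking the main hazard; I expect no conceptual obstacle beyond this.
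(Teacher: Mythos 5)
Your proposal is correct and is exactly the computation the paper intends (the paper states this lemma without writing out details, but its method for the analogous cases ${\mathfrak s}_4^0$ and ${\mathfrak q}_0$ is precisely yours): evaluate formulae (\ref{ww}), (\ref{wz}), (\ref{zz-1}) of Lemma \ref{hessiancomponents-x} at $e_S$ with $f=2(1-s)/\chi_{\infty}$, $F=4/\chi_{\infty}$, $fv-J_yv=(4/\chi_{\infty})V$, kill the terms involving $\langle v,\cdot\rangle$, $\langle fv-J_yv,\cdot\rangle$, $\langle y,Z_i\rangle$ by the orthogonality of ${\rm Span}\{V,J_YV\}$ and ${\Bbb R}Y$ to ${\mathfrak b}_1$, and feed in the brackets of Lemma \ref{ljyeperp} with $KZ_1=\sqrt{\vert\mu_j\vert}\,Z_2$, $KZ_2=-\sqrt{\vert\mu_j\vert}\,Z_1$. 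I verified your individual identities (including $fv-J_yv-2v=\frac{4}{\chi_{\infty}}(sV-J_YV)$, the sign pattern of the bracket formulas, and $\langle W_1,W_2'\rangle=-\langle W_2,W_1'\rangle=-\sqrt{\vert\mu_j\vert}$) and they reproduce every stated entry.
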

\begin{note}\rm
From this lemma
the Hessian $\nabla d b_{\gamma}\vert_{{\mathfrak q}_j}$ is Hermitian with respect to ${\hat K}$.
\end{note}

Now we show the positive definiteness of $\nabla d b_{\gamma}$ on ${\mathfrak b}_1$.
Let $W = u_1 W_1 + u_2 W_2 + v_1 W_1' + v_2 W_2' + w_1 Z_1 + w_2 Z_2 \in {\mathfrak b}_1$ be a non zero vector. Then, $\nabla d b_{\gamma}(W,W)$ is written in terms of a quadratic form as
\begin{eqnarray*}\nabla d b_{\gamma}(W,W)
&=&a (u_1^2 +  u_2^2) + b(v_1^2+v_2^2) + t(w_1^2+w_2^2)\\ \nonumber
&+& 2 c(u_1v_1+u_2v_2) + 2 d(u_1v_2-u_2v_1)\\ \nonumber
&+&2 f(u_1w_1+u_2w_2) + 2 g(u_1w_2-u_2w_1)\\ \nonumber
&+&2 h(v_1w_1+v_2w_2) + 2 \ell (v_1w_2-v_2w_1),
 \end{eqnarray*} where
 \begin{eqnarray*}
 a &=& \frac{1}{2}+ \frac{1}{2\chi_{\infty}}\vert V\vert^2\{(1-s)^2 + \vert Y\vert^2\cos^2\theta_i\}, \\ \nonumber
 b &=& \frac{1}{2}+ \frac{1}{2\chi_{\infty}}\vert V\vert^2\{\vert Y\vert^2 + (1-s)^2\cos^2\theta_i\}, \\ \nonumber
 c&=& \frac{1}{2\chi_{\infty}}\vert V\vert^2(1-s)\vert Y\vert \sin^2\theta_i, \, d= - \frac{1}{2}(1+\vert V\vert^2) \cos \theta_i, \\ \nonumber
  f &=& \frac{1}{2} s \vert V\vert,\hspace{2mm}  g= \frac{1}{2} \vert Y\vert\vert V\vert \cos \theta_i,\\ \nonumber
 h&=& - \frac{1}{2} \vert Y\vert\vert V\vert, \hspace{2mm}\ell = \frac{1}{2} s \vert V\vert \cos \theta_i , \\ \nonumber
 t&=& \frac{1}{2}(1+s^2+\vert Y\vert^2).
 \end{eqnarray*}Set, for convenience, $\sqrt{\vert \mu_i\vert} = \cos \theta_i$, $0<\theta_i<\frac{\pi}{2}$, so $\mu_i = - \cos^2\theta_i$. Then, the above quadratic form can be described by the Hermitian form in terms of a Hermitian matrix $H$;
 \begin{eqnarray*}
 \nabla d b(W,W) &=& ({\overline u},{\overline v},{\overline w})H\hspace{1mm}^t(u,v,w), \, {\rm where}\\ \nonumber
 H&=&\left(\begin{array}{ccc}
 a&c-id&f-ig\\
 c+id&b&h-i\ell\\
 f+ig&h+i\ell&t
 \end{array}
 \right),
  \end{eqnarray*}$u= u_1+iu_2, v=v_1+iv_2,w=w_1+iw_2 \in {\Bbb C}$. In order to assert the positive definiteness of the Hessian it suffices to show $\det H^{(2)} =\displaystyle{\left\vert\begin{array}{cc}a&c -id\\ c+id& b\end{array}\right\vert > 0}$ and  $\det H^{(3)} = \det H>0$, since $a = \det H^{(1)} > 0$.
  \begin{lemma}\label{positivehermitematrix}\rm
  \begin{eqnarray*}
  \det H^{(2)}&=& \frac{1}{4}(1+\vert V\vert^2)\sin^2\theta_i,\\ \nonumber
  \det H^{(3)} &=& \frac{1}{4}\sin^2\theta_i - \frac{\vert V\vert^4\vert Y\vert^2}{8\chi_{\infty}} \sin^4\theta_i
  > \frac{1}{8\chi_{\infty}}\sin^4 \theta_i\left(2 \chi_{\infty}-\vert V\vert^4\vert Y\vert^2\right).
  \end{eqnarray*}
 \end{lemma}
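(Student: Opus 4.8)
The plan is to verify the two determinant identities by direct substitution of the nine entries of $H$ recorded just above, and then to extract the stated strict inequality from the exact value of $\det H^{(3)}$. To keep the algebra manageable I would abbreviate $P:=(1-s)^2$, $Q:=|Y|^2$ and $\kappa:=\cos^2\theta_i=|\mu_i|$, so that $\chi_{\infty}=P+Q$, $(1-s)|Y|=\sqrt{PQ}$, $\sin^2\theta_i=1-\kappa$, and set $\alpha:=|V|^2/(2(P+Q))$; then $a=\tfrac12+\alpha(P+Q\kappa)$, $b=\tfrac12+\alpha(Q+P\kappa)$, $c=\alpha\sqrt{PQ}\,(1-\kappa)$ and $d=-\tfrac12(1+|V|^2)\sqrt{\kappa}$.

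For the first minor I would expand $\det H^{(2)}=ab-(c^2+d^2)$. The point is that the two cross terms in $ab$ combine as $\tfrac{\alpha}{2}(P+Q)(1+\kappa)=\tfrac{|V|^2}{4}(1+\kappa)$, while in $ab-c^2$ the quartic part becomes $\alpha^2\big[(P+Q\kappa)(Q+P\kappa)-PQ(1-\kappa)^2\big]=\alpha^2\kappa(P+Q)^2=\tfrac{|V|^4}{4}\kappa$, the decisive cancellation being that the bracket collapses to $\kappa(P+Q)^2$ and so removes the $\chi_{\infty}^2$ denominator. Hence $ab-c^2=\tfrac14+\tfrac{|V|^2}{4}(1+\kappa)+\tfrac{|V|^4}{4}\kappa$, and subtracting $d^2=\tfrac14(1+|V|^2)^2\kappa$ yields $\det H^{(2)}=\tfrac14(1+|V|^2)(1-\kappa)=\tfrac14(1+|V|^2)\sin^2\theta_i$. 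Note that no use of the unit-norm relation is needed here.

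For the $3\times3$ determinant I would use that a Hermitian determinant is real and expand
\begin{eqnarray*}
\det H = abt - a(h^2+\ell^2) - b(f^2+g^2) - t(c^2+d^2) + 2\big(cfh - df\ell + cg\ell + dgh\big),
\end{eqnarray*}
substitute the remaining entries $f=\tfrac12 s|V|$, $g=\tfrac12|Y||V|\sqrt{\kappa}$, $h=-\tfrac12|Y||V|$, $\ell=\tfrac12 s|V|\sqrt{\kappa}$ and $t=\tfrac12(1+s^2+|Y|^2)$, and then invoke the normalization $|V|^2+|Y|^2+s^2=1$ to eliminate $s^2$ in favour of $1-|V|^2-|Y|^2$. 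After the same kind of $(P+Q)^2$-cancellation collapses the $\chi_{\infty}^2$ denominators to a single power of $\chi_{\infty}$, the expression should reduce to $\det H^{(3)}=\tfrac14\sin^2\theta_i-\tfrac{|V|^4|Y|^2}{8\chi_{\infty}}\sin^4\theta_i$.

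Finally the inequality is purely formal once the exact value is in hand: writing
\begin{eqnarray*}
\det H^{(3)} = \frac{1}{8\chi_{\infty}}\sin^4\theta_i\big(2\chi_{\infty}-|V|^4|Y|^2\big) + \frac14\sin^2\theta_i\cos^2\theta_i,
\end{eqnarray*}
the second summand is strictly positive for $0<\theta_i<\tfrac{\pi}{2}$, which gives $\det H^{(3)}>\tfrac{1}{8\chi_{\infty}}\sin^4\theta_i(2\chi_{\infty}-|V|^4|Y|^2)$. \textbf{The main obstacle} is the $3\times3$ expansion: it is a long but mechanical substitution, and the only real subtlety is to carry out the two reductions consistently — the cancellation of the $\chi_{\infty}^2$ denominators into a single power of $\chi_{\infty}$, and the use of $|V|^2+|Y|^2+s^2=1$ to collapse $s^2$ and $|Y|^2$ — since a single algebraic slip would spoil the clean closed form of the answer.
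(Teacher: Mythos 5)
Your proposal is correct and follows essentially the same route as the paper: both compute $\det H^{(2)}=ab-|c+id|^2$ directly, expand $\det H^{(3)}=\det H^{(2)}t-a|h+i\ell|^2-b|f+ig|^2+2\,\mathrm{Re}\,(c+id)(h+i\ell)(f-ig)$ (your $abt-a(h^2+\ell^2)-b(f^2+g^2)-t(c^2+d^2)+2(cfh-df\ell+cg\ell+dgh)$ is the same identity), simplify using $|V|^2+|Y|^2+s^2=1$, and obtain the inequality from the exact value by the trivial observation that $\tfrac14\sin^2\theta_i\cos^2\theta_i>0$ for $0<\theta_i<\tfrac{\pi}{2}$. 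Your intermediate claims (the collapse of $(P+Q\kappa)(Q+P\kappa)-PQ(1-\kappa)^2$ to $\kappa(P+Q)^2$, and the closed form $\det H^{(2)}=\tfrac14(1+|V|^2)(1-\kappa)$ without the unit-norm relation) are accurate, and the remaining mechanical expansion does close as you predict.
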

 \begin{proof}
 \rm
 One sees
 \begin{eqnarray*}\det H^{(2)} =
 ab - \vert c +id\vert^2
= \frac{1}{4}(1+\vert V\vert^2)\sin^2\theta_i.
  \end{eqnarray*}
 For $\det H^{(3)}$ one has
 \begin{eqnarray*}
 \det H^{(3)} = \det H^{(2)} t - a \vert h+i\ell\vert^2 - b\vert f+ig\vert^2
 + 2 Re(c+id)(h+i\ell)(f-ig).
 \end{eqnarray*}By straight computations one obtains
 \begin{eqnarray*}
 & &a \vert h+i\ell\vert^2 + b\vert f+ig\vert^2 \\ \nonumber
 &=& \frac{\vert V\vert^2}{8}(s^2+\vert Y\vert^2)(1+\cos^2\theta_i)   \\ \nonumber
 &+& \frac{\vert V\vert^4}{8\chi_{\infty}}\big\{\vert Y\vert^2\big(s^2+(1-s)^2\big)(1+\cos^4\theta_i)\\ \nonumber
 &+& 2\big(\vert Y\vert^4+s^2(1-s)^2\big)\cos^2\theta_i\big\}
 \end{eqnarray*}and

 \begin{eqnarray*}
 Re (c+id)(f-ig)(h+i\ell)
 &=& \frac{\vert V\vert^2}{4}\big\{-\frac{\vert V\vert^2}{2\chi_{\infty}}
 s(1-s)\vert Y\vert^2 \sin^4\theta_i \\ \nonumber
 &+& \frac{1}{2}(1+\vert V\vert^2)(s^2+\vert Y\vert^2)\cos^2\theta_i\big\}.
 \end{eqnarray*}Thus, one has
 \begin{eqnarray*}
 \det H^{(3)}&=&\frac{1}{8}(1+\vert V\vert^2)\sin^2\theta_i\, (1+s^2+\vert Y\vert^2) - \frac{\vert V\vert^2}{8}(s^2+\vert Y\vert^2)(1+\cos^2\theta_i) \\ \nonumber
 &-& \frac{\vert V\vert^4}{8\chi_{\infty}}\big\{\vert Y\vert^2\big(s^2+(1-s)^2\big)(1+\cos^4\theta_i)
 + 2\big(\vert Y\vert^4+s^2(1-s)^2\big)\cos^2\theta_i\big\}\\ \nonumber
 &+& 2 \frac{\vert V\vert^2}{4}\big\{-\frac{\vert V\vert^2}{2\chi_{\infty}}
 s(1-s)\vert Y\vert^2 \sin^4\theta_i 
 + \frac{1}{2}(1+\vert V\vert^2)(s^2+\vert Y\vert^2)\cos^2\theta_i\big\}.
 \end{eqnarray*}

 Then,  a slight computation gives us 
\begin{eqnarray*}
\det H^{(3)}
&= &\left(\frac{1}{4}- \frac{1}{4}\cos^2\theta_i - \frac{\vert V\vert^2}{4}(s^2+\vert Y\vert^2)\cos^2\theta_i\right)\\ \nonumber
&+&\left(-\frac{\vert V\vert^4}{8\chi_{\infty}}\vert Y\vert^2(1+\cos^4\theta_i) + \frac{\vert V\vert^4}{4\chi_{\infty}}\vert Y\vert^2\cos^2\theta_i  \right)\\ \nonumber
&+& \frac{\vert V\vert^2}{4}(s^2+\vert Y\vert^2)\cos^2\theta_i \\ \nonumber
&=& \frac{1}{4}- \frac{1}{4}\cos^2\theta_i - \frac{\vert V\vert^4\vert Y\vert^2}{8\chi_{\infty}}(1-\cos^2\theta_i)^2\\ \nonumber
&=& \frac{1}{4}\sin^2\theta_i - \frac{\vert V\vert^4\vert Y\vert^2}{8\chi_{\infty}}\sin^4\theta_i.
\end{eqnarray*}

 \end{proof}

  Summarizing the arguments from subsection 4.5 we can conclude that the Hessian of the Busemann function associated to $\gamma$ is positive definite, provided $V\not=0, Y\not=0$ for $\gamma'(0)=V+Y+sA $.

\vspace{2mm}
     We will give an argument for $\nabla d b_{\gamma} > 0$ in the following non-generic cases; (i) $V\not=0$, $Y=0$, (ii) $V=0$, $Y\not=0$ and (iii) $V = 0$, $Y=0$, i.e., $s = -1$.

         \vspace{2mm}
     Case: $V\not=0$, $Y=0$; Let $\gamma$ be the geodesic of $\gamma(0) = e_s$, $\gamma'(0)=V+sA = \vert V\vert{\hat V}+ sA$, $\vert V\vert^2+s^2=1$.
     Let $W_1:= \vert V\vert{\hat V}+ sA$ and $W_2:= s{\hat V}- \vert V\vert A$. Then, one has $\displaystyle{{\mathfrak s} = {\mathfrak s}_2\oplus {\mathfrak p}\oplus {\mathfrak q}\oplus {\mathfrak z}     }$, $\displaystyle{{\mathfrak s}_2 := {\rm Span}\,\{W_1^{\mathfrak s},W_2^{\mathfrak s}\} }$, ${\mathfrak p} := \{U\in {\mathfrak v}\, \vert\, [V,U]=0, \langle U,V\rangle = 0\}$, ${\mathfrak q}:= J_{\mathfrak z}{\hat V}$ so $\gamma'(0)^{\perp} = {\Bbb R} W_2^{\mathfrak s}\oplus {\mathfrak p} \oplus {\mathfrak q}\oplus {\mathfrak z}$.

     It is shown by a simple computation that any vector of
    $ {\Bbb R} W_2^{\mathfrak s}\oplus {\mathfrak p} $ is an eigenvector corresponding to eigenvalue $\frac{1}{2}$.

    The Hessian $\nabla d b_{\gamma}$ is positive definite over ${\mathfrak q}\oplus{\mathfrak z}$. In fact, ${\mathfrak q}\oplus{\mathfrak z}$ splits into ${\mathfrak q}\oplus{\mathfrak z} = \oplus_{a=1}^k {\mathfrak b}_a$, ${\mathfrak b}_a := {\rm Span}\, \{J_{Z_a}{\hat V}, Z_a\}$ and fulfills that $\nabla d b_{\gamma}({\mathfrak b}_a,{\mathfrak b}_b) = 0$ for any distinct $a, b$ and $\nabla d b_{\gamma}\vert_{{\mathfrak b}_a} > 0$, $a=1,\dots, k$. Thus, the Hessian $\nabla d b_{\gamma}$ is positive definite over $\gamma'(0)^{\perp}$.

    \vspace{2mm}
    Case: $V=0$, $Y\not=0$;  Let $\gamma$ be the geodesic of $\gamma(0) = e_S$, $\gamma'(0) = Y + sA =\vert Y\vert {\hat Y}+ sA$, $\vert Y\vert^2+s^2=1$. It is concluded that
   on the space $\gamma(0)^{\perp} = {\mathfrak v}\oplus {\Bbb R}(s{\hat Y}- \vert Y\vert A)\oplus Y^{\perp}$ the Hessian $\nabla d b_{\gamma}$ is positive definite. Moreover, ${\mathfrak v}$ is the eigenspace corresponding to eigenvalue $\displaystyle{\frac{1}{2}}$ and $Y^{\perp}$ is the eigenspace corresponding to eigenvalue $1$.

    \vspace{2mm}Case: $V =0$, $Y=0$, i.e., $s = -1$;  In this case one can apply the formulae of Lemma \ref{componenthessian} to obtain that the Hessian $\nabla d b_{\gamma}$ is positive definite over $(-A)^{\perp} = {\mathfrak v}\oplus{\mathfrak z}$, since at $e_S$ ${\mathcal V}={\mathcal Y} = 0$.  The space ${\mathfrak v}$ is the eigenspace corresponding to eigenvalue $\displaystyle{\frac{1}{2}}$ and ${\mathfrak z}$ is the eigenspace corresponding to eigenvalue $1$.

    Therefore, from the above arguments Theorem \ref{hessianatidentity} is completely verified.

\subsection{Spectral properties of $\nabla d b_{\gamma}$}

\vspace{2mm}As a by-product of Theorem \ref{hessianatidentity} 
we can
compare the eigenspaces of $(\nabla d b_{\gamma})_{e_S}$ corresponding to the eigenvalues with the eigenspaces of $R_{\gamma'(0)}$ corresponding to the eigenvalues. Let ${\mathfrak f}:= {\mathfrak s}_4^0\oplus {\mathfrak p}\oplus {\mathfrak t}_1  \oplus (Y^{\perp})_J$ be the subspace of $\gamma'(0)^{\perp}$. Then, we can assert the commutativity of ${\mathcal S}(t)\vert_{\mathfrak f}$ with $R_{\gamma'(t)}\vert_{\mathfrak f}$ at $t=0$. Moreover we can show that the commutativity is valid for all $t$ as follows.
 \begin{theorem}\label{eigenproblemcommute}\rm
 Let $V+Y+sA\in {\mathfrak s}$ be a unit vector. Assume $V\not=0$, $Y\not=0$, a generic assumption on $V$, $Y$. Let ${\mathfrak s} = {\mathfrak s}_4\oplus {\mathfrak p}\oplus {\mathfrak t}_1 \oplus {\mathfrak t}_2 \oplus {\mathfrak t}_3 \oplus (Y^{\perp})_J\oplus (Y^{\perp})_J^{\perp}
$ be an admissible decomposition of ${\mathfrak s}$ associated with $V+Y+sA$.

Then,
 (i)\hspace{2mm}on the subspace ${\mathfrak f}$ 
  the shape operator ${\mathcal S}(t)$ of the horosphere ${\mathcal H}_{(\theta, \gamma(t))}$
 commutes with $R_{\gamma'(t)}$ ($\gamma$ is a geodesic of $\gamma(0) = e_S$, $\gamma'(0)=V+Y+sA$, $[\gamma]=\theta$) so that ${\mathcal S}(t)\vert_{\mathfrak f}$ shares common eigenspaces with $R_{\gamma'(t)}\vert_{\mathfrak f}$ and
  (ii)\hspace{2mm}each eigenvalue of ${\mathcal S}(t)\vert_{\mathfrak f}$ is negative constant and moreover
 (iii)\hspace{2mm}if $\lambda <0$ is an eigenvalue of  ${\mathcal S}(t)\vert_{\mathfrak f}$, then $- \lambda^2$ is an eigenvalue of $R_{\gamma'(t)}\vert_{\mathfrak f}$  and vice versa.
\end{theorem}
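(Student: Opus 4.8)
The plan is to reduce the statement at an arbitrary $t$ to the already-established situation at $e_S$ by homogeneity, and then to read off the eigenvalues from the explicit computations of this section. First I would fix $t$ and set $\phi := L_{\gamma(t)^{-1}}$, an isometry of $S$ carrying $\gamma(t)$ to $e_S$. Writing $\bar V + \bar Y + \bar s A := \phi_{\ast}\gamma'(t)$, this is again a unit vector of $\mathfrak{s}$ with $\vert\bar V\vert^2 = \vert V\vert^2 h(t)$ and $\vert\bar Y\vert^2 = \vert Y\vert^2 h(t)$ by Proposition \ref{geodesic}; in particular $\bar V \neq 0$ and $\bar Y \neq 0$ for every $t$, since $h(t)>0$, so we remain in the generic Case (ii) throughout. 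Because $\phi$ is an isometry, it intertwines the shape operator of the horosphere $\mathcal{H}_{(\theta,\gamma(t))}$ with that of $\mathcal{H}_{(\phi\theta,e_S)}$ and the Jacobi operator $R_{\gamma'(t)}$ with $R_{\bar V+\bar Y+\bar sA}$, the curvature analogue of Proposition \ref{equivariance}. Consequently the commutativity $[\mathcal{S}(t),R_{\gamma'(t)}]=0$ on the transported copy of $\mathfrak{f}$ holds if and only if $[\mathcal{S}(0),R_{\bar V+\bar Y+\bar sA}]=0$ on the corresponding $\mathfrak{f}$ at $e_S$, and the eigenvalue relation of (iii) transports likewise. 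Thus it suffices to prove (i) and (iii) at $e_S$ for an \emph{arbitrary} unit velocity, and to check that the eigenvalues there do not depend on that velocity.

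For the base point $e_S$ I would assemble the eigendata block by block along $\mathfrak{f} = \mathfrak{s}_4^0 \oplus \mathfrak{p} \oplus \mathfrak{t}_1 \oplus (Y^{\perp})_J$, using Lemma \ref{orthogonality} to work in each block separately. On $\mathfrak{s}_4^0$, Lemma \ref{eigen-1} gives $W_1^{\mathfrak{s}_4}$ as an eigenvector of $\nabla d b_\gamma$ with eigenvalue $1$ and $W_2^{\mathfrak{s}_4},W_3^{\mathfrak{s}_4}$ with eigenvalue $\tfrac12$; on $\mathfrak{p}$ the Hessian equals $\tfrac12\langle\cdot,\cdot\rangle$; and on $\mathfrak{t}_1\oplus(Y^{\perp})_J$, Lemma \ref{eigenvalue} gives eigenvalues $1$ and $\tfrac12$ with explicit eigenvectors $U_2$ and $U_1$. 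Since $\mathcal{S}=-\mathcal{T}$ is minus the endomorphism induced by the Hessian, the eigenvalues of $\mathcal{S}(0)\vert_{\mathfrak f}$ are exactly $-1$ and $-\tfrac12$. I would then compare with the Jacobi operator: by Proposition \ref{zeroeigenvector}, the Note following Proposition \ref{s4}, and the Remark following Lemma \ref{eigenvalue}, $R_{V+Y+sA}$ has, on the very same vectors, eigenvalue $-1$ (on $W_1^{\mathfrak{s}_4}$ and on the $U_2$-family) and eigenvalue $-\tfrac14$ (on $W_2^{\mathfrak{s}_4},W_3^{\mathfrak{s}_4}$, on $\mathfrak{p}$, and on the $U_1$-family). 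Hence $\mathcal{S}(0)$ and $R_{V+Y+sA}$ share a common eigenbasis on $\mathfrak{f}$; in particular they commute, which is (i) at $e_S$, and the pairing $-1=-(-1)^2$, $-\tfrac14=-(-\tfrac12)^2$ is precisely the relation $\kappa=-\lambda^2$ of (iii).

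The constancy statement (ii) is where the special choice of $\mathfrak{f}$ enters decisively, and I would obtain it by verifying that neither spectrum depends on $\vert V\vert,\vert Y\vert,s$. For $\mathcal{S}$ this is already visible in Lemma \ref{eigenvalue}: the $4\times4$ block there satisfies $a+b=\tfrac32$ and $ab-c^2-d^2=\tfrac12$ whenever $\vert V\vert^2+\vert Y\vert^2+s^2=1$, so its eigenvalues are forced to be $1$ and $\tfrac12$ irrespective of the magnitudes, and the same unit-vector identity pins the $\mathfrak{s}_4^0$- and $\mathfrak{p}$-eigenvalues. For $R$ the point is that $\mathfrak{f}$ deliberately omits $\mathfrak{t}_2\oplus\mathfrak{t}_3\oplus(Y^{\perp})_J^{\perp}=\oplus_{j<\ell}\mathfrak{q}_j$, on which the roots of (\ref{equation}) genuinely move with $\vert V\vert,\vert Y\vert$; on the retained block $(Y^{\perp})_J=L_\ell$ one has $\mu_\ell=-1$, so the right-hand side of (\ref{equation}) vanishes identically and the eigenvalues are the fixed roots $-1,-\tfrac14$, while on $\mathfrak{s}_4$ and $\mathfrak{p}$ they are always $-1$ and $-\tfrac14$. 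Feeding $\bar V+\bar Y+\bar sA$ into the base-point computation then shows that the eigenvalues of $\mathcal{S}(t)\vert_{\mathfrak f}$ are the same negative constants $-1,-\tfrac12$ for every $t$, which is (ii). As an independent cross-check one may note that on each common eigenline the Riccati equation (\ref{riccati-equation}) collapses to the scalar equation $\lambda'+\lambda^2+\kappa=0$ with $\kappa\in\{-1,-\tfrac14\}$ constant, whose stable solution (the one built from the bounded Jacobi tensor) is precisely the constant $\lambda=-\sqrt{-\kappa}\in\{-1,-\tfrac12\}$.

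I expect the main obstacle to be the coherence of $\mathfrak{f}$ as a bundle along $\gamma$: one must ensure that the equivariant transport used in the reduction really is the subspace on which \emph{both} operators act invariantly, and in particular that the matching of \emph{eigenspaces} on the $(Y^{\perp})_J$ block, and not merely of eigenvalue sets, survives for all velocities. This delicate point rests on the identity $\mu_\ell=-1$ together with the explicit vectors $U_1,U_2$ being simultaneous eigenvectors of $\mathcal{S}(0)$ and $R_{V+Y+sA}$; once that is secured for an arbitrary unit velocity, homogeneity propagates the entire picture along $\gamma$ with no further computation, yielding (i), (ii) and (iii) at once.
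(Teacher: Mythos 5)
Your proposal is correct and follows essentially the same route as the paper: reduce to the identity $e_S$ by left translation, note via Proposition \ref{geodesic} that $\vert V(t)\vert^2=\vert V\vert^2h(t)$, $\vert Y(t)\vert^2=\vert Y\vert^2h(t)$ keep the velocity generic, and then read off the common eigenbasis and the pairing $1\leftrightarrow-1$, $\tfrac12\leftrightarrow-\tfrac14$ from the explicit spectral computations of Section 4 (Lemma \ref{eigen-1}, Lemma \ref{eigenvalue} and its Remark), so that commutativity, constancy, and $\kappa=-\lambda^2$ all follow at once. The one point you flag as the ``main obstacle'' --- that the admissible decomposition built from $V,Y$ is still the admissible one for $V(t),Y(t)$, which the paper settles via $K_{V(t),Y(t)}=K_{V,Y}$ and defers in detail to \cite{IKPS-2} --- is exactly the same gap the paper itself leaves open, so your treatment is on par with the paper's own proof.
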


In fact, from (\ref{geodesicdecompo}) the vectors $V,\, Y$ for $\gamma'(0)=V+Y+sA$ are generic if and only if $V(t)$,$Y(t)$ for $\gamma'(t)=V(t)+Y(t)+A(t)$ are generic and we have moreover $K_{V(t),Y(t)}= K_{V,Y}$ as shown in 4.3,\hspace{0.5mm}\cite{BTV}.
It can be checked further that the admissible decomposition of ${\mathfrak s}$ at $t=0$ gives also admissible one at any $t$. For the detailed argument for all $t$ refer to \cite{IKPS-2}.

 \begin{remark}\rm
 The eigenvalues and eigenspaces of $R_{V+Y+sA}$ are given completely in \cite{BTV}. On the subspace ${\mathfrak f}$ one has

 (1)\hspace{2mm}the eigenvalues and eigenspaces of $R_{V+Y+sA}\vert_{{\mathfrak s}_4^0}$ are
 \begin{eqnarray*}
  0, & &\hspace{2mm}{\Bbb R}(V+Y+sA),\\ \nonumber
  -\frac{1}{4}, & &\hspace{2mm}{\Bbb R}(sV- J_YV -\vert V\vert^2)A)\oplus {\Bbb R}(\vert Y\vert^2 V+ s J_YV - \vert V\vert^2 Y),\\ \nonumber
  -1, & &\hspace{2mm}{\Bbb R}(J_YV + s Y - \vert Y\vert^2 A).
   \end{eqnarray*}

    (2)\hspace{2mm}$R_{V+Y+sA}\vert_{{\mathfrak p}}$ has a single eigenvalue, that is $- \frac{1}{4}$, provided ${\mathfrak p} \not= \{0\}$.

   (3)\hspace{2mm} If ${\mathfrak q} \not= \{0\}$ and further $(Y^{\perp})_J  \not= \{0\}$, the eigenvalues and eigenspaces of $R_{V+Y+sA}\vert_{{\mathfrak t}_1\oplus (Y^{\perp})_J}$ are
      \begin{eqnarray*}
   - \frac{1}{4},& &\hspace{2mm}\vert V\vert^2 X + J_X(J_Y V - s V),\hspace{4mm} X\in (Y^{\perp})_J, \\ \nonumber
      - 1, & &\hspace{2mm}(\vert V\vert^2-1) X + J_X(J_YV - sV),\hspace{4mm} X \in (Y^{\perp})_J.
   \end{eqnarray*}
\end{remark}

  \section{Appendix I; another proof of Theorem   \ref{damekricci-2}}

  \begin{proposition}\rm
\label{rank}\rm
Let $S$ be a Damek-Ricci space. Let $\gamma$ be an arbitray geodesic of $S$. Then, there exists no non-trivial perpendicular, parallel Jacobi vector field 
 along $\gamma$.
\end{proposition}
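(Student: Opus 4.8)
The plan is to recast the statement geometrically: a non-trivial perpendicular parallel Jacobi field produces a \emph{flat plane section field} along $\gamma$, and then to show that a Damek-Ricci space admits no such field. Suppose, for contradiction, that $u(t)$ is a non-trivial parallel Jacobi field along $\gamma$ with $u(t)\perp\gamma'(t)$. Being parallel, $u''(t)=0$, so the Jacobi equation reduces to $R(u(t),\gamma'(t))\gamma'(t)=0$; that is, $u(t)\in\mathrm{Ker}\,R_{\gamma'(t)}$ for every $t$. Since $S$ has non-positive sectional curvature, $R_{\gamma'(t)}$ is negative semi-definite, and hence $u(t)\in\mathrm{Ker}\,R_{\gamma'(t)}$ is equivalent to the vanishing of the sectional curvature of the plane $\sigma(t):=\mathrm{Span}\{\gamma'(t),u(t)\}$. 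As both $\gamma'$ and $u$ are parallel along $\gamma$, the assignment $t\mapsto\sigma(t)$ is a parallel field of flat $2$-planes, a flat plane section field containing the geodesic direction. So it suffices to prove that no such field exists along any geodesic of $S$.

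First I would decompose $\gamma'(t)=V(t)+Y(t)+A(t)$ as in (\ref{geodesicdecompo}) and apply the structural description of $\mathrm{Ker}\,R_{\gamma'(t)}$. At each fixed $t$ the unit vector $\gamma'(t)$ plays the role of $V+Y+sA$ in Proposition \ref{zeroeigenvector}, and $u(t)$ is a kernel vector not proportional to $\gamma'(t)$. By Proposition \ref{notparallel} this forces $|V(t)|^2=\tfrac23$ and $|Y(t)|^2=\tfrac13$, whence the $\mathfrak a$-component of $\gamma'(t)$ vanishes and $|V(t)|^2+|Y(t)|^2=1$. Because $\sigma(t)$ is flat for \emph{every} $t$, these identities must hold simultaneously for all $t$; the non-generic configurations $V(t)=0$ or $Y(t)=0$ are excluded at the outset, since there $\mathrm{Ker}\,R_{\gamma'(t)}=\mathbb{R}\,\gamma'(t)$ leaves no room for a flat direction transverse to $\gamma'(t)$.

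Finally I would reach a contradiction from the geodesic flow. By Proposition \ref{geodesic}, $|V(t)|^2=|V|^2\,h(t)$ and $|Y(t)|^2=|Y|^2\,h(t)$ with $h(t)=(1-\theta^2(t))/\chi(t)$ and $\theta(t)=\tanh\frac t2$. The constancy $|V(t)|^2\equiv\tfrac23$ forces $h(t)$ to be constant in $t$; but with the $\mathfrak a$-component $s=0$ one has $h(t)=(1-\theta^2(t))/(1+|Y|^2\theta^2(t))$, a strictly decreasing function of $\theta^2(t)$, so $h$ is constant precisely when $\theta(t)$ is constant, i.e. only at $t=0$. This contradiction shows that a flat plane section field cannot exist, and therefore no non-trivial perpendicular parallel Jacobi field exists, which is the assertion of Proposition \ref{rank}.

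The main obstacle, and the step I would handle most carefully, is the uniform-in-$t$ use of the algebraic characterization: one must verify that a flat direction transverse to $\gamma'(t)$ really forces the rigid configuration $|V(t)|^2=\tfrac23$, $|Y(t)|^2=\tfrac13$ at \emph{every} $t$ at once, in particular checking that the flat direction always lands in a $\mu_j=0$ summand of the decomposition $Y^{\perp}=L_0\oplus\cdots\oplus L_\ell$ of Proposition \ref{zeroeigenvector} rather than splitting across blocks. An alternative, more self-contained route for this step is to bypass the cubic relation (\ref{equation}) and instead compute the sectional curvature of $\sigma(t)$ directly from the curvature tensor via Lemma \ref{connection}, expressing flatness as an explicit algebraic system in the $\mathfrak v$-, $\mathfrak z$- and $\mathfrak a$-components of $u(t)$ and $\gamma'(t)$; this keeps the argument visibly a flat-plane argument and independent of the eigenvalue analysis used in the Section 3 proof.
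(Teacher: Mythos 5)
Your proof is correct, but it is not the route the paper takes for Proposition \ref{rank}; it is, almost step for step, the paper's \emph{other} proof of the same fact, namely the one in \S\ref{kimproof} (the theorem stated there is essentially identical to Proposition \ref{rank}). Your core mechanism --- invoking the spectral description of $\mathrm{Ker}\,R_{V+Y+sA}$ (Proposition \ref{zeroeigenvector}), the rigidity $\vert V\vert^{2}=\tfrac{2}{3}$, $\vert Y\vert^{2}=\tfrac{1}{3}$, $s=0$ (Proposition \ref{notparallel}), and the norm evolution $\vert V(t)\vert^{2}=\vert V\vert^{2}h(t)$ (Proposition \ref{geodesic}) to force $h\equiv 1$ and hence $\theta(t)\equiv 0$ --- is exactly that argument. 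The Appendix I proof attached to Proposition \ref{rank} avoids the Jacobi-operator spectrum entirely: it computes sectional curvature from Proposition \ref{planesection}, shows $K(P)\le 0$ with equality forcing the $\mathfrak a$-components of the spanning vectors to vanish (Lemmas \ref{nonpositivecurvature} and \ref{condflatsection}), and then observes that the $\mathfrak a$-component of $\gamma'(t)$ equals $(\log h(t))'$ by (\ref{velocity}), which is nonzero for every $t\neq 0$ --- the same contradiction via $h(t)$, but reached from a flat-plane curvature computation rather than from the cubic (\ref{equation}). So the ``alternative, more self-contained route'' you sketch in your final paragraph is precisely the paper's Appendix I proof; what it buys is independence from the eigenvalue analysis, while your route buys sharper information (it locates the kernel in the $\mu_j=0$ block). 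Finally, the worry you flag about the flat direction ``splitting across blocks'' is not a genuine gap in your argument: each $\mathfrak q_j$ is $R_{V+Y+sA}$-invariant, and by (\ref{k1k2k3}) a zero eigenvalue can only be the root $\kappa_3=0$ of (\ref{equation}), which forces $\mu_j=0$ together with the extremal equality $\vert V\vert^{4}\vert Y\vert^{2}=4/27$; this is exactly what Proposition \ref{notparallel} records, so your appeal to it is legitimate as stated.
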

As a direct consequence we obtain
\begin{corollary}\rm
Let $S$ be a Damek-Ricci space. Then every geodesic of $S$ is of rank one. 
\end{corollary}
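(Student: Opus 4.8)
The plan is to realize the hypothetical field as a \emph{flat plane section field} along $\gamma$ and to contradict it using only the Levi-Civita connection, thereby avoiding the explicit geodesic formula of Lemma \ref{geod} that underlies the proof in \ref{kimproof}. First I would suppose, for contradiction, that a non-trivial Jacobi field $u(t)$ exists with $u'(t)=0$ and $\langle u(t),\gamma'(t)\rangle=0$. Since $u$ is parallel it has constant nonzero norm, and $u''=0$, so the Jacobi equation collapses to $R_{\gamma'(t)}u(t)=0$ for every $t$. As $u(t)\neq 0$ is not proportional to $\gamma'(t)$, the $2$-plane $\sigma(t):=\mathrm{Span}\{\gamma'(t),u(t)\}$ has vanishing sectional curvature, so $\{\sigma(t)\}$ is a field of flat planes carried along $\gamma$, and the task reduces to showing a Damek-Ricci space admits no such field.

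Next I would extract a pointwise algebraic constraint. Fixing $t$ and decomposing the unit velocity as $\gamma'(t)=V(t)+Y(t)+s(t)A$, the vector $u(t)$ is an eigenvector of $R_{\gamma'(t)}$ for eigenvalue $0$ that is not proportional to $\gamma'(t)$. Proposition \ref{notparallel}, applied at this $t$, then forces $\vert V(t)\vert^2=\tfrac{2}{3}$ and $\vert Y(t)\vert^2=\tfrac{1}{3}$. Because $u$ is defined along all of $\gamma$, this holds simultaneously for every $t$; since $\vert\gamma'(t)\vert=1$ we get $s(t)^2=1-\tfrac{2}{3}-\tfrac{1}{3}=0$, i.e. $s(t)\equiv 0$, so $\gamma'(t)$ stays orthogonal to the left-invariant field $A$ throughout.

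The contradiction then comes from an intrinsic first-order computation of $s(t)=\langle\gamma'(t),A\rangle$. Using $\nabla_{\gamma'}\gamma'=0$ together with the connection values $\nabla_V A=-\tfrac12 V$, $\nabla_Y A=-Y$, $\nabla_A A=0$ of Lemma \ref{connection}, I would compute
\begin{equation*}
s'(t)=\langle\gamma'(t),\nabla_{\gamma'(t)}A\rangle=-\tfrac12\vert V(t)\vert^2-\vert Y(t)\vert^2=-\tfrac12\cdot\tfrac23-\tfrac13=-\tfrac23 .
\end{equation*}
This is incompatible with $s(t)\equiv 0$, which would force $s'(t)\equiv 0$. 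Hence no perpendicular parallel Jacobi field exists, and the Corollary that every geodesic of $S$ is rank one (a restatement of Theorem \ref{damekricci-2}) follows at once.

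I expect the main obstacle to be conceptual rather than computational: one must recognize that the constraint of Proposition \ref{notparallel} has to hold \emph{uniformly} in $t$, not merely at a single instant, so that $\vert V(t)\vert^2$ and $\vert Y(t)\vert^2$ are both pinned and the $\mathfrak a$-component is annihilated along the whole geodesic. The remaining subtlety is to justify that the connection identities of Lemma \ref{connection}, stated for left-invariant fields, apply verbatim to the covariant derivative of $A$ along $\gamma$; this is immediate from left-invariance of $g$ and $\nabla$. Notably the argument uses none of the explicit expressions for $h(t)$, $\theta(t)$ or $\chi(t)$, which is exactly what distinguishes this route from the one in \ref{kimproof}.
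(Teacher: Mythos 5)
Your proof is correct, and its concluding step is genuinely different from both of the paper's proofs. Your first half coincides with the proof at \ref{kimproof}: the reduction of a perpendicular parallel Jacobi field to a zero eigenvector of $R_{\gamma'(t)}$, the pointwise application of Proposition \ref{notparallel} (legitimate at every $\gamma(t)$ by left invariance of the curvature tensor, exactly as the paper itself uses it), and the conclusion $s(t)\equiv 0$ from $\vert V(t)\vert^2+\vert Y(t)\vert^2=1$; your ``flat plane section'' framing is cosmetic here, since you never invoke the sectional curvature formula of Lemma \ref{condflatsection} that drives the Appendix I proof. Where you genuinely diverge is the endgame. The paper, in both proofs, reaches the contradiction through the explicit geodesic formula of Lemma \ref{geod}: at \ref{kimproof} via Proposition \ref{geodesic} ($h(t)\equiv 1$ forces $\theta(t)=\tanh\frac{t}{2}\equiv 0$), and in Appendix I via the $A$-component $(\log h(t))'$ of the explicit velocity field. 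You instead close with a first-order, purely connection-theoretic identity: $s(t)=\langle\gamma'(t),A\rangle$ satisfies
\begin{eqnarray*}
s'(t)=\langle\gamma'(t),\nabla_{\gamma'(t)}A\rangle=-\frac{1}{2}\vert V(t)\vert^2-\vert Y(t)\vert^2,
\end{eqnarray*}
which is valid pointwise because $\nabla A$ is tensorial in its lower slot and left invariant, so Lemma \ref{connection} applies verbatim. This gives $s'(t)=-\frac{2}{3}$, contradicting $s\equiv 0$; indeed your inequality $s'(t)\leq -\frac{1}{2}\left(\vert V(t)\vert^2+\vert Y(t)\vert^2\right)<0$ shows the $\mathfrak a$-component of the velocity is strictly decreasing along any geodesic with $\gamma'\neq\pm A$, so the precise values $\frac{2}{3},\frac{1}{3}$ are not even needed once $s\equiv 0$ is known. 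What your route buys is elementarity and brevity: no closed-form geodesic, no functions $h$, $\theta$, $\chi$. What the paper's route buys is the quantitative evolution of $\vert V(t)\vert^2$ and $\vert Y(t)\vert^2$ (Proposition \ref{geodesic}), machinery that is reused elsewhere in the paper, whereas your argument leaves it untouched.
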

To verify the above proposition we present the sectional
curvature formula as
\begin{proposition}\label{planesection}
\rm Let $U + X + r A$, $V + Y$ be vectors in $\mathfrak{s}$ of unit
norm, perpendicular to each other. Then, the sectional curvature
$K(P)$ of a plane section $P = \{ U+X+r A, V+Y\}$ is represented by
\begin{eqnarray*}
K(P) &=& - \frac{3}{4}\ \left\vert [U,V]+ r Y   \right\vert^2 - \frac{1}{4} r^2 \vert V\vert^2 - \frac{1}{4} r^2 \vert Y\vert^2 \\ \nonumber
&+& \frac{1}{4} \vert V \vert^2 \vert X \vert^2 + \frac{1}{4} \vert U \vert^2 \vert Y \vert^2 - \langle J_XU,J_YV\rangle + \frac{1}{2} \langle J_XV,J_YU\rangle  \\ \nonumber
&-& \Big(\frac{1}{2} \vert X\vert^2 \vert V\vert^2 +
\frac{1}{2} \vert Y\vert^2 \vert U\vert^2 +\frac{1}{4} \vert U\vert^2 \vert V\vert^2 + \vert X\vert^2 \vert Y\vert^2 \\ \nonumber
&-& \langle U,V\rangle \langle X,Y\rangle - \frac{1}{4}\langle U,V\rangle^2 - \langle X,Y\rangle^2 \Big).
\end{eqnarray*}
\end{proposition}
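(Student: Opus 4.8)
The plan is to read the sectional curvature straight off the Levi-Civita connection recorded in Lemma~\ref{connection}, using left-invariance to turn the whole problem into algebra inside $\mathfrak{s}$. Extending the orthonormal generators $e_1 = U+X+rA$ and $e_2 = V+Y$ to left-invariant fields, $R(e_1,e_2)e_2$ is again left-invariant, so $K(P)$ is the constant
\[
K(P) = \langle R(e_1,e_2)e_2, e_1\rangle = \langle \nabla_{e_1}\nabla_{e_2}e_2 - \nabla_{e_2}\nabla_{e_1}e_2 - \nabla_{[e_1,e_2]}e_2,\, e_1\rangle,
\]
the Gram determinant in the denominator being $1$ by orthonormality. (Taking the second generator with vanishing $\mathfrak{a}$-component costs no generality: unless $P\subset\mathfrak{v}\oplus\mathfrak{z}$, the projection $P\to\mathbb{R}A$ has a one-dimensional kernel, and one chooses $e_2$ there.)

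First I would assemble the three first-order ingredients by expanding bilinearly and feeding in (\ref{levicivita}) term by term. This gives
\[
\nabla_{e_2}e_2 = -J_Y V + \Big(\tfrac12 |V|^2 + |Y|^2\Big)A, \qquad [e_1,e_2] = \tfrac{r}{2}V + \big(rY + [U,V]\big),
\]
the bracket coming from the Lie algebra structure of $\mathfrak{s}$, and $\nabla_{e_1}e_2$ being computed the same way; here the off-diagonal rules $\nabla_V X = -\tfrac12 J_X V$, $\nabla_V A = -\tfrac12 V$ and $\nabla_Y A = -Y$ are what generate the $J$-operator and $A$-contributions.

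Next I would differentiate a second time, applying $\nabla_{e_1}$ to $\nabla_{e_2}e_2$, applying $\nabla_{e_2}$ to $\nabla_{e_1}e_2$, and substituting $[e_1,e_2]$ into $\nabla_{(\cdot)}e_2$. Pairing the result against $e_1 = U+X+rA$ isolates exactly the scalar quantities appearing in the statement: the norm $|J_Y V|^2$, the bracket pairings $\langle [U,V], \cdot\rangle$, the two mixed terms $\langle J_X U, J_Y V\rangle$ and $\langle J_X V, J_Y U\rangle$, and the scalar products $|V|^2, |Y|^2, \langle U,V\rangle, \langle X,Y\rangle$ arising from the $A$-components.

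The heart of the argument, and the main obstacle, is the algebraic simplification rather than any conceptual point. I would use $(J_Z)^2 = -|Z|^2\,\mathrm{id}_{\mathfrak v}$ to replace $|J_Y V|^2$ by $|Y|^2|V|^2$, the defining relation $\langle [U,V], Z\rangle = \langle J_Z U, V\rangle$ to express bracket pairings, and then collect $|[U,V]|^2$, $r\langle [U,V], Y\rangle$ and $r^2|Y|^2$ into the squared norm $|[U,V]+rY|^2$. The two mixed terms must be kept separate because the polarized identity (\ref{algebrastr-a}), namely $\langle J_X U, J_Y V\rangle + \langle J_X V, J_Y U\rangle = 2\langle U,V\rangle\langle X,Y\rangle$, shows they are not interchangeable, which is precisely why $\langle J_X U, J_Y V\rangle$ and $\langle J_X V, J_Y U\rangle$ enter with different coefficients; the relation (\ref{bracket}) disposes of the remaining $J_X J_Y V$-type cross terms. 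Tracking the signs and the repeated factors of $\tfrac12$ produced by each use of (\ref{levicivita}) is the delicate part; once every term is reduced by these generalized Heisenberg identities, they regroup into the asserted expression for $K(P)$. Alternatively, one may contract the curvature tensor of $\mathfrak{s}$ recorded on pp.~84--85 of \cite{BTV}, which yields the same result.
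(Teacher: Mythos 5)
Your proposal is correct, and the two intermediate quantities you actually compute check out against the paper's own formulas: Lemma \ref{connection} indeed gives $\nabla_{e_2}e_2 = -J_YV + \bigl(\tfrac{1}{2}\vert V\vert^2 + \vert Y\vert^2\bigr)A$, and the bracket structure of ${\mathfrak s}$ gives $[e_1,e_2] = \tfrac{r}{2}V + rY + [U,V]$. Your route, however, is genuinely different from the paper's: the paper proves nothing from the connection at all — it obtains the formula in a single step by contracting the full curvature tensor of a Damek--Ricci space recorded on pp.~84--85 of \cite{BTV} against the pair $(U+X+rA,\,V+Y)$, adding only the remark that left-invariance makes the resulting expression valid at every point of $S$. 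Your plan reconstructs that contraction from scratch out of Lemma \ref{connection} (formulae (\ref{levicivita})) and the generalized Heisenberg identities (\ref{genheisalg}), (\ref{algebrastr-a}), (\ref{bracket}); this is more self-contained, since it uses only ingredients displayed in the paper itself rather than importing the BTV tensor, but the price is a long term-by-term expansion whose final bookkeeping you sketch rather than carry out. You do correctly flag the delicate points of that bookkeeping: the mixed terms $\langle J_XU,J_YV\rangle$ and $\langle J_XV,J_YU\rangle$ cannot be merged because (\ref{algebrastr-a}) only controls their sum (which is exactly why they carry the distinct coefficients $-1$ and $\tfrac{1}{2}$ in the statement), and the pieces $\vert[U,V]\vert^2$, $r\langle[U,V],Y\rangle$, $r^2\vert Y\vert^2$ must be regrouped into the single square $\vert[U,V]+rY\vert^2$. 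Your closing alternative — contract the tensor of \cite{BTV} — is precisely the paper's proof, so in effect your write-up contains the paper's argument as a remark and proposes a more elementary derivation as the main line.
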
This formula is obtained from the formula given in \cite{BTV}, p. 85. We remark that the metric of a Damek-Ricci space is left invariant. Any formulae represented by using  left invariant vector
fields are valid at any point of $S$.

\vspace{2mm}
\begin{lemma}\label{nonpositivecurvature}\rm
Let $P$ be a plane section at $e_S$ 
 spanned by orthonormal vectors $\{ X_1 =
aA+ b (U_1 + Y_1), X_2 = U_2 + Y_2 \}$,  $a, b\in {\Bbb R}$, $U_1, U_2 \in \mathfrak{v}$, $Y_1, Y_2\in \mathfrak{z}$.
 Then 
\begin{eqnarray}\label{sectionalformula}
K(P) &=& - \frac{a^2}{4} - \frac{3}{4} \big\vert a Y_2 +b[U_1,U_2] \big\vert^2 \\ \nonumber
&-&\frac{3}{4}\, b^2 \langle Y_1,Y_2\rangle^2 - \frac{3}{4}\, b^2 T(P),
\end{eqnarray}where
\begin{eqnarray}\label{t}
T(P) := \vert Y_1\vert^2 \vert Y_2\vert^2 + 2 \langle J_{Y_1}U_1, J_{Y_2}U_2\rangle + \frac{1}{3},
\end{eqnarray}which fulfills
\begin{eqnarray}\label{t1}
T(P) &\geq& \vert Y_1\vert^2 \vert Y_2\vert^2 - 2 \vert Y_1\vert \vert Y_2\vert \vert U_1\vert \vert U_2\vert + \frac{1}{3}\\ \nonumber
&\geq& \big(\sqrt{3} \vert Y_1\vert \vert Y_2\vert - \frac{1}{\sqrt{3}}\big)^2 \geq 0
\end{eqnarray}from which it follows $K(P) \leq 0$.
\end{lemma}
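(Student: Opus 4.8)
The plan is to derive the explicit curvature identity (\ref{sectionalformula}) by specializing the general formula of Proposition \ref{planesection}, and then to read off $K(P)\le 0$ from the sign of each summand, the only delicate point being the nonnegativity of $T(P)$. First I would substitute: in Proposition \ref{planesection} the two orthonormal, mutually orthogonal vectors play the roles $U+X+rA$ and $V+Y$, so for $X_1$ I set $U=bU_1$, $X=bY_1$, $r=a$, and for $X_2$ I set $V=U_2$, $Y=Y_2$. The orthonormality of $\{X_1,X_2\}$ translates into the normalizations $|U_1|^2+|Y_1|^2=1$ (equivalently $a^2+b^2=1$) and $|U_2|^2+|Y_2|^2=1$, while $\langle X_1,X_2\rangle=0$ yields the crucial relation $\langle U_1,U_2\rangle=-\langle Y_1,Y_2\rangle$. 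Under this substitution the bracket term $[U,V]+rY$ becomes exactly $b[U_1,U_2]+aY_2$, producing $-\frac34|aY_2+b[U_1,U_2]|^2$, and the two terms quadratic in $r$ collapse via $|U_2|^2+|Y_2|^2=1$ to the single term $-\frac{a^2}{4}$.

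The next ingredient is to combine the two endomorphism terms $-\langle J_XU,J_YV\rangle=-b^2\langle J_{Y_1}U_1,J_{Y_2}U_2\rangle$ and $\frac12\langle J_XV,J_YU\rangle=\frac12 b^2\langle J_{Y_1}U_2,J_{Y_2}U_1\rangle$ using the polarization identity (\ref{algebrastr-a}) with $Z=Y_1$, $Z_1=Y_2$; this rewrites the second inner product as $2\langle U_1,U_2\rangle\langle Y_1,Y_2\rangle-\langle J_{Y_1}U_1,J_{Y_2}U_2\rangle$, so the two terms merge into $-\frac32 b^2\langle J_{Y_1}U_1,J_{Y_2}U_2\rangle+b^2\langle U_1,U_2\rangle\langle Y_1,Y_2\rangle$. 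Inserting the orthogonality relation $\langle U_1,U_2\rangle=-\langle Y_1,Y_2\rangle$ and collecting every $\langle Y_1,Y_2\rangle^2$ contribution (those coming from $\langle U,V\rangle\langle X,Y\rangle$, $\frac14\langle U,V\rangle^2$ and $\langle X,Y\rangle^2$ as well) gives exactly $-\frac34 b^2\langle Y_1,Y_2\rangle^2$. Finally, eliminating $|U_1|^2=1-|Y_1|^2$ and $|U_2|^2=1-|Y_2|^2$ in the remaining scalar terms makes the pure $|Y_1|^2$ and $|Y_2|^2$ contributions cancel and leaves precisely $-\frac34 b^2\bigl(|Y_1|^2|Y_2|^2+\frac13\bigr)$, which together with the merged endomorphism term assembles $-\frac34 b^2 T(P)$. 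This bookkeeping is the main obstacle: getting all coefficients to land correctly depends on applying the polarization identity and both normalization relations together with the orthogonality constraint in a consistent way.

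For the inequalities (\ref{t1}) and the conclusion I would argue as follows. By Cauchy--Schwarz, $\langle J_{Y_1}U_1,J_{Y_2}U_2\rangle\ge-|J_{Y_1}U_1|\,|J_{Y_2}U_2|$, and since each $J_Z$ is skew-adjoint with $J_Z^2=-|Z|^2\,\mathrm{id}$ one has $|J_ZW|=|Z|\,|W|$; this yields the first bound $T(P)\ge|Y_1|^2|Y_2|^2-2|Y_1||Y_2||U_1||U_2|+\frac13$. The second bound is equivalent, after subtracting $\bigl(\sqrt3\,|Y_1||Y_2|-\frac{1}{\sqrt3}\bigr)^2$, to $1-|Y_1||Y_2|\ge|U_1||U_2|$; squaring and inserting $|U_i|^2=1-|Y_i|^2$ reduces it to $(|Y_1|-|Y_2|)^2\ge0$, so that $T(P)\ge0$. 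Since the three remaining summands of (\ref{sectionalformula}) are minus a square and minus norms, each is $\le0$, and $-\frac34 b^2 T(P)\le0$ as well; hence $K(P)\le0$, which is the asserted nonpositivity of the sectional curvature.
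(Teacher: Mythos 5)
Your proposal is correct and follows essentially the same route as the paper: substitute $U=bU_1$, $X=bY_1$, $r=a$, $V=U_2$, $Y=Y_2$ into Proposition \ref{planesection}, then use the polarization identity (\ref{algebrastr-a}) together with the orthonormality relations (in particular $\langle U_1,U_2\rangle+\langle Y_1,Y_2\rangle=0$ and $\vert U_i\vert^2=1-\vert Y_i\vert^2$) to assemble (\ref{sectionalformula})--(\ref{t}), and finally reduce the second inequality in (\ref{t1}) to $(\vert Y_1\vert-\vert Y_2\vert)^2\geq 0$. Your only cosmetic deviation is dividing by $\vert Y_1\vert\vert Y_2\vert$ before squaring, where the paper keeps that factor and computes $A_1^2-A_2^2=\vert Y_1\vert^2\vert Y_2\vert^2(\vert Y_1\vert-\vert Y_2\vert)^2\geq 0$; the two computations are identical in substance, and your explicit Cauchy--Schwarz step for the first inequality of (\ref{t1}) is exactly what the paper leaves implicit.
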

\begin{proof}\rm
To obtain (\ref{sectionalformula}), (\ref{t}) 
we set $U = b U_1$, $X = b Y_1$, $r = a$, $V = U_2$ and $Y = Y_2$ in Proposition \ref{planesection} to have
\begin{eqnarray*}
K(P) &=& - \frac{3}{4}\vert [b U_1,U_2] + a Y_2\vert^2 - \frac{1}{4}a^2\vert U_2\vert^2 - \frac{1}{4}a^2\vert Y_2\vert^2 \\ \nonumber
 &+& \frac{1}{4}\vert U_2\vert^2\vert b Y_1\vert^2 + \frac{1}{4}\vert b U_1\vert^2 \vert Y_2\vert^2 \\ \nonumber
 &-& \langle J_{bY_1} bU_1, J_{Y_2} U_2\rangle + \frac{1}{2}  \langle J_{bY_1} U_2, J_{Y_2} bU_1\rangle \\ \nonumber
 &-&\big( \frac{1}{2}\vert bY_1\vert^2\vert U_2\vert^2 + \frac{1}{2}\vert Y_2\vert^2\vert b U_1\vert^2 + \frac{1}{4}\vert bU_1\vert^2\vert U_2\vert^2 + \vert bY_1\vert^2\vert Y_2\vert^2 \\ \nonumber
 &-& \langle bU_1,U_2\rangle \langle bY_1,Y_2\rangle - \frac{1}{4} \langle bU_1,U_2\rangle^2 - \langle bY_1,Y_2\rangle^2\big).
\end{eqnarray*}For this we make use of (\ref{algebrastr-a})
and the orthonormality of $U_i + Y_i$, $i = 1,2$, in particular  $\displaystyle{\langle U_1,U_2\rangle + \langle Y_1,Y_2\rangle  = 0}$ to obtain (\ref{sectionalformula}), (\ref{t}).
We show the second inequality of (\ref{t1}) as follows.  It suffices for this to show
\begin{eqnarray}
 \vert Y_1\vert^2 \vert Y_2\vert^2 - 2 \vert Y_1\vert \vert Y_2\vert \vert U_1\vert \vert U_2\vert + \frac{1}{3} \geq 3 \vert Y_1\vert^2 \vert Y_2\vert^2 - 2 \vert Y_1\vert \vert Y_2\vert + \frac{1}{3}
\end{eqnarray}and hence suffices to show
\begin{eqnarray}\label{inequality-1}
-\vert Y_1\vert^2 \vert Y_2\vert^2 + \vert Y_1\vert\vert Y_2\vert \geq \vert Y_1\vert\vert Y_2\vert \vert U_1\vert\vert U_2\vert.
\end{eqnarray}Let $A_1$, $A_2$ be the left, right hand side of (\ref{t1}), respectively.  Notice $A_1 > 0$, because
$\displaystyle{\vert Y_1\vert\vert Y_2\vert(1-\vert Y_1\vert\vert Y_2\vert) > 0}$. Therefore, we may compute $A_1^2 - A_2^2$ as
\begin{eqnarray*}
A_1^2 - A_2^2 &=& \big(\vert Y_1\vert^4 \vert Y_2\vert^4 - 2
\vert Y_1\vert^3\vert Y_2\vert^3 + \vert Y_1\vert^2\vert Y_2\vert^2\big) - \vert Y_1\vert^2\vert Y_2\vert^2 \vert U_1\vert^2\vert U_2\vert^2\\ \nonumber
&=& \vert Y_1\vert^2 \vert Y_2\vert^2 \big\{\vert Y_1\vert^2 \vert Y_2\vert^2 - 2 \vert Y_1\vert \vert Y_2\vert + 1 -(1-\vert Y_1\vert^2)(1-\vert Y_2\vert^2)   \big\}\\ \nonumber
&=& \vert Y_1\vert^2 \vert Y_2\vert^2 \big(\vert Y_1\vert - \vert Y_2\vert  \big)^2 \geq 0
\end{eqnarray*}which implies that the
sectional curvature is represented by a sum of four
non-positive terms and thus Lemma \ref{nonpositivecurvature} is proved. Refer to \cite{Boggino} for this lemma.

\end{proof}
\begin{lemma}\label{condflatsection}\rm Let $P$ be a plane section given in Lemma \ref{nonpositivecurvature}. Then, $K(P) = 0$ if and only if $a = 0$, $b = 1$ and
\begin{eqnarray}[U_1,U_2] = 0, \hspace{2mm}\langle U_1,U_2 \rangle = 0,\hspace{2mm}\langle Y_1,Y_2 \rangle = 0\hspace{2mm}{\rm and}\hspace{2mm}
J_{Y_1} U_1 = -J_{Y_2} U_2,
\end{eqnarray}
and moreover
$\displaystyle{\vert U_i\vert = \frac{\sqrt{2}}{\sqrt{3}}}$ and $\displaystyle{\vert Y_i\vert = \frac{1}{\sqrt{3}}
}$, $i = 1,2$.
\end{lemma}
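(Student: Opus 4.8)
The plan is to read everything off from the curvature decomposition in Lemma~\ref{nonpositivecurvature}. Formula (\ref{sectionalformula}) exhibits $K(P)$ as a sum of four separately non-positive terms,
\[
K(P) = -\frac{a^2}{4} - \frac{3}{4}\bigl\vert a Y_2 + b[U_1,U_2]\bigr\vert^2 - \frac{3}{4}\,b^2\langle Y_1,Y_2\rangle^2 - \frac{3}{4}\,b^2\,T(P),
\]
where the last summand is non-positive because $T(P)\geq 0$ by (\ref{t1}). Hence $K(P)=0$ if and only if all four terms vanish simultaneously, and the whole argument reduces to analysing these four vanishing conditions together with the equality case of the inequality chain (\ref{t1}).

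For the ``if'' direction I would simply substitute the asserted data into each term. With $a=0$ the first term is zero; with $[U_1,U_2]=0$ the second is zero; with $\langle Y_1,Y_2\rangle=0$ the third is zero; and for the fourth one computes, using $\vert Y_i\vert^2=\tfrac13$, $J_{Y_1}U_1=-J_{Y_2}U_2$ and $\vert J_{Y_2}U_2\vert^2=\vert Y_2\vert^2\vert U_2\vert^2=\tfrac29$,
\[
T(P)=\vert Y_1\vert^2\vert Y_2\vert^2+2\langle J_{Y_1}U_1,J_{Y_2}U_2\rangle+\tfrac13=\tfrac19-\tfrac49+\tfrac13=0 .
\]
Thus $K(P)=0$.

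For the ``only if'' direction, assume $K(P)=0$. Vanishing of the first term gives $a=0$, and then the unit–norm condition on $X_1$ forces $b^2=1$, so we may take $b=1$. Vanishing of the second term now reads $\tfrac34\vert[U_1,U_2]\vert^2=0$, whence $[U_1,U_2]=0$; vanishing of the third gives $\langle Y_1,Y_2\rangle=0$, and combined with the relation $\langle U_1,U_2\rangle+\langle Y_1,Y_2\rangle=0$ coming from $\langle X_1,X_2\rangle=0$ this yields $\langle U_1,U_2\rangle=0$. It remains to extract the norm data and the $J$–relation from $T(P)=0$.

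The crux is the equality analysis of (\ref{t1}), where $T(P)=0$ forces equality at every step. First $Y_1,Y_2\neq 0$, since otherwise $T(P)\geq\tfrac13>0$. The Cauchy--Schwarz step $2\langle J_{Y_1}U_1,J_{Y_2}U_2\rangle\geq -2\vert Y_1\vert\vert U_1\vert\vert Y_2\vert\vert U_2\vert$, which uses $\vert J_ZW\vert=\vert Z\vert\vert W\vert$ (a consequence of (\ref{genheisalg})), must be an equality, so $J_{Y_1}U_1$ and $J_{Y_2}U_2$ are anti-parallel; the intermediate step, reduced in the proof of Lemma~\ref{nonpositivecurvature} to $A_1^2-A_2^2=\vert Y_1\vert^2\vert Y_2\vert^2(\vert Y_1\vert-\vert Y_2\vert)^2\geq 0$, forces $\vert Y_1\vert=\vert Y_2\vert$; and the vanishing square $(\sqrt3\,\vert Y_1\vert\vert Y_2\vert-\tfrac1{\sqrt3})^2=0$ forces $\vert Y_1\vert\vert Y_2\vert=\tfrac13$. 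Together these give $\vert Y_1\vert=\vert Y_2\vert=\tfrac1{\sqrt3}$, hence $\vert U_i\vert^2=1-\vert Y_i\vert^2=\tfrac23$. The main obstacle I anticipate is the final upgrade: the chain delivers only anti-parallelism of $J_{Y_1}U_1$ and $J_{Y_2}U_2$, and pinning the proportionality constant to exactly $-1$ relies on the now-equal magnitudes $\vert J_{Y_1}U_1\vert=\vert Y_1\vert\vert U_1\vert=\tfrac{\sqrt2}{3}=\vert J_{Y_2}U_2\vert$ extracted from the two separate equality conditions, which then gives $J_{Y_1}U_1=-J_{Y_2}U_2$. Some care is also needed with the degenerate cases $Y_i=0$ (excluded above) and with the convention $\vert U_1\vert^2+\vert Y_1\vert^2=1$ implicit in writing $\vert U_1\vert^2=1-\vert Y_1\vert^2$ in (\ref{t1}), which is exactly what $a=0$, $b=1$ guarantee.
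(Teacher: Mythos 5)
Your proposal is correct and follows essentially the same route as the paper: read off $a=0$, $b=1$, $[U_1,U_2]=0$, $\langle Y_1,Y_2\rangle=0$ and $T(P)=0$ from the vanishing of the four non-positive terms in (\ref{sectionalformula}), then extract the norm conditions and $J_{Y_1}U_1=-J_{Y_2}U_2$ from the equality cases of the chain (\ref{t1}). The paper leaves that last equality analysis implicit ("Then, the lemma is obtained"), and your write-up supplies exactly the intended details, including the step $A_1^2-A_2^2=\vert Y_1\vert^2\vert Y_2\vert^2(\vert Y_1\vert-\vert Y_2\vert)^2$ and the upgrade from anti-parallelism to equality via the matching norms $\vert J_{Y_i}U_i\vert=\sqrt{2}/3$.
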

Formula (\ref{sectionalformula}) indicates that $K(P) = 0$ implies $a = 0$ and hence $b = 1$, $[U_1,U_2] = 0$, $\langle Y_1,Y_2\rangle = 0$ and $T(P) = 0$ and vice versa.
 Then, the lemma is obtained.


\begin{proof} \rm of Proposition \ref{rank}. Let $\gamma$ be a geodesic of $S$ of $\gamma(0) = e_S,\, \gamma'(0) = s A + b(V+Y)$, $s^2 + b^2 =1$, $\vert V + Y\vert = 1$. Then,  we can assert that there exists no plane section field  along $\gamma$ whose sectional curvature is zero for any $t$. In fact, if, contrarily there exists a flat plane section field $P(t)$, generated by an orthonormal basis\, $\{\gamma'(t), \xi(t)\}$, where $\xi(t) = U(t) + Z(t) + a(t)A$ is a vector field along $\gamma$, perpendicular and $\vert \xi(t)\vert = 1$, then from Lemma \ref{geod} we have
\begin{eqnarray}\label{velocity}
\gamma'(t) &=& \frac{\sqrt{h(t)}}{\chi(t)}
\big[(1-s\theta(t))^2 - \theta^2(t) \vert Y\vert^2  \big] \vert V\vert {\hat V} \\ \nonumber
&+& \frac{2 \sqrt{h(t)}}{\chi(t)} \theta(t)\cdot (1-s\theta(t))\vert V\vert\vert Y\vert J_{\hat Y}{\hat V} \\ \nonumber
&+& h(t) \vert Y\vert {\hat Y} + \big(\log h(t)\big)' A\\ \nonumber
\end{eqnarray}
 Refer to (Theorem 2, p. 94, \cite{BTV}), for more detail.

    Since $K(P(t)) = 0$ at $t = 0$, Lemma \ref{condflatsection} implies $s = 0$ and hence $b = 1$ so that  the $A$-component of (\ref{velocity}) is 
$\big(\log h(t)\big)' $,
 where by setting $s=0$ \begin{eqnarray}h(t) = \frac{1-\theta^2(t)}{\chi(t)}, \hspace{2mm}
 \chi(t) = 1 + \theta^2(t) \vert Y\vert^2.\end{eqnarray} Since we assume $K(P(t)) = 0$ for any $t$, it follows from Lemma \ref{condflatsection} that
the component $\displaystyle{\left(\log h(t)\right)' }= \frac{h'(t)}{h(t)}$ must vanish for every $t$. 
However, 
 $(\log h(t))' = 0$ only when $t = 0$, since $\theta(t) = \tanh t/2$. This is a contradiction.
\end{proof}

 \section{Appendix II; A proof of Proposition   \ref{decompositionofs}}

 Proposition \ref{decompositionofs} is verified as follows.  Since
 \begin{eqnarray*}
 {\mathfrak s}_4 \oplus {\mathfrak p}\oplus {\mathfrak q} \subset {\mathfrak s},\hspace{2mm} {\mathfrak q} = {\mathfrak t} \oplus Y^{\perp}
 \end{eqnarray*}and \begin{eqnarray*}
 {\rm Span}\, \{ V, J_YV\} \oplus {\mathfrak p}\oplus {\mathfrak t} \subset {\mathfrak v},
  \end{eqnarray*}
   it suffices to show
  \begin{eqnarray}  \dim {\mathfrak v} \leq
 2 +\dim{\mathfrak p} + \dim{\mathfrak t} .
 \end{eqnarray}
   For this, we make use of the orthogonal decomposition
 defined at (\ref{J2condition-1}) 
 \begin{eqnarray*}
  Y^{\perp} = (Y^{\perp})_J \oplus (Y^{\perp})_J^{\perp},
 \end{eqnarray*}
 together also with the direct sum   decomposition given at (\ref{t-1}) and (\ref{t2t3})
 \begin{eqnarray}
 {\mathfrak t} = {\mathfrak t}_1 \oplus {\mathfrak t}_2\oplus {\mathfrak t}_3=J_{ (Y^{\perp})_J}V \oplus J_{ (Y^{\perp})_J^{\perp}}V \oplus J_{ (Y^{\perp})_J^{\perp}}J_YV,
 \end{eqnarray}
 \begin{eqnarray}\label{dimensioform}
 \dim {\mathfrak t} &=&  k_1 + 2 k_2,\\ \nonumber
 k_1 &:=& \dim (Y^{\perp})_J = \dim J_{(Y^{\perp})_J}V,
\\ \nonumber
k_2 &:=& \dim (Y^{\perp})_J^{\perp} = \dim J_{(Y^{\perp})_J^{\perp}}V = J_{(Y^{\perp})_J^{\perp}}J_YV.
 \end{eqnarray}

 We have certainly the orthogonal decomposition relative to the vector $V$ as
 $\displaystyle{ {\mathfrak v} = {\rm Ker}\hspace{0.5mm} ad(V) \oplus J_{\mathfrak z}V}$,
where $J_{\mathfrak z}V =\{ J_ZV\, \vert\, Z\in {\mathfrak z}\} 
 ={\rm Ker}\hspace{0.5mm} ad(V)^{\perp}$ and have, replacing $V$ by $J_YV \not=0$, another decomposition as
$\displaystyle{
  {\mathfrak v} = {\rm Ker}\hspace{0.5mm} ad(J_YV) \oplus J_{\mathfrak z}J_YV}$.
Decompose ${\mathfrak v}$ into
 \begin{eqnarray}
  {\mathfrak v} = {\rm Ker}\hspace{0.5mm} ad(V) \oplus J_{\mathfrak z}V = {\Bbb R}V \oplus {\Bbb R}J_YV \oplus {\mathfrak p} \oplus {\mathfrak p}_1 \oplus J_{Y^{\perp}}V
  \end{eqnarray}and similarly into
    \begin{eqnarray}
  {\mathfrak v} = {\rm Ker}\hspace{0.5mm} ad(J_YV) \oplus J_{\mathfrak z}J_YV = {\Bbb R}V \oplus {\Bbb R}J_YV \oplus {\mathfrak p} \oplus {\mathfrak p}_2 \oplus J_{Y^{\perp}}J_YV,
  \end{eqnarray}where ${\mathfrak p}_i$, $i = 1,2$ are the orthogonal complement of  ${\mathfrak p}$ in ${\rm Ker}\hspace{0.5mm} ad(V)$ and ${\rm Ker}\hspace{0.5mm} ad(J_YV)$, respectively.  

   \begin{claim}\rm The map
  \begin{eqnarray}h:\hspace{2mm}{\mathfrak p}_1 & \, &\rightarrow (Y^{\perp})_J^{\perp}; \\ \nonumber
  U &\, &\mapsto  h(U) := \frac{1}{\vert V\vert^2 \vert Y\vert^2} [J_YV,U]
  \end{eqnarray}  is injective.
  \end{claim}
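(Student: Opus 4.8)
The map $h$ is linear in $U$, so proving injectivity amounts to showing ${\rm Ker}\, h=\{0\}$. Since $V\neq 0$ and $Y\neq 0$, the scalar factor $\frac{1}{\vert V\vert^2\vert Y\vert^2}$ is nonzero, so $h(U)=0$ is equivalent to $[J_YV,U]=0$, that is, to $U\in{\rm Ker}\,(ad\,J_YV)$. The whole argument will therefore reduce to the observation that ${\mathfrak p}_1\cap{\rm Ker}\,(ad\,J_YV)=\{0\}$, which I plan to extract from the defining properties of ${\mathfrak p}$ and ${\mathfrak p}_1$.

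First I would check that $h$ is well defined as a map into $(Y^{\perp})_J^{\perp}$, using the adjointness $\langle [J_YV,U],Z\rangle=\langle J_Z(J_YV),U\rangle$ that follows from the definition of $J_Z$. Taking $Z=Y$ gives $\langle [J_YV,U],Y\rangle=\langle -\vert Y\vert^2 V,U\rangle=0$, because $U\in{\mathfrak p}_1$ is orthogonal to $V$; hence $h(U)\in Y^{\perp}$. Taking $Z\in (Y^{\perp})_J$, for which $J_ZJ_YV=J_{Z'}V\in J_{\mathfrak z}V$ by the definition of $(Y^{\perp})_J$ at (\ref{J2condition-1}), gives $\langle [J_YV,U],Z\rangle=\langle J_{Z'}V,U\rangle=0$, since $U\in{\mathfrak p}_1\subset{\rm Ker}\,(ad\,V)=(J_{\mathfrak z}V)^{\perp}$. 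Thus $h(U)\perp (Y^{\perp})_J$, so the image indeed lies in $(Y^{\perp})_J^{\perp}$.

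For injectivity, suppose $U\in{\mathfrak p}_1$ satisfies $h(U)=0$. Then $U\in{\rm Ker}\,(ad\,J_YV)$ by the reduction above, and since ${\mathfrak p}_1\subset{\rm Ker}\,(ad\,V)$ we obtain $U\in{\rm Ker}\,(ad\,V)\cap{\rm Ker}\,(ad\,J_YV)={\mathfrak p}$. But ${\mathfrak p}_1$ is by construction orthogonal to ${\mathfrak p}$, so $U\perp{\mathfrak p}$; together with $U\in{\mathfrak p}$ this forces $U=0$. This proves $h$ is injective, whence $\dim{\mathfrak p}_1\leq k_2=\dim (Y^{\perp})_J^{\perp}$, exactly the inequality needed to close the dimension count $\dim{\mathfrak v}\leq 2+\dim{\mathfrak p}+\dim{\mathfrak t}$ via $\dim{\mathfrak v}=2+\dim{\mathfrak p}+\dim{\mathfrak p}_1+k_1+k_2$. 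I do not expect a genuine obstacle here; the only points requiring care are recalling the precise definitions of ${\mathfrak p}$ and ${\mathfrak p}_1$ and invoking the orthogonality ${\rm Ker}\,(ad\,V)=(J_{\mathfrak z}V)^{\perp}$ to land the image in $(Y^{\perp})_J^{\perp}$ rather than merely in $Y^{\perp}$.
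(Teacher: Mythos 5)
Your proof is correct, and its injectivity step coincides with the paper's final step: from $h(U)=0$ you get $U\in{\rm Ker}\,(ad\,J_YV)$, hence $U\in{\mathfrak p}={\rm Ker}\,(ad\,V)\cap{\rm Ker}\,(ad\,J_YV)$, and ${\mathfrak p}\cap{\mathfrak p}_1=\{0\}$ forces $U=0$. Where you genuinely diverge is in verifying that $h$ lands in $(Y^{\perp})_J^{\perp}$ at all. You obtain this directly from the defining adjointness $\langle Z,[W,W_1]\rangle=\langle J_ZW,W_1\rangle$: pairing $[J_YV,U]$ with $Y$ uses ${\mathfrak p}_1\perp V$ (read off the displayed orthogonal decomposition of ${\mathfrak v}$), and pairing with $Z\in(Y^{\perp})_J$ uses $J_ZJ_YV=J_{Z'}V$ together with ${\mathfrak p}_1\subset{\rm Ker}\,(ad\,V)=\left(J_{\mathfrak z}V\right)^{\perp}$. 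The paper instead invokes its second decomposition ${\mathfrak v}={\rm Ker}\,(ad\,J_YV)\oplus J_{\mathfrak z}J_YV$, writes $U=U_1+U_2'+U_2''$ with $U_1\in{\mathfrak p}_2$, $U_2'\in J_{(Y^{\perp})_J}J_YV$, $U_2''\in J_{(Y^{\perp})_J^{\perp}}J_YV$, kills $U_2'$ by an inner-product argument, and computes $[J_YV,U]=[J_YV,U_2'']=\vert V\vert^2\vert Y\vert^2\,Z''$. That computation buys more than the containment: it identifies $h(U)$ with the coordinate $Z''$, i.e. $U=U_1+J_{h(U)}J_YV$, exhibiting $h$ as essentially the projection of ${\mathfrak p}_1$ onto the summand $J_{(Y^{\perp})_J^{\perp}}J_YV$, which is precisely the picture behind the dimension count $\dim{\mathfrak p}_1\leq k_2$ that Appendix II needs afterwards. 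Your route is shorter, avoids introducing ${\mathfrak p}_2$ for the codomain check, and is equally rigorous; the paper's route gives an explicit structural formula in exchange for the extra bookkeeping.
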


  \begin{proof}\rm From the above orthogonal decompositions we have
 \begin{eqnarray}
  {\mathfrak p}_1 \oplus J_{Y^{\perp}}V ={\mathfrak p}_2 \oplus J_{(Y^{\perp})_J}J_YV\oplus J_{(Y^{\perp})_J^{\perp}}J_YV,
    \end{eqnarray}since $Y^{\perp} = (Y^{\perp})_J \oplus (Y^{\perp})_J^{\perp}$. Then, any $U\in {\mathfrak p}_1$ is written as
       \begin{eqnarray}\label{sumformula}
    U = U_1 + U_2' + U_2'',\hspace{2mm} U_1 \in {\mathfrak p}_2,\, U_2' \in J_{(Y^{\perp})_J}J_YV, U_2'' \in J_{(Y^{\perp})_J^{\perp}}J_YV.
    \end{eqnarray}Hence, there exists a $Z\in (Y^{\perp})_J$ such that $U_2' = J_ZJ_YV$. Here $U_2'$ is written also as $U_2' = J_{Z'}V$ for a $Z'\in (Y^{\perp})_J$ from definition of $(Y^{\perp})_J$. Moreover, $U'' = J_{Z''}J_YV$ for a $Z''\in (Y^{\perp})_J^{\perp}$. Take an inner product of (\ref{sumformula}) with the vector $U_2' = J_{Z'}V$. Then $\langle U, U_2'\rangle = \langle U, J_{Z'}V\rangle = 0$, since ${\mathfrak p}_1 \perp J_{Y^{\perp}} V$. On the other hand $\langle U_1, J_{Z'}V\rangle = \langle U_1, J_{Z}J_YV\rangle =0$, since ${\mathfrak p}_2 \perp J_{Y^{\perp}}J_YV$. Also we have $\langle U_2'', U_2' \rangle =0$ from (\ref{algebrastr-a}) and consequently $\vert U_2'\vert^2=0$, namely $U_2' = 0$.
      Therefore $U$ is written as $U = U_1 + U_2''$.

    Since ${\mathfrak p}_2 \subset {\rm Ker}\, ad(J_YV)$, \begin{eqnarray}[J_YV,U] &=& [J_YV, U_1 + U_2''] 
  = [J_YV,  U_2''] \\ \nonumber
  &=& [J_YV, J_{Z''}J_YV] = \vert J_YV\vert^2 Z'' =\vert Y\vert^2\vert V\vert^2 Z''
  \end{eqnarray} 
  and henceforth
  $\displaystyle{Z'' = \frac{1}{\vert V\vert^2 \vert Y\vert^2}[J_YV, U]}$ which is just $h(U) $.

  Now suppose $h(U) = Z''$ is zero vector. Then, $[J_YV,U] = 0$, that is, $U \in {\rm Ker}\, ad(J_YV)$. By the way, $U\in{\mathfrak p}_1\subset {\rm Ker}\, ad(V)$ so that $U \in {\mathfrak p}={\rm Ker} ad(V)\cap {\rm Ker} ad(J_YV)$ and consequently $U = 0$, since ${\mathfrak p}\cap {\mathfrak p}_1 = \{0\}$.
  \end{proof}

  From this lemma $p_1 = \dim {\mathfrak p}_1 \leq  k_2 = \dim J_{(Y^{\perp})_J^{\perp}}J_YV$ so that from (\ref{dimensioform})
     \begin{eqnarray}\dim {\mathfrak v} &=&  2 + p + p_1  + \dim (J_{Y^{\perp}}V)
  = 2 + p+ p_1 + k_1 + k_2 \\ \nonumber
  &\leq& 2 + p + k_1 + 2 k_2 \\ \nonumber
  &=& 2 + p + \dim (J_{Y^{\perp}}V \oplus J_{Y^{\perp}}J_YV) = 2 + p + \dim{\mathfrak t}.
\end{eqnarray}
Hence the desired decomposition is obtained.

\vspace{3mm}
\begin{center}{\sc Part II}
\end{center}

\section{Rank of geodesics and eigenspace  corresponding to eigenvalue 0}
In this section, we prove Theorem \ref{positivehessian}.

Let $x \in M$ and $\theta\in \partial M$. Let $\gamma$ be a geodesic of $M$ satisfying $\gamma(0) = x$, representing $\theta$ and $b= b_{\gamma}$ be the Busemann function associated with $\gamma$.

  First we show
  \begin{eqnarray}\label{inequality-1}\displaystyle{\rm{rank} \, \gamma\hspace{1mm} -1 \leq \dim ({\Bbb E}_{\it b})_{\gamma(t)}}, \,\forall t\in {\Bbb R}\end{eqnarray} and then by exploiting double inductive arguments \begin{eqnarray}\label{inequality-2}\displaystyle{\rm{rank} \, \gamma\hspace{1mm}-1 \geq \dim ({\Bbb E}_{\it b})_{\gamma(t)}}, \,\forall t\in {\Bbb R}.\end{eqnarray}

Let $r = \rm{rank}\, \gamma$.  Let $\{ v_1(t),\cdots, v_n(t) \}$ be an orthonormal basis consisting of parallel vector fields along $\gamma$ such that $\gamma'(t)=v_1(t)$ and $v_j(t)$, $j = 2,\cdots,r$ are Jacobi vector fields. It suffices to show $\displaystyle{{\mathcal S}(t) v_j(t) = 0 }$ for $j = 2,\cdots,r$ in terms of the shape operator ${\mathcal S}(t)$ of the horosphere ${\mathcal H}_{(\theta,\gamma(t))}$. Then, the inequality (\ref{inequality-1}) is proved.
Define, for this, a set of perpendicular Jacobi fields $\displaystyle{\{ Y_j = Y_j(t)\hspace{2mm}\vert\, j= 2,\cdots,n,\, t\in {\Bbb R}\}
}$ satisfying $Y_j(0) = v_j(0)$ and $\vert Y_j(t)\vert \leq C$, $\forall t \geq 0$ for some $C> 0$,\, $j= 2,\cdots, n$. Note these Jacobi vector fields are stable.  Here $Y_j(t) = v_j(t)$, $t\in {\Bbb R}$ for $j = 2,\cdots,r$, since each $v_j(t)$ is a parallel Jacobi field. Thus $\{ Y_2=Y_2(t), \cdots, Y_n=Y_n(t)\}$  defines a stable, perpendicular Jacobi tensor ${\mathcal J} = {\mathcal J}(t)$ along $\gamma$ by
\begin{eqnarray}
{\mathcal J}(t)\, &:&\, \gamma(t)^{\perp} \rightarrow \gamma(t)^{\perp}, \\ \nonumber
 & & \sum_{j=2}^{n} c^i v_i(t) \mapsto \sum_{j=2}^{n} c^i Y_i(t)
\end{eqnarray}from which the shape operator ${\mathcal S}(t)$ of the horosphere ${\mathcal H}_{(\theta,\gamma(t))}$ at $\gamma(t)$ is described by
$\displaystyle{{\mathcal S}(t) = {\mathcal J}'(t)\, {\mathcal J}^{-1}(t)  }$. Refer to \cite{ISKyushu, Kn2002} for construction of the stable Jacobi tensor and its relation with the shape operator ${\mathcal S}(t)$.
By definition of ${\mathcal J}(t)$ we see for each $j = 2,\cdots, r$\, ${\mathcal J}(t) v_j(t) = v_j(t)$ so that ${\mathcal J}'(t) v_j(t) = 0$ and ${\mathcal J}^{-1}(t)v_j(t) = v_j(t)$ and therefore ${\mathcal S}(t)v_j(t) = {\mathcal J}'(t)\, {\mathcal J}^{-1}(t) v_j(t) = {\mathcal J}'(t) v_j(t) = 0$ so we get (\ref{inequality-1}).

\vspace{2mm}
Now we will show (\ref{inequality-2}) as follows.

For this we fix $t = 0$ without loss of generality and verify that for $u\in T_{\gamma(0)} {\mathcal H}_{(\theta,\gamma(0))}$ satisfying ${\mathcal S}(0)u= 0$ there exists a parallel vector field $u(t)$ along $\gamma$ such that $u(0)=u$ which satisfies $R(t) u(t) = 0$, $\forall t\in {\Bbb R}$. This proves (\ref{inequality-2}).

Let $\displaystyle{\mathcal{H}_{(\theta,\gamma(t))} = \{ y\in M \vert \ b_{\theta}(y) = b_{\theta}(\gamma(t))\}}$, $t\in {\Bbb R}$, be a horosphere centered at $\theta$ and passing through $\gamma(t)$.
The restriction of the Hessian to a horosphere $\mathcal{H}_{(\theta,\gamma(t))} $ gives minus signed second fundamental form $h$, as seen at (\ref{secondff}), Section 2. Namely, we have $\displaystyle{(\nabla d b_{\theta})_{\vert \mathcal{H}_{(\theta,\gamma(t))}}(\cdot,\cdot) = - \langle {\mathcal S}(t)\cdot, \cdot \rangle }$ with respect to the operator $\displaystyle{{\mathcal S}(t)}$ which is negative semi-definite from the positive semi-definiteness of $\nabla d b_{\theta}$.

Let $u(t)$ be a parallel vector field along $\gamma(t)$ such that $u(0) = u$. Similarly for $v\in T_{\gamma(0)}{\mathcal H}_{(\theta,\gamma(0))}$ we define a parallel vector field $v(t)$ along $\gamma(t)$ such that $v(0) = v$.

From the Riccati equation we have along $\gamma(t)$
\begin{eqnarray}\label{riccati-1}
\langle {\mathcal S}(t)u(t), v(t) \rangle' + \langle {\mathcal S}(t)u(t), {\mathcal S}(t)v(t) \rangle + \langle R(t)u(t), v(t) \rangle = 0
\end{eqnarray}or
\begin{eqnarray}\label{riccati-2}
\langle {\mathcal S}'(t)u(t), v(t) \rangle + \langle {\mathcal S}(t)u(t), {\mathcal S}(t)v(t) \rangle + \langle R(t)u(t), v(t) \rangle = 0.
\end{eqnarray}

In order to obtain Proposition \ref{proposition-2} below we need to take the derivative of ${\mathcal S}(t)$.  Since $\displaystyle{{\mathcal S}'(t) u(t) = \frac{\nabla}{d t}\left({\mathcal S}(t)u(t)\right) }$, ${\mathcal S}'(t)$ is an endomorphism of $T_{\gamma(t)}\mathcal{H}_{(\theta,\gamma(t))}$.
  For any integer $k \geq 2$\hspace{2mm}
$\displaystyle{{\mathcal S}^{(k)}(t) u(t) = \left(\frac{\nabla}{d t}\right)^k\left({\mathcal S}(t)u(t)\right) }$ is also an endomorphism of $T_{\gamma(t)}\mathcal{H}_{(\theta,\gamma(t))}$.
\begin{proposition}\label{proposition-2}\rm
Let $(M,g)$ be a Hadamard manifold. Let $x\in M$ and
$\theta\in
\partial M$ and $\gamma$ be a geodesic satisfying $\gamma(0) = x$
and $[\gamma] = \theta$.

If there exists a non zero $u\in T_x \mathcal{H}_{(\theta,x)}$ satisfying ${\mathcal S}(0) u = 0$, then
\begin{eqnarray}
 \left(\langle {\mathcal S}(t)u(t), v(t) \rangle^{(k)}\right)_{\vert t= 0} &=& 0, \\ \nonumber
  \left(\langle R(t)u(t), v(t) \rangle^{(k)}\right)_{\vert t=0} &=& 0
\end{eqnarray}for any $v\in T_x \mathcal{H}_{(\theta,x)}$ and any integer $k \geq 0$.
\end{proposition}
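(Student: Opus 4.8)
The plan is to reduce the statement, which concerns two scalar functions and all their $t$-derivatives, to a single infinite-order-vanishing statement about one scalar, and to close that with the Riccati equation and the sign (semi-definiteness) information. Since $u(t)$ and $v(t)$ are parallel along $\gamma$, repeated differentiation of $\langle \mathcal{S}(t)u(t),v(t)\rangle$ only hits the first slot, so $\langle \mathcal{S}(t)u(t),v(t)\rangle^{(k)}\big|_{0}=\langle \tfrac{\nabla^{k}}{dt^{k}}(\mathcal{S}(t)u(t))\big|_{0},v\rangle$, and likewise for $R$. Writing $\psi(t):=\mathcal{S}(t)u(t)$ and $\rho(t):=R(t)u(t)$, the two assertions are therefore equivalent to the claim that $\psi$ and $\rho$ vanish to infinite order at $t=0$. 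The two sign facts available are that $\mathcal{S}(t)$ and $R(t)$ are self-adjoint and negative semi-definite for every $t$ (the former from positive semi-definiteness of $\nabla d b_{\theta}$, the latter from nonpositive curvature). From these I would record the Cauchy--Schwarz inequalities for the positive semi-definite forms $-\mathcal{S}(t)$ and $-R(t)$, namely $\langle \mathcal{S}(t)w,v\rangle^{2}\le\langle \mathcal{S}(t)w,w\rangle\langle \mathcal{S}(t)v,v\rangle$ and the same with $R$; these are the bridge letting diagonal control imply off-diagonal control.

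Next I would collapse everything onto the diagonal scalar $h(t):=\langle \mathcal{S}(t)u(t),u(t)\rangle\le 0$. Introduce also $G(t):=\langle R(t)u(t),u(t)\rangle\le 0$ and $P(t):=|\mathcal{S}(t)u(t)|^{2}\ge 0$. Applying Cauchy--Schwarz with $w=u(t)$, $v=\mathcal{S}(t)u(t)$ gives $P(t)^{2}\le |h(t)|\,|\langle \mathcal{S}(t)\psi(t),\psi(t)\rangle|\le C\,|h(t)|\,P(t)$, whence $P(t)\le C\,|h(t)|$ near $0$ (using that $\|\mathcal{S}(t)\|$ is bounded there, which holds as the data are analytic). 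The diagonal Riccati equation, evaluated with $u(t)$ parallel, reads
\[
h'(t)=-P(t)-G(t),
\]
so $G=-h'-P$. Consequently, if $h$ vanishes to infinite order at $0$, then so does $P$ (by $P\le C|h|$), hence so does $G$, and Cauchy--Schwarz $\langle \mathcal{S}(t)u(t),v(t)\rangle^{2}\le C_{v}|h(t)|$ and $\langle R(t)u(t),v(t)\rangle^{2}\le C_{v}'|G(t)|$ then force $\psi$ and $\rho$ to vanish to infinite order. Thus the entire proposition reduces to the single claim that $h$ vanishes to infinite order at $t=0$.

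For this core claim I would argue by contradiction, exploiting that both $h\le 0$ and $G\le 0$ attain the value $0$ at $t=0$, so $t=0$ is a maximum of each; in particular $h'(0)=G'(0)=0$, and for a smooth function with a local maximum the first nonvanishing derivative, if any, is of even order and negative. Suppose $h$ first fails to vanish at order $2m$ (even, with $m\ge 1$ since $h'(0)=0$). Then $|h(t)|=O(t^{2m})$, so $P\le C|h|$ gives $P^{(j)}(0)=0$ for $j\le 2m-1$. Differentiating the Riccati relation gives $h^{(j)}(0)=-P^{(j-1)}(0)-G^{(j-1)}(0)$; for $1\le j\le 2m-1$ the left side and the $P$-term vanish, so $G^{(i)}(0)=0$ for $0\le i\le 2m-2$, i.e. $G$ vanishes to order $2m-2$. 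Since $G$ has a maximum at $0$, its next derivative $G^{(2m-1)}(0)$, being of odd order, cannot be the first nonvanishing one and must also vanish. But the case $j=2m$ of the recursion reads $h^{(2m)}(0)=-G^{(2m-1)}(0)=0$, contradicting the assumption $h^{(2m)}(0)\neq 0$. Hence $h$ vanishes to infinite order, and the previous paragraph finishes the proof.

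I expect the main obstacle to be the bookkeeping of this intertwined (``double'') induction between $h$ and $G$ through the Riccati recursion: one must apply the even-order parity constraint coming from the maximum property to the correct quantity at the correct order, and must justify carefully that the Cauchy--Schwarz bound $P\le C|h|$ and the $O(t^{2m})$ order estimates are legitimate, which requires the smoothness (indeed analyticity) and local boundedness of $\mathcal{S}(t)$ near $t=0$. The polarization/Cauchy--Schwarz step for the degenerate semi-definite forms, and the passage from ``vanishes to infinite order'' back to ``all $t$-derivatives vanish,'' are routine once the diagonal claim is secured.
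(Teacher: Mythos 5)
Your proposal is correct, and it takes a genuinely different route from the paper's proof. The paper runs a fourfold double induction, advancing simultaneously the diagonal statements $\langle {\mathcal S}^{(j)}(0)u,u\rangle=0$ (up to order $2k$), $\langle R^{(j)}(0)u,u\rangle=0$ (up to $2k-1$), and the vector-valued statements ${\mathcal S}^{(j)}(0)u=0$, $R^{(j)}(0)u=0$ (up to $k-1$); each off-diagonal step is proved by a discriminant argument on the quadratic form $f(t)+2xg(t)+x^{2}h(t)$ (essentially re-deriving Cauchy--Schwarz each time), and the quadratic Riccati term $\langle {\mathcal S}u,{\mathcal S}u\rangle^{(2k)}$ is handled by the Leibniz formula together with the inductively known vector-valued vanishing. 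You instead collapse everything onto the single scalar $h(t)=\langle {\mathcal S}(t)u(t),u(t)\rangle$: Cauchy--Schwarz for the semi-definite forms $-{\mathcal S}$ and $-R$, invoked once and for all, converts diagonal infinite-order vanishing into the full off-diagonal statement, and your inequality $P\le C\lvert h\rvert$ (with $P=\lvert{\mathcal S}(t)u(t)\rvert^{2}$) is the real simplification --- it controls the quadratic Riccati term by the diagonal, eliminating the Leibniz bookkeeping entirely and reducing the Riccati equation to the scalar identity $h'=-P-G$. Your contradiction argument on the first nonvanishing derivative of $h$, using the parity constraint at an interior maximum of a nonpositive function, then replaces the paper's step-by-step bootstrap; note that it correctly recovers $G(0)=0$ through the $j=1$ case of your recursion (it is not an independent hypothesis, as your opening phrasing might suggest), and that the constant $C$ only needs continuity of $\|{\mathcal S}(t)\|$ near $t=0$, not analyticity. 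What the paper's version buys is the explicit order-by-order vector statements ${\mathcal S}^{(j)}(0)u=0$, $R^{(j)}(0)u=0$ with the precise diagonal/off-diagonal offset; what yours buys is a substantially shorter and more transparent argument in which each of the three ingredients (semi-definiteness, Riccati, smoothness) is used exactly once.
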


An inductive proof will be given in the next section.
\begin{theorem}\rm
 Let $(M,g)$ be an analytic Hadamard manifold.  Let $x\in M$ and $\theta\in \partial M$ and $\gamma$ be a geodesic satisfying $\gamma(0) = x$ and $[\gamma] = \theta$.  Suppose that every Busemann function $b_{\theta}$ of $M$ is analytic.

 \vspace{1mm}If there exists a non zero $u\in T_x \mathcal{H}_{(\theta,x)}$ satisfying ${\mathcal S}(0) u = 0$, then $\displaystyle{{\mathcal S}(t)u(t) = 0}$ and $\displaystyle{R(t)u(t) = 0}$ for any $t\in \mathbb{R}$ so that $u(t)$ is a parallel Jacobi field along $\gamma$.
\end{theorem}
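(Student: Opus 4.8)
The plan is to combine Proposition \ref{proposition-2} with the real-analyticity hypotheses and pass from pointwise vanishing of all derivatives at $t=0$ to global vanishing via the identity theorem for analytic functions of one variable. Fix the non-zero $u\in T_x\mathcal{H}_{(\theta,x)}$ with ${\mathcal S}(0)u=0$ and let $u(t)$ denote its parallel extension along $\gamma$. Since $\gamma'(t)=-(\nabla b_\theta)_{\gamma(t)}$ is itself parallel and $u(0)\perp\gamma'(0)$, parallel transport preserves inner products, so $u(t)\perp\gamma'(t)$ for every $t$; hence $u(t)\in T_{\gamma(t)}\mathcal{H}_{(\theta,\gamma(t))}$ and ${\mathcal S}(t)u(t)$ is well defined. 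The same applies to the parallel extension $v(t)$ of any $v\in T_x\mathcal{H}_{(\theta,x)}$.

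First I would record that, under the hypotheses, the scalar functions $t\mapsto\langle{\mathcal S}(t)u(t),v(t)\rangle$ and $t\mapsto\langle R(t)u(t),v(t)\rangle$ are real-analytic on all of $\mathbb{R}$. Indeed $(M,g)$ is analytic, so the Jacobi operator $R(t)$ along $\gamma$ depends analytically on $t$; and since $b_\theta$ is assumed analytic, its Hessian $\nabla d b_\theta$ is analytic, whence by (\ref{secondff}) the shape operator ${\mathcal S}(t)$ of the horosphere ${\mathcal H}_{(\theta,\gamma(t))}$ is analytic along $\gamma$. Coupling these with the analytic parallel fields $u(t)$, $v(t)$ yields the asserted analyticity.

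Next, Proposition \ref{proposition-2} tells us precisely that every derivative of these two functions vanishes at $t=0$. A real-analytic function $\mathbb{R}\to\mathbb{R}$ all of whose derivatives vanish at a point has identically zero Taylor series there, so it vanishes on a neighborhood of $0$; by the identity theorem on the connected domain $\mathbb{R}$ it then vanishes identically. Hence $\langle{\mathcal S}(t)u(t),v(t)\rangle\equiv 0$ and $\langle R(t)u(t),v(t)\rangle\equiv 0$ for every fixed $v$. As $v$ ranges over $T_x\mathcal{H}_{(\theta,x)}$ the vectors $v(t)$ span $\gamma'(t)^{\perp}=T_{\gamma(t)}\mathcal{H}_{(\theta,\gamma(t))}$, in which both ${\mathcal S}(t)u(t)$ and $R(t)u(t)$ lie; therefore ${\mathcal S}(t)u(t)=0$ and $R(t)u(t)=0$ for all $t\in\mathbb{R}$.

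Finally I would conclude the Jacobi-field assertion. Because $u(t)$ is parallel we have $u''(t)=0$, while $R(t)u(t)=R(u(t),\gamma'(t))\gamma'(t)=0$; adding these gives $u''(t)+R(u(t),\gamma'(t))\gamma'(t)=0$, which is exactly the Jacobi equation, so $u(t)$ is a parallel Jacobi field along $\gamma$. The one point that genuinely requires care, and the main conceptual hinge, is the passage from ``all derivatives vanish at $t=0$'' to ``the function vanishes on all of $\mathbb{R}$'': this is where real-analyticity of ${\mathcal S}(t)$ and $R(t)$, rather than mere smoothness, is indispensable, and it is the reason the hypothesis that $b_\theta$ be analytic cannot be dropped. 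Once analyticity is secured, the remaining steps are purely formal.
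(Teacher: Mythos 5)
Your proposal is correct and follows essentially the same route as the paper: both invoke Proposition \ref{proposition-2} to annihilate all derivatives of $\langle {\mathcal S}(t)u(t),v(t)\rangle$ and $\langle R(t)u(t),v(t)\rangle$ at $t=0$, then use analyticity of ${\mathcal S}(t)$ and $R(t)$ along $\gamma$ (coming from analyticity of $g$ and of $b_{\theta}$) together with the identity theorem to conclude these functions vanish identically, hence ${\mathcal S}(t)u(t)=0$ and $R(t)u(t)=0$ for all $t$. The only differences are cosmetic: the paper pairs against a parallel orthonormal frame $e_i(t)$ while you pair against arbitrary parallel extensions $v(t)$, and you make explicit the final step (parallelism plus $R(t)u(t)=0$ gives the Jacobi equation) that the paper leaves implicit.
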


This theorem is a direct consequence of Proposition \ref{proposition-2}.

\vspace{1mm}
Let $\{ e_i(t); i = 2,\dots,n\}$ be a parallel, perpendicular orthonormal fields along $\gamma$, $n = \dim X$.   Then from Proposition \ref{proposition-2} we have $\displaystyle{\left(\langle {\mathcal S}(t)u(t), e_i(t) \rangle^{(k)}\right)_{\vert t= 0} = 0}$ for any $k \geq 0$.
 Since $(X,g)$ is analytic and every Busemann function is analytic on $M$, the Hessian $\nabla d b_{\theta}$ is also analytic on $M$ so that ${\mathcal S}(t)$ is analytic along the geodesic $\gamma$ and hence $\displaystyle{\langle {\mathcal S}(t)u(t), v(t) \rangle} = 0$, that is, ${\mathcal S}(t) u(t) = 0$ for $-\infty < t < \infty$. Similarly we have $R(t) u(t) = 0$ for $-\infty < t < \infty$.

 \vspace{2mm}
 From this theorem the inequality (\ref{inequality-2}) is derived.

 \section{Proof of Proposition \ref{proposition-2}}

We assume that there exists a non-zero vector $u\in T_{\gamma(0)}{\mathcal H}_{(\theta,\gamma(0))} =\gamma'(0)^{\perp}$ which satisfies ${\mathcal S}(0)u = 0$.
\begin{lemma}\rm Under this assumption it holds
\begin{eqnarray}
\langle {\mathcal S}(0)^{(j)}u,u\rangle &= 0,\, j= 0, \cdots,4,\\ \nonumber
\langle R(0)^{(j')}u,u\rangle &= 0,\, j'= 0,\cdots,3,
\end{eqnarray}
and moreover for any $v\in T_{\gamma(0)}{\mathcal H}_{(\theta,\gamma(0))}=\gamma'(0)^{\perp}$
\begin{eqnarray}
\langle {\mathcal S}'(0)u,v\rangle = 0, \hspace{2mm} 
\langle R'(0)u,v\rangle = 0. 
\end{eqnarray}
\end{lemma}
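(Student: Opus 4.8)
The plan is to run a short bootstrap on the Riccati equation (\ref{riccati-equation}), $\mathcal{S}'(t) + \mathcal{S}(t)^2 + R(t) = 0$, alternating between two sign constraints and the operator identities obtained by differentiating it. Throughout I would work in a parallel orthonormal frame along $\gamma$, so that $u(t)$ and $v(t)$ are parallel, each $\mathcal{S}^{(k)}(t)$ and $R^{(k)}(t)$ is a self-adjoint endomorphism of $\gamma'(t)^{\perp}$, and differentiation commutes with the pairing. The two sign constraints are $\mathcal{S}(t)\le 0$ (negative semi-definiteness, from $\nabla d b_{\theta}\ge 0$ via (\ref{secondff})) and $R(t)\le 0$ (non-positive curvature). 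Set $g(t):=\langle \mathcal{S}(t)u(t),u(t)\rangle$ and $\rho(t):=\langle R(t)u(t),u(t)\rangle$; both are $\le 0$.

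First I would establish the vector identities $\mathcal{S}(0)u=0$ (hypothesis), $R(0)u=0$, and $\mathcal{S}'(0)u=0$. Indeed $g(0)=0$ with $g\le 0$ makes $t=0$ a maximum, so $g'(0)=\langle \mathcal{S}'(0)u,u\rangle=0$; evaluating the Riccati equation at $t=0$ on $u$ gives $\mathcal{S}'(0)u=-\mathcal{S}(0)^2 u-R(0)u=-R(0)u$, whence $\langle R(0)u,u\rangle=-g'(0)=0$, and negative semi-definiteness of $R(0)$ upgrades this to $R(0)u=0$, so that $\mathcal{S}'(0)u=0$ as a vector. This already yields the first ``moreover'' identity $\langle \mathcal{S}'(0)u,v\rangle=0$ for all $v$. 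I would then propagate these two vanishings through the differentiated Riccati identities. Because $\mathcal{S}(0)u=\mathcal{S}'(0)u=0$ and the operators are self-adjoint, every cross term carrying a factor $\mathcal{S}(0)$ or $\mathcal{S}'(0)$ adjacent to $u$ drops out, collapsing the relations to $\langle \mathcal{S}^{(k)}(0)u,u\rangle=-\langle R^{(k-1)}(0)u,u\rangle$ for $k=1,2,3,4$. The scalar vanishings then follow by a ``the sign forces an odd-order derivative to vanish'' argument applied alternately to $g$ and $\rho$: from $g(0)=g'(0)=0$ and the collapsed identity $g''(0)=-\rho'(0)=0$ (as $\rho'(0)=0$ by the maximum of $\rho$) we reach $g(0)=g'(0)=g''(0)=0$, so the first possibly nonzero derivative of $g$ is of even order, forcing $g'''(0)=0$, hence $\rho''(0)=0$; repeating once more gives $\rho'''(0)=0$ and therefore $g^{(4)}(0)=0$. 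This delivers all stated scalar identities, $\langle \mathcal{S}^{(j)}(0)u,u\rangle=0$ for $j\le 4$ and $\langle R^{(j')}(0)u,u\rangle=0$ for $j'\le 3$.

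The one genuinely nontrivial point—and the main obstacle—is the remaining \emph{vector} statement $\langle R'(0)u,v\rangle=0$ for \emph{all} $v$, since the maximum arguments above only control pairings against $u$. Differentiating the Riccati equation twice and using $\mathcal{S}(0)u=\mathcal{S}'(0)u=0$ gives $R'(0)u=-\mathcal{S}''(0)u$, so it suffices to show $\mathcal{S}''(0)u=0$. For this I would use negative semi-definiteness as a genuine Cauchy--Schwarz inequality: testing $\langle \mathcal{S}(t)(u(t)+s\,a(t)),u(t)+s\,a(t)\rangle\le 0$ against all $s\in\mathbb{R}$ and all parallel $a$ and reading off the discriminant yields
\begin{eqnarray*}
\langle \mathcal{S}(t)u(t),a(t)\rangle^2 \le \big(-g(t)\big)\big(-\langle \mathcal{S}(t)a(t),a(t)\rangle\big).
\end{eqnarray*}
Writing $w(t):=\mathcal{S}(t)u(t)$, one has $w(0)=w'(0)=0$, while $-g(t)=o(t^4)$ (this is exactly where the order-four scalar vanishing is consumed) and $-\langle \mathcal{S}(t)a(t),a(t)\rangle$ stays bounded. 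Comparing $t^4$-coefficients then forces $\langle w''(0),a\rangle=0$ for every parallel $a$, i.e. $\mathcal{S}''(0)u=w''(0)=0$, hence $R'(0)u=0$ and $\langle R'(0)u,v\rangle=0$ for all $v$.

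Thus the scalar computation is not decorative: its fourth-order output is precisely what powers the vector conclusion, and keeping this dependency straight—the interplay between the sign/maximum constraints (which only see the direction $u$) and the Cauchy--Schwarz bound (which converts higher-order scalar decay into a vector vanishing)—is the crux of the argument. The regularity required for the Taylor comparison is supplied by analyticity of $\mathcal{S}$ along $\gamma$ in the ambient setting, and $C^5$ would in fact suffice. This lemma then serves as the base case feeding the inductive proof of Proposition~\ref{proposition-2}.
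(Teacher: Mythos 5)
Your proposal is correct and proves everything the lemma asserts (indeed slightly more), but it organizes the two key steps differently from the paper, in ways worth recording. The shared skeleton is a Riccati bootstrap alternating the sign constraints ${\mathcal S}(t)\le 0$, $R(t)\le 0$ with Maclaurin-type order counting. The first difference: you read the Riccati equation as a \emph{vector} identity along $u$, namely ${\mathcal S}'(0)u=-{\mathcal S}(0)^2u-R(0)u=-R(0)u$, so that once the kernel argument gives $R(0)u=0$ you obtain ${\mathcal S}'(0)u=0$ outright; the paper instead proves $\langle {\mathcal S}'(0)u,v\rangle=0$ by testing the Riccati equation against each parallel $v$ after separately establishing $\langle R(0)u,v\rangle=0$, and its subsequent chain of assertions on $\langle {\mathcal S}''u,u\rangle$, $\langle {\mathcal S}'''u,u\rangle$, $\langle R''u,u\rangle$, $\langle R'''u,u\rangle$, $\langle {\mathcal S}^{(4)}u,u\rangle$ is exactly reproduced by your collapsed identities $\langle {\mathcal S}^{(k)}(0)u,u\rangle=-\langle R^{(k-1)}(0)u,u\rangle$, $k\le 4$ (the cross terms die by self-adjointness plus ${\mathcal S}(0)u={\mathcal S}'(0)u=0$, as you note). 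The second, more substantial difference concerns $\langle R'(0)u,v\rangle=0$: the paper (Assertion \ref{assertionruv-b}) argues by contradiction on the quadratic form $x\mapsto\langle R(t)(u(t)+xv(t)),u(t)+xv(t)\rangle$, playing the fourth-order vanishing of $\langle R(t)u,u\rangle$ against the first-order term $\langle R(t)u,v\rangle$, and must first reduce to the non-degenerate case $\langle R(0)v,v\rangle<0$; you instead prove the stronger vector statement ${\mathcal S}''(0)u=0$ by Cauchy--Schwarz for the semi-definite form $-{\mathcal S}(t)$ (consuming $\langle{\mathcal S}(t)u,u\rangle=o(t^4)$) and transfer it through the differentiated Riccati equation via $R'(0)u=-{\mathcal S}''(0)u$. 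Your route needs no degenerate-case analysis, since Cauchy--Schwarz only requires boundedness of $\langle{\mathcal S}(t)a,a\rangle$, and it is precisely the mechanism the paper deploys later for (\ref{suvk}) inside the induction of Proposition \ref{proposition-2}, so your base case is better aligned with the inductive step that follows. Two cosmetic points, neither affecting correctness: the identity $R'(0)u=-{\mathcal S}''(0)u$ comes from differentiating the Riccati equation \emph{once}, not twice; and your argument tacitly uses that each ${\mathcal S}^{(k)}(t)$ is self-adjoint and preserves $\gamma'(t)^{\perp}$, both of which indeed hold in a parallel frame because the metric is parallel.
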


\begin{proof}\rm
\begin{assertion}\label{suu-a}
\begin{eqnarray}\label{suu-1}
\langle {\mathcal S}(0)u,u \rangle = 0,\hspace{2mm}\langle {\mathcal S}'(0)u,u \rangle = 0.
\end{eqnarray}\rm The first equality is obvious. The second one follows from the negative semi-definiteness of ${\mathcal S}(t)$ for any $t$. In fact, set $f(t) := \langle {\mathcal S}(t)u(t),u(t) \rangle$. Then $f(0) = 0$. 
Suppose $f'(0) = \langle {\mathcal S}'(0)u,u\rangle \not= 0$.
 Then, we may assume $f'(0) = \langle {\mathcal S}(t)u(t),u(t) \rangle'_{\vert t=0} =: a > 0$ without loss of generality. Then, since $f(t)$ is analytic, there exists $t_0 > 0$ such that $f'(t) > \frac{a}{2}$ for $0 < t < t_0$. From Maclaurin expansion we have
$\displaystyle{f(t) = f(0) + f'(\theta t) t > \frac{a}{2} t > 0}$, for any $0 < t < t_0$ ($0 < \theta < 1$). This is a contradiction, because $\langle {\mathcal S}(t) \cdot, \cdot\rangle $ is negative semi-definite for any $t$.
\end{assertion}
\end{proof}

The following is immediate from the assumption above.
\begin{assertion}\label{suv-a}\rm
\begin{eqnarray*}
\langle {\mathcal S}(0)u,v\rangle = 0,\hspace{2mm}\forall v\in T_{\gamma(0)}{\mathcal H}.
\end{eqnarray*}
\end{assertion}

\vspace{2mm}Therefore, the Riccati equation implies then
\begin{assertion}\label{ruu-a}\rm
\begin{eqnarray}\label{ruu-0}
\langle R(0)u, u\rangle= 0, \hspace{2mm}\langle R'(0)u, u\rangle= 0.
\end{eqnarray}
\end{assertion}
In fact, set $v=u$ in the equation (\ref{riccati-2})  to get \begin{eqnarray}\label{riccati-x}
\langle {\mathcal S}'(t)u(t), u(t) \rangle + \langle {\mathcal S}(t)u(t), {\mathcal S}(t)u(t) \rangle + \langle R(t)u(t), u(t) \rangle = 0.
\end{eqnarray} Then, as (\ref{suu-a}) and (\ref{suv-a}) show, the first and the second terms of the equation (\ref{riccati-x}) at $t = 0$ vanish so that
 the third term $\langle R(0)u, u\rangle$ must be zero. From the negative semi-definiteness of the Jacobi operator $R(t)$ one obtains $\langle R'(0)u, u\rangle = 0$ similar to the
 proof of Assertion \ref{suu-a}. 
 Since $R(t)\leq 0$, $\forall t$, (\ref{ruu-a}) implies
\begin{assertion}\rm
\begin{eqnarray}\label{ruv-0}
 \langle R(0)u,v\rangle = 0,\hspace{2mm}\forall v\in T_{\gamma(0)} {\mathcal H}_{(\theta,\gamma(0))}.
\end{eqnarray}

\end{assertion}
By using these facts and applying the Riccati equation, one has
\begin{assertion}\rm
\begin{eqnarray}\label{suu-b}
 \langle {\mathcal S}''(0)u,u \rangle = 0, \hspace{2mm}
\hspace{2mm} \langle {\mathcal S}'''(0)u,u \rangle = 0.
\end{eqnarray}
This is shown by the following argument. Differentiate the equation (\ref{riccati-x}) with respect to $t$ at $t=0$.  
Then the second term reduces to $2\langle {\mathcal S}'(0)u, {\mathcal S}(0)u\rangle$ which vanishes from Assertion \ref{suv-a}. The third term vanishes from Assertion \ref{ruu-a} so one gets the first equality of (\ref{suu-b}). From the negative semi-definteness of ${\mathcal S}(t)$ together with the Maclaurin expansion the second one is obtained.

\end{assertion}
\begin{assertion}\label{suv-b}\rm
\begin{eqnarray}\label{suv-1}
 \langle {\mathcal S}'(0)u,v \rangle = 0, \hspace{2mm}\forall v\in T_{\gamma(0)}{\mathcal H}_{(\theta,\gamma(0))}
\end{eqnarray}i.e., ${\mathcal S}'(0)u = 0$.
\end{assertion}
\begin{proof}\rm
In the equation (\ref{riccati-2})
 one set $t = 0$, where $v(t)$ is a parallel vector field along $\gamma$ such that $v(0)=v$. Then, the second term $\langle {\mathcal S}(0)u, {\mathcal S}(0)v\rangle$ vanishes from (\ref{suv-a}) and third term vanishes from (\ref{ruv-0}) so the first term must vanish.
\end{proof}

From the Riccati equation one has
\begin{assertion}
\begin{eqnarray}\label{ruu-2}
 \langle R''(0)u, u\rangle = 0, \hspace{2mm}\langle R'''(0)u, u\rangle = 0.
\end{eqnarray}
\end{assertion}
In fact one has by differentiating twice (\ref{riccati-2}) and setting $v(t) = u(t)$
\begin{eqnarray*}
\langle {\mathcal S}'''(t)u(t),u(t)\rangle + \langle {\mathcal S}(t)u(t), {\mathcal S}(t)u(t)\rangle'' + \langle R(t)u(t),u(t)\rangle'' = 0.
\end{eqnarray*}Here, the first term $\langle {\mathcal S}'''(0)u,u\rangle$ vanishes from (\ref{suu-b}) and the second term becomes
\begin{eqnarray*}
\langle {\mathcal S}(t)u(t),{\mathcal S}(t)u(t)\rangle'' = 2 \langle {\mathcal S}''(t)u(t),{\mathcal S}(t)u(t)\rangle + \langle {\mathcal S}'(t)u(t),{\mathcal S}'(t)u(t)\rangle
\end{eqnarray*}which also vanishes at $t=0$ from  Assertion \ref{suv-b}. So, one obtains the first equality of (\ref{ruu-2}). One has immediately then by the Maclaurin expansion
\begin{eqnarray*}\label{ruu-3}
 \langle R'''(0)u, u\rangle = 0
\end{eqnarray*}from the negative semi-definiteness of $R(t)$.

 \vspace{2mm}
 Now, we will investigate the following.

\begin{assertion}\label{assertionruv-b}\rm
\begin{eqnarray*}\label{ruv-b}
 \langle R'(0)u, v\rangle = 0,\hspace{2mm}\forall v\in T_{\gamma(0)}{\mathcal H}_{(\theta,\gamma(0))},
\end{eqnarray*}that is, $R'(0)u = 0$.
\end{assertion}

\begin{proof}\rm
Suppose that there exists a $v$ such that $\langle R'(0)u, v\rangle \not= 0$. Then, one may assume without loss of generality $\langle R'(0)u, v\rangle > 0$.  Now one can consider the quadratic form $\langle R(t)w(t), w(t)\rangle$, $w(t) = (u(t)+xv(t))\in T_{\gamma(t)}{\mathcal H}_{(\theta,\gamma(0))}$, $x\in \mathbb{R}$.
Since
\begin{eqnarray*}
\langle R(t)w(t), w(t)\rangle &=& \langle R(t)u(t), u(t)\rangle + 2 x \langle R(t)u(t), v(t)\rangle + x^2 \langle R(t)v(t), v(t)\rangle \\ \nonumber
&=& {\hat f}(t) + 2 {\hat g}(t) x + {\hat h}(t) x^2
\end{eqnarray*}where

\begin{eqnarray}
 {\hat f}(t) :=\langle R(t)u(t), u(t)\rangle,
 \hspace{2mm}{\hat g}(t)  :=\langle R(t)u(t), v(t)\rangle\hspace{2mm}{\rm and}\hspace{2mm}{\hat h}(t) :=\langle R(t)v(t), v(t)\rangle
\end{eqnarray} are functions with respect to $t$.  One has then from the previous arguments by using the Maclaurin  expansion
\begin{eqnarray*}
{\hat f}(t) = \frac{{\hat f}^{(4)}(\theta t)}{4!} t^4,\, 0<\theta<1,
\end{eqnarray*}since ${\hat f}^{(j)}(0) = 0$ for $j = 0, \cdots, 3$ and similarly

\begin{eqnarray*}
 {\hat g}(t) = {\hat g}'(\theta' t) t,\, 0<\theta' < 1,
\end{eqnarray*} since ${\hat g}(0) = 0$. The discriminant $D = D(t)$ of $x$ is given, therefore, by
\begin{eqnarray*}
 D(t) = \left({\hat g}(t)\right)^2 - {\hat f}(t) {\hat h}(t) = \left\{{\hat g}'(\theta' t)\right\}^2  t^2 - \frac{{\hat f}^{(4)}(\theta t)}{4!} t^4\cdot {\hat h}(t).
 \end{eqnarray*}One may assume ${\hat h}(0) < 0$, since, if, otherwise, ${\hat h}(0) = 0$, then $v = v(0)$ and hence $w = u + y v$, $y\in {\Bbb R}$ is also an eigenvector of $R(0)$ with eigenvalue zero so that $\langle R'(0)w,w\rangle = 2 y \langle R'(0)u,v\rangle$ must vanish and this leads to a contradiction. Thus, one has
 \begin{eqnarray*}
  D(t) = \left\{{\hat g}'(\theta' t)\right\}^2  t^2 \left\{ 1 - \frac{{\hat f}^{(4)}(\theta t)}{4!}/\big\{{\hat g}'(\theta' t)\big\}^2\,  t^2\cdot h(t)\right\}.
 \end{eqnarray*}Here, for a $t$ of sufficiently small $\vert t\vert$  $\vert{\hat f}^{(4)}(\theta t)\vert \leq C$, for a $C> 0$ from the continuity of ${\hat f}(t)$ and moreover ${\hat g}'(\theta' t) > 0$ from the assumption in proving argument. Therefore, there exists some $t$ of sufficiently small $\vert t\vert$ for which $D(t) > 0$ so that one can choose an $x$ such that $\langle R(t)w(t),w(t) \rangle = {\hat f}(t) + 2 x {\hat g}(t) + x^2 {\hat h}(t) > 0$ for such  $t$. This is a contradiction, since $\langle R(t)w(t),w(t) \rangle \leq 0$ for any $w(t)$ and one can conclude $\langle R'(0)u,v\rangle = 0$ for any $v$.
\end{proof}

 \begin{assertion}\rm
 \begin{eqnarray*}\label{suv-d}
 \langle {\mathcal S}^{(4)}(0)u, u\rangle = 0.
 \end{eqnarray*}
\end{assertion}

This follows from the  Riccati equation argument 
\begin{eqnarray*}
\langle {\mathcal S}'(t)u(t),u(t)\rangle^{(3)} + \langle  {\mathcal S}(t)u(t), {\mathcal S}(t)u(t)\rangle^{(3)}
+\langle R(t)u(t),u(t)\rangle^{(3)} =0.
\end{eqnarray*}

 The second term reduces  at $t=0$ by Leibniz' formula to
 \begin{eqnarray*}
 \langle  {\mathcal S}(t)u(t), {\mathcal S}(t)u(t)\rangle^{(3)}\vert_{t=0} = 2\langle  {\mathcal S}(0)u, {\mathcal S}'''(0)u\rangle + 6 \langle  {\mathcal S}'(0)u, {\mathcal S}''(0)u\rangle = 0.
 \end{eqnarray*}Furthermore at $t=0$
 $\displaystyle{\langle R(t)u(t),u(t)\rangle^{(3)}\vert_{t=0} = \langle R'''(0)u,u\rangle =0}$ from (\ref{ruu-2}). Thus the lemma is proved.

\begin{proposition}\rm Let $k$ be a positive integer. Suppose for this integer $k$

\noindent
(i)\,
\begin{eqnarray}\label{suu0to2k}\displaystyle{\langle {\mathcal S}(t)u(t), u(t)\rangle^{(j)}_{\vert t=0} = 0}, j = 0,\cdots, 2k,
\end{eqnarray}  (ii)\, \begin{eqnarray}\label{suv0tok-1}\displaystyle{\langle {\mathcal S}(t)u(t), v(t)\rangle^{(j_1)}_{\vert t=0} = 0,} \hspace{2mm}v\in T_{\gamma(0)} \mathcal{H}_{(\theta,\gamma(0))},\hspace{2mm}
 j_1 = 0,\dots, k-1\end{eqnarray} 
 and moreover \\
 (iii) \begin{eqnarray}\label{ruu0to2k-1}\displaystyle{\langle R(t)u(t), u(t)\rangle^{(j_2)}_{\vert t=0} = 0},\,  j_2 = 0,\cdots, 2k-1\end{eqnarray}and  (iv)
\begin{eqnarray}\displaystyle{\langle R(t)u(t), v(t)\rangle^{(j_3)}_{\vert t=0} = 0,}\hspace{2mm}v\in T_{\gamma(0)} \mathcal{H}_{(\theta,\gamma(0))}, \hspace{2mm} j_3 = 0,\cdots, k-1.\end{eqnarray} 
 Then, it is concluded

\vspace{2mm}\noindent
(a)\,
\begin{eqnarray}\label{suu2k+1}\langle {\mathcal S}(t)u(t), u(t)\rangle^{(j)}_{\vert t=0} = 0, \hspace{2mm}j= 2k+1, 2k+2
\end{eqnarray}  (b)\, \begin{eqnarray}\label{suvk}\displaystyle{\langle {\mathcal S}(t)u(t), v(t)\rangle^{(k)}_{\vert t=0} = 0,}\hspace{2mm}\forall v\in T_{\gamma(0)}{\mathcal H}_{\theta,\gamma(0)}
\end{eqnarray}
and moreover
 (c) \begin{eqnarray}\label{ruu2k}
 \langle R(t)u(t), u(t)\rangle^{(j_1)}_{\vert t=0} = 0, \hspace{2mm}j_1= 2k, 2k+1
 \end{eqnarray}and  (d)
\begin{eqnarray}\label{ruvk}\displaystyle{\langle R(t)u(t), v(t)\rangle^{(k)}_{\vert t=0} = 0, \hspace{2mm}\forall v\in T_{\gamma(0)}{\mathcal H}_{\theta,\gamma(0)}.}
\end{eqnarray} 

\end{proposition}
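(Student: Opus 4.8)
The plan is to prove the four conclusions (a)--(d) \emph{in a carefully chosen order} within this single inductive step, using two tools repeatedly: the Riccati identity (\ref{riccati-2}) differentiated an appropriate number of times and evaluated at $t=0$, and the elementary ``sign trick'' that a smooth function $p(t)\le 0$ with $p(0)=p'(0)=\cdots=p^{(2m)}(0)=0$ must satisfy $p^{(2m+1)}(0)=0$ (otherwise the odd-order leading term of its Taylor expansion would force a sign change). Throughout I write $u(t),v(t)$ for the parallel extensions, so that $\langle {\mathcal S}(t)u(t),v(t)\rangle^{(j)}|_{0}=\langle {\mathcal S}^{(j)}(0)u,v\rangle$ and likewise for $R$; note ${\mathcal S}(t)$ is smooth along $\gamma$ because it is built from the (smooth) stable Jacobi tensor $B(t)$ via ${\mathcal S}=B'B^{-1}$, even though $b_{\theta}$ is globally only $C^{2}$. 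The key bookkeeping observation is that hypothesis (ii) and conclusion (b) together say ${\mathcal S}^{(a)}(0)u=0$ for $a=0,\dots,k$, since an element of $T_{\gamma(0)}{\mathcal H}_{(\theta,\gamma(0))}$ orthogonal to all of $T_{\gamma(0)}{\mathcal H}_{(\theta,\gamma(0))}$ vanishes, and each ${\mathcal S}^{(a)}(0)u$ lies in that space.

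First I would prove (b): differentiate (\ref{riccati-2}) exactly $k-1$ times and set $t=0$. The last term vanishes by (iv); the middle term expands by Leibniz into $\sum_{a+b=k-1}\binom{k-1}{a}\langle {\mathcal S}^{(a)}(0)u,{\mathcal S}^{(b)}(0)v\rangle$, where every summand has $a\le k-1$, so ${\mathcal S}^{(a)}(0)u=0$ by (ii); hence $\langle {\mathcal S}u,v\rangle^{(k)}|_{0}=0$. Next I obtain (a) and (c) by alternating the two tools along the chain $\phi^{(2k+1)}\to\rho^{(2k)}\to\rho^{(2k+1)}\to\phi^{(2k+2)}$, with $\phi=\langle {\mathcal S}u,u\rangle$ and $\rho=\langle Ru,u\rangle$. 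The odd-order value $\phi^{(2k+1)}(0)=0$ is immediate from (i) and the sign trick ($\phi\le0$). Differentiating the diagonal Riccati identity $2k$ times and using $\phi^{(2k+1)}(0)=0$ together with $\langle {\mathcal S}u,{\mathcal S}u\rangle^{(2k)}|_{0}=\sum_{a+b=2k}\binom{2k}{a}\langle {\mathcal S}^{(a)}(0)u,{\mathcal S}^{(b)}(0)u\rangle=0$ (in each summand one index is $\le k$) yields $\rho^{(2k)}(0)=0$; the sign trick applied to $\rho\le0$ then gives $\rho^{(2k+1)}(0)=0$, which is (c). Feeding this back into the diagonal Riccati identity differentiated $2k+1$ times (again the ${\mathcal S}$-square term dies because one index is $\le k$) gives $\phi^{(2k+2)}(0)=0$, completing (a).

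Finally, (d) is the genuine obstacle; it is the generalization of Assertion \ref{assertionruv-b} and cannot be reached from the Riccati identity alone, because it requires the full negative semi-definiteness of $R(t)$. The plan is to argue by contradiction: suppose $\langle Ru,v\rangle^{(k)}|_{0}\neq0$ for some $v$, say positive. Consider $w(t)=u(t)+xv(t)$ and the scalar $\langle R(t)w(t),w(t)\rangle=\hat f(t)+2\hat g(t)x+\hat h(t)x^{2}\le0$, where $\hat f=\langle Ru,u\rangle$, $\hat g=\langle Ru,v\rangle$, $\hat h=\langle Rv,v\rangle$. By (iii) together with the already-proven (c), $\hat f$ vanishes to order $2k+2$ at $0$, so $|\hat f(t)|\le C\,t^{2k+2}$; by (iv), $\hat g(t)=\tfrac{\hat g^{(k)}(0)}{k!}t^{k}+o(t^{k})$ with nonzero leading coefficient, so $\hat g(t)^{2}$ is of exact order $t^{2k}$. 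Wherever $\hat h(t)<0$, the requirement that the downward parabola stay $\le0$ forces its discriminant $D(t)=\hat g(t)^{2}-\hat f(t)\hat h(t)\le0$; but $\hat g(t)^{2}$ dominates $\hat f(t)\hat h(t)=O(t^{2k+2})$, so $D(t)>0$ for small $t$, a contradiction.

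The delicate point of (d) is the degenerate case $\hat h(0)=0$, i.e. $R(0)v=0$. If $\hat h\equiv0$ near $0$, the form is linear in $x$ and $\le0$ for all $x$, forcing $\hat g\equiv0$ and hence $\hat g^{(k)}(0)=0$, the desired contradiction. If $\hat h\not\equiv0$, then since $\hat h\le0$ there are $t_{n}\to0$ with $\hat h(t_{n})<0$, and along this sequence the discriminant estimate above still gives $D(t_{n})>0$, a contradiction. This establishes (d), and with it the entire inductive step, so hypotheses (i)--(iv) hold at $k+1$ and Proposition \ref{proposition-2} follows by induction, the base case having been supplied by the preceding Assertions.
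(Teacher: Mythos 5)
Your proof is correct, and its overall architecture --- the Riccati identity, Maclaurin sign tricks on the diagonal quantities, and a discriminant argument against negative semi-definiteness --- is the paper's, but you reach conclusion (b) by a genuinely different and simpler route, and you write out the proof of (d) that the paper omits. For (b) the paper argues by contradiction with the quadratic form $\langle{\mathcal S}(t)w(t),w(t)\rangle$, $w=u+xv$, and its discriminant, exploiting negative semi-definiteness of ${\mathcal S}(t)$; you instead differentiate (\ref{riccati-2}) exactly $k-1$ times at $t=0$, kill the curvature term by hypothesis (iv), and kill every Leibniz term $\langle{\mathcal S}^{(a)}(0)u,\,{\mathcal S}^{(b)}(0)v\rangle$, $a+b=k-1$, via the observation that (ii) forces ${\mathcal S}^{(a)}(0)u=0$ for $a\le k-1$ (each ${\mathcal S}^{(a)}(0)u$ stays tangent to the horosphere, since $\langle {\mathcal S}(t)u(t),\gamma'(t)\rangle\equiv 0$ and $\gamma'$ is parallel). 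This is precisely the generalization of the paper's base-case Assertion \ref{suv-b} proving ${\mathcal S}'(0)u=0$; what it buys is a purely algebraic step that needs no semi-definiteness of ${\mathcal S}$ at all, confining the analytic discriminant machinery to the one place it is unavoidable, namely (d), where $R(t)\le 0$ must enter. For (d) itself --- which the paper dismisses as ``just similar to'' the proof of (\ref{suvk}) --- your estimate $|\hat f(t)|\le C\,t^{2k+2}$ against $\hat g(t)^2\sim c\,t^{2k}$ is the intended argument, and your dichotomy in the degenerate case $\hat h(0)=0$ (either $\hat h\equiv 0$ near $0$, forcing $\hat g\equiv 0$ and hence $\hat g^{(k)}(0)=0$, or a sequence $t_n\to 0$ with $\hat h(t_n)<0$ along which the discriminant is still positive) is tighter than the paper's corresponding parenthetical, which inspects only $t=0$. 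Your opening remark that ${\mathcal S}(t)=B'(t)B^{-1}(t)$ is smooth in $t$ because the stable Jacobi tensor solves the Jacobi ODE also quietly justifies the repeated $t$-differentiation, a point the paper leaves implicit. The two proofs order the conclusions differently (paper: odd half of (a), (b), (c), (d), even half of (a); you: (b), odd half of (a), (c), even half of (a), (d)); both orders are free of circularity, and your closing bookkeeping that (a)--(d) together with (i)--(iv) reproduce the hypotheses at level $k+1$ is exactly what closes the induction.
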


\vspace{1mm}\noindent
{\bf Proof.}\hspace{2mm} First we show 
$\displaystyle{\langle {\mathcal S}(t)u(t),u(t)\rangle^{(2k+1)}_{\vert t=0} = \langle {\mathcal S}^{(2k+1)}(0)u,u\rangle= 0}$.
 For this we suppose $a:= \langle {\mathcal S}(t)u(t),u(t)\rangle^{(2k+1)}_{\vert t=0} = \langle {\mathcal S}^{(2k+1)}(0)u,u\rangle  >0$ without loss of generality.  Let $f(t) :=\langle {\mathcal S}(t)u(t),u(t)\rangle$. Then by using the Maclaurin expansion  the function $f(t)$ is written from (i) as
\begin{eqnarray*}
 f(t) :=  \frac{1}{(2k+1)!} f^{(2k+1)}(\theta t) t^{(2k+1)},\hspace{2mm}0 <\theta <1
\end{eqnarray*}for $t$ of sufficiently small $\vert t\vert$. There exists, then, a $t_0> 0$ such that $f^{(2k+1)}(t) \geq \frac{a}{2} > 0$ and hence $f(t)$ must be positive for $0< t < t_0$ . This is a contradiction and hence we obtain (\ref{suu2k+1}),
since ${\mathcal S}(t)$ is negative semi-definite.  In case of $a < 0$, we may take a negative $t$ of sufficiently small $\vert t\vert$.

 Now we will show
\begin{eqnarray*}
\displaystyle{\langle {\mathcal S}(t)u(t), v(t)\rangle^{(k)}_{\vert t=0} = 0}
\end{eqnarray*} for any $v\in T_{\gamma(0)} \mathcal{H}_{(\theta,\gamma(0))}$. For this we suppose similarly as in proof of Assertion \ref{assertionruv-b}
\begin{eqnarray*}\displaystyle{\langle {\mathcal S}(t)u(t), v(t)\rangle^{k}_{\vert t=0} >0}
\end{eqnarray*}and consider the quadratic form $\langle {\mathcal S}(t)w(t), w(t)\rangle$, $w(t) = u(t)+xv(t)$ for any $x\in {\Bbb R}$. Then
\begin{eqnarray}\label{quadratic}
\langle {\mathcal S}(t)(w(t), w(t)\rangle &=& f(t) + 2 x g(t) + x^2 h(t),\\ \nonumber
g(t) &:=& \langle {\mathcal S}(t)u(t),v(t)\rangle, \\ \nonumber
h(t) &:=& \langle {\mathcal S}(t)v(t),v(t)\rangle.
\end{eqnarray}Here  $f(t)$ is written by
\begin{eqnarray*} f(t)= \frac{1}{(2k+2)!} f^{(2k+2)}(\theta_1 t) t^{(2k+2)},\, 0 < \theta_1 < 1
\end{eqnarray*}
from (\ref{suu0to2k}) and (\ref{suu2k+1}). Moreover  $g(t)$ is expanded from (\ref{suv0tok-1}) as
\begin{eqnarray*}
 g(t) = \frac{1}{k!} g^{(k)}(\theta_2 t) t^k,\, 0 < \theta_2 < 1.
\end{eqnarray*} From (\ref{suvk}) we may assume $g^{(k)}(\theta_2 t) \not= 0$ for $t > 0$ of sufficiently small $\vert t\vert$. Since $h(0) \leq 0$, we can assume $h(0) < 0$ (if $h(0) = 0$ is supposed, then $v(0)$ is an eigenvector of eigenvalue zero so for any $x$\hspace{2mm} $u + x v$ is also an eigenvector of ${\mathcal S}(0)$ corresponding to eigenvalue zero and consequently $\langle {\mathcal S}(0)(u+xv),u+xv\rangle = 0$. However, $\langle {\mathcal S}(0)(u+xv), u+xv\rangle = \langle {\mathcal S}(0)u,u\rangle + 2 x \langle {\mathcal S}(0)u,v\rangle + x^2 \langle {\mathcal S}(0)v,v\rangle = 2 x \langle {\mathcal S}(0)u,v\rangle$. This is a contradiction, since (\ref{suv0tok-1}) is assumed in the proposition and we assumed (\ref{suvk})).
Return back to the main argument.
Compute the discriminant $D(t)=g^2(t) - f(t) h(t)$ of (\ref{quadratic}) as
\begin{eqnarray*}
 D(t) 
 &=& \left(\frac{1}{k!}\right)^2\big\{ g^{(k)}(\theta t) t^k\big\}^2 - \, \frac{h(t)}{(2k+2)!} f^{(2k+2)}(\theta_1 t) t^{(2k+2)} \\ \nonumber
 &=&  \left(\frac{1}{k!}\right)^2\big\{ g^{(k)}(\theta t)\big\}^2 t^{2k}\Big[1 -  \left(\frac{1}{k!}\right)^{-2}\big\{ g^{(k)}(\theta t)\big\}^{-2}\, \frac{h(t)}{(2k+2)!} f^{(2k+2)}(\theta_1 t) t^{2}\Big].
\end{eqnarray*}Since
\begin{eqnarray*}\Big\vert\left(\frac{1}{k!}\right)^{-2}\big\{ g^{(k)}(\theta t)\big\}^{-2}\, \frac{h(t)}{(2k+2)!} f^{(2k+2)}(\theta_1 t) t^{2}\Big\vert \leq c\hspace{0.5mm}t^2,
\end{eqnarray*}for $t$ of sufficiently small $\vert t\vert$, the terms in the parenthesis tend to $1$ as $t \rightarrow 0$. Consequently we have $D(t) > 0$ for any $t$ of sufficiently small $\vert t\vert$ so that the quadratic function (\ref{quadratic}) with $h(t) < 0$ has a positive value for a certain $x\in {\Bbb R}$.
This is a contradiction so that (\ref{suvk}) is verified.

We will next show (\ref{ruu2k}) in the following way.

First we show  the first equality of (\ref{ruu2k})
by differentiating $2k$ times the Riccati equation (\ref{riccati-2}) at $t=0$ as
\begin{eqnarray}\label{certainequation}
& &\langle {\mathcal S}'(t)u(t),u(t)\rangle^{(2k)}\vert_{t=0} \\ \nonumber
&+& \langle {\mathcal S}(t)u(t),{\mathcal S}(t)u(t)\rangle^{(2k)}\vert_{t=0} \\ \nonumber
&+& \langle R(t)u(t),u(t)\rangle^{(2k)}\vert_{t=0} = 0.
\end{eqnarray}The first term, as reduced to $\langle {\mathcal S}^{(2k+1)}(0)u,u\rangle = 0$ vanishes from (\ref{suu2k+1}). To assert $\langle {\mathcal S}(t)u(t),{\mathcal S}(t)u(t)\rangle^{(2k)}\vert_{t=0} = 0$ we apply Leibniz formula
\begin{eqnarray*}
\langle {\mathcal S}(t)u(t),{\mathcal S}(t)u(t)\rangle^{(2k)} &=& \sum_{r=0}^{2k} \left(\begin{array}{c}2k\\r\end{array}\right)\langle {\mathcal S}^{(r)}(t)u(t), {\mathcal S}^{(2k-r)}(t)u(t)\rangle\\ \nonumber
&=& 2\,\sum_{r=0}^{k-1} \left(\begin{array}{c}2k\\r\end{array}\right)\langle {\mathcal S}^{(r)}(t)u(t), {\mathcal S}^{(2k-r)}(t)u(t)\rangle \\ \nonumber
& &+\left(\begin{array}{c}2k\\k\end{array}\right)\langle {\mathcal S}^{(k)}(t)u(t), {\mathcal S}^{(k)}(t)u(t)\rangle.
\end{eqnarray*}When $t = 0$, all the terms of right hand vanish from the assumption (\ref{suv0tok-1}) together with (\ref{suvk}). Thus, the second term of (\ref{certainequation}) vanishes. Therefore the third term vanishes, namely  the first equality of (\ref{ruu2k}) is proved. The second equality of (\ref{ruu2k}) is shown by using the Maclaurin expansion, similar to the proof of the first equality of (\ref{suu2k+1}). 

  Now we will show
\begin{eqnarray*}
\displaystyle{\langle R(t)u(t), v(t)\rangle^{(k)}_{\vert t=0} = 0}
\end{eqnarray*} for any $v\in T_{\gamma(0)} \mathcal{H}$.  However, this is verified by using an argument just similar to the argument given for proving (\ref{suvk}) so we omit a proof.

\vspace{2mm}To see finally $\displaystyle{\langle S(t)u(t), u(t)\rangle^{(2k+2)}_{\vert t=0} = 0}$ we differentiate the Riccati equation $2k + 1$ times
\begin{eqnarray*}
 \langle {\mathcal S}'(t)u(t),u(t) \rangle ^{(2k+1)} +  \langle {\mathcal S}(t)u(t),{\mathcal S}(t)u(t) \rangle^{(2k+1)} +  \langle R(t)u(t),u(t) ^{(2k+1)} = 0.
\end{eqnarray*}Here the third term vanishes at $t = 0$ from (\ref{ruu2k}).
We see that the second term vanishes at $t = 0$, since  we can assume ${\mathcal S}^{(j)}(0) u = 0$ for $j = 0,\cdots, k$ from the assumption (\ref{suv0tok-1}) and (\ref{suvk}) to observe by the aid of the Leibniz formula
\begin{eqnarray*}
\langle {\mathcal S}(t)u(t),{\mathcal S}(t)u(t) \rangle^{(2k+1)}_{\vert t=0}  = 2 \sum_{r= 0}^{k}  \left(\begin{array}{c} 2k+1\\ r\end{array}\right) \langle {\mathcal S}^{(r)}(0)u, {\mathcal S}^{(2k+1- r)}(0) u\rangle = 0.
\end{eqnarray*}Therefore, $\langle {\mathcal S}^{(2k+2)}(0)u,u \rangle = 0$, that is, the second equality of (\ref{suu2k+1}) is obtained.

\section{visibility axiom and the Hessian of Busemann function}

First we  will prove Proposition \ref{visibilitycriterion}.

Let $b_{\theta}$ be the Busemann function associated with $\theta \in \partial M$. Let $p\in M$ and $\gamma : {\Bbb R} \rightarrow M$ be a geodesic of $\gamma(0) = p$, $[\gamma] = \theta$. So $b_{\theta}$ may be represented by $b_{\theta}(\cdot) = b_{\gamma}(\cdot)$ neglecting an additive constant so $b_{\theta}$ satisfies $b_{\theta}(p) = 0$ and $b_{\theta}(\gamma(t)) = -t$ for any $t$. Now, let $\gamma_1$ be a geodesic of $\gamma_1(0) = p$ and $[\gamma_1] \not= \theta$. We will show $\lim_{t\rightarrow\infty} b_{\theta}(\gamma_1(t)) = +\infty$. We see  $\gamma_1'(0) \not= \gamma'(0)$ at $p$ from the non-positive curvature assumption and $[\gamma_1] \not= \theta$.

  Further, we can assume ${\gamma}_1'(0) \not= - {\gamma}'(0)$, since, if equality ${\gamma}_1'(0) = - {\gamma}'(0)$ holds, then $\gamma_1(t) = \gamma(-t)$ for any $t$, and hence $b_{\theta}(\gamma_1(t)) = b_{\theta}(\gamma(-t)) = t \rightarrow +\infty$ as $t \rightarrow +\infty$, so we get the desired result.

Let $\varphi$ be the angle between $\nabla b_{\theta}$ and $\gamma_1'(0)$, that is, $\cos \varphi = \langle (\nabla b_{\theta})_p, \gamma_1'(0)\rangle$. Since $\nabla b_{\theta}$ coincides with $- \gamma'(0)$ at $p$, we consider the following  cases; case (i): $0 < \varphi < \pi/2$,\, case (ii): $\varphi
 = \pi/2$, that is, $\gamma_1'(0)$ is tangent to the horosphere ${\mathcal H}_{(\theta,\gamma(0))}$ at $p$ and case (iii) $\pi/2 < \varphi < \pi$.

Define a function $q$ of $t$ by $\displaystyle{q(t) := b_{\theta}(\gamma_1(t)).}$
 Note $q(0) = 0$. Since $[\gamma_1] \not= \pm\, \theta$, $q(t)$ is strictly convex for any $t$. In fact, $q''(t)$ is written by
 \begin{eqnarray*}q''(t) = (\nabla d
b_{\theta})_{{\gamma}_1(t)}(\gamma'_1(t),\gamma'_1(t))
\end{eqnarray*}
and this value of the Hessian is positive.  If
the value is zero, otherwise, then, the velocity vector
$\gamma'_1(t)$ must be a null vector of the Hessian at the point $\gamma_1(t)$, that is,  
orthogonal to a horosphere centered at $\theta$. Thus $\gamma_1$ must be asymptotically equivalent to the geodesic $\gamma$ so that  $\gamma_1$ belongs to $\theta$ or $-\theta = [\gamma^-]$. But this possibility is excluded as before and hence  we have
$q''(t) = (\nabla d
b_{\theta})_{{\gamma}_1(t)}({\gamma}_1'(t),{\gamma}_1'(t)) >
0$. Then, $q(t)$ is strictly convex.

Consider case (i):\hspace{2mm}Since $0 < \varphi < \pi/2$, $0 < \cos \varphi < 1$.
We have

 $\displaystyle{q'(t) = < (\nabla b_{\theta})_{{\gamma}_1(t)}, \gamma'_1(t) >}$
so that $q'(0) = \cos \varphi > 0$. 
Therefore, there exists $t_0 > 0$ such that $q'(t) > \varepsilon >
0$ for an $\varepsilon > 0$ and any $t$ of $0\leq t \leq t_0$.  
Since $q''(t) > 0$ at any $t$, $q'(t)$ is strictly increasing in any
$t$ so that we conclude $q'(t) \geq \varepsilon >0$ for any $t
\geq t_0$ and hence for all $t > 0$ and also we may assume
$q(t_0) > 0$. 
Let $n > 0$ be an
 arbitrary positive number. Then,  from the strict convexity of $q(t)$\hspace{2mm} 
 $\displaystyle{
 q(t_0)  < \left(1- \frac{1}{n}\right)\, q(0) + \frac{1}{n}\, q(n \, t_0) = \frac{1}{n} q(n t_0)}$
 that is, $q(n t_0) > n q(t_0)$, in other words, $b_{\theta}(\gamma_1(n t_0)) > n\, b_{\theta}(\gamma_1(t_0))$. Notice $b_{\theta}(\gamma_1(t_0)) = q(t_0) > 0 $. So we get $\lim_{t\rightarrow\infty} b_{\theta}(\gamma_1(t)) = \infty$ from the continuity of $b_{\theta}$.

\vspace{2mm}Case (ii):\hspace{2mm}In this case $\gamma_1$ is tangent to the
horosphere ${\mathcal H}_{(\theta,\gamma(0))}$ at $t = 0$. We see then  $q'(0) = 0$. So,
from $q''(t)>0$ it follows that
for any $t$ there exists $a$ of $0<a<1$ such that
$\displaystyle{q(t) =  \frac{1}{2} q''(a t) \hspace{0.5mm}t^2 > 0}$.
This means that $q(t_0) > 0$ for some $t_0 > 0$ so that we conclude that $\lim_{t\rightarrow\infty} b(\gamma_1(t)) = \infty$, similarly as in case (i).

\vspace{2mm}
Case (iii):\hspace{2mm}We have $\cos \varphi < 0$. By the way, $q'(t) = \langle \nabla b_{\theta}, \gamma'_1(t)\rangle = \cos \varphi(t)$, where $\varphi(t)$ is the angle $\angle_{\gamma(t)}(\nabla b_{\theta}, \gamma_1'(t))$ between $\nabla b_{\theta}$ and $\gamma_1'(t)$.
Then
\begin{eqnarray}
\cos \varphi(t) - \cos\varphi(0) = \int_0^t q''(t) dt = \int_0^t \nabla d b_{\theta}(\gamma'_1(t), \gamma'_1(t))dt.
\end{eqnarray}
Here $\varphi(0) = \varphi$ and then $-1 < \cos \varphi(0) < 0$.  Thus, from the assumption of Corollary \ref{visibilitycriterion} there exists a $t_0 > 0$ such that $\cos \varphi(t_0)$ is positive, since

$\displaystyle{\cos \varphi(t_0) = \cos \varphi + \int_0^{t_0} \nabla d b_{\theta}(\gamma'_1(t), \gamma'_1(t))dt}$.
From the continuity of $\varphi(t)$ there exists a $t_1 > 0$ such that $\cos \varphi(t_1) = 0$. 
So, it is concluded by applying case (ii) that $b_{\theta}(\gamma_1(t))$ tends to $+\infty$ as $t \rightarrow \infty$.

\begin{proof} \rm of Proposition \ref{leasteigenvalue}.
\,
Note that the least eigenvalue $\lambda=\lambda(p)$ is a positive continuous function on $S$, since  $\nabla d b_{\theta}(\cdot,\cdot) > 0$.  Let $q(t) = b_{\theta}(\gamma_1(t))$ as before. Then,  $\displaystyle{q'(t) = \langle \nabla b_{\theta}, \gamma_1'(t) \rangle}$
 $\displaystyle{ = \cos\varphi(t)}$,\,  $\displaystyle{\varphi(t) = \angle_{\gamma_1(t)}(\nabla b_{\theta}, \gamma_1'(t))}$ and $q''(t) = \nabla d b_{\theta}(\gamma_1'(t), \gamma_1'(t))$.
 Since $\nabla d b_{\theta}(\nabla b_{\theta}, v) = 0$ for any vector $v$ tangent to a horosphere centered at $\theta$,  we have the equality
\begin{eqnarray*}\label{secondderiv}
 (\cos \varphi(t))' = \nabla d b_{\theta}(\gamma_1'(t)^{T}, \gamma_1'(t)^{T})
\end{eqnarray*}where $\displaystyle{\gamma_1'(t)^{T} := \gamma_1'(t) - \langle\nabla b_{\theta}, \gamma_1'(t)\rangle \nabla b_{\theta}}$ is tangential to the horosphere.  Then, by using $\lambda(t):=\lambda(\gamma_1(t))$ we have
\begin{eqnarray*}
  \nabla d b_{\theta}(\gamma_1'(t)^{T}, \gamma_1'(t)^{T}) \geq \lambda(t)\hspace{0.5mm} \vert \gamma_1'(t)^{T}\vert^2
  =\lambda(t) \left(1 - \cos ^2\varphi(t)\right)
\end{eqnarray*}so that
\begin{eqnarray*}
 (\cos \varphi(t))' \geq \lambda(t) \left(1 - \cos ^2\varphi(t)\right).
\end{eqnarray*}Let $y(t) := q'(t) = \cos \varphi(t)$. Then, this is reduced to $y'(t) \geq \lambda(t)(1- y^2(t))$, or equivalently to
\begin{eqnarray}\label{angleequation}
\displaystyle{ \frac{1}{1-y^2(t)}\, y'(t) \geq \lambda(t)}.
\end{eqnarray}

Here, $-1 < y(t) < 1$ for any $t$. In fact, if we assume, otherwise, $y(t) = \pm 1$ for a $t_0$,  then this means that $\varphi(t_0) = 0$ or $\pi$  and consequently either $[\gamma_1] = \theta$, which is excluded, or $[\gamma_1] = -\theta$ from which $b_{\theta}(\gamma_1(t)) = t \to + \infty$, as $t\to \infty$. But
this case has been considered.

Now from (\ref{angleequation}) we obtain
\begin{eqnarray*}
\log \sqrt{\frac{1+y(t)}{1-y(t)} } \geq \log \sqrt{\frac{1+y(t_1)}{1-y(t_1)} }\, \int_{t_1}^t \lambda(s) ds
\end{eqnarray*} for any $t, t_1$ $(t_1 < t)$, from which
\begin{eqnarray}\label{visibility}
\frac{1+y(t)}{1-y(t)}  \geq \frac{1+y(t_1)}{1-y(t_1)} \, \exp\left\{2\, \int_{t_1}^t \lambda(s) ds\right\}.
\end{eqnarray}
 We set $\displaystyle{F(t;t_1) :=
\frac{1+y(t_1)}{1-y(t_1)} \, \exp\left\{2\, \int_{t_1}^t \lambda(s) ds\right\}}$. Then, (\ref{visibility}) is written as
\begin{eqnarray*}
y(t) \geq \frac{F(t;t_1)- 1}{F(t;t_1)+1}.
\end{eqnarray*}
From the assumption of Proposition \ref{leasteigenvalue}, 
 namely from $\displaystyle{\lim_{t\rightarrow\infty}\int_{t_1}^t \lambda(s) ds = +\infty }$, it holds $F(t,t_1) - 1 > 0$ for a sufficiently large $t$ and thus $y(t) = \cos \varphi(t)> 0$,
that is,  $0 < \varphi(t) < \frac{\pi}{2}$ for
the angle $\varphi(t)= \angle_{\gamma_1(t)}\left(\nabla b_{\theta}, \gamma'_1(t)\right)$. Since the Hessian $\nabla d b_{\theta}$ is positive definite, we can apply the argument  in proving Proposition \ref{visibilitycriterion} in case (i) to obtain Proposition  \ref{leasteigenvalue}. 

\end{proof}

 Moreover we have the following result.
\begin{corollary}{\rm Let $(M,g)$ be a Hadamard manifold having positive definite Hessian $\nabla d b_{\theta}$, for any $\theta\in\partial M$.
If the least eigenvalue of $\nabla d b_{\theta}$ restricted to a horosphere centered at $\theta\in\partial M$ has a uniform lower bound from below over $M$, then $(M,g)$ fulfills visibility axiom.}
\end{corollary}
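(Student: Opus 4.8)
The plan is to obtain this corollary as an immediate application of Proposition \ref{leasteigenvalue}, so the work reduces to checking that both hypotheses of that proposition are met. Hypothesis (i), that every Busemann function $b_{\theta}$ has positive definite Hessian over $\nabla b_{\theta}^{\perp}$, is granted verbatim by the assumption of the corollary. It therefore remains only to deduce the integral divergence condition (\ref{tendinfty}) from the assumed uniform lower bound on the least eigenvalue.

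First I would fix $\theta\in\partial M$ and write $\lambda(p)$ for the least eigenvalue of $(\nabla d b_{\theta})_p$ restricted to the tangent space of the horosphere through $p$, equivalently to $\nabla b_{\theta}^{\perp}$; by hypothesis there is a constant $c_0 > 0$, independent of $p$, with $\lambda(p) \geq c_0$ for all $p\in M$. Taking an arbitrary unit-speed geodesic $\gamma_1$ with $[\gamma_1] \neq \pm\theta$ and any $t_1\in {\Bbb R}$, monotonicity of the integral then yields
\begin{eqnarray*}
\int_{t_1}^{t} \lambda(\gamma_1(s))\, ds \;\geq\; \int_{t_1}^{t} c_0 \, ds \;=\; c_0\,(t - t_1),
\end{eqnarray*}
which tends to $+\infty$ as $t\rightarrow\infty$. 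Hence condition (\ref{tendinfty}) holds along every such $\gamma_1$, and Proposition \ref{leasteigenvalue} applies to conclude that $(M,g)$ satisfies the visibility axiom.

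The argument is essentially bookkeeping, and the only point requiring a moment's care is the meaning of \emph{least eigenvalue}: the full Hessian $\nabla d b_{\theta}$ always annihilates the gradient direction $\nabla b_{\theta}$, so the relevant quantity $\lambda(p)$ must be the least eigenvalue over $\nabla b_{\theta}^{\perp}$. It is precisely the positive definiteness assumed in the corollary that guarantees this number is strictly positive, and hence that a uniform positive lower bound $c_0$ can exist in the first place. There is no genuine analytic obstacle here; all the substance of the result is carried by Proposition \ref{leasteigenvalue}, with the present corollary simply replacing its divergence hypothesis by the stronger, but more transparent, uniform lower bound.
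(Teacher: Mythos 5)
Your proposal is correct and follows exactly the route the paper intends: the corollary is stated as an immediate consequence of Proposition \ref{leasteigenvalue}, with the uniform positive lower bound $\lambda(p)\geq c_0>0$ forcing $\int_{t_1}^{t}\lambda(\gamma_1(s))\,ds\geq c_0(t-t_1)\rightarrow+\infty$, which is precisely condition (\ref{tendinfty}). Your remark that the least eigenvalue must be taken over $\nabla b_{\theta}^{\perp}$ (since the gradient direction is always null for the Hessian) is also the correct reading of the statement.
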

}

\vspace{2mm}A Damek-Ricci space $S$ satisfies the visibility axiom from Theorem \ref{positivedefiniteness}.   
The visibility of $S$ is also derived directly from 
the exact form of the Busemann function in Proposition \ref{busemann}.

\section*{Acknowledgements}
 \noindent {This research was supported by Basic Science Research Program through
the National Research Foundation of Korea(NRF)
 funded by the Ministry of Education (NRF-2016R1D1A1B03930449).}

\end{document}